\documentclass[a4paper,oneside,12pt]{book}
\pdfoutput=1
\usepackage{graphics}               
\usepackage{graphicx}               
\usepackage{subfigure}              
\usepackage{amsmath}                
\usepackage{amssymb}                
\usepackage{setspace}               
\usepackage{fancyhdr}               
\usepackage{acronym}		        
\usepackage{cite}
\usepackage[final]{pdfpages}
\usepackage{algpseudocode}
\usepackage[english]{babel}
\usepackage{multirow}

\usepackage{caption}
\usepackage{minitoc}
\usepackage{eso-pic}
\usepackage{epsfig}
\usepackage{epigraph}
\usepackage[outer=25mm, inner=41mm, top=36mm, bottom=29mm]{geometry}	
\usepackage{epstopdf}
\usepackage{amsthm}


\theoremstyle{definition}
\newtheorem{theorem}{Theorem}[section]
\newtheorem{definition}{Definition}[section]
\newtheorem{assumption}{Assumption}[section]
\newtheorem{lemma}{Lemma}[section]
\newtheorem{remark}{Remark}[section]


%


\begin{document}

\dominitoc
\pagenumbering{roman}   
\begin{titlepage}

\pagestyle{empty}
\begin{center}
\vfill
{\huge {\bf \LARGE Decentralized Control of Three Dimensional Mobile Robotic Sensor Networks\\}}
\vspace{1.5cm}
\vspace{1.5cm}
 by\\

\vspace{2ex}
{\Large {\bf \Large Valimohammad  Nazarzehi had}}\\

\vspace{1.5cm}

%
%
\vspace{1.5cm}
\vspace{1.5cm}
\vspace{1.5cm}
\vspace{1.5cm}
2016
\end{center} 

\end{titlepage}

\let\cleardoublepage\clearpage
\singlespace
\singlespacing

\chapter*{Abstract}
\mtcaddchapter[Abstract]
\onehalfspace
\singlespacing
Decentralized control of mobile robotic sensor networks is a fundamental problem in robotics that has attracted intensive research in recent decades. Most of the existing works dealt with two-dimensional spaces. This report is concerned with the problem of decentralized self-deployment of mobile robotic sensor networks for coverage, search, and formation building in three-dimensional environments.\\
The first part of the report investigates the problem of complete sensing coverage in three-dimensional spaces. We propose a decentralized random algorithm to drive mobile robotic sensors on the vertices of a truncated octahedral grid for complete sensing coverage of a bounded 3D area. Then, we develop a decentralized random algorithm for self deployment of mobile robotic sensors to form a desired geometric shape on the vertices of the truncated octahedral grid. The second part of this report studies the problem of search in 3D spaces. We present a distributed random algorithm for search in bounded three dimensional environments. The proposed algorithm utilizes an optimal three dimensional grid for the search task.\\
Third, we study the problem of locating static and mobile targets in a bounded 3D space by a network of mobile robotic sensors. We introduce a novel decentralized bio-inspired random search algorithm for finding static and  mobile objects in 3D areas. This algorithm combines the Levy flight random search mechanism with a 3D covering grid. Using this algorithm, the mobile robotic sensors randomly move on the vertices of the covering grid with the length of the movements follow a Levy fight distribution.\\
This report studies the problem of 3D formation building in 3D spaces by a network of mobile robotic sensors. Decentralized consensus-based control law for the multi-robot system which results in forming a given geometric configuration from any initial positions in 3D environments is proposed. Then, a decentralized random motion coordination law for the multi-robot system for the case when the mobile robots are unaware of their positions in the configuration in three dimensional environments is presented. The proposed algorithms use some simple consensus rules for motion coordination and building desired geometric patterns. Convergence of the mobile robotic sensors to the given configurations are shown by extensive simulations. Moreover, a mathematically rigorous proof of convergence of the proposed algorithms to the given configurations are given.

\singlespacing

\singlespace    
\setcounter{tocdepth}{2}
\tableofcontents
\decrementmtc
\listoftables
\listoffigures

\adjustmtc
\mainmatter

\newpage
\pagenumbering{arabic}

\singlespacing
\chapter{Introduction}\label{chap:introduction}
\minitoc
Wireless sensor networks, which are networks without fixed infrastructures, consist of a number of small, low-power, low-cost nodes called sensor nodes which can vary from few to thousands \cite{rawat2014wireless}. These nodes have the ability to interact with their environment through sensors, processing information and communicating this information with their neighbors and communicating via wireless channels over a short distance \cite{gkikopouli2012survey}. The wireless sensor networks have the task of monitoring their environment by detecting the event occurring in the target area \cite{sangwan2015survey}.\\
Nowadays, sensor and robotic technology are developed enough to inspire the idea of mobile robotic sensor networks. In mobile robotic senor networks, sensors attached to autonomous underwater robots (AUVs) or unmanned aerial vehicles (UAVs), can move vertically and horizontally in the ocean and air, and  each robot can be viewed as a sensing node \cite{wang2012three}.  Static sensors deployment may not be possible in some applications where the deployment area is not reachable due to the aggressive environment or existence of mines. Moreover, mobile sensor networks are more flexible in terms of deployment and exploration abilities over static sensor networks \cite{zhang2012marine, khedo2010wireless,mansour2014wireless,6412670}. Besides, mobile sensor networks can maximize coverage with limited hardware. Also, they reduce the cost of operation, especially in the three-dimensional environments, in that areas of interest are often large \cite{survey, wu2011wireless,qi2015multi}.\\
Mobile wireless sensor networks have been used to help or replace human in different applications, for instance, sensing, monitoring \cite{gu2006data,susca2008monitoring}, surveillance search and rescue \cite{baxter2007multi} operations as well as exploration in hazardous environments.  Furthermore, they  are used for  intrusion detection, oceanographic data collection \cite{wang2012spatiotemporal}, ocean sampling, pollution monitoring in coastal areas \cite{cortes2005coordination}, detection of terrorist threats to ships in ports \cite{wang2008intrusion}, mine reconnaissance \cite{lin2010novel}, disaster prevention like detection of tsunamis and sending warnings \cite{chinnadurai2015underwater}, ocean resource exploration \cite{pompili2009three, akkaya2009self}, detection of the origin of the fire \cite{penders2011robot}, its location \cite{casbeer2005forest}, temperature, and direction also, they can be used for pollution estimation in urban areas  \cite{jalalkamali2013distributed,clark2005cooperative}.\\
In the literature, most of the works on mobile wireless sensor networks dealt with two-dimensional settings where all sensor nodes are dispersed in a two dimensional plane. However, in the real world, most of the sensor networks are deployed in 3D spaces such as wireless sensor networks used on the trees of different heights in a forest, or in a building with multiple floors, or underwater applications for ocean sampling and tsunami warnings, aerial defence systems as well as atmosphere pollution monitoring. Deploying sensor networks in 3D environments introduces new challenges in terms of connectivity, coverage, mobility, communication and coordination \cite{dong2012algorithm}. Moreover, there are significant physical, technological, and economic differences between terrestrial and air/underwater sensor networks. Compared to those for 2D sensor networks, 3D deployments are more expensive, higher mobility and areas of interest in ocean/air environments are often large \cite{wang2012three}. As a result, deploying three-dimensional sensor networks  entail extra computational complexity, and many problems cannot be solved by extensions or generalizations of 2D methods \cite{huang2007coverage}. In this report, we study three-dimensional mobile sensor networks to develop new approaches for deployment of mobile sensor networks for coverage, search, and formation building in 3D spaces.\\
The centralized and decentralized approach are two commonly accepted methods for controlling a network of mobile sensors. The centralized approach is based on the assumption that a central station is available to control a whole group of agents \cite{bakule2008decentralized,kalantar2007distributed}. In contrast, the distributed approach does not require a central station for control. The distributed approach is believed more promising due to many unavoidable physical constraints such as limited resources and energy, short wireless communication ranges, narrow bandwidths, and large sizes of mobile sensors to manage and control \cite{yang2008multi}. Furthermore, the distributed method has many advantages in realizing cooperative group performances. Also, it is scalable and robust \cite{bullo2009distributed}.\\
As a distributed solution to multi-agent coordination, consensus problems was introduced in \cite{ vicsek1995novel, tsitsiklis1984distributed, lynch1996distributed}. In \cite{vicsek1995novel, ren2005consensus}, the authors  defined consensus as reaching  an agreement on a common value of  a certain quantity of interest means by  a team of agents through negotiating with their neighbours. For more details and progress reports about consensus see survey paper \cite{olfati2007consensus,ren2005survey} and references therein. In this report, we propose decentralized control algorithm for coverage, search and formation building in 3D spaces.\\
Mobile sensor networks are deployed in an area of interest either deterministically or randomly. The choice of the deployment scheme depends on the type of sensors, applications and the environment that the sensors will operate in \cite{younis2008strategies}. Deterministic node deployment is necessary when sensors are expensive or when their operation is significantly affected by their position or in a controlled and human-friendly environment where the shape of the network, the location of the sensor node is known in advance. In this type of deployment, mobile sensor nodes are deployed in a predefined manner and the placement of sensor nodes for coverage pattern is much easier than random coverage \cite{4146943}. On the other hand, for many cases, the randomized sensors placement becomes the only feasible option \cite{7052338}. This is particularly true for the cases where the location and topology change from time to time. Also, for large, dangerous and inaccessible region where human intervention is not possible such as a battle field or a disaster region, intrusion detection, disaster recovery and forest fire detection. In the random deployment scenarios, mobile sensors may be dropped by a low-flying airplane, helicopter or an unmanned aerial or underwater vehicle \cite{wang2015coverage}. As a result, a self-deployment is needed after being distributed in an area of interest, to organize sensor nodes and to place nodes in the appropriate positions \cite{boufares2015three}.\\ The autonomous deployment of mobile wireless sensor networks is a process in which the mobile sensor nodes adjust their positions dynamically according to a certain algorithm, until the predefined requirement is achieved \cite{miao20143d}. In this report, we assume the mobile sensor networks are randomly deployed in a given three-dimensional area. This initial deployment does achieve neither area coverage, nor network connectivity. Thus, we propose distributed self-deployment algorithms to achieve the predefined coverage, search and formation building requirements. Note that the relocation process should take into account the energy cost of mechanical movement and the communication messages involved in directing the motion \cite{younis2008strategies}.\\
The following section presents a literature review on the relevant topics in this report.
\section{Distributed 3D Complete Sensing Coverage and Forming Specific Shapes}
Coverage is an important issue in a sensor network, and is usually treated as a measure of the quality of service. Three types of coverage are defined by Gage: blanket coverage, sweep coverage and barrier coverage \cite {gage,cheng2009distributed, chen2010local}. In \cite{cheng2011decentralized,faizan,6496149,cheng2013decentralized}, a distributed law was presented for a group of self-deployed  mobile sensors, to perform  coverage in a two-dimensional space. A distributed algorithm for blanket coverage of a bounded two-dimensional area was studied in \cite{main} that drives the network of mobile wireless sensors to form a grid consisting of  equilateral triangles. All of the mentioned algorithms are based on the two-dimensional space assumption where all mobile sensors are dispersed in a two dimensional plane \cite{cheng2011decentralized,cortes2002coverage,savkin2015decentralized,bretl2013robust,cheng2012self,faizan,main,cheng2011decentralized1,5210103}. However, in real life, most of the mobile robotic sensor networks are deployed in three-dimensional spaces such as wireless sensor networks deployed on the trees of different heights in a forest, or in a building with multiple floors, or underwater applications for ocean sampling and tsunami warnings, aerial defense systems as well as atmosphere pollution monitoring. For two-dimensional sensor networks coverage means arranging mobile sensors in a line or in a plane, while in three-dimensional cases, in order to detect any penetrating object entering in specific region or gather scientific data from a given three-dimensional space, mobile  sensors should cover a three-dimensional space. \\
Deploying mobile sensor networks in three-dimensional environments introduce new challenges in terms of connectivity, coverage, communication and coordination \cite {dong2012algorithm}. Moreover, there are significant physical, technological, and economic differences between terrestrial and air/underwater sensor networks. three-dimensional networks require extra computational complexity, and many problems cannot be solved by extensions or generalizations of 2D methods \cite{ huang2007coverage}. Furthermore, the problem of self-deployment of a three-dimensional robotic sensor network is more challenging than a two-dimensional network in the sense that there is not any mathematically proved optimal way to tessellate three-dimensional spaces by the minimum number of robotic sensors \cite{huang2007coverage, liu2013dynamic}.\\
A centralized approximate solution to the problem of assigning sensors to grid positions in three-dimensional environments was presented in \cite{barrier2008,barrier2011}. In \cite{bai2009full, bai2009low}, the authors studied the problem of constructing connected and full covered optimal three-dimensional static wireless sensor networks. In \cite{akkaya2009self}, a distributed technique for self-reconfiguration of underwater wireless sensor networks was proposed. However, based on that method, the sensors can only move vertically to adjust depth for maximum coverage. The coverage problem for 3D sensor networks with high deployment density was studied by Watfa and Commuri \cite{watfa20063}. Their proposed scheme chooses a subset of sensors to be alive and provide full coverage to the whole 3D region. Their method covers the target area, but the node redundancy is the major flaws of this scheme. In \cite{pompili2006deployment}, the idea is to adjust the depths of sensors to provide coverage after their initial random deployment at the bottom of the ocean. The deployment is controlled by a central station as a result; the approach is not fully distributed.\\ Another common method of mobile sensor network deployment for coverage is virtual force algorithm which is based on virtual forces between robots \cite{ boufares2015three, zou2003sensor}. Using virtual repulsive and attractive forces the mobile robotic sensors are forced to move away or towards each other so that full coverage is achieved. This method has several undesirable limitations. Firstly, using this approach, all robots remain in constant motion and slowly converge to a steady-state. As a result, it depends on mobility, which is a high power consumption task.  Moreover, using this method equilibrium steady-state cannot always be predicted, or the system may converge to local-minima \cite{stirling2010energy}. In \cite{bartolini2014vulnerabilities}, Bartolini et al. presented the security vulnerabilities of deployment algorithms based on virtual force approach. For more about coverage consider the survey papers and references therein \cite{feng2014coverage, zhu2012survey, watfa2006coverage, wang2014overview}.\\
In \cite {nazarzehi2015distributed}, we proposed a decentralized dynamic search coverage algorithm for coverage of 3D spaces. Using that algorithm, a few number of mobile wireless sensors should perform the coverage task for exploring large 3D spaces. As a result, that method is not a good option for the cases where we need to monitor a whole area at the same time or for time-sensitive coverage tasks such as radiation, chemical, and biological agent’s detection where using that method; the target region is covered only part of the time.\\
Our coverage problem is to deploy a network of mobile sensors for complete sensing coverage such that every point of a given bounded three-dimensional region is sensed by at least one mobile sensor. In this report, we present a distributed method for self-deployment of a team of mobile robotic sensors for complete sensing coverage in a bounded three-dimensional region. The mobile sensors use the proposed distributed control approach to communicating with their neighboring mobile robotic sensors to coordinate their motions and to archive a common goal. To cover a bounded three-dimensional area with the minimum number of mobile wireless sensors, we take advantage of vertices of a three-dimensional truncated octahedral grid proposed in \cite{alam2006coverage}. Unlike \cite{alam2006coverage}, here we assume that there is not a common coordinate system among all mobile sensors, therefore, we use consensus variables to build a common coordinate system for the mobile wireless sensors.\\
To verify the effectiveness of the proposed truncated octahedral grid, we  compare four different three-dimensional space filling polyhedral grids and we show that a truncated octahedral grid provides better 3\texttt{D} covering grid than other 3\texttt{D} grids because it can cover a bounded 3\texttt{D} area by a fewer number of vertices compare to the other grids. As a result, using this grid can minimize the time of coverage and the number of mobile sensors.\\
Without connectivity the mobile sensor nodes cannot exchange information with their neighbours. As a result, connectivity is an important issue in wireless sensor networks. A network is considered to be connected if there is, at least, one path between any two sensor nodes in the target area \cite{mansoor2014coverage}. Here, maintaining connectivity is essential for continuous data gathering from all mobile sensors. As a result, we propose a decentralized control algorithm to deploy the minimum number of mobile sensors for full coverage and connectivity in 3D areas.\\
The problem of self-deployment of mobile wireless sensors in three-dimensional spaces for either sampling or sensing  coverage of desired phenomena has enormous potential applications. For example, it can be applied to detect intruders  entering a protected region. Also, it can be used for environmental monitoring of disposal sites on the deep ocean floor \cite{chemical},  sea floor surveying for hydrocarbon exploration \cite{underwater}, sensing  biological and chemical phenomena in different depths, equipment monitoring and leak detection \cite{2012underwater}. Furthermore, the results of this part are also applicable to monitoring of any three-dimensional spaces, such as airborne applications, space exploration, greenhouse gas monitoring and storm tracking study \cite{alam2014coverage, al2013efficient}.\\
\section{Distributed 3D Dynamic Search Coverage}
The problem of search by mobile wireless sensor networks is an emerging field of research  which is very close to the dynamic coverage because in both cases the goal is to monitor a wide area by a few number of mobile sensors \cite{liu2013dynamic}. To accomplish the search task especially for exploring large, unknown environments, it is necessary to use  effective search algorithms \cite{couceiro2015overview}.
A majority of existing search methods relies on a single mobile sensor. Using a single mobile sensor for search of large 3D environments is not effective especially for the time sensitive search applications. In contrast, a network of mobile sensors  can considerably minimize the search time as well as it improves the robustness of the search process.
In recent years, several strategies have been presented for search in the literature. However, most of them are restricted to 2D spaces and not applicable for 3D environments  \cite{cheng2009distributed,baranzadeh2013decentralized}.
In \cite{bbb}, the authors investigated two search algorithms which are spiral search and informed random search. Both of the mentioned algorithms are based on 2D assumption also, they are heuristic. Furthermore, they assumed that the search area is known to the robots a priori. In \cite{farmani2015tracking}, Farmani et al. proposed a multi-target localization and tracking algorithm for multiple UAVs.\\
Some research focuses on partially unknown environments based on the assumption that the mobile sensors knows  the target locations a priori \cite{ijaz2006using,datta2004line} but here we assume that the mobile sensor network is not aware of the locations of targets. In \cite{keeter2012cooperative}, a random search algorithm based on Levy flight, for the search of targets in a bounded 3D environment was proposed. While \cite{bbb} is based on the assumption that the mobile sensor network knows the search environment and the targets' locations, in \cite{cai2013improved} it is assumed that the sensor network knows the number of the targets' but it is unaware of the targets' locations. Here, we consider the case where the deployed mobile sensor network knows neither targets' locations nor the number of targets and the targets are randomly placed around the search area. In this report, we introduce a distributed grid-based random algorithm for search in 3D environments by a mobile sensor network where the search environment is unknown to the mobile sensors a priori. To minimize the time of the search the mobile sensors should avoid repeated exploration . As a result, the mobile sensors communicate with their neighbours to share their information while doing search task. Based on this algorithm, each mobile sensor builds a map of the explored area and shares it with other sensors passing within its communication range to minimize the time of the search. The proposed algorithm is based on the decentralized communication method which is an appropriate option for the case where the mobile sensors have limited communication ranges. Here, the mobile sensor network accomplishes the search task using vertices of a cubic and a truncated octahedral grid. The presented search algorithm uses the vertices of the mentioned 3D grids for the search of a given 3D environment. We demonstrate that the truncated octahedral grid provides better 3D covering than a cubic grid, as a result using this grid can minimize the time of the search. A practical application of the proposed search algorithm is target searching in a three-dimensional aerial and aquatic environment. For example, it can be used for the search of sea mines, sources of pollution, black boxes from downed aircraft.
\section{Distributed Bio-inspired Random Search for Locating Static Targets}
In many coverage applications, it is not necessary to monitor the whole area of interest. Monitoring only some specific points in the area of interest is sufficient and acceptable. For such cases, we assume only a limited number of discrete objects or target points need to be monitored. As a result, full coverage of a given area is not required. In such cases, the deployment cost will decrease because of the smaller number of mobile sensors needed compared to the number required to cover the entire area. The problem of locating targets in both two and three-dimensional spaces is a common task in biological and engineering systems. To do the search task, a team of mobile robots equipped with sensing peripherals capability is deployed in the search area to locate targets like sea mines, black boxes from downed aircraft or ships, hazardous chemicals, fire spots in the jungle or to measure a concentration of dangerous substances \cite{sutantyo2013collective,nurzaman2009biologically,bernard2011autonomous}. Finding targets in large environments through the use of a single robot is inefficient, and using a team of mobile robots with local communication capability can succeed more quickly. Moreover, the success of robot teams searching for targets where targets availability is unknown and the condition of the environment is unpredictable depends on the efficiency of the search strategy \cite{fricke2013microbiology}. In environments where the distribution of targets is unknown, a priori or change over time randomized search strategies are more effective than deterministic search \cite{stephens1986foraging,acar2003path}.\\
Animals perform foraging activities when searching for food sources in nature with no knowledge of the environment and a limited sensory range. Biological creatures, marine predators, fruit flies, and honey bees who search for sources of food, for a new site and a mate are  believed to use the Levy flight random motion \cite{viswanathan2002levy,yang2009cuckoo,hereford2010bio}. Levy flight is a random walk mechanism that has the Levy probability distribution function in determining the length of the walk. Levy walk random search approach is easy to implement also it does not require communication between searchers. Using this random search method, a searcher performs the search task by searching  a small area of space, and then jump to a zone that is likely previously unexplored. This behaviour is similar to that of a foraging animal who searches for food in a given area with a series of small movements, then travels a larger distance to another area to search again. In fact, by performing Levy flight random walk mechanism, forager optimizes the number of targets encountered versus the travelled distance. The idea is that the probability of returning to the previous site is smaller compared to another random walk mechanism. The process of evolution and natural selection has led animals to optimize their foraging strategies. As a result, researchers in this new emerging area are finding much inspiration from biology.\\
In \cite{fricke2013microbiology,fricke2015distinguishing}, the authors proposed a search strategy for a team of mobile robotic sensors based on the T cell movement in lymph nodes and they showed that the distributions of the step-sizes taken by T cells are best described by a random walk with the Levy-like distribution. In \cite{sutantyo2013collective}, a bio-inspired random search strategy for the multi-robot system to efficiently localize targets in underwater search scenarios was proposed. Also, the authors showed that how a novel adaptation strategy based on the firefly optimization algorithm can improve the Levy flight performance particularly when targets are clustered. In \cite{sutantyo2010multi}, the authors combined the Levy flight bio-inspired search algorithm to an artificial potential field to improve robots dispersion in the environment to optimize search task by a team of mobile robots \cite{sutantyo2010multi}. In \cite{keeter2012cooperative}, a randomized algorithm, based on the Levy flight, for locating sparse targets in a bounded three-dimensional  aquatic or air environments environments was proposed. Yang et al. developed a decentralized control algorithm of swarm robot inspired by bacteria chemotaxis for target search and trapping \cite{ yang2015self}.\\
In all of the mentioned works, mobile robotic sensors search an area of interest using a random walk with a Levy-flight distribution. Most of the previous works were based on the assumption that targets are sparsely distributed in the search areas. For many real-world search applications, targets are clustered in a group such as schools of fish, underwater mines clustered in fields and human victims in disaster areas. Furthermore, some of the previous studies were restricted to two dimensions, and some of them did not consider cooperation and communication amongst mobile robotic sensors \cite{ghaemi2009computer}. In this report, we propose a novel decentralized grid-based Levy flight random search algorithm. Here, we combine the bio-inspired Levy flight random search mechanism with a 3D covering grid proposed in \cite{nazarzehi2015distributed} to optimize the search procedure.\\
In the former works, the Levy flight random walks took place in a continuous space, but here the random walk occurs on a discrete grid. Unlike \cite{nazarzehi2015distributed}, in this study the mobile robotic sensors move randomly on the vertices of the covering grid with the length of motion follow a Levy flight distribution. Based on this method, the Levy flight distribution will generate the length of the movement, while the vertices of the covering truncated octahedral grid will improve the dispersion of the deployed mobile robotic sensors. As a result, it optimizes the search task by reducing the search time. To stop the search and reduce the cost of the operation, each mobile sensors communicates to other mobile sensors within its communication range to broadcast information regarding detected targets \cite{vali}. Performance of this grid-based random search method is verified by extensive simulations also to evaluate the performance of the proposed grid-based Levy flight algorithm we compare it to other random search strategies for locating sparse and clustered distributions of targets. Furthermore, we give a mathematically rigorous proof of the convergence of the proposed algorithm with probability 1 for any number of mobile robotic sensors and targets.
\section{Distributed Bio-inspired Algorithm for Search of Moving Targets}
The success of a network of mobile robotic sensors searching for targets in an unknown three-dimensional environment depends on the efficiency of the employed search strategy \cite{stevens2013autonomous}. There are two target search approaches named random search and deterministic search. Random search is just opposite to the determined search in that no predefined information is available about the location of moving targets, their distribution and movement pattern. In environments where the distribution of targets is unknown a priori or changes over time randomized search strategies are more effective \cite{stephens1986foraging,acar2003path}.\\
Viswanathan et al. \cite{viswanathan2000levy} discovered that the foraging behaviour of animals follows a certain random search strategy that is an optimal Markovian search strategy based on Levy processes. Markovian search strategies have many advantages as they are easy to implement and do not require communication between independent searchers. The Levy walk (Levy flight) is the trajectory of the searcher containing piecewise linear pieces of Levy distributed lengths, and has been exhibited in several natural systems. Based on the Levy walk method, a mobile robotic sensor searches a small area of space and then jumps to a region that is not previously explored and starts again. This behavior is similar to that of a foraging animal who searches for food in a given area with a series of small movements, then travels a larger distance to another area to search again.\\
In the bio-inspired Levy flight search approach, the length of the movement can be determined by the distribution of the targets. As a result, this method is flexible and suitable for the scenario where the animal has to adapt to the environmental changes when resources are sparsely distributed within unpredictable environments \cite{liu2013dynamic}. The process of evolution and natural selection has led animals to optimize their foraging strategies. As a result, in recent years, many search algorithms have been inspired by the Levy walk random search method \cite{fricke2013microbiology, fricke2015distinguishing}. In \cite{sutantyo2013collective}, the authors proposed a bio-inspired random search strategy for the multi-robot system to efficiently localize targets in underwater search scenarios. Also, they showed how a novel adaptation strategy based on the firefly optimization algorithm can improve the Levy flight performance particularly when targets are clustered. In \cite{sutantyo2010multi}, the authors combined the Levy flight bio-inspired search algorithm to an artificial potential field scheme to improve robots dispersion in the environment to optimize search task by a team of mobile robots \cite{sutantyo2010multi}. In \cite{keeter2012cooperative}, a randomized algorithm, based on the Levy flight, for locating sparse targets in a three-dimensional bounded aquatic or air environments was proposed. In \cite{stevens2013autonomous}, the authors show that using the Levy distribution exhibits advantages over a continuous sweeping search for moving targets in 2D environments. Most of the previous works are restricted to two dimensional spaces, and some of them do not consider cooperation and communication amongst robots \cite{ghaemi2009computer}. Furthermore, all of the former studies are based on the assumption that targets are stationary. Therefore, their results are not  applicable for detecting mobile targets in three dimensional spaces. There are many examples of moving targets such as a running vehicle or an intruder entering a protected area. In this report, we study the problem of detecting mobile  targets moving randomly in a bounded 3D environment by a network of mobile sensors \cite{nazarzehi2015distributed1}.
\section{Decentralized Three-dimensional Formation Building}
The problem of 3D formation building of a network of mobile robotic sensors capable of local communication is to coordinate that network of mobile robots such that they can realize some desired formations and keep it over time \cite{cao2013overview, barca2013swarm}. A formation of mobile robots can be used to inspect oil pipelines in underwater environments where a team of mobile robots is required to build a desired formation and hover above the pipeline at different depths to monitor pipelines and detect any abnormalities \cite{ismail2012dynamic}. In such missions, a formation of autonomous robots is preferred over a single robot as it can provide better redundancy and flexibility. Moreover, it can accomplish the monitoring task more rapidly and cost-effectively. Furthermore, a group of mobile robots moving together in a given configuration can form an efficient data acquisition network for gas and oil exploration, environmental monitoring in both aerial and underwater environments, security patrols, mapping of the seabed, search and rescue in hazardous environments \cite{borhaug2006cross,jouvencel20123d,yintao2012path,jouvencel2010coordinated,turpin2012decentralized}. Besides, such formation can be used in airborne applications such as space exploration, storm tracking, greenhouse gas monitoring and air quality monitoring \cite{alam2014coverage}.\\
There are two approaches to the problem of formation control of a group of mobile robots \cite{belkhouche2011modeling,ou2014finite,chen2005formation}.  The centralized approach is based on the assumption that a leader is available to control the whole group of mobile robots. As a result, the leader should have access to all robots information for coordination of their motions and other robots follow the leader.  In contrast, the distributed approach does not require a central station for formation building. Decentralized scheme is preferable to the centralized ones for many reasons. First of all, it provides robustness with respect to mobile robots failures also they are scalable. Secondly, in practice, the sensing and communication ranges of autonomous robots are limited. Consequently,  mobile robots do not have access to all information of every robot in the workspace but only of the robots within their neighborhood. Therefore, the design of the controller for each robot has to be based on the local information. In recent years, consensus strategies have been applied to achieve the formation control of multiple mobile robots, which focus on driving the kinematics of all mobile robots to a common value. Using this method, a team of mobile robots communicate to their neighbors to reach an agreement on a common value \cite {peng2013distributed}.\\  Decentralized formation control has been studied in a large number of recent publications \cite{jadbabaie2003coordination,savkin2010decentralized,savkin2004coordinated,savkin2010bearings,dimarogonas2007decentralized,listmann2009consensus,peng2013distributed,savkin2016distributed}. In \cite{savkin2013method, baranzadeh2015decentralized, baran}, a distributed formation building algorithm for formation building in a given geometric pattern has been proposed. Most of the above-cited papers on formation control are based on the 2D assumption. This assumption is justified for applications where mobile robots are deployed on the earth surface. Therefore, this 2D space hypothesis may no longer be valid in the sea and air where mobile robots are distributed over a 3D space.\\ Recently, the problem of formation  building in a 3D space has been  analysed in \cite{borhaug2007straight,scardovi2007stabilization,ercan2010regular,zeng2011artifical}. However, in most of the previous works the mobile robots were modelled as simple linear models, which  have not taken into account constraints on the robot's linear and angular velocities. Distributed Geodesic Control Laws for flocking of nonholonomic agents in 3D environments was proposed in \cite{moshtagh2007distributed}. By using that algorithm, all mobile robots will finally move in the same direction without any predefined configuration. In \cite{robio}, we presented preliminary results of decentralized 2D formation building in three-dimensional environments by some simulations. In this note, we extend the results in \cite{robio} by extension from a 2D formation building to a 3D formation building in 3D environments. In particular, here we introduce a random formation building algorithm for anonymous robots. In this report, we present a decentralized consensus-based control algorithm for the problem of 3D formation building for nonholonomic mobile robots in 3D environments. Based on the proposed algorithm, the mobile robots only communicate with their neighbors located in their communication range to build and maintain a desired 3D configuration. As a result, this method is a good option for the cases where the mobile robots have limited communication range due to the economic reasons or because of physical constraints (underwater environments). Unlike previous works \cite{roussos20103d}, using this algorithm, the mobile robots move in not only the same direction in 3D spaces but also they finally build a given 3D geometric configuration and move with the same speed. Furthermore, in this study the motion of each robot is modeled by a nonlinear model. Also, we take into account the standard constraints on robots' angular acceleration and linear velocity \cite{matveev20143d}. The performance of the proposed three dimensional formation building algorithm is validated  by numerical simulations also, we give mathematically rigorous proof of convergence of the proposed algorithms to the given geometric configurations.\\
 Here, we study the problem of formation building for the case when the mobile robots are unaware of their positions in the configuration. We present a decentralized random motion coordination law for a network of mobile robots so that the mobile robots reach consensus on their positions and form a desired three dimensional geometric pattern from any initial position.\\


\vspace{-6mm}
\section{Report Contributions}\label{sec:report_contributions}
\vspace{-2mm}
The contributions of the report are described as follows:
\begin{enumerate}
\item  This report introduces a novel distributed control algorithm for self-deployment of mobile robotic sensor networks for complete sensing coverage of bounded three-dimensional spaces. Moreover, it develops a distributed control law for coordination of the mobile robotic sensors such that they form a given 3D shape at vertices of a truncated octahedral grid from any initial positions. Simulation results show that the 3D truncated octahedral grid outperforms other 3D grids in terms of complete sensing coverage time and a minimum number of mobile robotic sensors required to cover the given 3D region. The proposed control algorithms are based on the consensus approach that is simply implemented and computationally effective. The control algorithms are distributed, and the control action of each mobile sensor is based on the local information of its neighbouring mobile sensors. The effectiveness of the proposed control algorithms are confirmed by extensive simulations. Also, we give mathematically rigorous proof of convergence with probability 1 of the proposed algorithms.
\vspace{-2mm}
  \item The report proposes a set of distributed random laws for search coverage in unknown bounded three dimensional environments. The mobile sensors utilize a truncated octahedral grid for the search process. Furthermore, they exchange information regarding detected targets with their neighbouring mobile sensors to minimize the search time. The performance of the proposed algorithm is confirmed by extensive simulations and the convergence with probability 1 for any number of mobile sensors has been mathematically proved.
	\vspace{-2mm}
  \item This report presents a novel decentralized bio-inspired grid-based search algorithm to drive a network of mobile sensors for locating clustered and sparsely distributed targets in bounded 3D areas. The proposed method combines the bio-inspired Levy flight random search mechanism for determining the length of the walk with vertices of a covering truncated octahedral grid to optimize the search procedure by improving dispersion of the mobile robotic sensors. The proposed approach has the advantage that it does not need centralized control system also it is scalable. Moreover, this report compares the proposed bio-inspired grid-based random search scheme with other random search methods.
	\vspace{-2mm}
  \item This report considers the problem of detecting mobile targets moving randomly in a 3D space by a network of mobile sensors. It proposes a bio-inspired random search mechanism supplemented with a grid-based distribution strategy for the search in bounded 3D areas. Based on this algorithm, the mobile robots do the search task by randomly moving to the vertices of a common 3D covering grid from any initial position. The proposed algorithm uses some simple consensus rules for building a common covering grid. Also, it uses a random walk search pattern drawn by a Levy flight probability distribution to do the search task. The performance of the proposed algorithm is evaluated by simulations and comparison to the Levy walk random search method. Also, we give mathematically rigorous proof of convergence with probability 1 of the proposed algorithm.
		\vspace{-2mm}
  \item This report presents a decentralized control law for formation building in three dimensional environments. It  uses nonlinear standard kinematics equations with hard constraints on the robot angular and linear velocity to describe the robot motion in 3D spaces. Then, it proposes a random formation building algorithm for the problem of formation building with anonymous robots in 3D environments. This algorithm is an appropriate option for the case where robots do not know a priori its position in the configuration. The proposed control laws are based on the consensus approach that is simply implemented and computationally effective. The proposed control algorithms are decentralized and the control action of each robot is based on the local information of its neighbouring robots. Using this algorithm, all mobile robots eventually converge to the desired geometric configuration with the same direction and the same speed. The performance of the proposed decentralized control laws is confirmed by extensive simulations. Furthermore, it gives a mathematically rigorous analysis of convergence of the mobile robots to the given configurations.
	\vspace{-2mm}
  \item In this report, we have developed an effective, flexible and robust search scheme for target searching tasks, where the environment model and target locations are unknown to the mobile sensor network.
	\vspace{-2mm}
	\item Unlike many other studies in this area which present heuristic based strategies, in this report mathematically rigorous analysis is available for the proposed coverage, search and formation building algorithms.
	
	\end{enumerate}

\section{Report Organization} \label{sec:report_organizations}
The rest of this report is organized as follows:
\begin{itemize}
	\item \textbf{Chapter (2)} introduces a distributed random algorithm to drive a network of mobile robotic sensors on the vertices of a truncated octahedral grid for complete sensing coverage of a bounded 3D area. Moreover, it provides a decentralized algorithm for self-deployment of mobile robotic  sensors to form a desired 3D geometric shape on the vertices of a truncated octahedral grid.
	\item \textbf{Chapter (3)} presents a distributed random algorithm for search in bounded three dimensional environments by a network of mobile sensors. The proposed algorithm utilizes an optimal three dimensional grid pattern for the search task.
	\item \textbf{Chapter (4)} extends the distributed algorithm  proposed in the previous chapters by introducing a novel bio-inspired decentralized grid-based random search algorithm for finding randomly located objects in 3D areas by a mobile sensor network.
	\item \textbf{Chapter (5)} presents a distributed bio-inspired Levy flight algorithm for search of moving targets in three dimensional spaces. Using this algorithm, the mobile robots do the search task by randomly moving to the vertices of a common 3D covering grid from any initial position.
	\item \textbf{Chapter (6)} proposes a decentralized consensus-based control law for a team of mobile robots which result in forming a given geometric configuration from any initial positions in 3D environments. Then, it presents a decentralized random motion coordination law for the multi-robot system for the case when the mobile robots are unaware of their positions in the configuration in three dimensional environments.
	\item \textbf{Chapter (7)} summarizes the work presented in this report and gives directions for the future research.
\end{itemize}

\singlespacing

\chapter{Distributed Complete Sensing Coverage and Forming Specific Shapes}\label{chap:1ValiCH1}
\minitoc
In this chapter, we present a decentralized control algorithm for self-deployment of mobile sensors for complete sensing coverage in a bounded three-dimensional region. The proposed decentralized control scheme uses distributed communication and control mechanisms for the cooperation of the mobile sensors. Based on this method, the mobile sensors exchange information using a wireless network \cite{new2}.  In recent years, distributed coordination of networks of mobile robotic sensors has attracted many researchers because based on this approach sensor networks are not required to access information from all other agents also the computational complexity of decentralized systems is independent of the number of agents in the network.  Furthermore, this approach is not sensitive to the loss of central leaders, and it scales well with increasing the number of mobile sensors. Moreover, it is flexible, robust and reliable. Besides, the decentralized control method is a good option for the cases where the mobile wireless sensors have limited communication ranges due to the economic reasons or because of physical constraints (underwater environments). Based on this scheme, the mobile sensors communicate with their neighboring mobile sensors to coordinate their motions and to archive a common goal .\\
To cover a bounded three-dimensional area with the minimum number of mobile wireless sensors, we take advantage of vertices of a three-dimensional truncated octahedral grid proposed in \cite{alam2006coverage}. Unlike \cite{alam2006coverage}, here we assume that there is not a common coordinate system among all mobile sensors, therefore we use consensus variables to build a common coordinate system for the mobile sensor network. To verify the effectiveness of the proposed truncated octahedral grid, we  compare four different three dimensional space filling polyhedral grids and we show that the truncated octahedral grid provides better covering grid than other three-dimensional grids because it can cover a bounded three-dimensional area by the fewer number of vertices compare to the other grids. As a result, using this grid we can cover the target area quicker and by less number of mobile sensors compare to the other grids.\\
At first part of this chapter, we introduce a distributed random algorithm for complete coverage of an unknown bounded three-dimensional area. At second part, we develop a distributed control law for coordination of motion of the mobile sensors such that they form a given three-dimensional shape at vertices of a truncated octahedral grid from any initial positions. The performance of the proposed algorithms are verified by simulations also we give a mathematically rigorous proof of the convergence of the proposed algorithm with probability 1 for any number of mobile sensors and any sensor initial conditions \cite{nazarzehi2015decentralized}.\\
The rest of this chapter is organized as follows: The problem is formulated in Section \ref{2.1}. In Section \ref{2.2}, a distributed random  algorithm for self-deployment of a mobile robotic sensor network on vertices of a truncated octahedral grid in a bounded region is presented. In Section \ref{2.4}, the results of section \ref{2.2} are developed for self-deployment of mobile wireless sensors in a given three-dimensional shape. Numerical simulation results that validate the proposed algorithms are presented in Section \ref{2.3}, Section \ref{2.5}. Conclusion is summarized in Section \ref{2.6}.

\begin{figure}
\centering
{\includegraphics[scale=0.7]{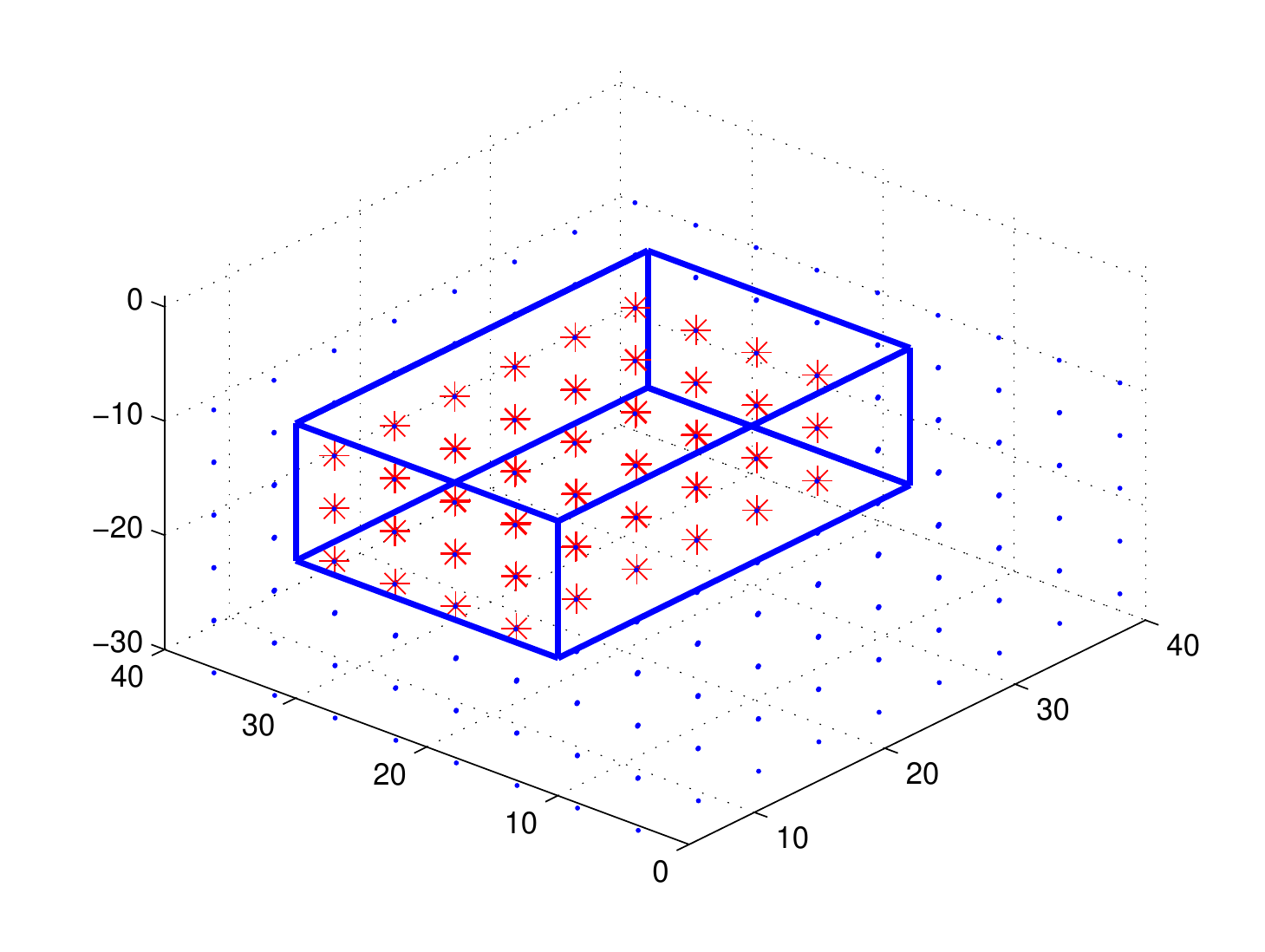}}
\caption{Target region($\texttt{M}$) represented by a cube, the set $\texttt{V}$ denoted by $’.’$ and the set $\hat{\texttt{V}}$ by $*$.}
 \label{MonitoringRegion}
\end{figure}

\begin{figure}
\centering
{\includegraphics[scale=0.85]{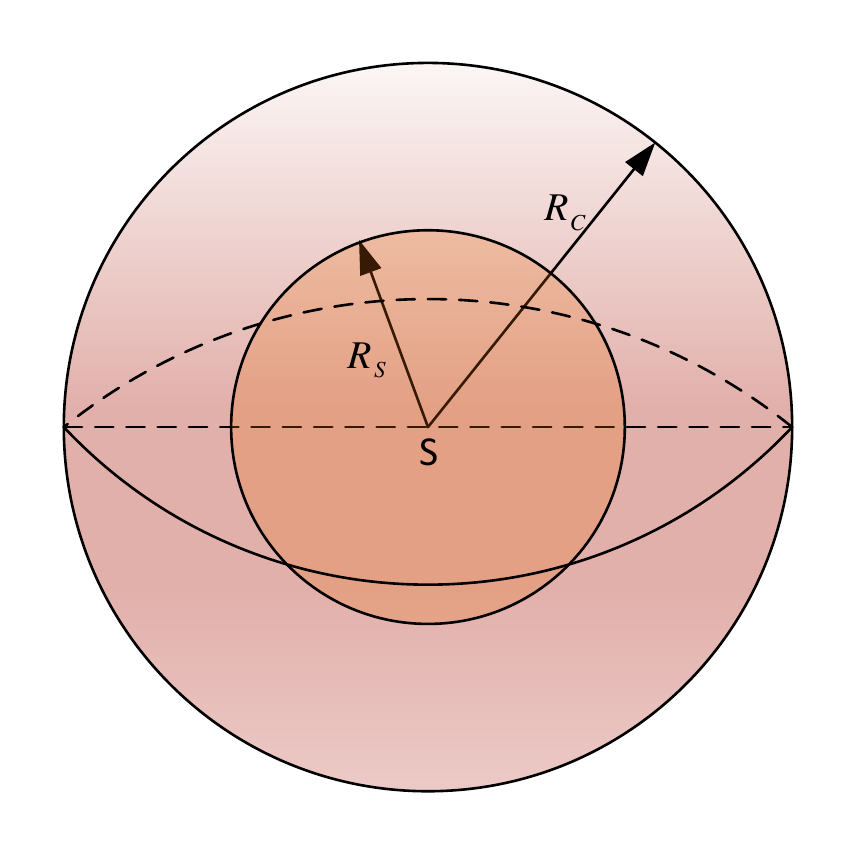}}
\caption{Sensing range denoted by $R_s$ and communication range by $R_c$}
 \label{sensor}
\end{figure}

\begin{figure*}[t!]
\begin{center}
\mbox{
\subfigure[]{
{\includegraphics[width=0.48\textwidth]{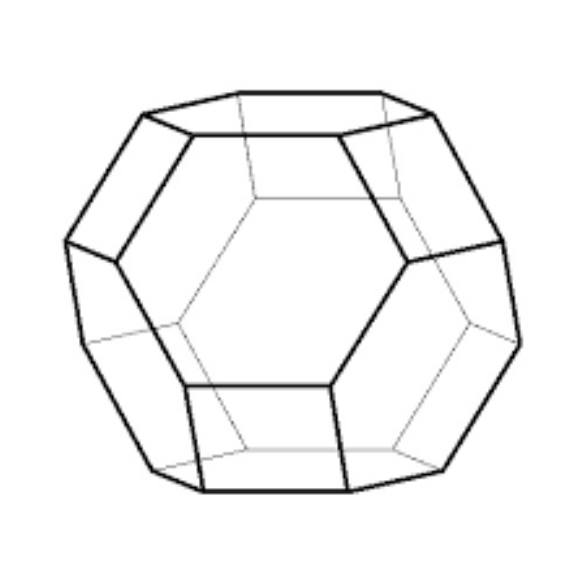}}\quad
\label{trunc1}
}
\hspace{-2mm}
\subfigure[]{
{\includegraphics[width=0.48\textwidth]{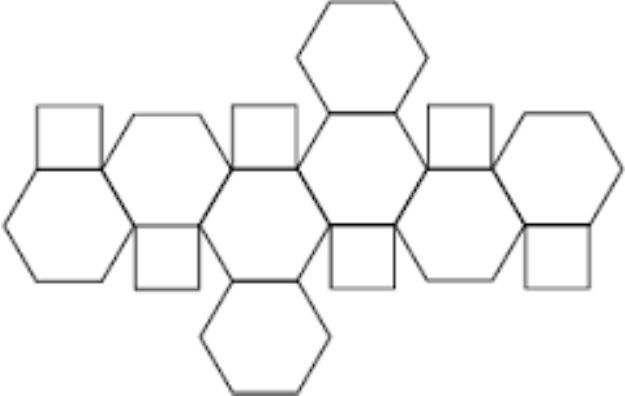}}\quad
\label{trunc2}
}
}
\vspace{-4mm}
\caption{A truncated octahedron.}
\vspace{-8mm}
\label{trunc}
\end{center}
\end{figure*}
\begin{figure*}[t!]
\begin{center}
\mbox{
\subfigure[]{
{\includegraphics[width=0.48\textwidth]{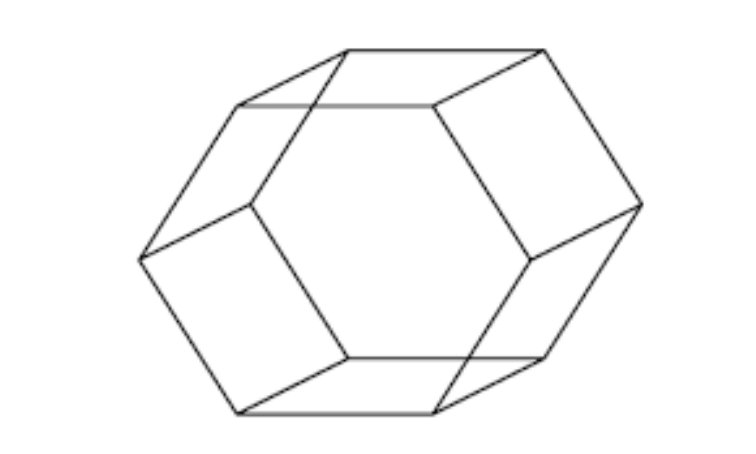}}\quad
\label{hex1}
}
\hspace{-2mm}
\subfigure[]{
{\includegraphics[width=0.48\textwidth]{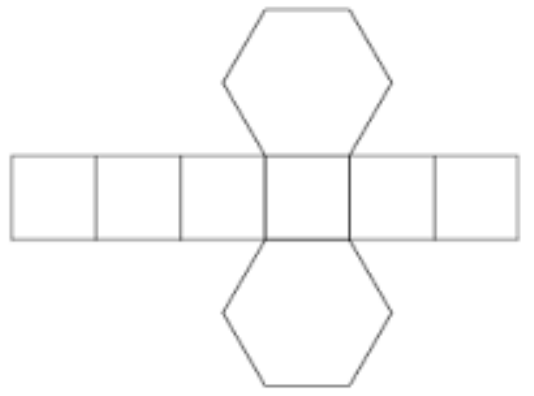}}\quad
\label{hex2}
}
}
\vspace{-4mm}
\caption{A hexagonal prism}
\vspace{-8mm}
\label{hex}
\end{center}
\end{figure*}

\begin{figure*}[t!]
\begin{center}
\mbox{
\subfigure[]{
{\includegraphics[width=0.48\textwidth]{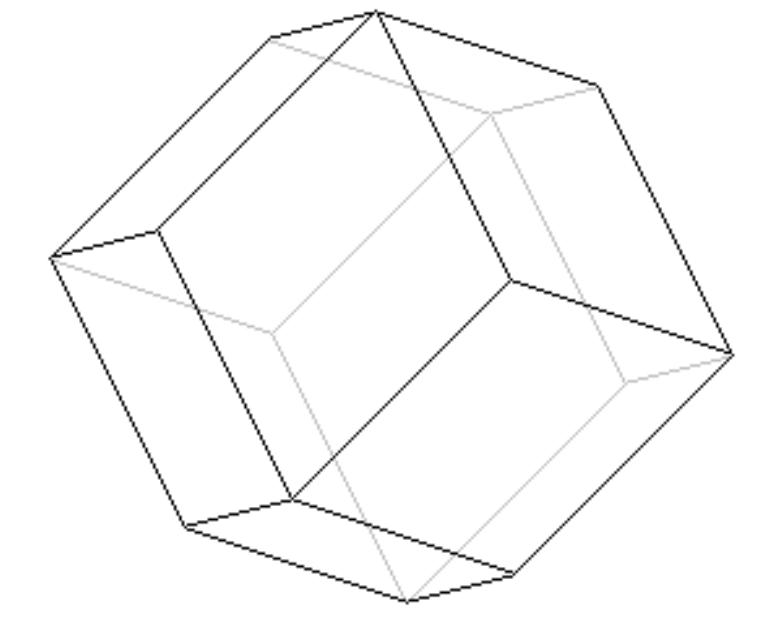}}\quad
\label{rhomb1}
}
\hspace{-2mm}
\subfigure[]{
{\includegraphics[width=0.48\textwidth]{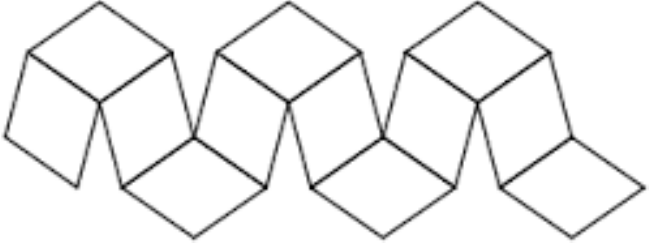}}\quad
\label{rhomb2}
}
}
\vspace{-4mm}
\caption{A rhombic dodecahedron}
\vspace{-8mm}
\label{rhomb}
\end{center}
\end{figure*}



\section{Problem Statement} \label{2.1}
Our coverage problem is to deploy a network of mobile sensors for complete sensing coverage such that every point inside a given bounded three-dimensional region is sensed by at least one mobile sensor. The objective of this chapter is to design a distributed random control algorithm to drive a team of mobile sensors for complete sensing coverage of a given bounded three-dimensional area.
The three-dimensional bounded regions $\texttt{M} \subset \texttt{R}^3$ to be covered is shown in Fig.\ref{MonitoringRegion}. We assume the mobile wireless sensor network consisting of n mobile sensors. Let $ p_i(.)\in \texttt{R}^3 $ be the Cartesian coordinates of the sensor $i$. We assume every mobile sensor has a limited sensing range, denoted by $R_s$. Sensing is omnidirectional and the sensing region of each mobile sensor can be represented by a sphere of radius $R_s$, having the mobile sensor at its centre, see Fig.\ref{sensor}.
\begin{assumption}
We assume a spherical sensing model such that each mobile sensor has a sensing range of $\texttt{R}_s>0 $ can reliably detect any object located within a distance of $\texttt{R}_s $ from the sensor. In other words, mobile sensor $i$ has the ability to identify objects in a sphere of radius $\texttt{R}_s $ defined by:
\begin{equation}
\texttt{S}_{i,\texttt{R}_s}=\{p\in \texttt{R}^3;\|(p-p_i (\texttt{K}))\| \leq \texttt{R}_s\}
\end{equation}
\end{assumption}
We assume every mobile sensor has a limited communication range, denoted by $R_c$. Communication is omnidirectional, and the communication region of each mobile sensor is represented by a sphere of radius $R_c$, having the mobile sensor at its centre, see Fig.\ref{sensor}.
\begin{assumption}
We assume a spherical communication model where each mobile sensor has a communication range of $\texttt{R}_c>0 $ can reliably communicate with any mobile sensors located within a distance of $\texttt{R}_c $ from the mobile sensor as shown in Fig.\ref{sensor}. Mathematically speaking, mobile sensor $i$ can obtain information on its neighbours in a sphere of radius  $\texttt{R}_c$ defined by:
\begin{equation}
\texttt{S}_{i,\texttt{R}_c}=\{p\in \texttt{R}^3;\|(p-p_i (\texttt{K}))\| \leq \texttt{R}_c\}
\end{equation}
\end{assumption}
\begin{definition}
The mobile sensors are called homogenous if they have the identical sensing ability, computational ability, and the capacity to communicate.
\end{definition}
Here, we assume the mobile sensors are homogenous. Also, we assume that the mobile sensors can detect the boundaries of the region $\texttt{M}$. For a given time $ \texttt{K} > 0 $, each mobile sensor communicates with its surrounding neighbours in the communication range at the discrete time instance $\texttt{K} = 0, 1, 2,..$ for the coordination of their motions \cite{kottege2011underwater}.
\begin{definition}
Space-filling polyhedron is a polyhedron that can be used to fill a three-dimensional space without overlaps or gaps.
\end{definition}
The figures of three space-filling polyhedral are shown in Fig.\ref{trunc}, Fig.\ref{hex} and Fig.\ref{rhomb}. As shown in Fig.\ref{trunc2}, a truncated octahedron has 14 faces (6 squares and 8 regular hexagonal), and the length of the edges of the squares and the hexagons are the same. Fig.\ref{hex2} shows that a hexagonal prism is a prism composed of six rectangular sides and two hexagonal bases. Moreover, as shown in Fig.\ref{rhomb2}, a rhombic dodecahedron is a convex polyhedron with 12 congruent rhombic faces.
As mentioned before, we assumed that the mobile sensors' communication and sensing models are spherical also it is clear that spheres do not tessellate three-dimensional spaces without overlap or gap.
\begin{figure*}[t!]
\begin{center}
\mbox{
\subfigure[Truncated octahedron and its circumsphere]{
{\includegraphics[width=0.45\textwidth]{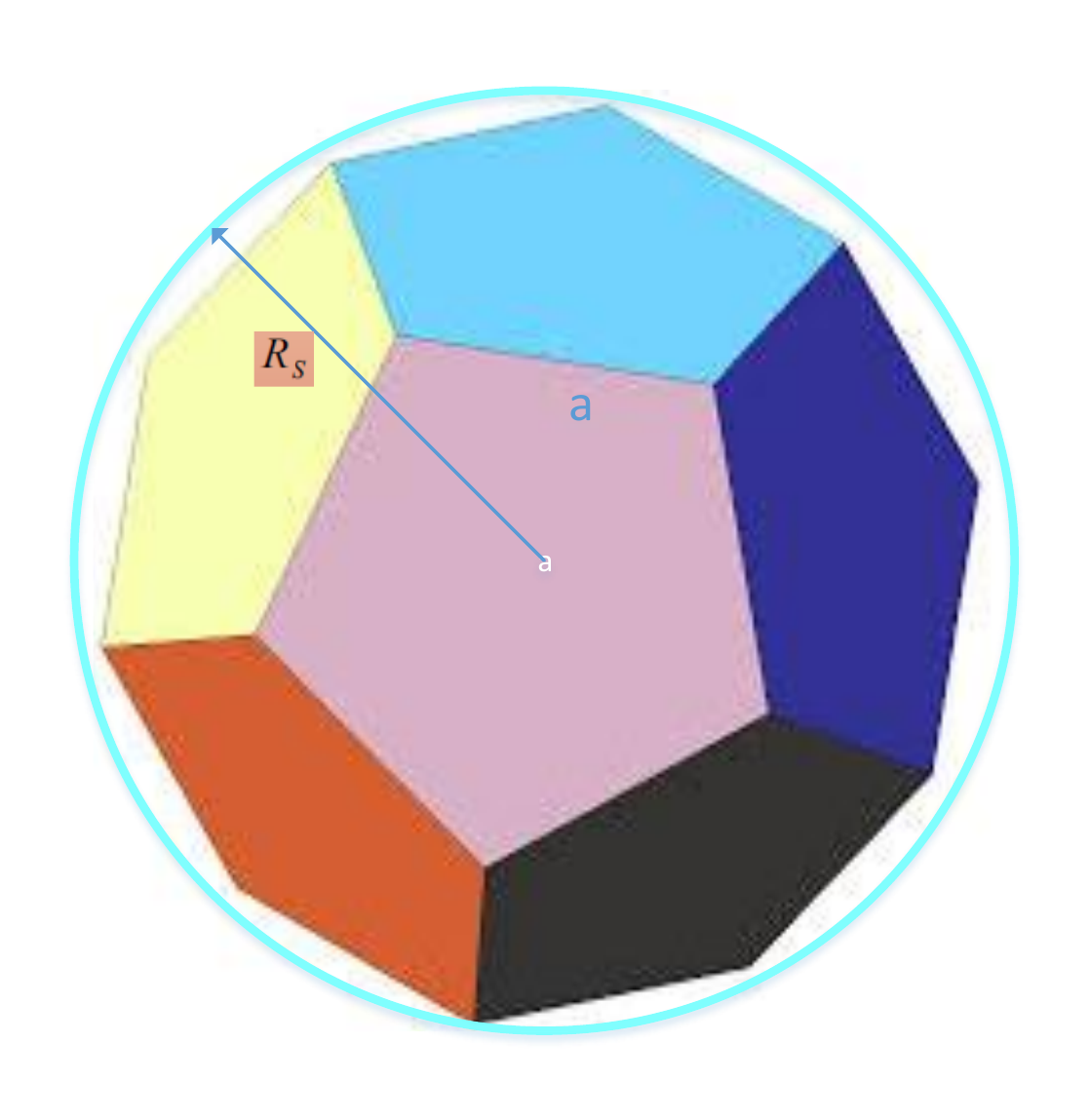}}\quad
\label{nn1}
}
\subfigure[Cube and its circumsphere]{
{\includegraphics[width=0.5\textwidth]{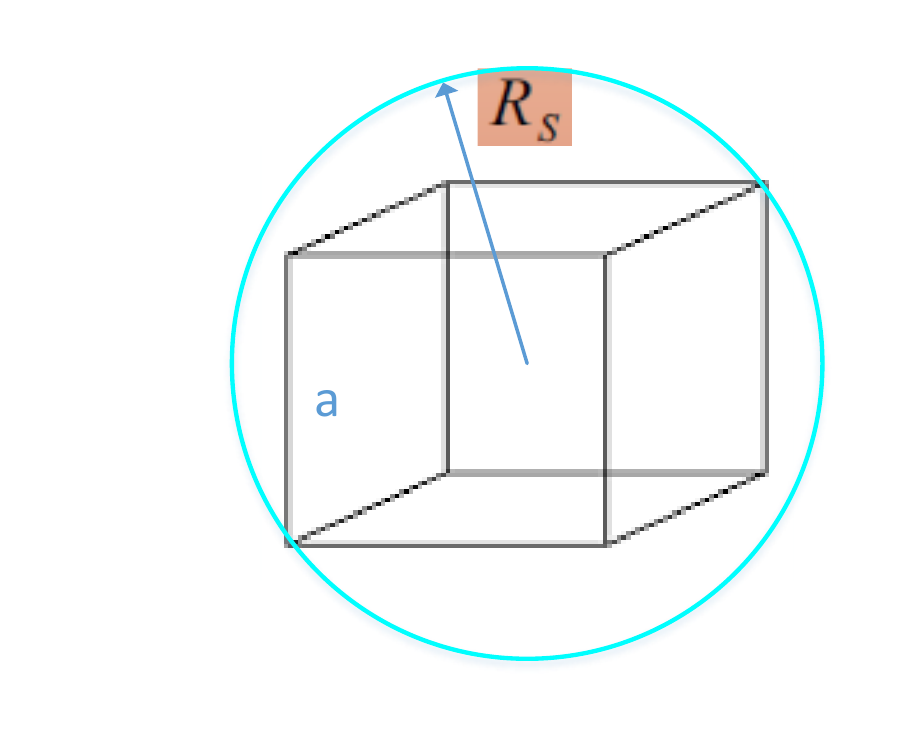}}\quad
\label{nn2}
}
}
\mbox{
\subfigure[Hexagonal prism and its circumsphere]{
{\includegraphics[width=0.5\textwidth]{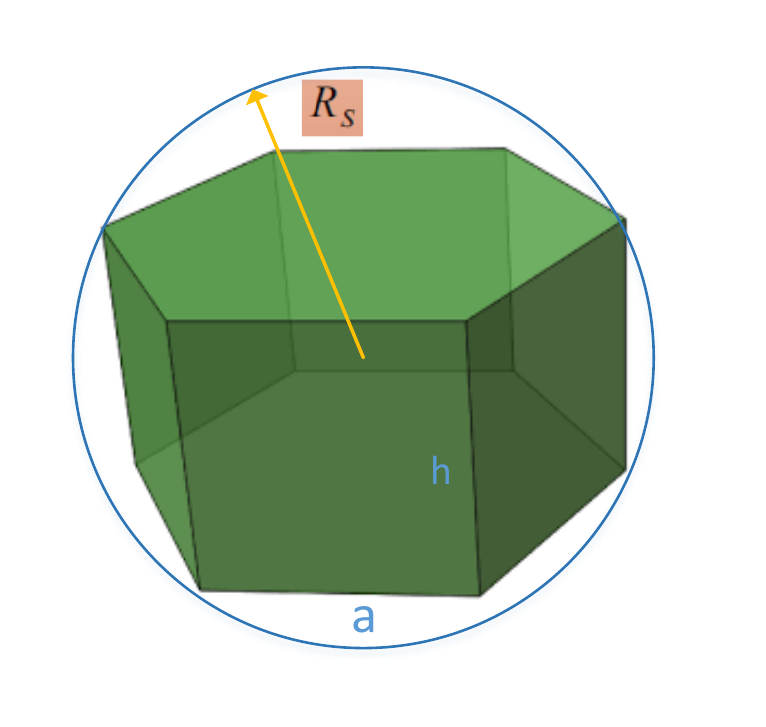}}\quad
\label{nn3}
}
\subfigure[Rhombic dodecahedron and its circumsphere]{
{\includegraphics[width=0.5\textwidth]{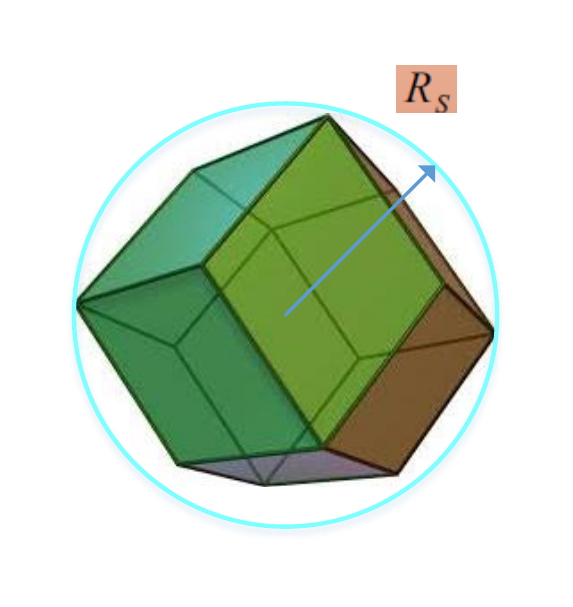}}\quad
\label{nn4}
}
}
\caption{Four space filling polyhedra with their circumspheres}
\vspace{-6mm}
\end{center}
\end{figure*}
\begin{table}
 \caption{Space filling polyhedrons' volumetric quotient}
 \label{vq}
 \begin{center}
 {\small
\begin{tabular*}{\columnwidth}[t]{cp{100pt}}\hline\hline
Space filling polyhedron & Volumetric quotient \\ \hline
$Truncated-octahedron$ & {\raggedright 0.68 } \\
$Cube$ & {\raggedright 0.36 } \\
$Hexagonal-prism$ & {\raggedright  0.47} \\
$ Rhombic-dodecahedron$ & {\raggedright 0.47 } \\
\hline\hline
  \end{tabular*}
 }
 \end{center}
\end{table}
Therefore, we introduce  a space-filling polyhedron that best approximates a sphere such that if each  mobile sensor is modelled by that polyhedron, then the number of mobile sensors required to cover a three-dimensional volume is minimized. Fig.\ref{nn1}, Fig.\ref{nn2},Fig.\ref{nn3} and Fig.\ref{nn4} show four space filling polyhera and their circumspher. Finding such optimal space filling polyhedrons is still an important open mathematical problem. This problem is similar to the century old  Lord Kelvin problem \cite{kelvin}. In 1887, Kelvin  proposed the truncated octahedrons as the solution to the problem of finding a space-filling arrangement of similar cells of equal volume with minimal surface area.
Different space filling polyhedrons can be compared using a criterion called volumetric quotient. The volumetric quotient is defined as \cite{alam}:
\begin{equation}
 V_q=\frac{V_p}{V_s}
\end{equation}
Where $V_p$ is the volume of a polyhedron and $V_s$ is the volume of its circumsphere. The volumetric quotient of a truncated octahedron is bigger than a cube, a hexagonal prism and  a rhombic dodecahedron as shown in Table.\ref{vq}. As a result, we expect that using a truncated octahedral grid for coverage leads to a fewer number of vertices to be covered and  minimizes the time of coverage and the number of mobile sensors for covering a three-dimensional space \cite{alam2008coverage}.
In the following, we assume that vertices of the truncated octahedral grid are the centres of truncated octahedrons of that grid.
\begin{definition}
Consider a truncated octahedral grid cutting $\texttt{M} $ into equal truncated octahedrons with the sides of $\frac{2 \texttt{R}_s}{\sqrt{10}}$. Let $\texttt{V}$ be the infinite set of centres of all the truncated octahedrons of this grid. The set ${\hat{\texttt{V}}}={\texttt{V}} \cap \texttt{M} $ is called a truncated octahedral covering set of $\texttt{M} $ (Fig.\ref{MonitoringRegion}).
\end{definition}

\begin{definition}
A sensor network is said to be connected if any sensor node can communicate with other sensor nodes directly or indirectly.
\end{definition}
Connectivity is a fundamental issue in wireless sensor networks because without connectivity the mobile sensor nodes cannot exchange information with their neighbours. For our coverage algorithm, connectivity is crucial as without connectivity the mobile sensors cannot reach consensus on the vertices of the common grid. As a result, they cannot build a common covering grid. For the truncated octahedron grid, the communication range must be at least 1.78 times the sensing range to maintain connectivity among nodes \cite{alam2008coverage}. Mathematically speaking:
\begin{equation}
 \frac{R_c}{Rs}\geq \frac{4}{\sqrt{5}}.
\end{equation}
It is worth to note that for the cases where the communication range is between 1.41 and 1.78 times the sensing range, then a hexagonal prism placement strategy or a rhombic dodecahedron placement strategy should be used to maintain connectivity in the network.\\
In this chapter, we present a decentralized control law to drive the mobile sensors to the vertices of the covering truncated octahedral  grid. The aim of this algorithm is to deploy the minimum number of mobile wireless sensors such that any point inside the bounded three-dimensional space is within the sensing range of at least one mobile sensor. In other words:
\begin{equation}
\forall \quad p \in \texttt{M},\quad  \exists \quad \texttt{v} \in \hat{\texttt{V}} \quad ; \|p-\texttt{v}\|\leq \texttt{R}_s
\end{equation}
It means that the monitoring region is covered completely if for any point inside the monitoring region $\texttt{M}$ there exists a mobile sensor in the sensing range of that point. Also, all mobile sensor can communicate with each other. First, we introduce a distributed random algorithm to drive the mobile sensors to the vertices of a truncated octahedral grid for complete coverage of an unknown bounded 3D area. Then, we propose a three-stage algorithm for coordination of motion of the mobile sensors, so that drive the mobile sensors on the vertices of a common truncated octahedral grid to form a desired 3D geometric shapes from any initial positions.\\
Let $N_i(\texttt{K})$ be the set of all mobile sensors $j$, $j\neq  i$ and $j\in {1, 2, . . ., n}$ that at time $\texttt{K}$ belong to the sphere $\texttt{S}_{i,\texttt{R}_c (\texttt{K})}$ also it is supposed that mobile sensor  $i$ has $|N_i(\texttt{K})|$ number of neighbours at time $\texttt{K}$. We employ the notion of graph to describe the relationships between neighbours, because  it  can change over time. For any time $\texttt{K}>0$, the relationships between neighbours are described by a simple undirected graph $\texttt{G}(\texttt{K})$ with vertex set ${1, 2, . . ., n}$ where $i$ corresponds to sensor $i$. The vertices $i$ and $j$ of the graph $\texttt{G}(\texttt{K})$, where $j\neq  i$, are connected by an edge if and only if, the mobile wireless sensors $i$ and $j$ are neighbours at time $\texttt{K}$. Here, we impose the following condition on the connectivity of the graph \cite{main, jadbabaie2003coordination}.
\begin{assumption} \label{assump:connec}
There exists an infinite sequence of contiguous, non-empty, bounded time intervals $[\texttt{K}_j, \texttt{K}_j+1)$, $ j = 0, 1, 2, . . .$  starting at $\texttt{K}_0 = 0$ such that across each $[\texttt{K}_j, \texttt{K}_j+1)$, the union of the collection $\{\texttt{G}(\texttt{K}) : \texttt{K} \in [\texttt{K}_j, \texttt{K}_j+1)\} $ is a connected graph \cite{main}.
\end{assumption}
 \section{Decentralized Algorithm for Self deployment of Mobile sensors for Complete coverage} \label{2.2}
In this section, we present an algorithm for self deployment of the mobile sensor network for complete sensing coverage of a bounded three-dimensional area. We use a node placement strategy proposed in \cite{alam2006coverage} to build a covering truncated octahedral grid's vertices and to find the locations where the mobile sensors should be placed.
\begin{equation}
V_1=( \texttt{x}+(2\alpha_1+\alpha_3)\frac{2 \texttt{R}_s}{\sqrt{5}},  \texttt{y}+(2\alpha_2+\alpha_3)\frac{2 \texttt{R}_s}{\sqrt{5}}, \texttt{z}+\alpha_3\frac{2 \texttt{R}_s}{\sqrt{5}})
\end{equation}

Here, $\alpha_1,\alpha_2$ and $\alpha_3 \in Z$ and $Z$ is the set of all integers. The distance between two neighbouring nodes is $\frac{2\texttt{R}_s}{\sqrt{5}}$. The inputs to our algorithm are sensing range $\texttt{R}_s$ and the coordinates of the point $(\texttt{X}, \texttt{Y}, \texttt{Z})$, which act as a seed for the growing the truncated octahedral grid. We use consensus variables to build a common coordinates $(\texttt{X},\texttt{Y},\texttt{Z})$ for all mobile sensors. As a result, we will build a common truncated octahedral covering grid for all mobile sensors without the need to know a common point a priori. Initially, the mobile sensors  do not have a common coordinate system. Therefore, we assume each mobile sensor has consensus variables $\texttt{X}_i(\texttt{K})$,$\texttt{Y}_i(\texttt{K})$ and $\texttt{Z}_i(\texttt{K})$ in its own coordinate system. Here, nodes (the mobile sensors) can reach a consensus on the frame of reference $(\texttt{X},\texttt{Y},\texttt{Z})$ by local estimation of distances to neighbours.\\
 We suppose that, for any mobile sensors, the vector $V_1=( \texttt{x}+(2\alpha_1+\alpha_3)\frac{2 \texttt{R}_s}{\sqrt{5}},  \texttt{y}+(2\alpha_2+\alpha_3)\frac{2 \texttt{R}_s}{\sqrt{5}}, \texttt{z}+\alpha_3\frac{2 \texttt{R}_s}{\sqrt{5}})) $ determines the location where nodes should be placed. It is obvious that any truncated octahedral grid is uniquely defined by a point $q_i(\texttt{K})=(\texttt{X}_i(\texttt{K}),\texttt{Y}_i(\texttt{K}),\texttt{Z}_i(\texttt{K}))$  and $\texttt{R}_s$. Thus, any  $q_i(\texttt{K})=(\texttt{X}_i(\texttt{K}),\texttt{Y}_i(\texttt{K}),\texttt{Z}_i(\texttt{K}))$ and $\texttt{R}_s$  uniquely define a truncated octahedral covering set of  $\texttt{M}$, which will be denoted as $\hat{\texttt{V}}[q,\texttt{R}_s]$. The scalar parameters  $ \alpha_1 $, $ \alpha_2 $,$ \alpha_3$ and $ \texttt{R}_s $ along with the three-dimensional consensus variables $q_i(\texttt{K})=[\texttt{X}_i(\texttt{K})\quad  \texttt{Y}_i(\texttt{K})\quad   \texttt{Z}_i(\texttt{K})]$ characterize the coordinates of a vertex of the grid. The mobile sensors will start with different values of the coordination variables  $\texttt{X}_i(0)$,$\texttt{X}_i(0)$ and $\texttt{Z}_i(0)$, then eventually converge to some consensus values $\texttt{X}_0$, $\texttt{Y}_0$ and $\texttt{Z}_0$ which define a common coordinate system for all mobile sensors.\\
Let  p  be a mobile sensor position, and $C[q](p)$ defines the closest vertices of the truncated octahedral grid $\hat{\texttt{V}}[q,\texttt{R}_s]$ to p. The following rules for updating the consensus variables and the mobile sensors’ coordinates are proposed:

\begin{equation}\label{eq:cons1}
\texttt{X}_i(\texttt{K}+1)=\frac{\texttt{X}_i(\texttt{K})+\sum_{\substack{
   j\in N_i(\texttt{K})
}}\texttt{X}_j(\texttt{K}) }
 {1+| N_i(\texttt{K}) |}
\end{equation}

\begin{equation*}
\texttt{Y}_i(\texttt{K}+1)=\frac{\texttt{Y}_i(\texttt{K})+\sum_{\substack{j\in N_i(\texttt{K})}}\texttt{Y}_j(\texttt{K})}{1+|N_i(\texttt{K})|}
\end{equation*}

\begin{equation*}
\texttt{Z}_i(\texttt{K}+1)=\frac{\texttt{Z}_i(\texttt{K})+\sum_{\substack{j\in N_i(\texttt{K})}}\texttt{Z}_j(\texttt{K})}{1+|N_i(\texttt{K})|}
\end{equation*}
\begin{equation}\label{eq:cons2}
p_i(\texttt{K}+1)=C[q_i(\texttt{K}),\texttt{R}_s](p_i(\texttt{K}))
\end{equation}
%
%
%
%
%
%
%

%
The rules (~\ref{eq:cons1}) and (~\ref{eq:cons2}) are local rules for updating the consensus variables of each mobile wireless sensor  based on the average of its own and the consensus variables values of its neighbours. In other words,  mobile sensors use the consensus variables to achieve the consensus on the vertices of the truncated octahedral grid. Finally, the mobile wireless sensors converge to the vertices of a truncated octahedral grid.\\
\begin{theorem}\label{TH1}: Suppose that assumption ~\ref{assump:connec} hold and the mobile sensors move according to the distributed control algorithm (~\ref{eq:cons1}), (~\ref{eq:cons2}). Then, there exists a truncated octahedral grid $\hat{\texttt{V}}$ such that for any of the mobile wireless sensors there exists a $\hat{\texttt{v}}\in\hat{\texttt{V}} $ such that:
\begin{equation}
lim_{\texttt{K}\rightarrow\infty} p_i\left(\texttt{K}\right)= \hat{\texttt{v}}
 \end{equation}
\end{theorem}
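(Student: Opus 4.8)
\emph{Proof proposal.} The plan is to split the argument into two stages. First I would show that the consensus variables $q_i(\texttt{K})=(\texttt{X}_i(\texttt{K}),\texttt{Y}_i(\texttt{K}),\texttt{Z}_i(\texttt{K}))$ of all sensors converge to one common point $q_*=(\texttt{X}_0,\texttt{Y}_0,\texttt{Z}_0)$, so that the sensors asymptotically agree on a single truncated octahedral grid $\hat{\texttt{V}}:=\hat{\texttt{V}}[q_*,\texttt{R}_s]$. Second I would show that once this common grid has essentially stabilized, the position update $p_i(\texttt{K}+1)=C[q_i(\texttt{K}),\texttt{R}_s](p_i(\texttt{K}))$ traps each sensor on one fixed vertex of $\hat{\texttt{V}}$, which gives the required $\hat{\texttt{v}}$.

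\textbf{Stage 1 (consensus on the coordinate frame).} I would rewrite each of the three scalar recursions (\ref{eq:cons1}) in matrix form: for a fixed coordinate, $w(\texttt{K}+1)=P(\texttt{K})w(\texttt{K})$, where $P(\texttt{K})$ has $P_{ii}(\texttt{K})=1/(1+|N_i(\texttt{K})|)$, $P_{ij}(\texttt{K})=1/(1+|N_i(\texttt{K})|)$ for $j\in N_i(\texttt{K})$, and $P_{ij}(\texttt{K})=0$ otherwise. Each $P(\texttt{K})$ is row-stochastic, has a strictly positive diagonal, its off-diagonal support is exactly the edge set of the undirected graph $\texttt{G}(\texttt{K})$, and—since there are only finitely many graphs on $n$ vertices—every nonzero entry is bounded below by $1/n$. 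Under Assumption \ref{assump:connec} the union of the graphs over each interval $[\texttt{K}_j,\texttt{K}_{j+1})$ is connected, and the classical theory of products of such stochastic matrices (the consensus lemmas used in \cite{main,jadbabaie2003coordination}: the spread $\max_i w_i-\min_i w_i$ contracts by a uniform factor $<1$ over each window) yields that $P(\texttt{K})\cdots P(0)$ converges to a matrix with identical rows. Hence each coordinate of $q_i(\texttt{K})$, and therefore $q_i(\texttt{K})$ itself, converges to a common limit $q_*$ independent of $i$. I would record that for any $q$ the grid $\hat{\texttt{V}}[q,\texttt{R}_s]$ is the rigid translate of $\hat{\texttt{V}}$ by the vector $q-q_*$.

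\textbf{Stage 2 (capture by a vertex).} Let $d:=\frac{2\texttt{R}_s}{\sqrt{5}}$, the spacing of neighbouring grid points, which is the minimum distance between distinct vertices of any such grid. Since $q_i(\texttt{K})$ converges it is Cauchy, so (finitely many sensors) there is a $\texttt{K}_0$ with $\|q_i(\texttt{K}+1)-q_i(\texttt{K})\|<d/2$ for all $i$ and all $\texttt{K}\ge \texttt{K}_0$. For $\texttt{K}\ge 1$, $p_i(\texttt{K})=C[q_i(\texttt{K}-1),\texttt{R}_s](p_i(\texttt{K}-1))$ is by construction a vertex of the $q_i(\texttt{K}-1)$-grid, i.e. $p_i(\texttt{K})=v+(q_i(\texttt{K}-1)-q_*)$ for some $v\in\hat{\texttt{V}}$. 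I would then prove by induction on $\texttt{K}\ge \texttt{K}_0+1$ that this $v$ is one \emph{fixed} vertex $v_i$: if $p_i(\texttt{K})=v_i+(q_i(\texttt{K}-1)-q_*)$, the candidate $v_i+(q_i(\texttt{K})-q_*)$ of the $q_i(\texttt{K})$-grid lies within $\|q_i(\texttt{K})-q_i(\texttt{K}-1)\|<d/2$ of $p_i(\texttt{K})$, whereas any other vertex of that grid is at distance at least $d-\|q_i(\texttt{K})-q_i(\texttt{K}-1)\|>d/2>\|q_i(\texttt{K})-q_i(\texttt{K}-1)\|$ from it, so the closest-vertex map returns exactly $v_i+(q_i(\texttt{K})-q_*)=p_i(\texttt{K}+1)$. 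Letting $\texttt{K}\to\infty$, $p_i(\texttt{K})=v_i+(q_i(\texttt{K}-1)-q_*)\to v_i\in\hat{\texttt{V}}$, which is the claim with $\hat{\texttt{v}}=v_i$.

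\textbf{Main obstacle.} The technical heart is Stage 1: establishing consensus under only the \emph{eventual} connectivity of Assumption \ref{assump:connec}, rather than connectivity at every instant, which needs the infinite-product-of-stochastic-matrices machinery (Wolfowitz/Hajnal-type coefficients of ergodicity, or the uniform span-seminorm contraction over each window $[\texttt{K}_j,\texttt{K}_{j+1})$). Stage 2 is elementary but two points require care: (i) the closest-vertex map $C[\cdot]$ may a priori be set-valued, which is resolved here by the strict inequality obtained once $\texttt{K}\ge\texttt{K}_0$; and (ii) a minor boundary effect—the closest grid vertex to a sensor near $\partial\texttt{M}$ could lie just outside $\texttt{M}$—which I would absorb by running the whole argument on the infinite grid $\texttt{V}[q_*,\texttt{R}_s]$ and noting that all sensors stay within a bounded neighbourhood of $\texttt{M}$.
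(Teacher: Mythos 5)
Your proposal is correct and follows the same two-stage decomposition as the paper: first consensus of the variables $q_i(\texttt{K})$ under Assumption~\ref{assump:connec} (which the paper handles by citing the standard Jadbabaie-type results, exactly the stochastic-matrix-product machinery you sketch), and then convergence of the positions to vertices of the resulting common grid. The one substantive difference is that your Stage~2 supplies an argument the paper omits: the paper simply asserts that, after the consensus variables converge, rule~(\ref{eq:cons2}) places every sensor on a vertex of the common grid, without explaining why $p_i(\texttt{K})$ settles on a \emph{single} vertex rather than hopping between vertices as the grid $\hat{\texttt{V}}[q_i(\texttt{K}),\texttt{R}_s]$ continues to drift slightly at each step. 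Your capture lemma --- once $\|q_i(\texttt{K}+1)-q_i(\texttt{K})\|<d/2$ the closest-vertex map is single-valued and returns the translate of the same lattice point, so $p_i(\texttt{K})\to v_i$ --- closes exactly this gap, and your remarks on tie-breaking in $C[\cdot]$ and on boundary vertices address genuine loose ends in the paper's statement. So your route is the paper's route, executed with more care; nothing in it conflicts with the paper's argument.
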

Based on the law (~\ref{eq:cons1}) and the assumption ~\ref{assump:connec}, the consensus variables converges to some constant values as follows:\\$ \texttt{X}_i\left(\texttt{K}\right)\rightarrow \texttt{X}_0$, $ \texttt{Y}_i\left(\texttt{K}\right)\rightarrow \texttt{Y}_0$ and  $ \texttt{Z}_i\left(\texttt{K}\right)\rightarrow \texttt{Z}_0$.\\(See \cite{jadbabaie2003coordination,cao} for the proof of convergence of consensus variables to some constant values). Since $\texttt{R}_s$ ,$\alpha_1$ ,$\alpha_2$ and $\alpha_3$ are common to all mobile sensors, therefore:
\begin{equation*}
( \texttt{x}_i+(2\alpha_1+\alpha_3)\frac{2 \texttt{R}_s}{\sqrt{5}},  \texttt{y}_i+(2\alpha_2+\alpha_3)\frac{2 \texttt{R}_s}{\sqrt{5}}, \texttt{z}_i+\alpha_3\frac{2 \texttt{R}_s}{\sqrt{5}})\rightarrow
\end{equation*}

\begin{equation}
( \texttt{x}_0+(2\alpha_1+\alpha_3)\frac{2 \texttt{R}_s}{\sqrt{5}},  \texttt{y}_0+(2\alpha_2+\alpha_3)\frac{2 \texttt{R}_s}{\sqrt{5}}, \texttt{z}_0+\alpha_3\frac{2 \texttt{R}_s}{\sqrt{5}}).
\end{equation}
 It means that, after convergence of the consensus variables to some constant values, all mobile sensors will have a common truncated octahedral grid.\\
Then, based on the rule (~\ref{eq:cons2}), all mobile sensors move to the vertices of the common truncated octahedral grid. As a result, at the end of this stage, all mobile sensors are derived to some vertices of the truncated octahedral grid. This completes the proof of Theorem ~\ref{TH1}. \\
By using this algorithm, many of the vertices of the truncated octahedral grid remain unoccupied. To occupy all vacant vertices of the grid, we use the following random algorithm for spreading mobile sensors in the whole area \cite{main}

\begin{equation}\label{eq: random coverage}
\texttt{p}((K+1),i)=\begin{cases}
s\quad with\quad probability\quad\frac{1}{\left\|\texttt{S}\left(p_i\left(\texttt{K}\right)\right)\right\|}\\
\quad    \forall \quad s \in \texttt{S}\left(p_i\left(\texttt{K}\right)\right)\\
\end{cases}
\end{equation}
Where $\texttt{S}\left(p_i\left(\texttt{K}\right)\right)$ is defined as a set consisting of $\texttt{V} = p_i(\texttt{K})$ and all its unoccupied neighbours belong to $\hat{\texttt{V}}$. Furthermore, let $\left|\texttt{S}\left(p_i\left(\texttt{K}\right)\right)\right|$ represents the number of elements in $\texttt{S}\left(p_i\left(\texttt{K}\right)\right)$. It is clear that $1 \leq \left|\texttt{S}\left(p_i\left(\texttt{K}\right)\right)\right| \leq 27$.\\  The relation (~\ref {eq: random coverage}) implies that the mobile wireless sensors move randomly to occupy unoccupied vertices of the truncated octahedral grid. This continuous until  all vertices of the grid are occupied. It is obvious that all unoccupied vertices will becomes covered if the number of mobile sensors be more than  the number of vertices of the covering truncated octahedral grid.
\begin{theorem}\label{th} Suppose $N_m$ and $N_{\hat{\texttt{v}}}$ be the number of mobile sensors and the number of vertices in $\hat{\texttt{V}}$, respectively and $N_m\geq N_{\hat{\texttt{v}}}$. Also, assume the mobile sensors move according to the  law (~\ref {eq: random coverage}). Then with probability 1 there exists a time $\texttt{K}_0\geq 0$ such that for any $\hat{\texttt{v}}\in \hat{\texttt{V}}$ the relationship $\hat{\texttt{v}} = p_i(\texttt{K})$ holds for some $i = 1, 2, . . . , n$ and all $\texttt{K}\geq \texttt{K}_0$.
\end{theorem}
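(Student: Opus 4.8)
The plan is to read the evolution of the joint configuration of all $n$ mobile sensors on the finite vertex set $\hat{\texttt{V}}$ as a time-homogeneous, finite-state Markov chain and to show that the ``fully occupied'' configurations form an absorbing set which is reached almost surely. The state at time $\texttt{K}$ is the tuple $(p_1(\texttt{K}),\dots,p_n(\texttt{K}))$ of positions, all lying on $\hat{\texttt{V}}$ by Theorem~\ref{TH1} and kept on $\hat{\texttt{V}}$ by the rule (~\ref{eq: random coverage}); since $\hat{\texttt{V}}$ is finite and $n$ is fixed, the state space is finite, and each sensor's transition depends only on the current occupancy pattern, so the chain is Markovian. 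First I would record the key structural fact: if every vertex of $\hat{\texttt{V}}$ is occupied, then for each $i$ the set $\texttt{S}(p_i(\texttt{K}))$ contains no unoccupied neighbour and hence equals the singleton $\{p_i(\texttt{K})\}$, so every sensor stays put forever; thus the set $\mathcal{A}$ of fully-occupied configurations is absorbing and frozen. Note the occupancy count is not monotone along the chain (a random move may vacate a singly-occupied vertex), which is exactly why one argues via absorption rather than via a monotone potential.

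The heart of the proof is the following claim: \emph{from any non-fully-occupied configuration there is a positive-probability finite sequence of moves after which the number of occupied vertices has strictly increased}. This is where the hypothesis $N_m \ge N_{\hat{\texttt{v}}}$ is used: if fewer than $N_{\hat{\texttt{v}}}$ vertices are occupied, the pigeonhole principle gives an occupied vertex $o$ carrying at least two sensors (a ``surplus'' vertex), and there is at least one unoccupied vertex $u$. Using connectivity of the adjacency graph that (~\ref{eq: random coverage}) induces on $\hat{\texttt{V}}$ --- which I would state as a standing regularity assumption on $\texttt{M}$, in the spirit of Assumption~\ref{assump:connec} --- choose a path $o=w_0,w_1,\dots,w_m=u$ in that graph, and let $a\ge 1$ be the smallest index with $w_a$ unoccupied. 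I would then run a ``relay/push'' along the path: in one time step, let one designated sensor at $w_{a-1}$ step to $w_a$ while every other sensor stays in place; if $w_{a-1}$ still carries a sensor we have gained an occupied vertex and are done, and otherwise the unoccupied site has moved one step toward $o$, so we repeat. Since the index of the hole strictly decreases, after at most $m$ such single-sensor moves the hole reaches $w_1$, and a surplus sensor is pushed from $o=w_0$ onto $w_1$; because $o$ started with at least two sensors it remains occupied, and $w_1,\dots,w_a$ all end up occupied, so the occupied count has risen by one. Each prescribed move has positive probability: (~\ref{eq: random coverage}) always allows a sensor to stay (probability at least $\tfrac{1}{27}$) and allows the designated sensor at $w_k$ to step onto the currently-unoccupied neighbour $w_{k+1}$, and these choices are made independently, so the whole prescribed sequence has positive probability.

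Iterating the claim, from any state one reaches a state in $\mathcal{A}$ along a positive-probability path of length at most $(N_{\hat{\texttt{v}}}-1)$ times the diameter of $\hat{\texttt{V}}$; taking the minimum of these probabilities and the maximum of these lengths over the finite state space yields a uniform $\varepsilon>0$ and a uniform $T<\infty$ with $\Pr[\text{not in }\mathcal{A}\text{ after }T\text{ steps}\mid\text{any starting state}]\le 1-\varepsilon$. The Markov property then gives $\Pr[\text{not in }\mathcal{A}\text{ after }kT\text{ steps}]\le(1-\varepsilon)^k\to 0$, so almost surely the chain enters $\mathcal{A}$ at some finite time $\texttt{K}_0$ and, $\mathcal{A}$ being absorbing, remains there for all $\texttt{K}\ge\texttt{K}_0$, which is the assertion of Theorem~\ref{th}. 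The main obstacle I anticipate is making the relay construction fully rigorous --- the bookkeeping of which intermediate vertices are temporarily vacated and re-filled, and the check that the surplus at $o$ is not consumed before the final push --- together with the (modelling rather than mathematical) point of pinning down the precise neighbour structure in (~\ref{eq: random coverage}) and the connectedness of the grid restricted to $\texttt{M}$; the geometric-decay (Borel--Cantelli) step is then routine.
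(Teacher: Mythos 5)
Your proposal is correct and follows the same route as the paper: both read the joint sensor configuration as a finite absorbing Markov chain whose absorbing states are the fully-occupied configurations and conclude almost-sure absorption from the fact that an absorbing state is reachable with positive probability from every state. The paper, however, merely asserts that reachability, whereas your pigeonhole-plus-relay construction actually proves it (modulo the grid-connectivity caveat you flag, which the paper also needs but does not state), so your write-up is strictly more complete than the published argument.
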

Here, the random movement of the mobile sensors is Markov process as the mobile sensors follow the transition rule proposed in (~\ref {eq: random coverage}) that only depends on the current state of the mobile sensors, does not depend on their history. Moreover, the proposed control law defines an absorbing Markov chain. An absorbing Markov chain is a Markov chain in which every state can reach an absorbing state. An absorbing state is a state that, once entered, cannot be left. In this case, the state when different mobile sensors are located in different vertices of the covering grid is an absorbing state. These absorbing states can be reached from any initial state with a non-zero probability. This implies that with probability 1, one of the absorbing states will be achieved. This completes the proof of Theorem ~\ref{th}.
\begin{figure*}[t!]
\begin{center}
\mbox{
\subfigure[Mobile sensors' initial deployment: mobile sensors denoted by *, monitoring region represented by a cube]{
{\includegraphics[width=0.8\textwidth,height=0.65\textwidth]{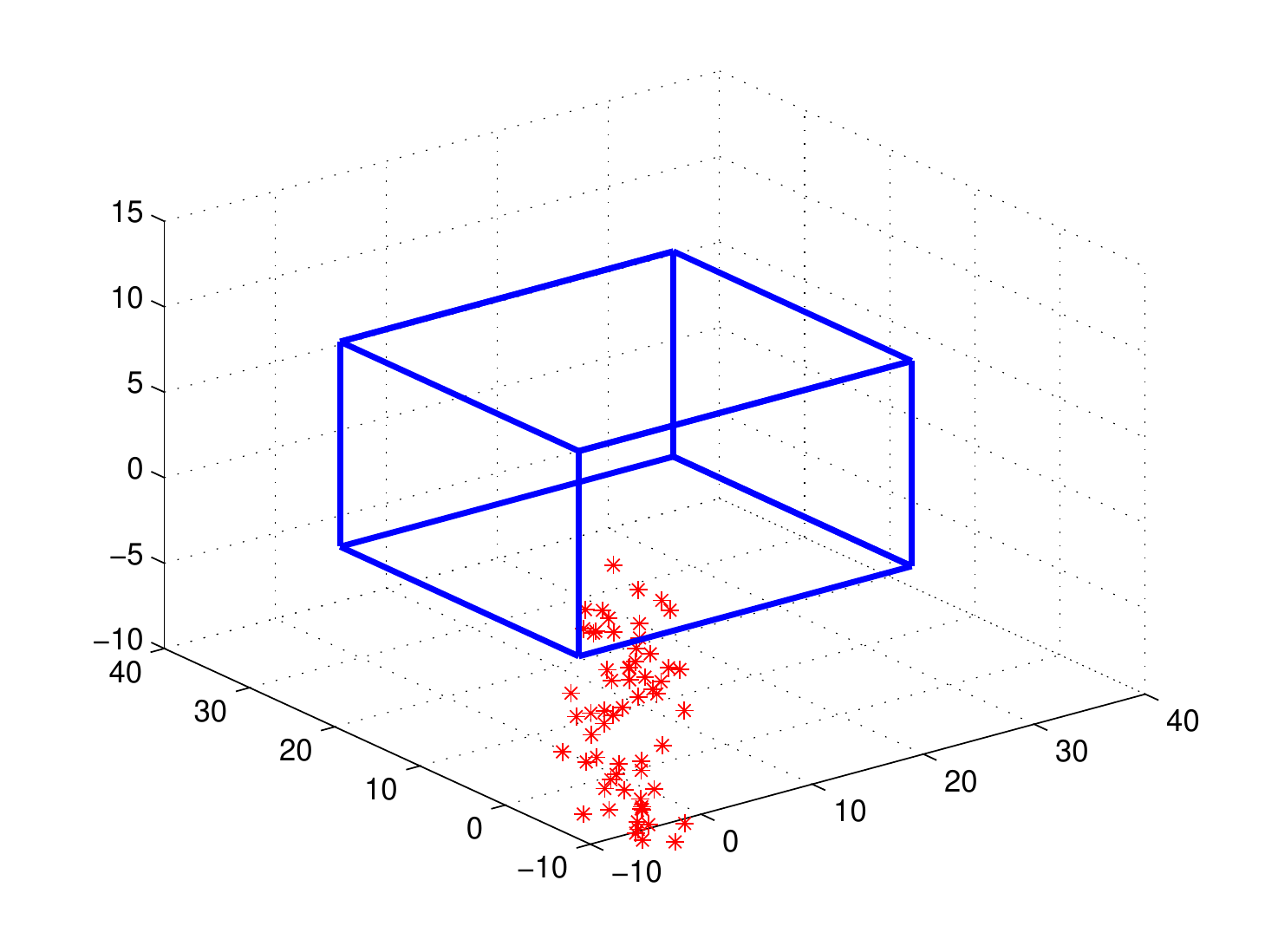}}\quad
\label{init}
}
}
\mbox{
\subfigure[Mobile sensor’s movement to the  vertices of the Truncated octahedron based  grid]{
{\includegraphics[width=0.8\textwidth,height=0.65\textwidth]{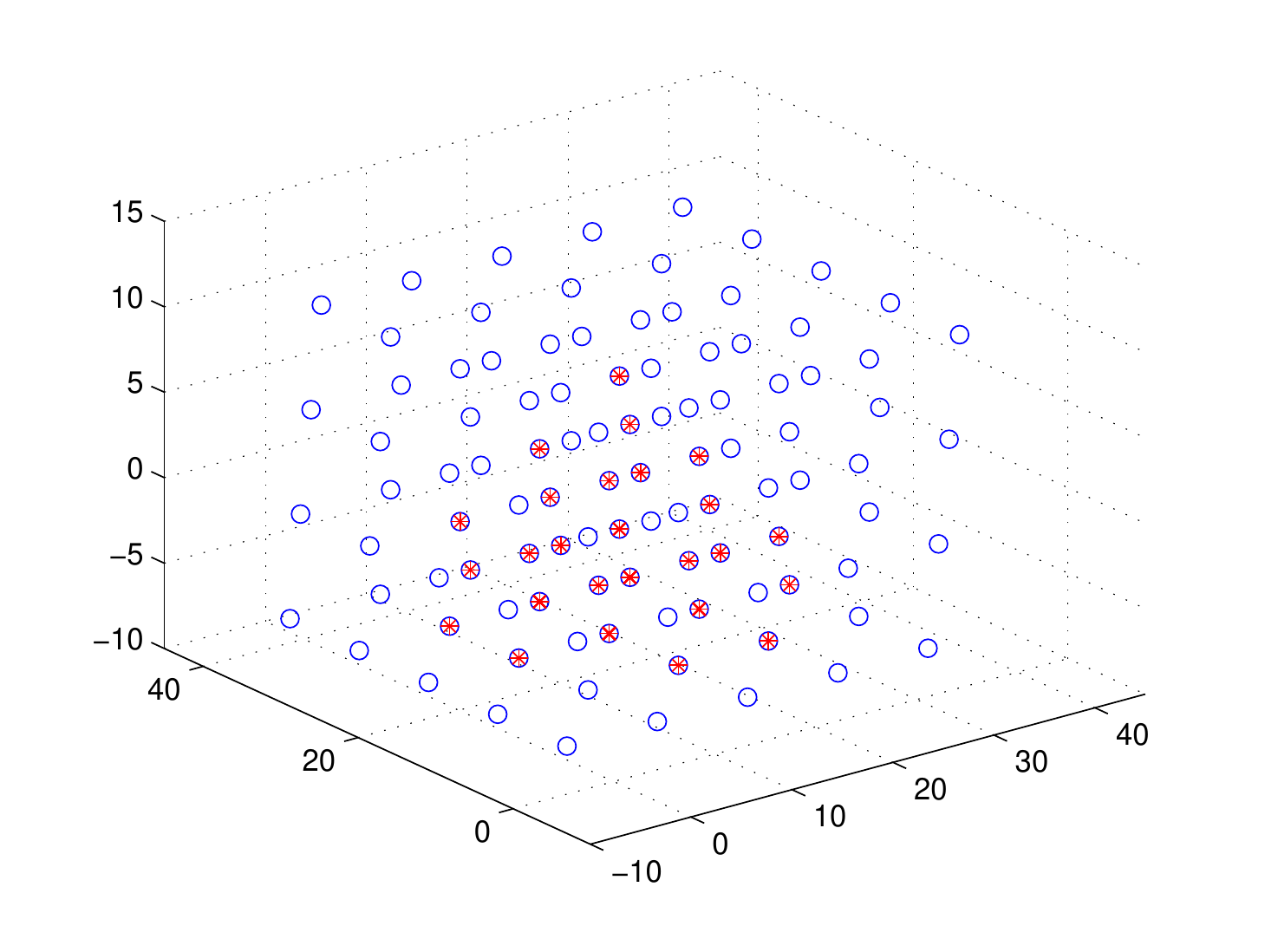}}\quad
\label{cons}
}
}
\caption{Consensus to the vertices of the common grid}
\label{coverage1}
\end{center}
\end{figure*}

\begin{figure*}[t!]
\begin{center}
\mbox{
\subfigure[Mobile sensors' moving towards vacant vertices, after  7 steps]{
{\includegraphics[width=0.8\textwidth,height=0.65\textwidth]{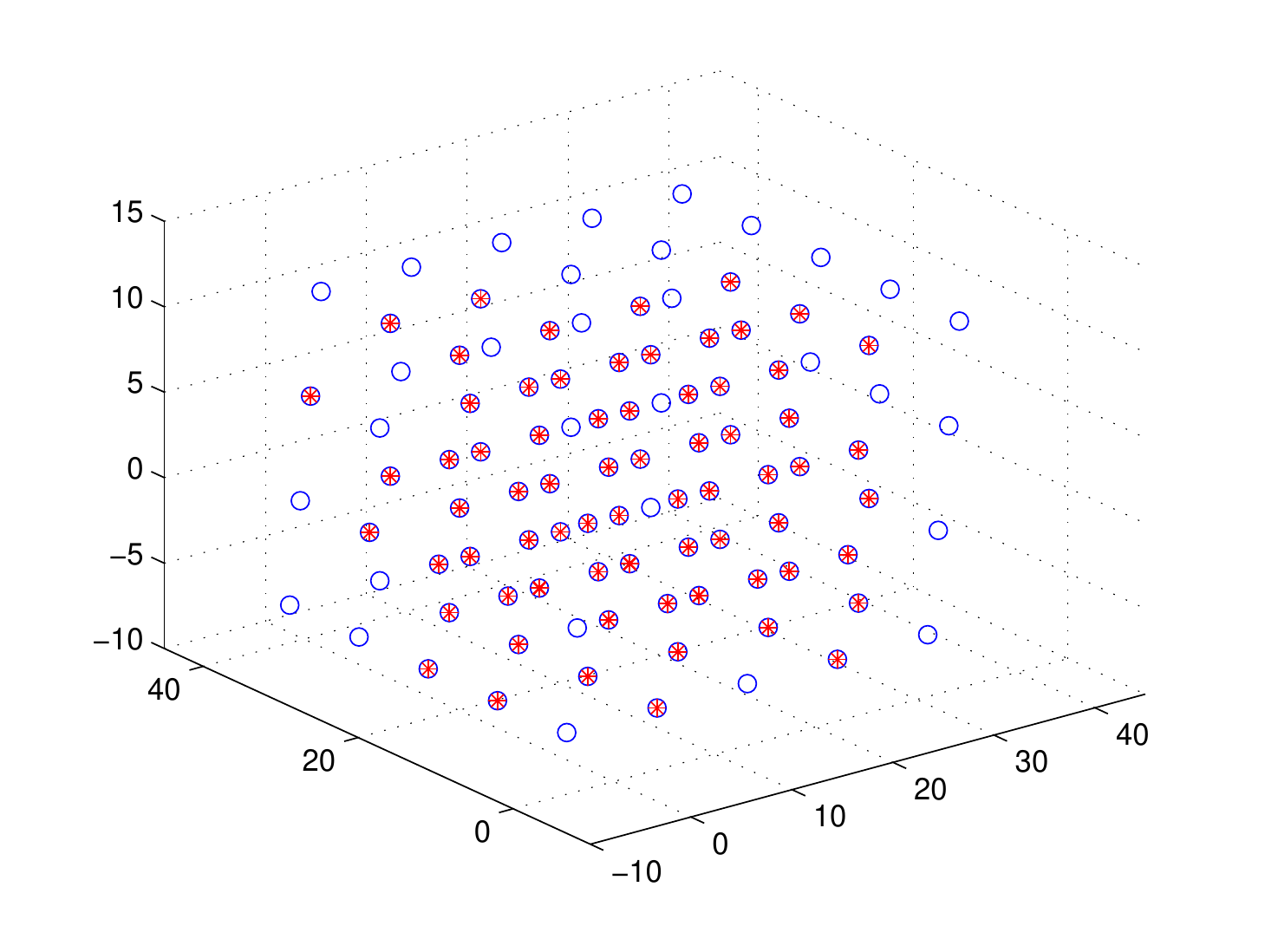}}\quad
\label{spread1}
}
}
\mbox{
\subfigure[Complete sensing coverage after 17 steps]{
{\includegraphics[width=0.8\textwidth,height=0.65\textwidth]{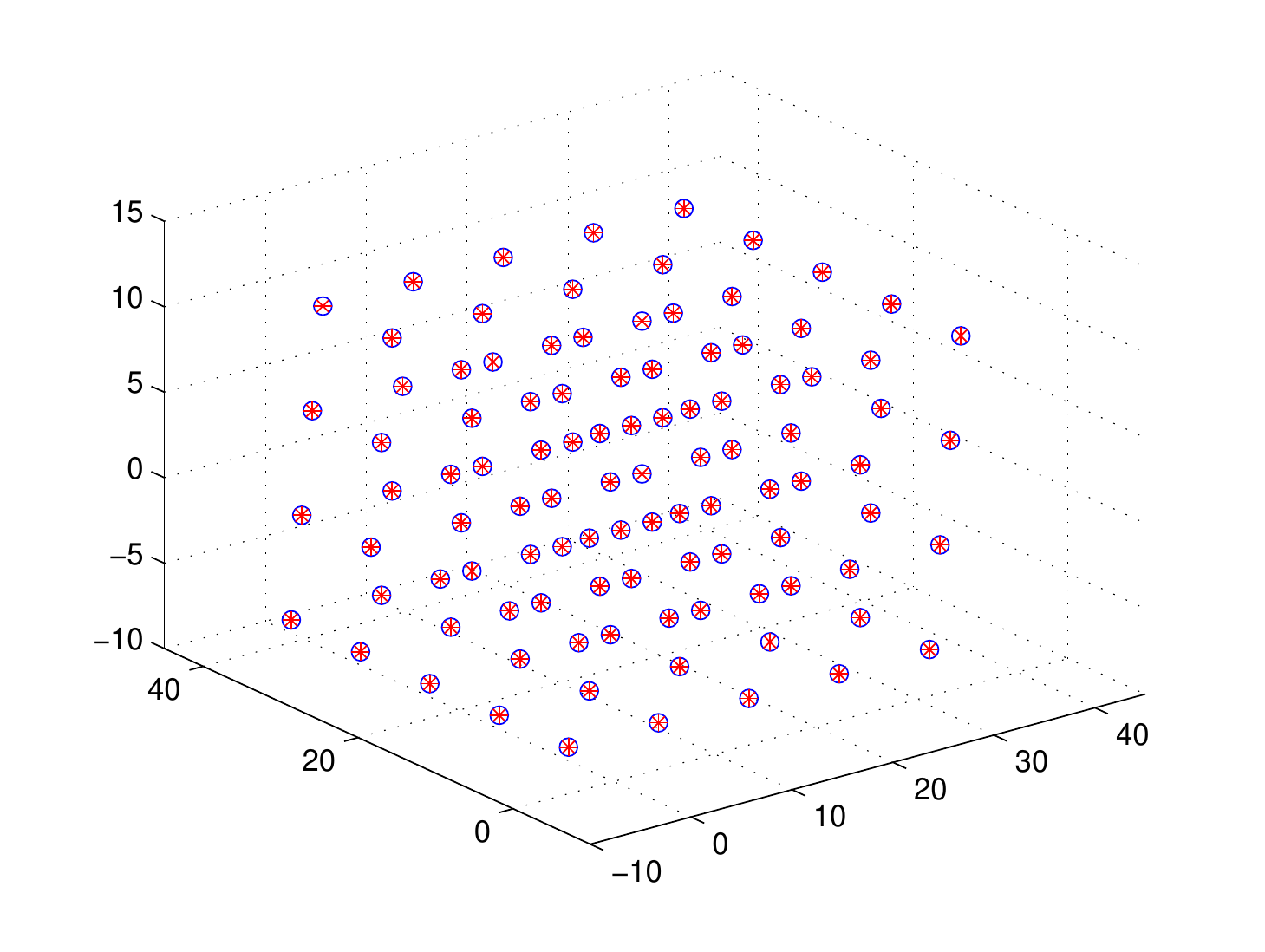}}\quad
\label{spread}
}
}
\caption{Mobile sensors’ movement to cover the monitoring region (Truncated octahedron based grid)}
\label{coverage2}
\end{center}
\end{figure*}
\subsection{Simulation Results} \label{2.3}
In this section, we present some examples of computer simulations to illustrate the proposed decentralized coverage algorithm. The simulations were carried out using MATLAB R2014b. In the first set of simulations, our objective is to obtain complete sensing coverage within a bounded three-dimensional space ($\texttt{M}$). Fig.\ref{coverage1}-Fig.\ref{coverage2} illustrate the evolution of the mobile wireless sensors' positions over time. We assume that 100 mobile wireless sensors are spread randomly around an area of interest as shown in Fig.\ref{init}, represented by $*$. For the initial configuration, it is assumed that the locations of these mobile sensors are randomly and independently distributed in a cube with a side of $10\texttt{R}_s$. It is clear that this initial random deployment doesn't achieve optimal coverage of the area of interest. Therefore, the mobile sensors use our proposed deployment algorithm to ensure the area of interest is covered, and the mobile sensor network is connected.\\
At first stage, the mobile wireless sensors move to the monitoring region according to the algorithms (~\ref{eq:cons1}),(~\ref{eq:cons2}) to form a  covering grid as shown in Fig.\ref{cons}. At the end of this stage, all mobile wireless sensors are at the vertices of the truncated octahedral grid. Next, the random algorithm (~\ref {eq: random coverage}) has been applied to spread mobile wireless sensors to obtain complete sensing coverage. Fig.\ref{spread1} shows the mobile wireless sensors’ locations after 7 steps. Finally, Fig.\ref{spread} represents the sensors location after 17 steps when all vertices of the truncated octahedral grid have been occupied, and the region $\texttt{M}$ has been completely covered by the mobile wireless sensors.\\
\begin{figure*}
\centering
{\includegraphics[scale=0.7]{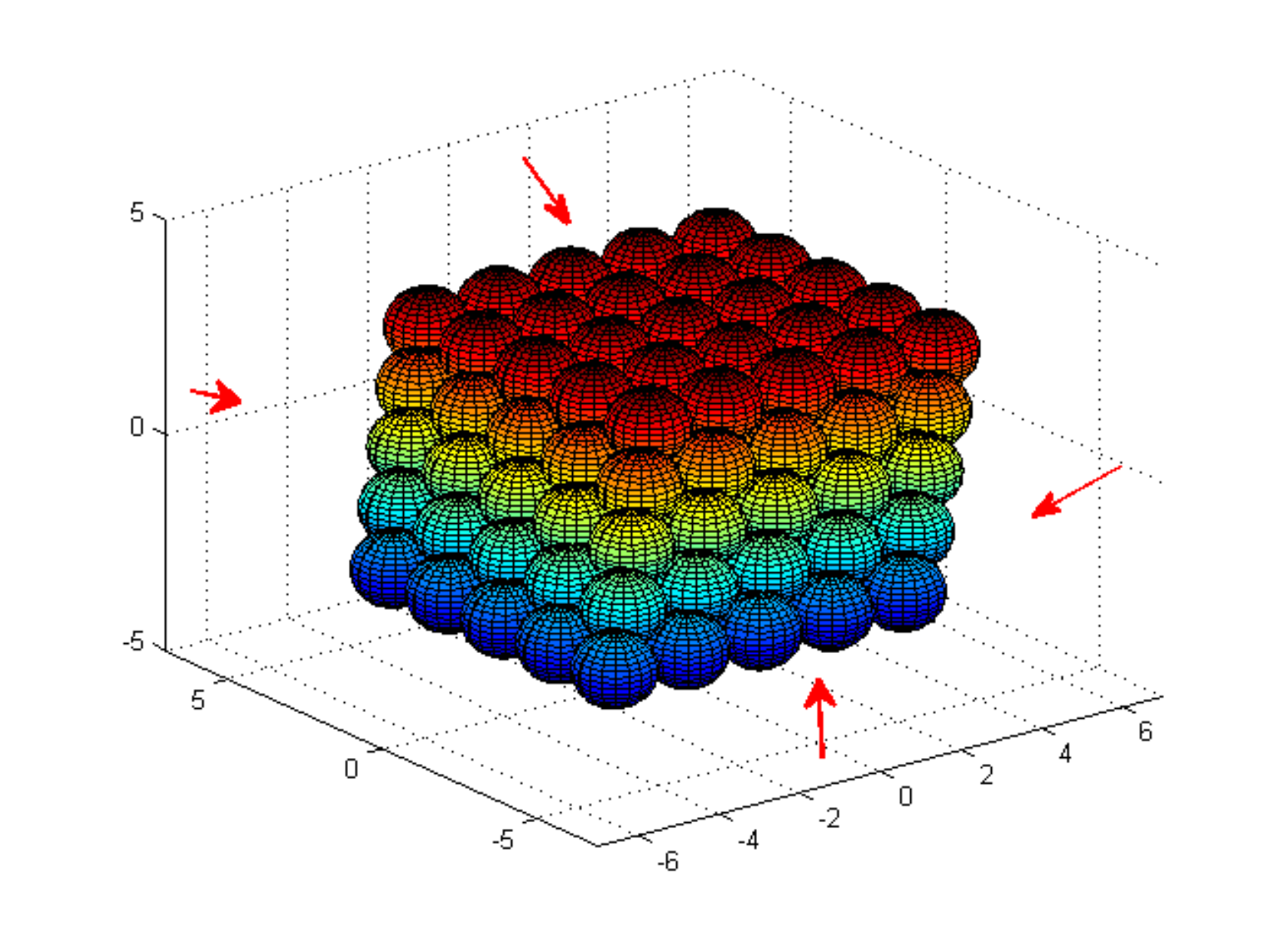}}
\caption{Intruder detection: the sensing ranges of the mobile sensors shown by syphers and intruders represented by arrows}
 \label{new}
\end{figure*}
Next, we consider four intruders are moving to the target area and employ them to evaluate the performance the proposed sensing coverage algorithm. Fig.\ref{new} shows intruders' position at the initial deployment and the arrows indicate their headings. Our simulation demonstrates the deployed mobile sensor network have detected the intruder after entering the target region.\\
 In the following, we carried out several simulations to compare the performance of four space filling polyhedra grids (Truncated octahedron, Hexagonal prism, Cube and  Rhombic dodecahedron) for complete sensing coverage of the same area. As mentioned before, we expected that a polyhedron with the higher volumetric quotient needs, the smaller number of vertices to cover a three-dimensional space. The volumetric quotient of a truncated octahedron is bigger than aforementioned space filling polyhedrons. It means that the number of mobile sensors require  to cover a three-dimensional area by a truncated octahedral grid is less than  the number of mobile sensors required by other grids.\\ Table.\ref{comp1} shows the average complete sensing coverage steps, and the number of mobile wireless sensors need for full sensing coverage of the same area using Truncated octahedral, Hexagonal prism, Cubic, and  Rhombic dodecahedron grids. As shown in table.\ref{comp1}, the average coverage time (step) of the truncated octahedral grid is less than that of the other three-dimensional grids. Because the number of vertices to be covered is less than other grids. Moreover, truncated octahedral grid has the smaller overlapping area, hence this grid requires the less number of mobile sensors for covering the same area. In conclusion, simulation results show that the truncated octahedron grid is the best among the three kinds of grids as it has the smallest overlapping area. Hexagonal prism and Rhombic dodecahedron provide relatively good performance while the cubic grid is the worst among all since it has the biggest overlapping area.

%
%
%
\begin{table}
 \caption{Comparison of four three-dimensional covering grids}
 \label{comp1}
 \begin{center}
 {\small

\begin{tabular*}{\columnwidth}[t]{cp{72pt}p{47pt}}\hline
Space filling polyhedron & Number of vertices &  complete sensing coverage  (steps)\\ \hline
$Truncated-octahedron$ & {\raggedright 100 } & {\raggedright 17 }\\
$Cube$ & {\raggedright 172 } & {\raggedright 28}\\
$Hexagonal-prism$ & {\raggedright  140} & {\raggedright 23}\\
$ Rhombic-dodecahedron$ & {\raggedright 142 } & {\raggedright 22 }\\
\hline\hline
  \end{tabular*}
 }
 \end{center}
\end{table}
 \begin{figure*}[t!]
\begin{center}
\mbox{
\subfigure[Sensors denoted by *, vertices of the grid represented by o, the desired formation given by the sphere]{
{\includegraphics[width=0.47\textwidth,height=0.5\textwidth]{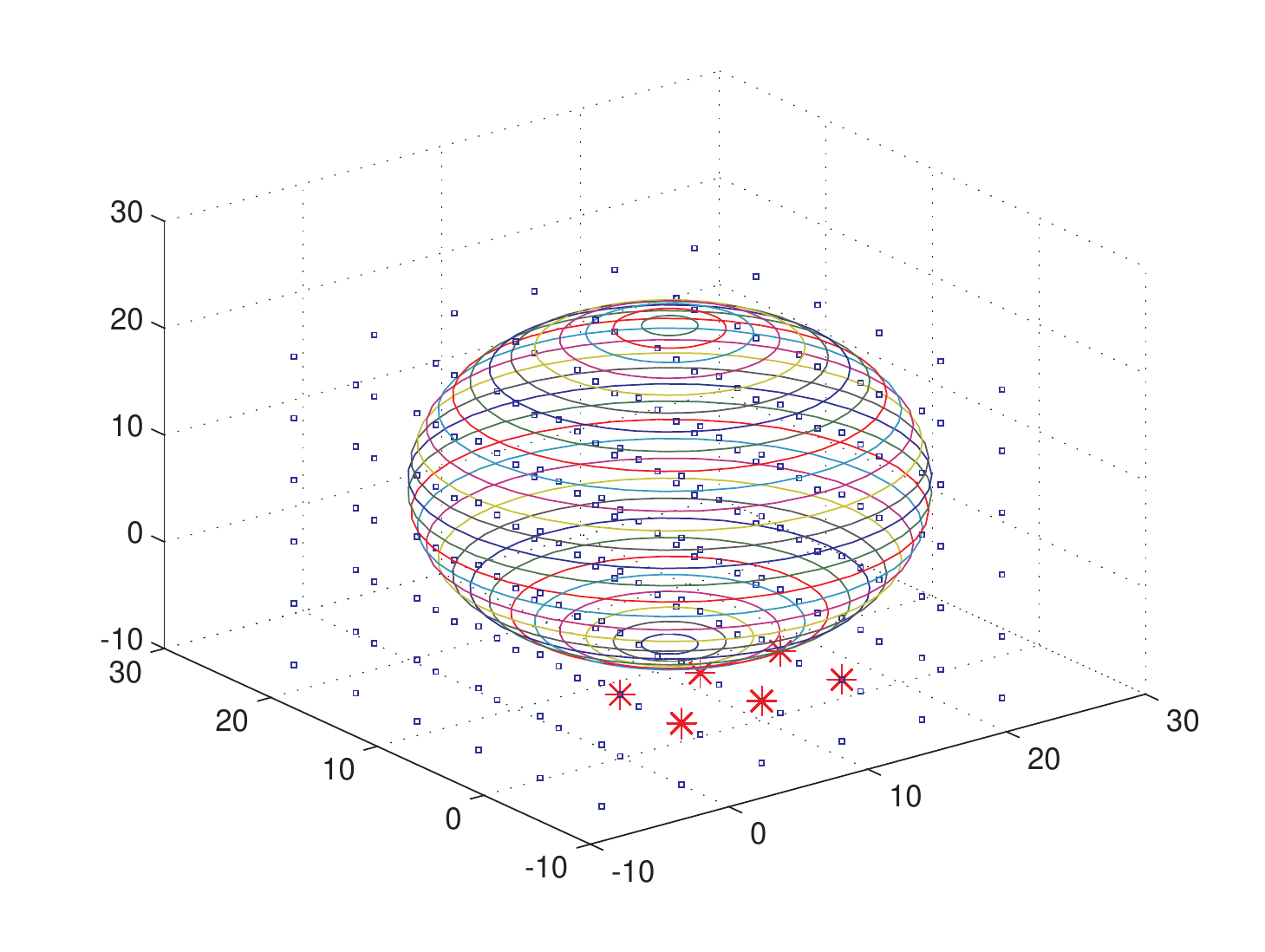}}\quad
\label{s1}
}
\subfigure[Sensors moved inside the sphere]{
{\includegraphics[width=0.47\textwidth,height=0.5\textwidth]{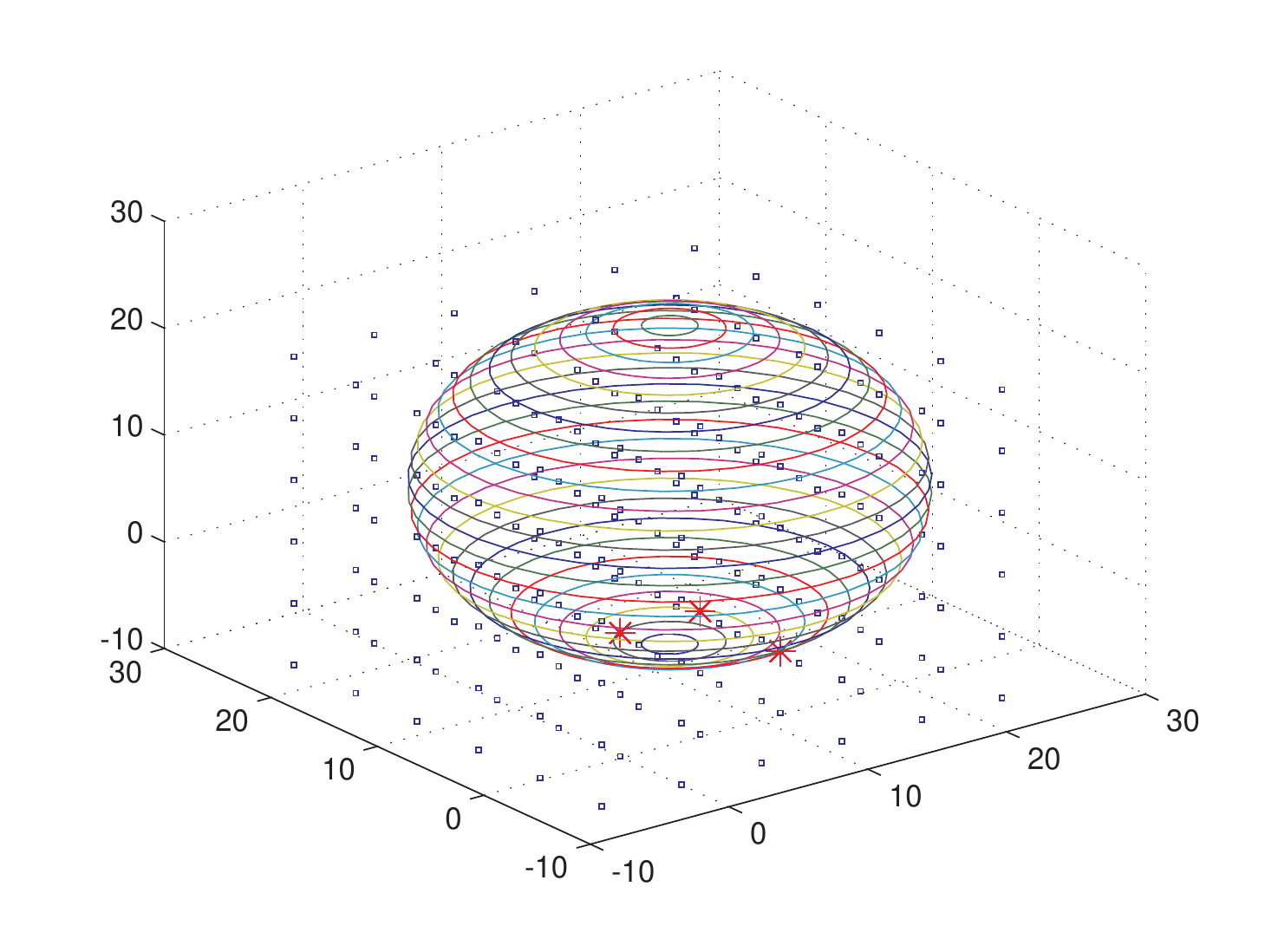}}\quad
\label{s2}
}
}
\mbox{
\subfigure[Sensors' movement to cover vacant vertices inside the sphere]{
{\includegraphics[width=0.47\textwidth,height=0.5\textwidth]{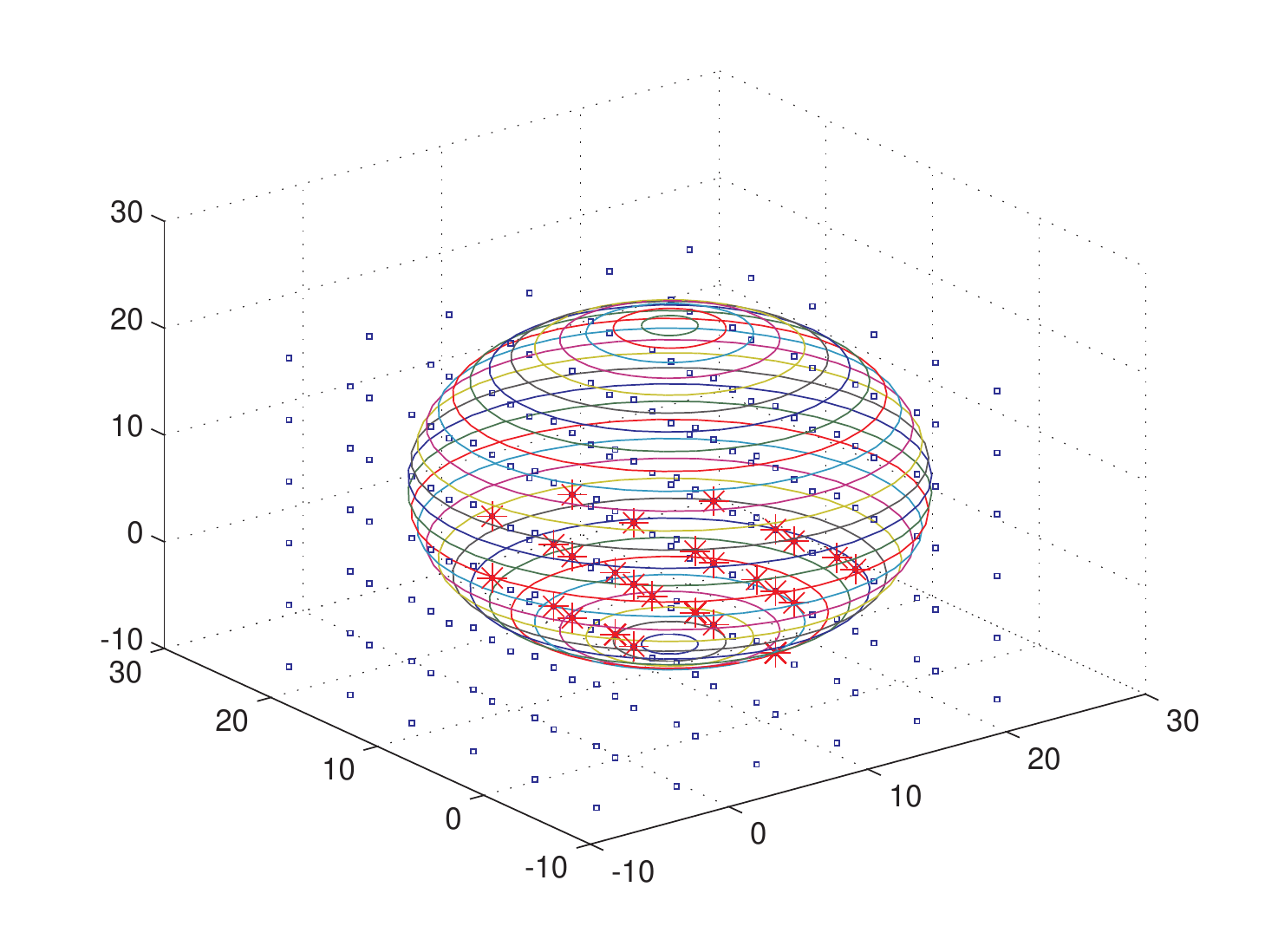}}\quad
\label{s3}
}
\subfigure[complete sensing coverage]{
{\includegraphics[width=0.47\textwidth,height=0.5\textwidth]{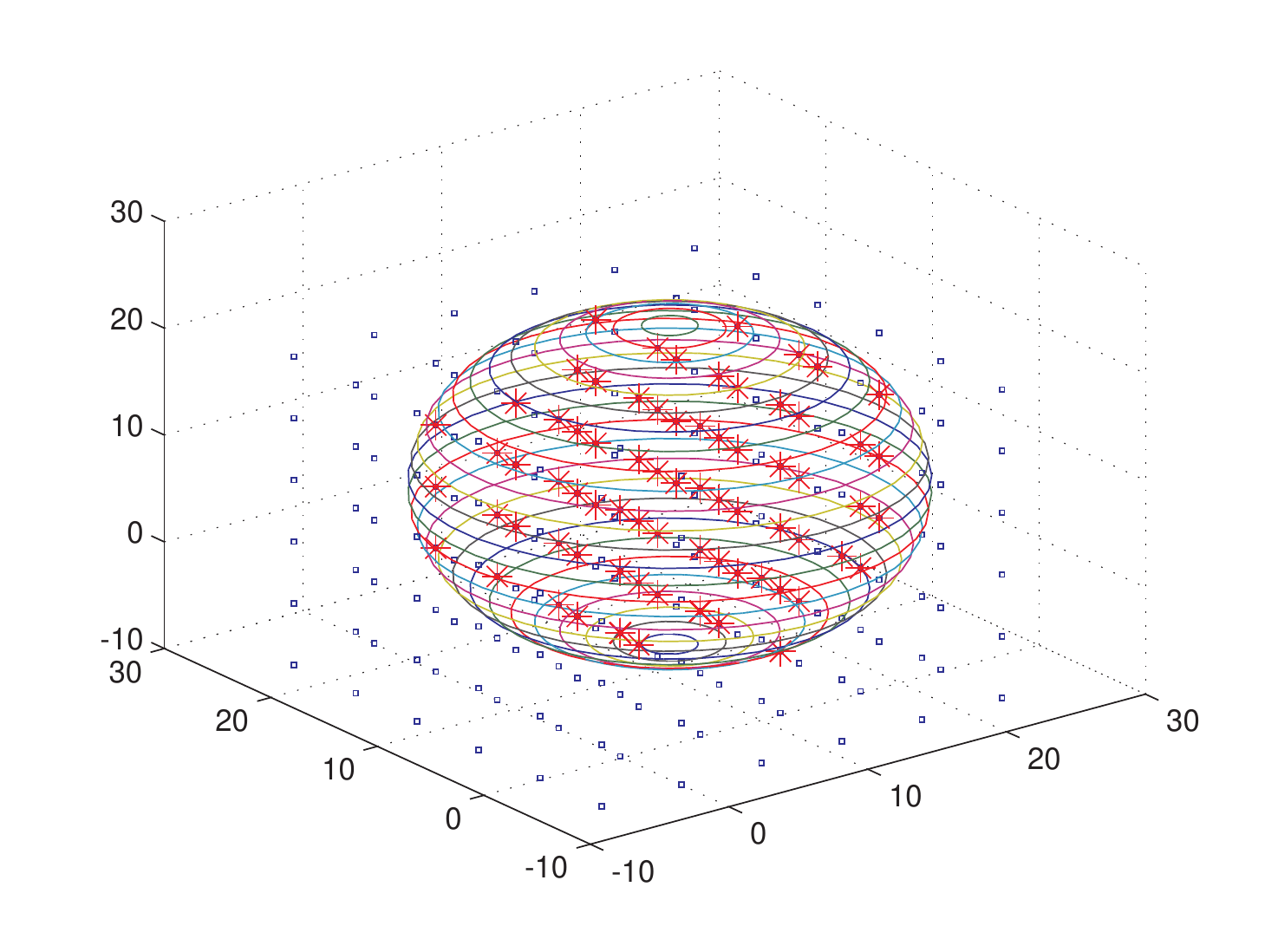}}\quad
\label{s4}
}
}
\caption{mobile sensors’ movement to form the spherical pattern}
\vspace{-6mm}
\end{center}
\end{figure*}
\section{Self-deployment in a Given Three-dimensional Formation}\label{2.4}
In this section, we develop a randomized distributed control law for the mobile sensor network which results in forming a given geometric formation on vertices of a truncated octahedral grid. Using this algorithm, the mobile sensors coordinate their motions to achieve a desired geometric pattern. To do that, we apply the control law (~\ref{eq:cons1}),(~\ref{eq:cons2}) which drive all mobile wireless sensor on vertices of a common covering truncated octahedral grid. As mentioned before, after applying control law (~\ref{eq:cons1}),(~\ref{eq:cons2}) the mobile sensors converge to some consensus values ($x_0$, $y_0$, $z_0$) which define the origin of the common coordinate system for the mobile sensor network.  We introduce the following rules for the mobile wireless sensors to achieve a consensus on the directions of the common coordinate system as follows:

\begin{equation}\label{eq: common}
\theta_i(\texttt{K}+1)=\frac{\theta_i(\texttt{K})+\sum_{\substack{
   j\in N_j(\texttt{K})
}}\theta_j(\texttt{K}) }
 {1+| N_j(\texttt{K}) |}
\end{equation}
\begin{equation*}
\psi_i(\texttt{K}+1)=\frac{\psi_i(\texttt{K})+\sum_{\substack{j\in N_i(\texttt{K})}}\psi_j(\texttt{K})}{1+|N_j(\texttt{K})|}
\end{equation*}
Where $\theta_i(t)$ and $\psi_i(t)$ denote the pitch and the yaw angles of the mobile wireless sensors, respectively. Based on this control law, the mobile wireless sensors start with random initial values for $\theta_i(k)$ and $\psi_i(k)$ and eventually converge to some constant values $\theta_0$ and $\psi_0$. It is obvious that the consensus values  $x_0$,$y_0$,$z_0$, $\theta_0$ and $\psi_0$ define a specific orthogonal coordinate system (x,y,z) on the three-dimensional space. Using this three dimensional coordinate system, we can define three dimensional geometric patterns by some relationship of the type  $f(x,y,z) \in R^3$ as follows:\\
(1)- Sphere: $x^2+y^2+z^2<r$;\\
(2)- Cuboid: $c_{x_1}<x<c_{x_2}, c_{y_1}<y<c_{y_2}, c_{z_1}<z<c_{z_2}$;\\
(3)- Torus: $x=(c+acos(\theta))cos(\varphi)$, $y=(c+acos(\theta))sin(\varphi)$, $z=sin(\theta)$.\\
Where $a,c \in R$ and $\theta, \varphi \in \left[0 \quad 2\pi\right];$\\
(4)-Ellipsoid: $\frac{x^2}{a^2}+\frac{y^2}{b^2}+\frac{z^2}{c^2}=1$. Where $a, b, c \in R$.\\
Note that; it is  assumed that the mobile wireless sensor network knows the following desired three-dimensional geometric patterns a priori.
Now, we introduce a three-stage algorithm for the mobile wireless sensor  network so that drive the mobile sensors on the vertices of a truncated octahedral grid to form the desired three-dimensional geometric shapes as follows:\\
(\texttt{St1}): At first stage, we use control law (~\ref{eq:cons1}), (~\ref{eq:cons2}), (~\ref {eq: common})  to drive the mobile sensors  at vertices of a common truncated octahedral grid and to build a common coordinate system for all mobile sensors.\\
(\texttt{St2}): At second stage of the algorithm, the mobile sensors move to the closest vertices of three dimensional truncated octahedral grid located inside the given geometric shape.\\
(\texttt{St3}): To form the desired geometric pattern, at third stage we apply the control law (~\ref {eq: random coverage}) to randomly drive the mobile wireless sensors to cover the given three-dimensional pattern.\\
\begin{theorem} \label{TH2}: Suppose that assumption ~\ref{assump:connec} hold and the desired three dimensional geometric configuration is given by $f(x,y,z)\in R^3$. After applying the three stage algorithm \texttt{St1}, \texttt{St2} and \texttt{St3}, with probability 1 there exists a time $k_0 \geq 0$ such that for any $i = 1,2, . . . ,n$ there exists a vertex $\texttt{v}$ of the truncated octahedron grid set belonging to the f(x,y,z) such that $v = p_i(k)$ for all $k \geq k_0$.
\end{theorem}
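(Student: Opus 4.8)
The plan is to split the argument into the three stages \texttt{St1}, \texttt{St2}, \texttt{St3} of the algorithm, invoking Theorems~\ref{TH1} and~\ref{th} as ready-made building blocks and adding only the ingredient that is genuinely new here, namely convergence of the \emph{orientation} variables. For stage \texttt{St1}, I would note that the translational consensus variables $\texttt{X}_i(\texttt{K}),\texttt{Y}_i(\texttt{K}),\texttt{Z}_i(\texttt{K})$ obey exactly the averaging rule~(\ref{eq:cons1}) analysed in Theorem~\ref{TH1}, while the pitch/yaw variables $\theta_i(\texttt{K}),\psi_i(\texttt{K})$ obey the structurally identical rule~(\ref{eq: common}); hence, under Assumption~\ref{assump:connec}, the same consensus argument (cited after Theorem~\ref{TH1}) forces all five quantities to converge to common constants $x_0,y_0,z_0,\theta_0,\psi_0$. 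Since $\texttt{R}_s$ and the integers $\alpha_1,\alpha_2,\alpha_3$ are shared a priori, this limiting data determines one truncated octahedral grid together with one orthogonal frame $(x,y,z)$ common to every sensor, and by~(\ref{eq:cons2}) every sensor is driven onto a vertex of that grid. This is precisely Theorem~\ref{TH1} augmented with orientation consensus.

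For stage \texttt{St2}, once the common frame and the common grid are fixed, the relation $f(x,y,z)$ singles out a well-defined finite set $\hat{\texttt{V}}_f\subset\hat{\texttt{V}}$ of grid vertices lying inside the desired configuration; each sensor relocates in one step to its closest vertex in $\hat{\texttt{V}}_f$, so this stage terminates in finite time with all sensors on vertices of $\hat{\texttt{V}}_f$. This step requires $N_m\geq|\hat{\texttt{V}}_f|$, which is the implicit counting hypothesis already present in Theorem~\ref{th}. For stage \texttt{St3}, I would restrict the random spreading rule~(\ref{eq: random coverage}) to $\hat{\texttt{V}}_f$ — that is, in the definition of $\texttt{S}(p_i(\texttt{K}))$ only unoccupied neighbours belonging to $\hat{\texttt{V}}_f$ are admissible. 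The resulting dynamics is a finite-state Markov chain on sensor configurations over $\hat{\texttt{V}}_f$ which, exactly as in the proof of Theorem~\ref{th}, is an absorbing chain: the configuration in which $|\hat{\texttt{V}}_f|$ distinct vertices are occupied is an absorbing state reachable with positive probability from any state, so with probability $1$ it is attained at some finite time $k_0$ and from then on every vertex of $\hat{\texttt{V}}_f$ carries a sensor. Concatenating the three stages gives the theorem.

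The step I expect to be the main obstacle is verifying the hypothesis needed to run the absorbing-chain argument in \texttt{St3}, namely that the subgraph of the truncated octahedral grid induced on $\hat{\texttt{V}}_f$ is connected, so that every vertex of $\hat{\texttt{V}}_f$ is reachable by single-step moves through other vertices of $\hat{\texttt{V}}_f$. For the convex patterns in the list (sphere, cuboid, ellipsoid) this is immediate; for a torus the admissible region is not simply connected, so one should either add a mild regularity condition on $f$ or, equivalently, require $\texttt{R}_s$ to be small relative to the feature sizes of the shape, which guarantees connectivity of $\hat{\texttt{V}}_f$. A secondary point to make explicit is that the three stages are executed sequentially, so one must check that the (random) completion time of each stage is almost surely finite before the next one starts — which again follows from the finiteness of $\hat{\texttt{V}}_f$ together with Assumption~\ref{assump:connec}.
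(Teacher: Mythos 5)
Your proposal is correct, and it follows the only natural route: stage \texttt{St1} via the consensus result of Theorem~\ref{TH1} (augmented with the orientation averaging rule~(\ref{eq: common})), stage \texttt{St2} as a finite relocation, and stage \texttt{St3} via the absorbing-Markov-chain argument of Theorem~\ref{th} restricted to the grid vertices inside the shape. The paper's own proof, however, consists of the single sentence that the theorem ``immediately follows from Theorem~\ref{TH1}''; it does not separate the stages, does not invoke Theorem~\ref{th} for the random-spreading phase, and does not state any hypotheses beyond Assumption~\ref{assump:connec}. Your write-up therefore supplies genuine content the paper omits: the counting condition $N_m\geq|\hat{\texttt{V}}_f|$ needed for the absorbing state to exist, the observation that the orientation consensus variables converge by the same averaging argument, and --- most importantly --- the requirement that the subgraph of the truncated octahedral grid induced on the vertices inside $f(x,y,z)$ be connected, without which the random walk of stage \texttt{St3} cannot reach every component (a real issue for the torus example the paper itself lists, and one the paper's one-line proof silently ignores). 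None of these points invalidates the theorem under reasonable implicit assumptions, but you are right to flag them; your version is the proof the paper should have written.
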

Theorem ~\ref{TH2} immediately follows from Theorem ~\ref{TH1}.

\begin{figure*}[t!]
\begin{center}
\mbox{
\subfigure[Mobile sensors denoted by *, vertices of the grid represented by o, the desired formation given by the cuboid]{
{\includegraphics[width=0.47\textwidth,height=0.5\textwidth]{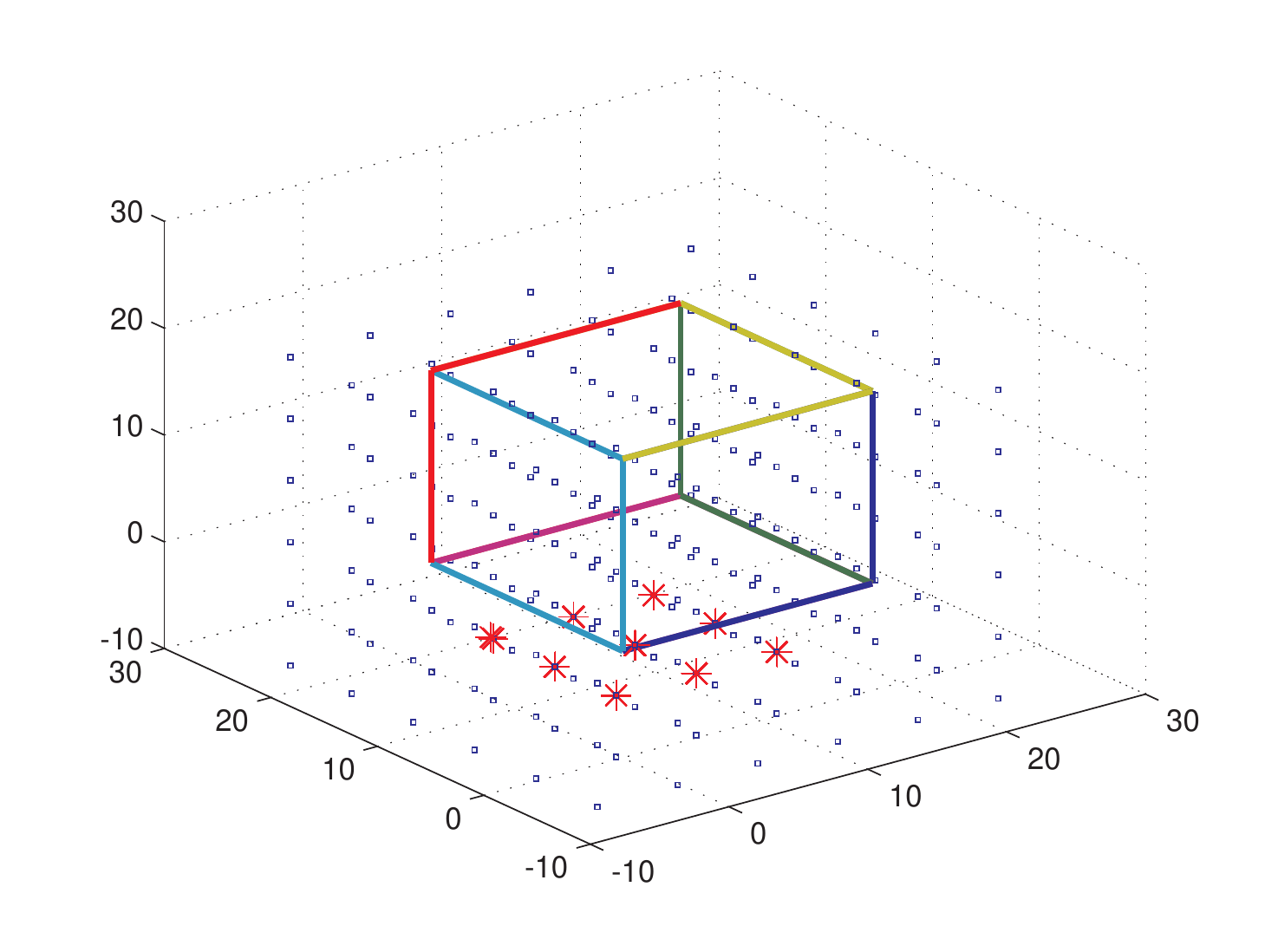}}\quad
\label{C1}
}
\subfigure[Mobile sensors  moved inside the cuboid]{
{\includegraphics[width=0.47\textwidth,height=0.5\textwidth]{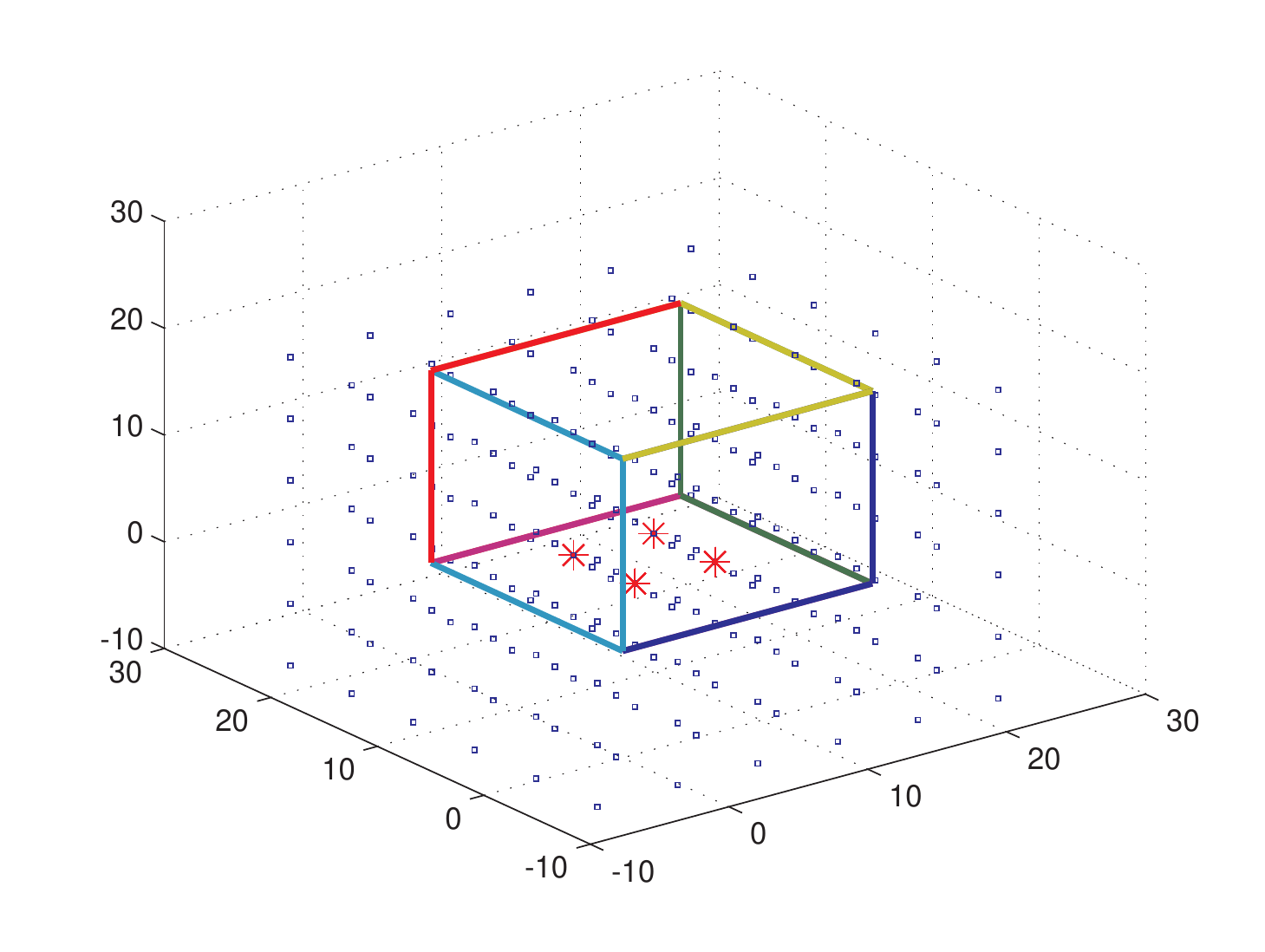}}\quad
\label{C2}
}
}
\mbox{
\subfigure[Mobile sensors' movement to cover vacant vertices inside the cuboid]{
{\includegraphics[width=0.47\textwidth,height=0.5\textwidth]{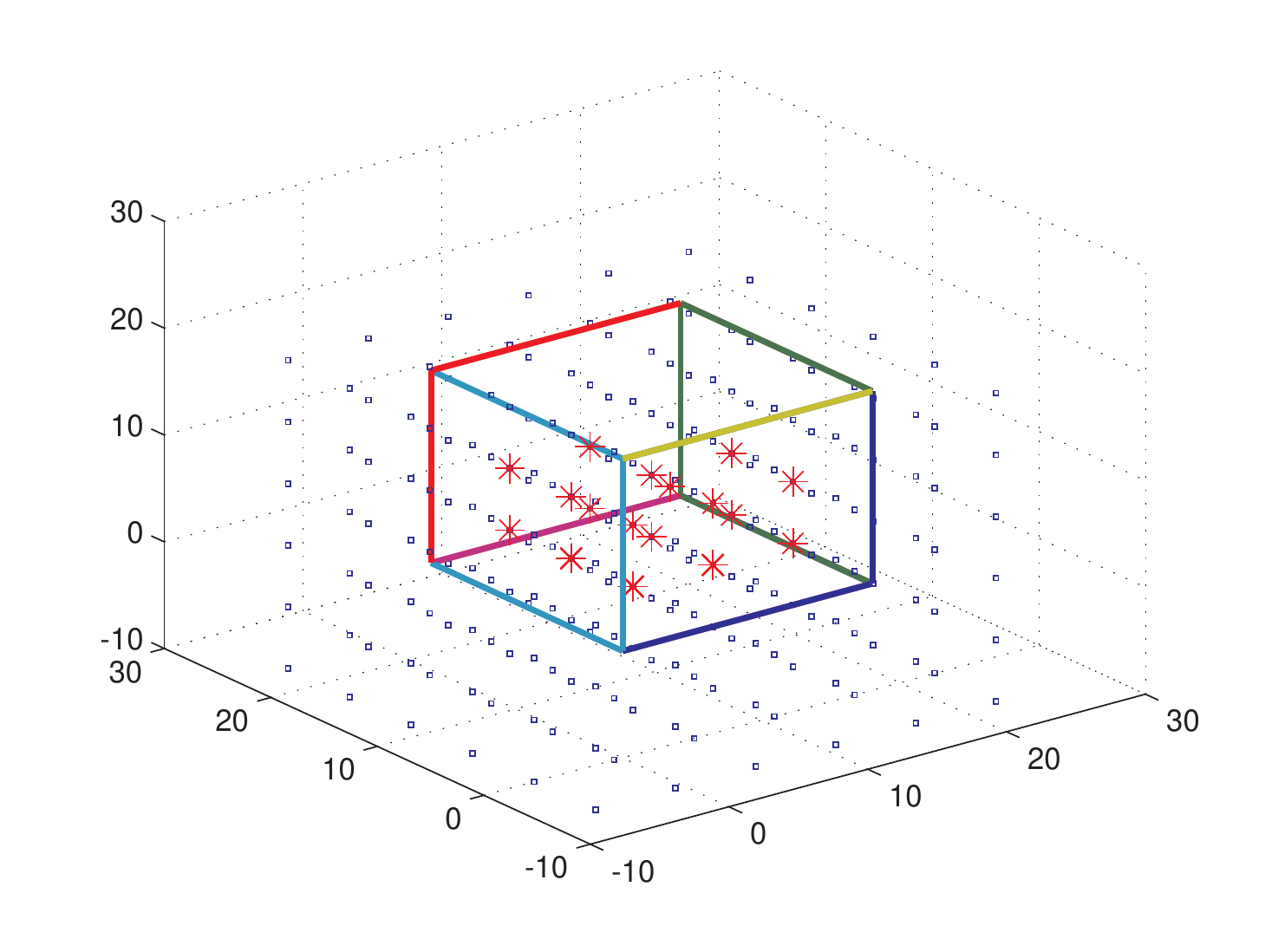}}\quad
\label{C3}
}
\subfigure[Complete sensing coverage]{
{\includegraphics[width=0.47\textwidth,height=0.5\textwidth]{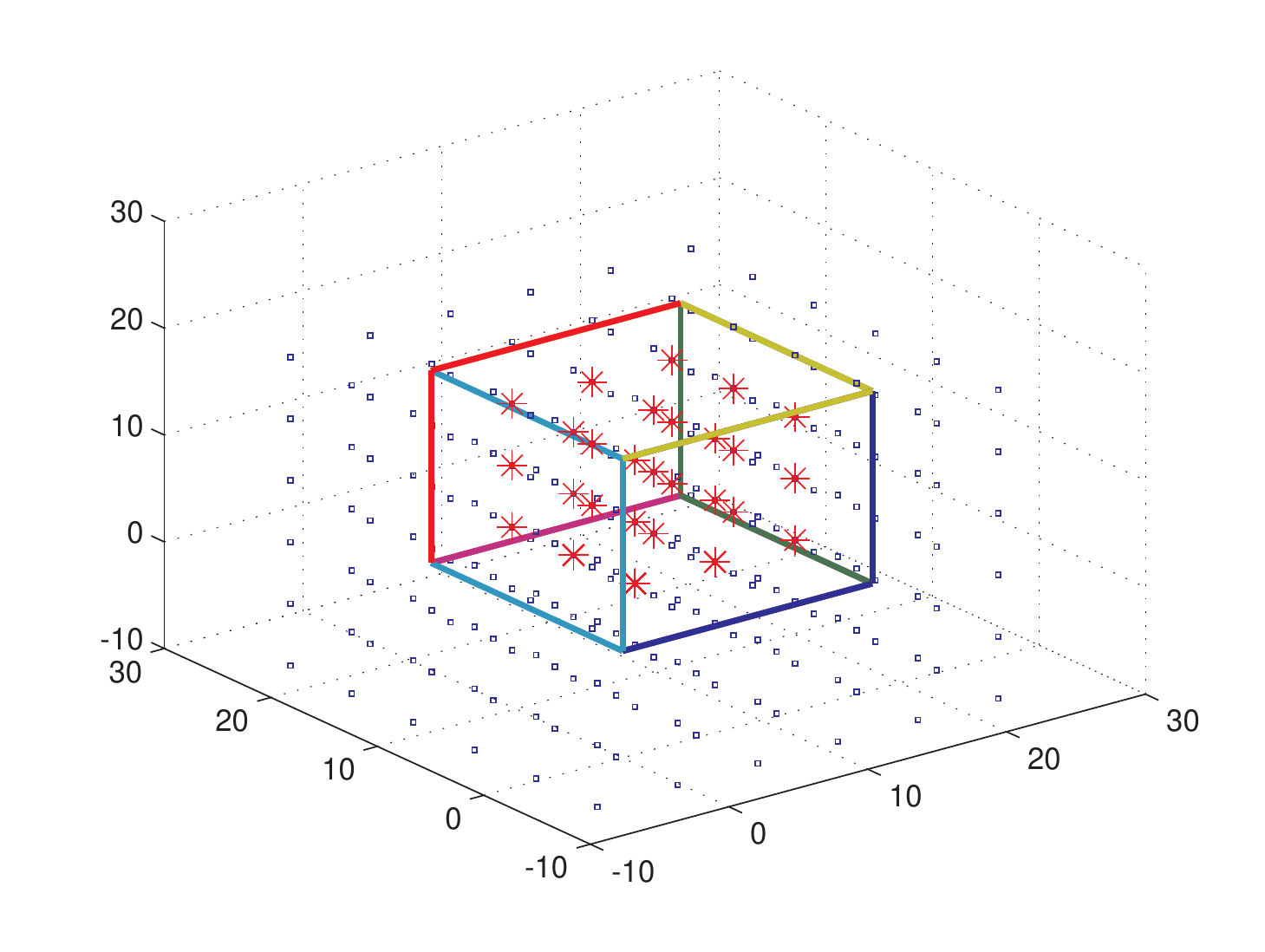}}\quad
\label{C4}
}
}
\caption{Mobile sensors’ movement to form the cuboid pattern}
\vspace{-6mm}
\end{center}
\end{figure*}

\begin{figure*}[t!]
\begin{center}
\mbox{
\subfigure[Sensors denoted by *, vertices of the grid represented by o, the desired formation given by the torus]{
{\includegraphics[width=0.47\textwidth,height=0.5\textwidth]{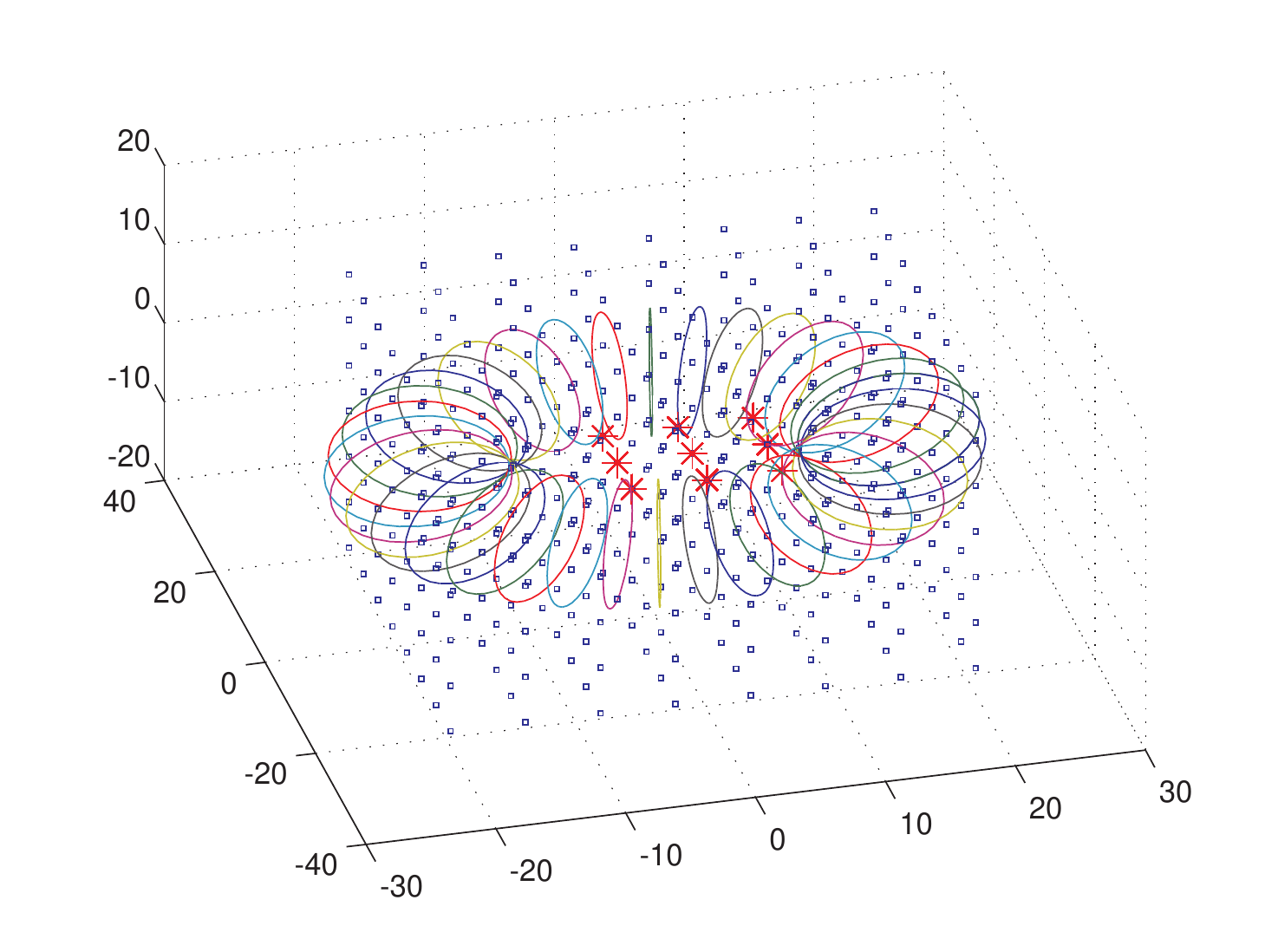}}\quad
\label{T1}
}
\subfigure[Sensor’s moved inside the torus]{
{\includegraphics[width=0.47\textwidth,height=0.5\textwidth]{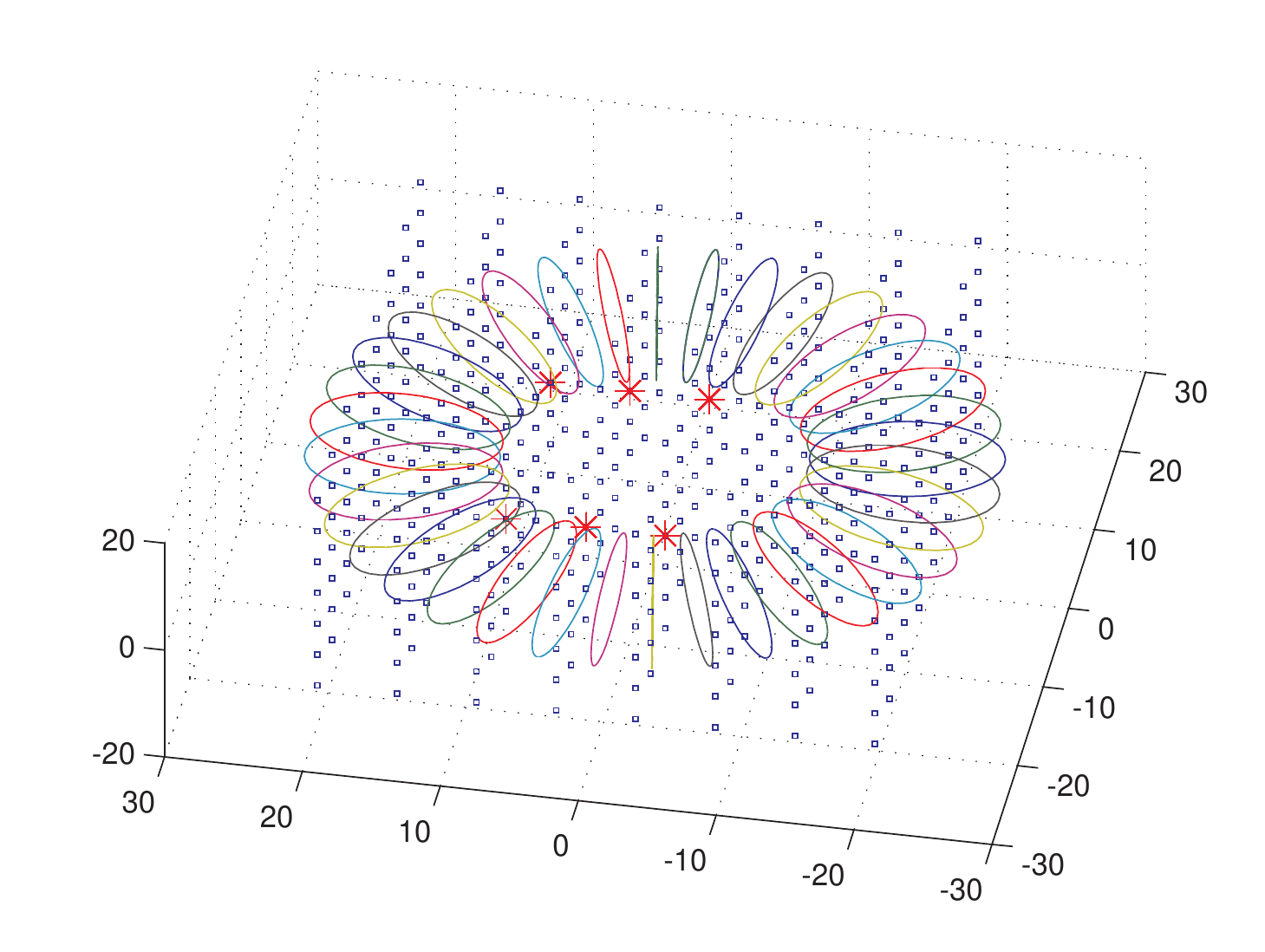}}\quad
\label{T2}
}
}
\mbox{
\subfigure[Sensors' movement to cover vacant vertices inside the torus]{
{\includegraphics[width=0.47\textwidth,height=0.5\textwidth]{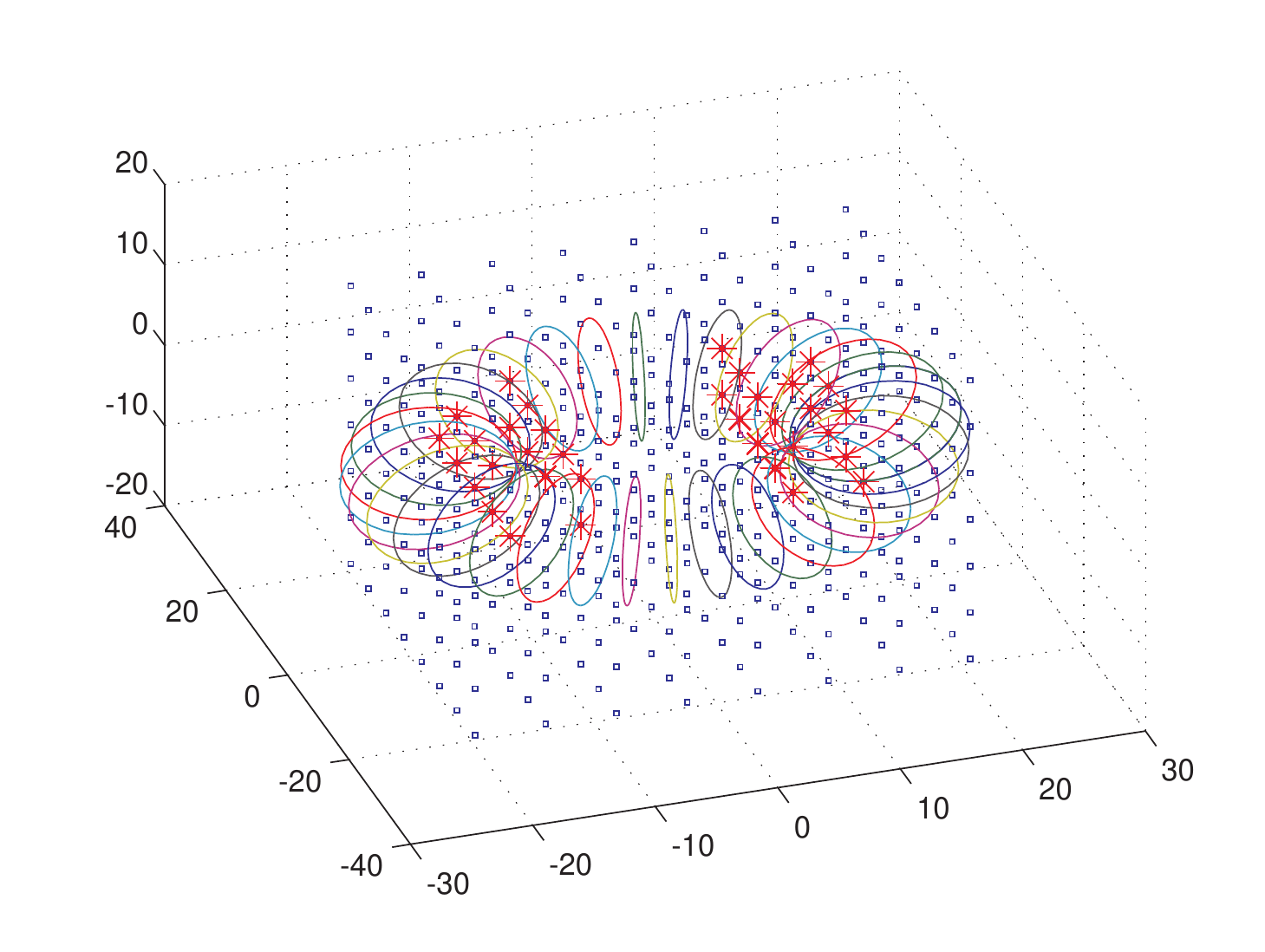}}\quad
\label{T3}
}
\subfigure[Complete sensing coverage]{
{\includegraphics[width=0.47\textwidth,height=0.5\textwidth]{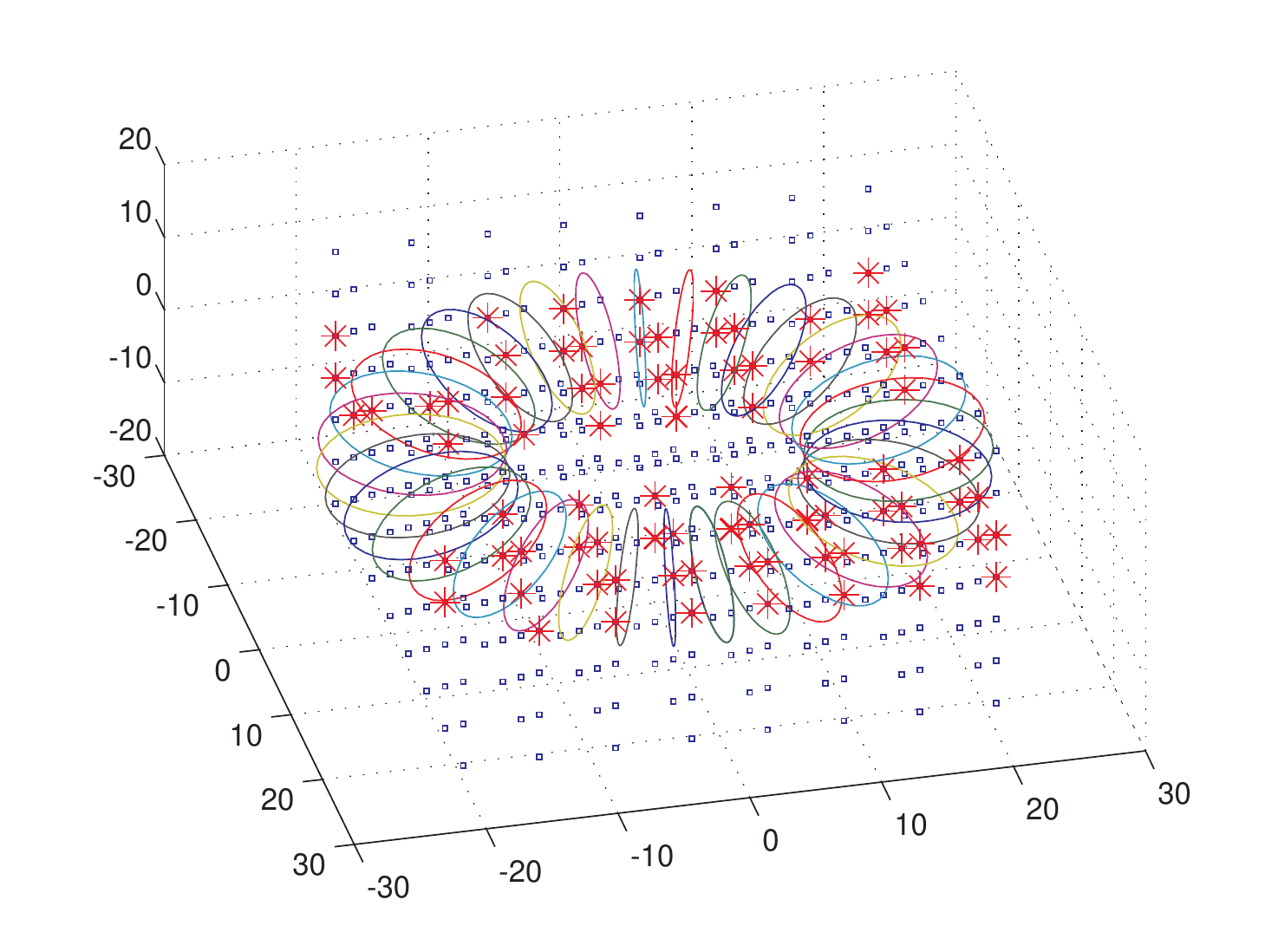}}\quad
\label{T4}
}
}
\caption{Mobile sensors’ movement to form the torus pattern}
\vspace{-6mm}
\end{center}
\end{figure*}

\begin{figure*}[t!]
\begin{center}
\mbox{
\subfigure[Sensors denoted by *, vertices of the grid represented by o, the desired formation given by the ellipsoid]{
{\includegraphics[width=0.47\textwidth,height=0.5\textwidth]{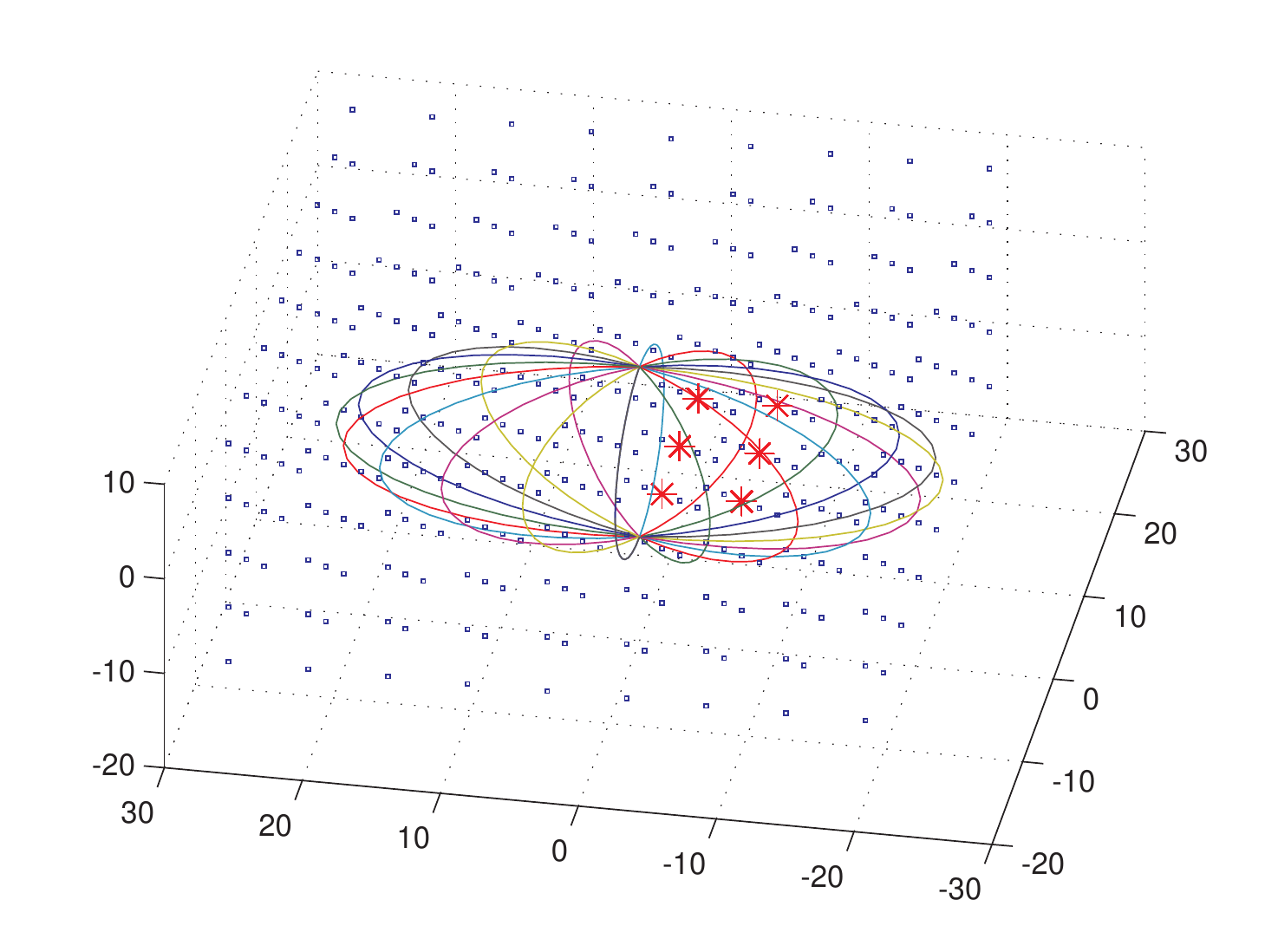}}\quad
\label{E1}
}
\subfigure[Sensor’s moved inside the ellipsoid]{
{\includegraphics[width=0.47\textwidth,height=0.5\textwidth]{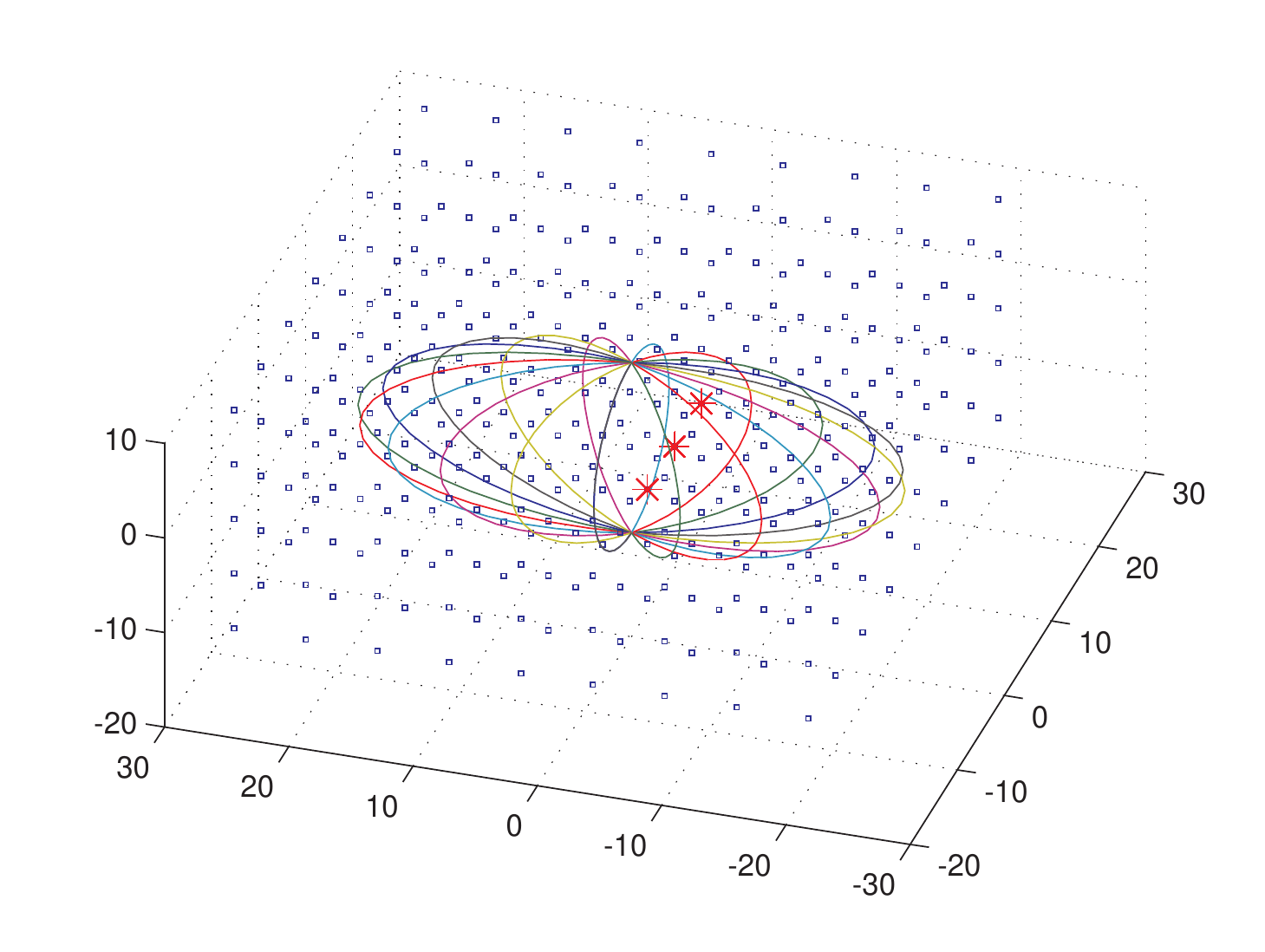}}\quad
\label{E2}
}
}
\mbox{
\subfigure[Sensors' movement to cover vacant vertices inside the ellipsoid]{
{\includegraphics[width=0.47\textwidth,height=0.5\textwidth]{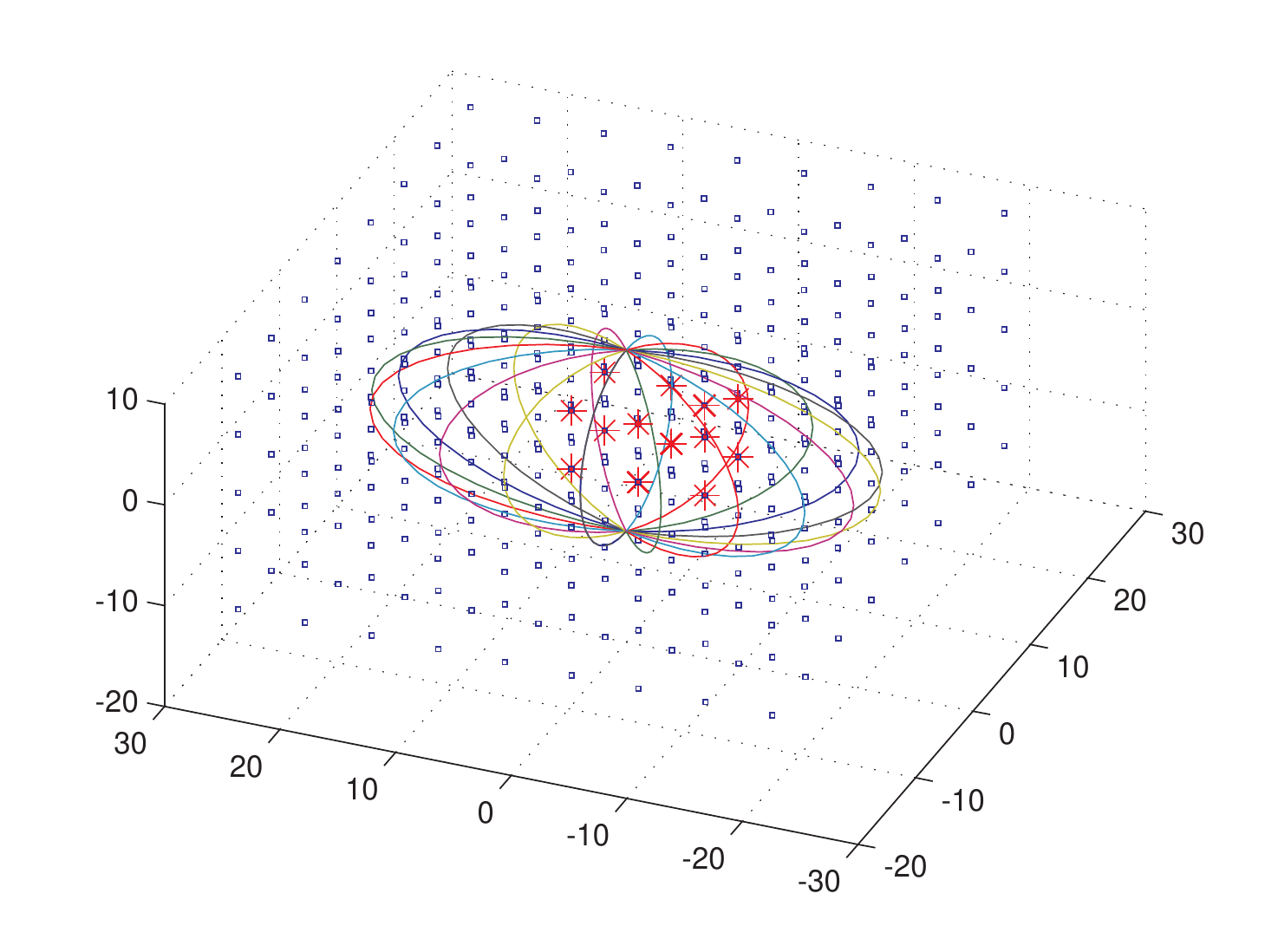}}\quad
\label{E3}
}
\subfigure[complete sensing coverage]{
{\includegraphics[width=0.47\textwidth,height=0.5\textwidth]{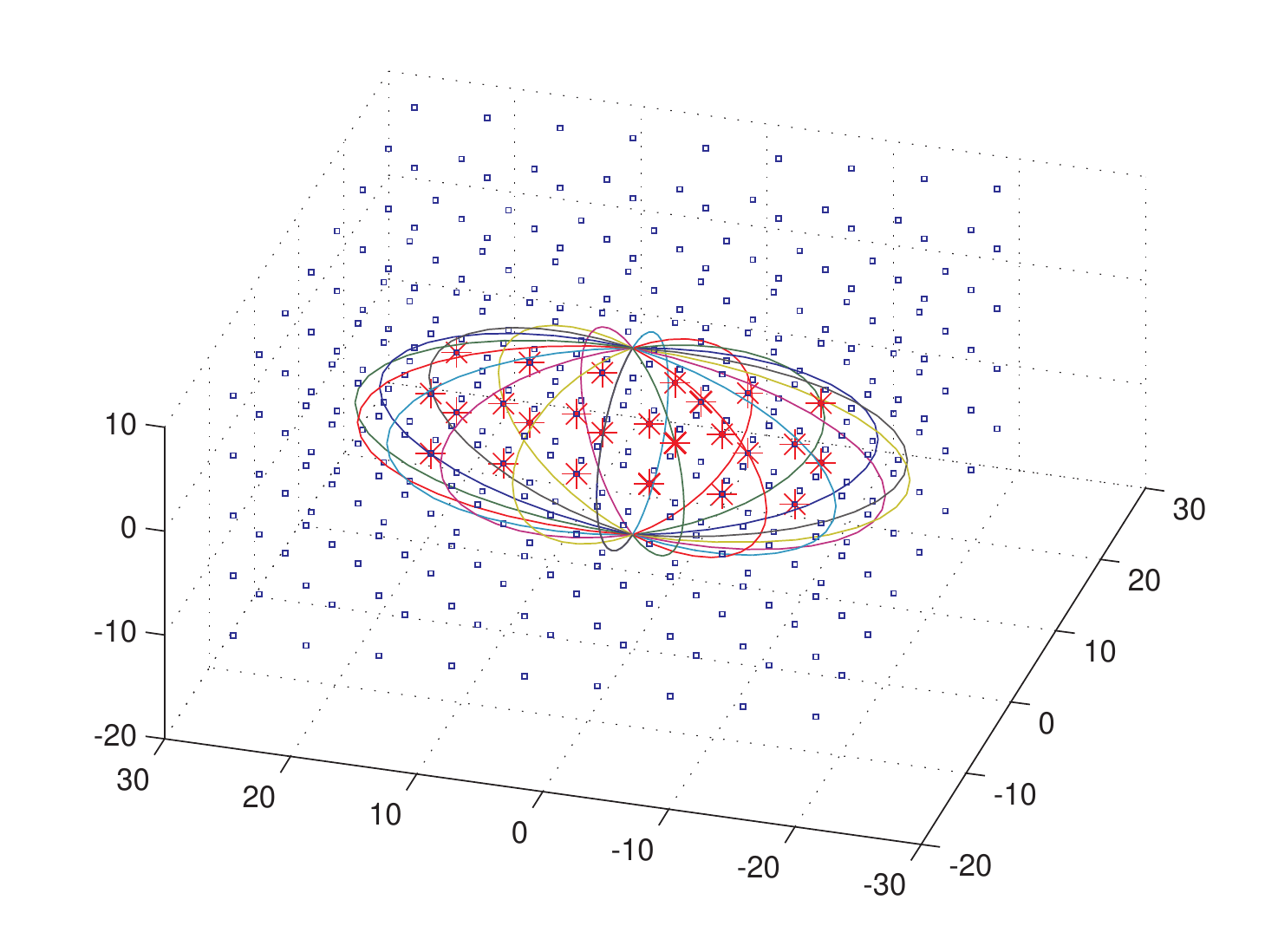}}\quad
\label{E4}
}
}
\caption{Mobile sensors’ movement to form the ellipsoid pattern}
\vspace{-6mm}
\end{center}
\end{figure*}

\subsection{Simulation Results} \label{2.5}
In this section, we illustrate simulation results for the problem of driving the mobile sensors to form a specific geometric pattern in three-dimensional spaces. We suppose that 80 mobile wireless sensors are distributed randomly in a three-dimensional area.	First, we consider a spherical region defined by $x^2+y^2+z^2<r$. Based on St1 the mobile sensors move according to the control law (~\ref{eq:cons1}),(~\ref{eq:cons2}) to the vertices of a common truncated octahedral grid as shown in  Fig.\ref{s1}. Then, all  mobile sensors located outside of the given sphere move to the inside of the given sphere as shown in Fig.\ref{s2}. Fig.\ref{s3} shows the sensors locations after 5 steps. The final formation of the mobile sensors is shown in Fig.\ref{s4} when all mobile sensors have spread inside the given sphere on different vertices of the truncated octahedral grid. At the next stage, the mobile sensors form a cuboid formation defined by $c_{x_1}<x<c_{x_2}, c_{y_1}<y<c_{y_2}, c_{z_1}<z<c_{z_2}$.  Fig.\ref{C1}- Fig.\ref{C4} shows different stages of the formation building inside the cuboid region.\\
 The next set of simulations are carried out to show the effectiveness of the proposed algorithm to form a complicated torus and ellipsoid patterns shown in (Fig.\ref{T1}- Fig.\ref{T4}) and (Fig.\ref{E1}- Fig.\ref{E4}), respectively.\\ As shown in the simulations, for all of the given geometric patterns the mobile wireless sensors build a covering truncated octahedral grid after few steps and then they converge to the vertices of the truncated octahedral grid. After that, the mobile wireless sensors move inside the given pattern on the vertices of the 3D grid and finally they spread inside the given geometric formation and build the given pattern. As a result, the presented simulation results verify the effectiveness of the proposed distributed formation building control law for various given patterns.

\section{Summary} \label{2.6}
In this chapter, a distributed control algorithm for self-deployment of a mobile sensor network for complete sensing coverage of a bounded three-dimensional space was presented. The proposed control laws drive the network of mobile wireless sensors to form a three-dimensional covering grid in the target region then they occupy the vertices of the grid. Also, we developed a distributed control law for coordination of the mobile sensors such that they form a given three-dimensional shape at vertices of a truncated octahedral grid from any initial positions. Simulation results showed that the three-dimensional truncated octahedral grid outperforms than other three-dimensional grids in terms of complete sensing coverage time and the minimum number of mobile sensors required to cover completely the given three-dimensional region.\\ The presented control algorithms are based on the consensus approach that is simply implemented and computationally effective. The control algorithms are distributed and the control action of each mobile wireless sensor  is based on the local information of its neighbouring mobile wireless sensors. Therefore, these algorithms could be applicable to the cases where the communication range is limited. The effectiveness of the proposed control algorithms were confirmed by simulations. Also, we give mathematically rigorous proof of convergence with probability 1 of the proposed algorithms.

\singlespacing

\chapter{Distributed 3D Dynamic Search Coverage}\label{chap:1ValiCH2}
\minitoc
In the previous chapter, it was assumed that we have enough mobile sensors to deploy for the full sensing coverage in a 3D space. In this chapter, we assume the number of mobile sensors is  limited. Here, we introduce a distributed grid-based random algorithm for search in 3D environments, where the search environment is unknown to the mobile sensors a priori \cite{nazarzehi2015distributed}. Mobile sensors normally rely on absolute positioning or localisation to perform the search task. Absolute positioning needs GPS signal which is usually unfeasible inside or in underwater due to attenuated signals \cite{antonelli2014underwater}. Robot localization commonly entails odometer sensing and environment maps. Such approaches are not applicable on the low cost large swarms as environment maps may be unknown a priori \cite{grzonka2009towards, burgard2005coordinated}. To do the task of localization and positioning without global information, we employ the concept of distributed processing of local information through a wireless communication network \cite{stirling2010energy}. Our proposed algorithm relies on local sensing and communication as it uses the mobile robotic sensors  themselves as sensor nodes. Therefore, this method does not require absolute positioning and localization system. Based on this approach, the mobile sensors communicate with their neighbours to share their information while doing search task. As mentioned before, the proposed algorithm relies on the decentralized communication method which is a real option for the case where the mobile sensors have limited communication ranges.\\
To minimize the time of search and to avoid repeated exploration, each mobile sensor builds a map of the explored area and shares it with other sensors passing within its communication range. As a result, using this strategy the mobile sensors do the search task cooperatively and time efficiently \cite{nazarzehi2015distributed}.\\
In this chapter, the mobile sensor network accomplishes the search task based on a cubic and a truncated octahedral grid. The presented search algorithm uses the vertices of the mentioned 3D grids for the search of a given 3D environment. We demonstrate that the truncated octahedral grid provides better  3D covering than a cubic grid, as a result using this grid can minimize the time of the search. Moreover, simulation results show that the average search time by the proposed grid based random search method is less than the Levy flight random algorithm. Also, we give a mathematically rigorous proof of the convergence of the proposed algorithm with probability 1 for any number of sensors and any sensor initial conditions. A practical application of the proposed search algorithm is target searching in a three-dimensional aerial and aquatic environment. For example, it can be used for the search of sea mines, sources of pollution an black boxes from downed aircraft.
\section{Problem Formulation}
In this chapter, our objective is to design a distributed random  search algorithm to drive a network of mobile sensors for search in bounded 3D spaces. We assume the sensor network has no information about the search area ($\emph{M}  \subset \mathbb{R}^3$) and the positions of the targets $\emph{T}_1, \emph{T}_2, ... \emph{T}_l$, but it can detect the boundaries of the search area. Also, we assume the region $\emph{M} $ is bounded. Here, we adopt the binary sensing model as the sensing model of the mobile sensors. Using this model, the detection of a mobile sensor is defined by $1$ or $0$.  If $\left\|P_t-p_i \right\|\leq R_s$, outputs ‘1’, which means that target is within the sensing range of at least one mobile sensor and outputs ‘0’, otherwise.\\ We define two search scenarios follows:\\
In the first scenario, we assume that the mobile sensor network knows the total number of targets, but it is not aware of their positions. Consequently, the search should be stopped after finding all targets.\\
In the second case, it is assumed that the mobile sensors do not know the number of targets. Therefore, they should search entire area to find all possible targets. In other words, for the second case where the mobile sensor network is not aware of the number of neither targets nor their locations exploring the whole space can guarantee the accomplishment of the search task.\\
Here, our goal is to present a distributed control algorithm to drive the mobile sensors in the search area for identifying all targets in both search scenarios. We propose search algorithms that utilize two different grids (a cubic grid and a truncated octahedral grid) to carry out the search task.\\
Note that, vertices of a cubic (truncated octahedral) grid are  the centres of cubes (truncated octahedrons) of that grid. In 1887, Kelvin \cite{alam2008coverage,kelvin} proposed the truncated octahedron as the solution to the problem of finding space-filling arrangement of similar cells of equal volume with minimal surface area. Finding such optimal space filling polyhedra is still an important open mathematical problem. Different 3D grids can be compared using a criteria called volumetric quotient. The volumetric quotient is defined as: $V_q=\frac{V_p}{V_s}$, where $V_p$ is the volume of a polyhedron and $V_s$ is the volume of its circumsphere. The volumetric quotient of a truncated octahedron is about 1.8 times that of a cube. Intuitively, this gives the hope that using a truncated octahedral grid  for search leads to a fewer number of vertices to be searched and  minimizes the time of the search. In the following, we use the term 3D grid for expression of both cubic grid and truncated octahedral grid.
\begin{figure}
\centering
{\includegraphics[width=0.75\textwidth]{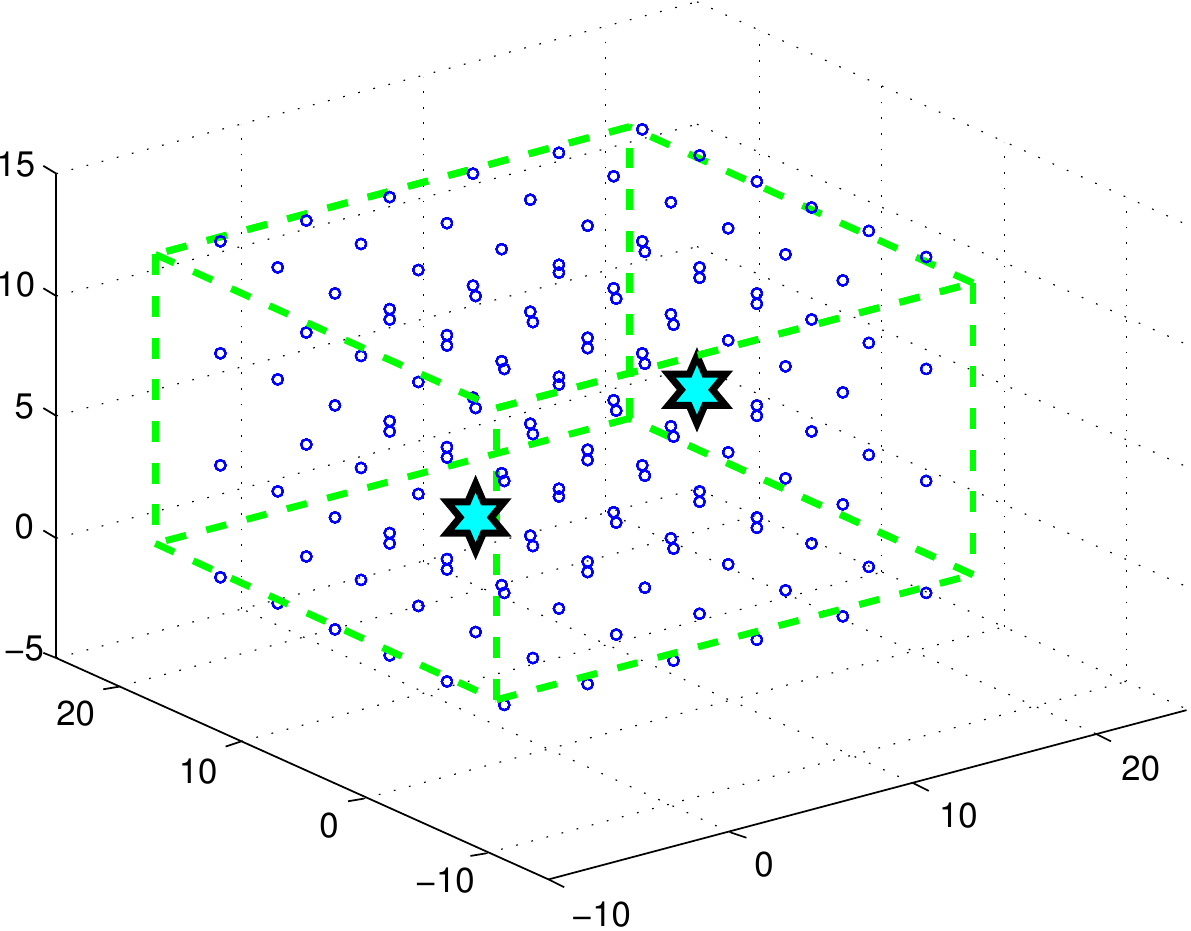}}
\caption{Boundaries of M denoted by –, vertices of the grid by o, targets by star}
\label{schematic2}
\end{figure}
\begin{figure}
\centering
{\includegraphics[width=0.75\textwidth]{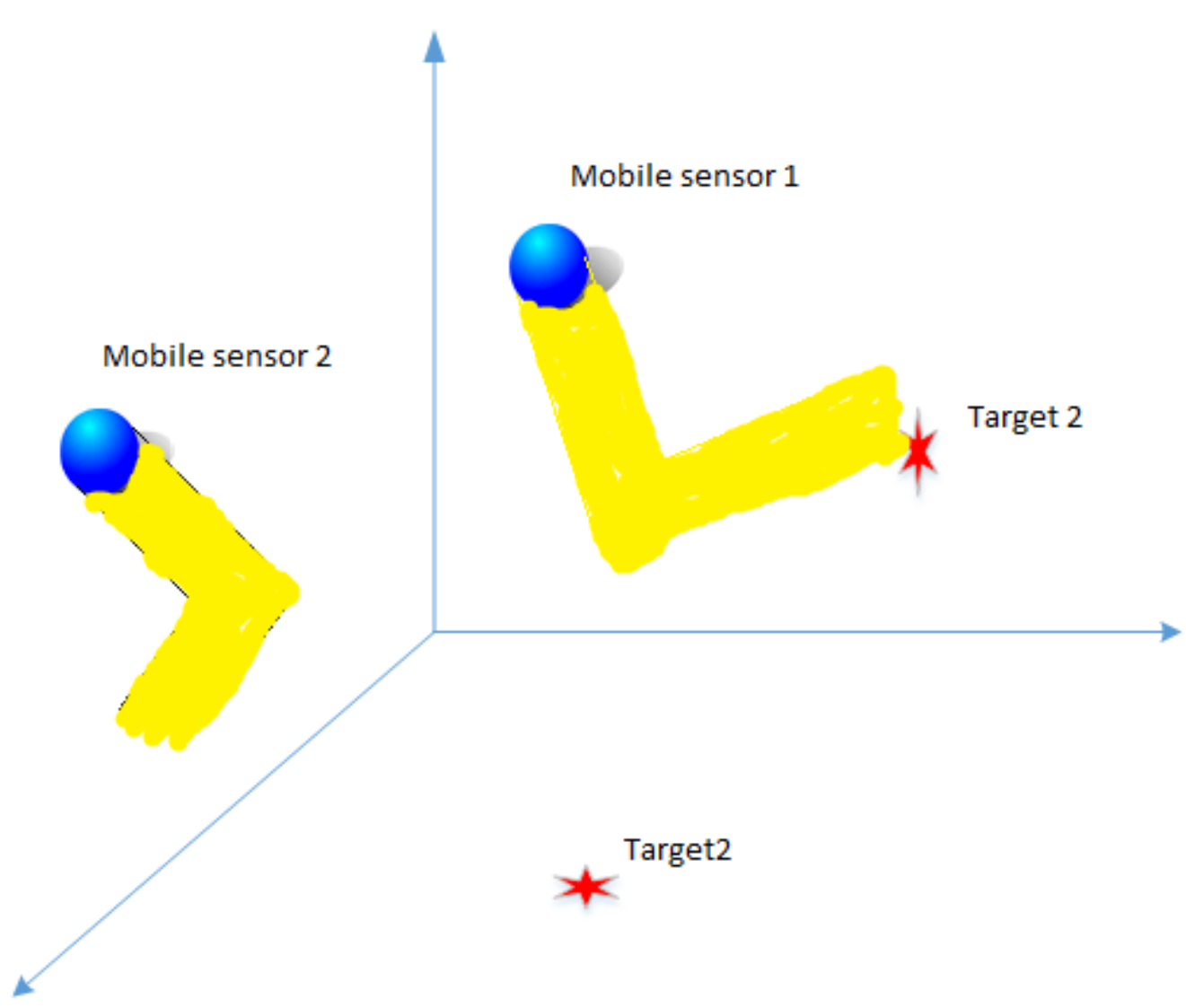}}
\caption{Mobile sensors denoted by sphere (sensing range) , targets denoted by stars, detected area shown by yellow tape}
\label{searchschematic}
\end{figure}
\begin{definition}
Consider a cubic (truncated octahedral) grid cutting $\emph{M} $ into equal cubes (truncated octahedrons) with the sides of $\frac{2\emph{R}_s}{\sqrt{3}}$($\frac{2\emph{R}_s}{\sqrt{10}}$). Let ${\mathcal{\hat{V}}}$ be the infinite set of centres of all the cubes (truncated octahedrons)  of this grid. The set ${\mathcal{V}}={\mathcal{\hat{V}}} \cap \emph{M} $ is called a cubic (truncated octahedral) covering set of the search area (Fig.\ref{schematic2}).
\end{definition}
At first, the mobile sensor network builds a grid which is common among all mobile sensors, also it covers the search area. Then, they perform the search task by moving randomly to the vertices of the grid from different initial locations. During the search, the mobile sensors keep the information of the previously visited vertices of the grid and the targets such as the position of the detected targets and the visited vertices of the grid, which are representative of the previously searched area and communicates with its neighbours in the communication range at the discrete sequence of times $\emph{k} = 0, 1, 2,...$, to exchange the search  information. The neighbouring set of mobile sensor $i$ is defined by
\begin{equation}
\emph{S}_
{i,\emph{R}_c}=\{p\in \mathbb{R}^3;\|(p-p_i (\emph{k}))\| \leq \emph{R}_c\}
\end{equation}
Where $\emph{R}_c$ is the mobile sensors' communication range and $ p_i(\emph{k})\in \mathbb{R}^3 $ is the Cartesian coordinates of the mobile sensor $i$ at time $\emph{k}$.
\begin{definition}
A target is said to be detected by a mobile sensor if it is located within the sensing region of the mobile sensor, which is the sphere of radius $\emph{R}_s>0$ centred at the mobile sensor.
\end{definition}
The schematic of two mobile sensors searching for two randomly locate targets is shown in Fig.\ref{searchschematic}.\\
Let $N_i(\emph{k})$ be the set of all  mobile sensors $j$, $j\neq  i$ and $j\in {1, 2, . . ., n}$ that belongs to the sphere $\emph{S}
_{i,\emph{R}_c} (\emph{k})$ also, we assume that  mobile sensor $i$ has $|N_i(\emph{k})|$ number of neighbours at time $\emph{k}$. We take advantage of the notion of graph to define the relationships between neighbouring mobile sensors. As a result, the vertices $i$ and $j$ of the graph $\emph{g}(\emph{k})$ are considered connected if the  mobile sensors $i$ and $j$ are neighbours at time $\emph{k}$. In the following, we impose a condition on the connectivity
of the graph.
\begin{assumption}\label{assump:connec2}
There exists non-empty, an infinite sequence of contiguous, bounded time-intervals $[\emph{k}_m, \emph{k}_m+1)$, $m = 0, 1, 2, . . .$, such that across each $[\emph{k}_m, \emph{k}_m+1)$, the graph $\emph{g}(\emph{k})$ is connected \cite{main}.
\end{assumption}
\section{Decentralized Random Search Algorithm}
In this section, we introduce a two-stage decentralized grid based random search algorithm for finding targets in a bounded 3D area. At the first phase, the mobile sensors use consensus variables to build a common 3D grid. At the end of this stage, sensors are located on the vertices of the 3D grid. In the second stage, the mobile sensors perform the search task by moving randomly on the vertices of the 3D grid. To avoid unnecessary motion and to save the time and the energy consumption, during the search the mobile sensors share the information about the visited vertices and the detected targets with other mobile sensors passing on their communication range.
\subsection{Building a Common Three-dimensional Covering Grid}
In this section, we propose a distributed consensus algorithm to enable the mobile sensors to reach a common value and to build a common covering grid. Using our proposed distributed consensus algorithms each mobile sensors updates its information based on the information states of its neighbours in such a way that the final information state of each mobile sensor converges to a common value. It is said that agreement or consensus is achieved if the corresponding states of all agents converge to the same value.
 To make a covering truncated octahedral or cubic grid, we use 3D node placement strategies proposed in \cite{alam2008coverage} as follows:
\begin{equation}
V_1=(\emph{x}+\alpha_1\frac{2\emph{R}_s}{\sqrt{3}}, \emph{y}+\alpha_2\frac{2\emph{R}_s}{\sqrt{3}},\emph{z}+\alpha_3\frac{2\emph{R}_s}{\sqrt{3}})
\end{equation}
\begin{equation}
V_2=(\emph{x}+(2\alpha_1+\alpha_3)\frac{2\emph{R}_s}{\sqrt{5}}, \emph{y}+(2\alpha_2+\alpha_3)\frac{2\emph{R}_s}{\sqrt{5}},\emph{z}+\alpha_3\frac{2\emph{R}_s}{\sqrt{5}})
\end{equation}
The vectors $V_1$, $V_2$ determine the position of the centres of the cubes and the truncated octahedrons which are used to tessellate the search area, respectively.
Here, $\alpha_1,\alpha_2$ and $\alpha_3 \in Z$ and $Z $ is the set of all integers. The inputs to our algorithm are the sensing range $\emph{R}_s$ and the coordinates of the point $(\emph{x}, \emph{y}, \emph{z})$, which act as a seed for the growing of the 3D grid.
 In \cite{alam2008coverage} , the point $(\emph{x},\emph{y},\emph{z})$ was considered  as an arbitrary known point. In this chapter, we assume that the mobile sensor network does not have information about this point a priori. Therefore, we use consensus variables to build a common coordinates $(\emph{x},\emph{y},\emph{z})$ for the mobile sensor network, as a result, we will create a common 3D grid for the mobile sensor network. Initially, the sensors  do not have a common coordinate system. Therefore, we assume that  each mobile sensor has consensus variables $\emph{x}_i(\emph{k})$,$\emph{y}_i(\emph{k})$ and $\emph{z}_i(\emph{k})$ in its coordinate system. It is obvious that, any 3D grid is uniquely defined by a point $q_i(\emph{k})=(\emph{x}_i(\emph{k}),\emph{y}_i(\emph{k}),\emph{z}_i(\emph{k}))$  and $\emph{R}_s$. Thus, any   $q_i(\emph{k})$ and $\emph{R}_s$  uniquely define a 3D grid in  $\emph{M} $, which will be denoted as $\mathcal{V}[q,\emph{R}_s]$. The scalar parameters  $ \alpha_1 $, $ \alpha_2 $,$ \alpha_3$ and $ \emph{R}_s $ along with the 3D consensus variables $q_i(\emph{k})=[\emph{x}_i(\emph{k})\quad  \emph{y}_i(\emph{k})\quad   \emph{z}_i(\emph{k})]$ characterize the coordinates of the vertices of the grid. The mobile sensors will start with different values of the coordination variables  $\emph{x}_i(0)$,$\emph{y}_i(0)$ and $\emph{z}_i(0)$, then eventually converge to some consensus values $\emph{x}_0$, $\emph{y}_0$ and $\emph{z}_0$ which define a common coordinate system for the mobile sensor network.\\
Let $p_i(\emph{k})$ be the coordinate of the mobile  sensor $i$ at time $\emph{k}$, and $\emph{C}[q](p)$ defines the closest vertices of the 3D grid $\mathcal{V}[q,\emph{R}_s]$ to p. The following rules for updating the consensus variables and the mobile sensors’ coordinates are proposed that drives all mobile sensors to the same constant values:
\begin{equation}\label{eq:cons3}
\emph{x}_i(\emph{k}+1)=\frac{\emph{x}_i(\emph{k})+\sum_{\substack{
   j\in N_i(\emph{k})
}}\emph{x}_j(\emph{k}) }
 {1+| N_i(\emph{k}) |}
\end{equation}
\begin{equation*}
\emph{y}_i(\emph{k}+1)=\frac{\emph{y}_i(\emph{k})+\sum_{\substack{j\in N_i(\emph{k})}}\emph{y}_j(\emph{k})}{1+|N_i(\emph{k})|}
\end{equation*}
\begin{equation*}
\emph{z}_i(\emph{k}+1)=\frac{\emph{z}_i(\emph{k})+\sum_{\substack{j\in N_i(\emph{k})}}\emph{z}_j(\emph{k})}{1+|N_i(\emph{k})|}
\end{equation*}
\begin{equation}\label{eq:cons4}
p_i(\emph{k}+1)=\emph{C}[q_i(\emph{k}),\emph{R}_s](p_i(\emph{k}))
\end{equation}
Based on the rules (~\ref{eq:cons3}) and (~\ref{eq:cons4}), each mobile sensor updates its consensus variables using the average of its own and the consensus variables values of its neighbours located within its communication range.
\begin{theorem} Suppose that Assumptions ~\ref{assump:connec2} hold and the mobile sensor network moves according to the laws (~\ref{eq:cons3}), (~\ref{eq:cons4}). Then, there exists a cubic(truncated octahedral) covering set $\mathcal{V}$ such that: $\forall$ $i = 1, 2, . . ., n$, $\exists$ $v\in\mathcal{V} $ ; $lim_{\emph{k}\rightarrow\infty} p_i\left(\emph{k}\right)= v$.
\end{theorem}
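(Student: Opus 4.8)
The plan is to follow the scheme of the proof of Theorem~\ref{TH1}, adapted to the node placements $V_1$, $V_2$ and to the update rules (\ref{eq:cons3})--(\ref{eq:cons4}). It rests on three ingredients: (i) the consensus variables reach agreement; (ii) agreement of the consensus variables forces the sensors' individual grids to converge to one common grid; (iii) once that common grid is (approximately) fixed, the nearest-vertex projection (\ref{eq:cons4}) pins each sensor to a single vertex. I expect ingredient (iii) to be the only step requiring real care.

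\textbf{Consensus and convergence of the grids.} First I would note that (\ref{eq:cons3}) is precisely the Vicsek--Jadbabaie averaging iteration applied coordinate-wise to the scalars $x_i$, $y_i$, $z_i$. Under Assumption~\ref{assump:connec2} (the switching communication graph is jointly connected over the intervals $[k_m,k_m{+}1)$) the standard convergence result for such iterations, see \cite{jadbabaie2003coordination,cao}, yields common limits $x_i(k)\to x_0$, $y_i(k)\to y_0$, $z_i(k)\to z_0$, the same for every $i$. Writing $q_i(k)=(x_i(k),y_i(k),z_i(k))$ and $q_0=(x_0,y_0,z_0)$, we get $q_i(k)\to q_0$, so each sequence $\{q_i(k)\}$ is Cauchy. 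Since $R_s$ and the integers $\alpha_1,\alpha_2,\alpha_3$ are shared by all sensors, the vertex set $\mathcal{V}[q_i(k),R_s]$ is a rigid translate of one fixed lattice $\Lambda$ (the cubic lattice of spacing $2R_s/\sqrt3$ in the cube case, the body-centred-cubic lattice generated by $V_2$ in the truncated-octahedron case), i.e. $\mathcal{V}[q_i(k),R_s]=q_i(k)+\Lambda$. Hence $\mathcal{V}[q_i(k),R_s]\to q_0+\Lambda=:\mathcal{V}$ vertex by vertex, and $\mathcal{V}$ is the common covering grid of the statement (intersected with $M$, the boundary-detection assumption keeping the sensors inside the search region).

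\textbf{The projection dynamics settle on one vertex.} Let $d>0$ be the minimal distance between distinct points of $\Lambda$ and $r=d/2$ the packing radius of $\Lambda$, i.e. the in-radius of its Voronoi cell (the cube, resp. the truncated octahedron). Using the Cauchy property of $\{q_i(k)\}$, pick $K_1$ with $\|q_i(k{+}1)-q_i(k)\|<r$ for all $i$ and all $k\ge K_1$. For $k\ge K_1$, rule (\ref{eq:cons4}) places $p_i(k{+}1)$ at a vertex $q_i(k)+c_i(k)$ with $c_i(k)\in\Lambda$; this point lies within distance $<r$ of the vertex $q_i(k{+}1)+c_i(k)$ of the next grid, which --- $r$ being the packing radius --- is its \emph{unique} nearest vertex in $\mathcal{V}[q_i(k{+}1),R_s]$, so $c_i(k{+}1)=c_i(k)$. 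Thus the lattice label is eventually constant, $c_i(k)\equiv c_i$, and therefore $p_i(k{+}1)=q_i(k)+c_i\to q_0+c_i=:v\in\mathcal{V}$, which is the desired limit for every $i$.

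\textbf{Where the difficulty lies.} The delicate point is the last step: the sensors are projected onto a grid that is itself still drifting, so one must exclude the label $c_i(k)$ oscillating between adjacent vertices forever; this is exactly what the packing-radius estimate prevents, once the per-step drift has shrunk below $r$. One further bookkeeping point: the nearest-vertex operator $C[\cdot]$ must be equipped with a fixed tie-breaking rule (or the measure-zero set of exactly-equidistant configurations handled separately), after which the argument runs identically for the cubic and the truncated-octahedral placements, so that the proof of Theorem~\ref{TH1} carries over with only these modifications.
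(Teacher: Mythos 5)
Your proposal is correct and follows essentially the same scheme as the paper's own proof: convergence of the consensus variables to common values $x_0,y_0,z_0$ via the standard switching-graph averaging result of \cite{jadbabaie2003coordination}, hence a common grid determined by the shared parameters $R_s,\alpha_1,\alpha_2,\alpha_3$, followed by the projection rule (\ref{eq:cons4}) placing each sensor on a vertex. Your third ingredient --- the packing-radius argument showing that the lattice label $c_i(k)$ eventually freezes while the grid is still drifting --- goes beyond the paper, which simply asserts that (\ref{eq:cons4}) "guarantees that all mobile sensors move to the vertices of the common grid" without excluding perpetual oscillation between adjacent vertices; your extra step therefore tightens, rather than departs from, the published argument.
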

\begin{proof}
 Based on the update law (~\ref{eq:cons3}) and the assumption ~\ref{assump:connec2}, the consensus variables converges to some constant values as follows: $ \emph{x}_i\left(\emph{k}\right)\rightarrow \emph{x}_0$, $ \emph{y}_i\left(\emph{k}\right)\rightarrow \emph{y}_0$ and  $ \emph{z}_i\left(\emph{k}\right)\rightarrow \emph{z}_0$.\\(See \cite{jadbabaie2003coordination} for the proof of convergence of consensus variables to some constant values). Since $\emph{R}_s$ ,$\alpha_1$ ,$\alpha_2$ and $\alpha_3$ are common to all mobile sensors, therefore:
\begin{equation*}
(\emph{x}_i+\alpha_1\frac{2\emph{R}_s}{\sqrt{3}},\emph{y}_i+\alpha_2\frac{2\emph{R}_s}{\sqrt{3}},\emph{z}_i+\alpha_3\frac{2\emph{R}_s}{\sqrt{3}})\rightarrow
\end{equation*}
\begin{equation}
(\emph{x}_0+\alpha_1\frac{2\emph{R}_s}{\sqrt{3}},\emph{y}_0+\alpha_2\frac{2\emph{R}_s}{\sqrt{3}},\emph{z}_0+\alpha_3\frac{2\emph{R}_s}{\sqrt{3}}).
\end{equation}
\begin{equation*}
(\emph{x}_i+(2\alpha_1+\alpha_3)\frac{2\emph{R}_s}{\sqrt{5}}, \emph{y}_i+(2\alpha_2+\alpha_3)\frac{2\emph{R}_s}{\sqrt{5}},\emph{z}_i+\alpha_3\frac{2\emph{R}_s}{\sqrt{5}})\rightarrow
\end{equation*}
\begin{equation}
(\emph{x}_0+(2\alpha_1+\alpha_3)\frac{2\emph{R}_s}{\sqrt{5}}, \emph{y}_0+(2\alpha_2+\alpha_3)\frac{2\emph{R}_s}{\sqrt{5}},\emph{z}_0+\alpha_3\frac{2\emph{R}_s}{\sqrt{5}}).
\end{equation}
 It means that, after a while, all mobile sensors will have a common covering 3D grid and, the rule (~\ref{eq:cons4}) guaranties that all mobile sensors move to the vertices of the common 3D grid.
 \end{proof}
\subsection{Randomized Grid Search}
 In this section, we propose a randomized search algorithm to transfer the mobile sensors to visit the vertices of the 3D grid. The 3D grid covers the entire search area, as a result, transmission of the mobile sensors to the vertices of the 3D grid implies that the whole  area is searched by the mobile sensor network.
We introduce the Boolean variables $b_v(\emph{k})$ and $b_\emph{T}(\emph{k})$ which define the states of the vertices of the grid and the states of the targets at the time $\emph{k}$, respectively. If vertex $v$ has been visited before time $\emph{k}$ by any of the mobile sensors  then, $b_v(\emph{k})= 1$ and $b_v(\emph{k})= 0$ otherwise. If the target $\emph{T}$ has been detected by any of the mobile sensors before time $\emph{k}$ then, $b_\emph{T}(\emph{k})= 1$  and $b_\emph{T}(\emph{k})= 0$ otherwise.\\ Now the following random algorithm for relocating  of the mobile sensors is presented:
\begin{equation} \label{eq:search}
p_i(\emph{k}+1)=\begin{cases}
\hat{N}_{p_i(\emph{k})}\quad with \quad probability\quad\frac{1}{\left\|\mathrm{\hat{S}}(p_i(\emph{k}))\right\|}\\
if\quad \left\|\mathrm{\hat{S}}(p_i(\emph{k}))\right\|\neq0\\
N_{p_i(\emph{k})}\quad \quad with \quad probability \quad\frac{1}{\left\|\emph{S}
(p_i(\emph{k}))\right\|}\\
if\quad \left\|\mathrm{\hat{S}}(p_i(\emph{k}))\right\|=0.
\end{cases}
\end{equation}
Where, $\emph{S}
(p_i(\emph{k}))$  is the  set of  the closest vertices of the 3D grid to $p_i(\emph{k})$ and $N_{p_i(\emph{k})}$ is a randomly selected element of $\emph{S}
(p_i(\emph{k}))$. $\mathrm{\hat{S}}(p_i(\emph{k}))$ is considered as a set of the all elements of $\emph{S}
(p_i(\emph{k}))$ which have not been already visited by any of the sensors also
$\hat{N}_{p_i(\emph{k})}$ is its randomly selected element. $\left\|\emph{S}
(p_i(\emph{k}))\right\|$ and  $\left\|\mathrm{\hat{S}}(p_i(\emph{k}))\right\|$ are considered as the number of the elements in each set.\\
The algorithm (~\ref {eq:search}) implies that the mobile sensors perform the search by moving randomly to the vertices of the covering grid. During the search, they keep the information of the previously searched vertices and share it with their neighbours. The mobile sensors randomly select their next destination from the unvisited vertices in their neighbourhood. In the case where all of the neighbouring vertices are visited they randomly move to one of the neighbouring vertices of the grid.
Based on this algorithm, the mobile sensor network constantly moves in the search area.\\ To stop the search process after search of the entire area when the mobile sensor network is unaware of the number of the targets; the following condition should be satisfied:
\begin{equation} \label{eq:stop1}
 p_i(\emph{k}+1)= p_i(\emph{k})
\end{equation}
\begin{equation*}
if \quad \forall \quad v \in \mathcal{V};\quad \left\|\mathrm{\hat{S}}(p_v(\emph{k}))\right\| = 0
\end{equation*}
\begin{theorem}: Suppose that the mobile sensors move according to the law (~\ref {eq:search}),(~\ref {eq:stop1}). Then for any number of mobile sensors, with probability 1 there exists a time $\emph{k}_0$  such that:
\begin{equation*}
 \forall \quad v\in \mathcal{V} , \quad  b_v(\emph{k}_0)=1
\end{equation*}
\end{theorem}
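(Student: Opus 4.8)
The plan is to recast the search dynamics as a finite absorbing Markov chain and then conclude by the same argument used to prove Theorem~\ref{th}. First I would record that, since $M$ is bounded, the covering set $\mathcal{V}$ contains only finitely many vertices, and that by the previous theorem (which, under Assumption~\ref{assump:connec2}, drives all sensors onto a common $3$D grid in finitely many steps) from some step onward every mobile sensor occupies a vertex of that common grid. From that moment the pair $X(k)=(p_1(k),\dots,p_n(k),(b_v(k))_{v\in\mathcal{V}})$ evolves as a time-homogeneous Markov chain on a finite state space, because both the relocation rule~(\ref{eq:search}) and the stopping rule~(\ref{eq:stop1}) depend only on the current sensor positions and on the current vector of visited flags. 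A key structural observation is that $k\mapsto (b_v(k))_{v\in\mathcal{V}}$ is componentwise non-decreasing: once a vertex is visited its flag stays $1$ forever. Consequently a state in which $b_v=1$ for every $v\in\mathcal{V}$ is absorbing, since there $\|\mathrm{\hat S}(\cdot)\|=0$ everywhere and~(\ref{eq:stop1}) freezes all sensors; proving the theorem amounts to showing that this set of absorbing states is reached with probability $1$.

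The heart of the argument is a reachability claim: from every state there is a sequence of transitions, each of strictly positive probability and of deterministically bounded length, ending in such a fully-visited state. To establish it I would single out one sensor, say sensor~$1$, and induct on $|U|$, the number of unvisited vertices, where $U=\{v\in\mathcal{V}:b_v=0\}$. If the vertex $w$ occupied by sensor~$1$ has an unvisited neighbouring vertex, then by~(\ref{eq:search}) sensor~$1$ is forced to step to an unvisited neighbour and the step to any particular one of them has positive probability, so $|U|$ strictly decreases. If $w$ has no unvisited neighbour but $U\neq\emptyset$, I would use connectedness of the grid-adjacency graph on $\mathcal{V}$ --- which holds because the space-filling cells tile the connected region $M$ --- to pick a shortest path in that graph from $w$ into $U$; its first edge leads to an already-visited neighbour, and in that case~(\ref{eq:search}) takes a uniform random step among the neighbouring vertices, so sensor~$1$ can follow that edge with positive probability and reduce its graph distance to $U$ by one. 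Since that distance never exceeds $|\mathcal{V}|$, after boundedly many such steps sensor~$1$ becomes adjacent to $U$ and then, by the first case, shrinks $|U|$. Iterating, $U$ empties within a number of steps bounded solely in terms of $|\mathcal{V}|$. During this construction the remaining sensors $2,\dots,n$ are allowed to move arbitrarily: each of their finitely many admissible moves has positive probability, and any extra vertices they happen to visit only help, so the full joint sequence of transitions still has positive probability and bounded length.

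With the reachability claim in hand, the conclusion is immediate from the standard theory of absorbing Markov chains, exactly as in the proof of Theorem~\ref{th}: on a finite state space, if from every state an absorbing state can be reached in a bounded number of steps with positive probability, then an absorbing state is entered with probability $1$. Here the absorbing states are precisely those with $b_v=1$ for all $v\in\mathcal{V}$, so with probability $1$ there exists a time $k_0$ with $b_v(k_0)=1$ for every $v\in\mathcal{V}$; and by monotonicity of the flags the same holds for all $k\ge k_0$. The step I expect to be the main obstacle is the reachability claim in the regime where no sensor currently sits next to an unvisited vertex: there one must carefully route a sensor through already-explored territory toward the unexplored region while only ever invoking positive-probability moves, and must verify that the simultaneous, uncontrolled motion of the other sensors cannot obstruct this --- both of which are handled by connectedness of the covering grid together with the fact that the visited set can only grow.
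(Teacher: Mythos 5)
Your proposal is correct and follows essentially the same route as the paper: the paper's own proof is a short sketch that simply declares the dynamics an absorbing Markov chain and asserts that an absorbing (fully-visited) state is reached with probability~1 from any initial state. Your write-up supplies exactly the step the paper waves away as ``obvious'' --- the reachability claim, in particular the routing of a sensor through already-visited territory via connectedness of the grid-adjacency graph when no sensor is adjacent to an unvisited vertex, together with the monotonicity of the visited flags --- so it is a more rigorous rendering of the same argument rather than a different one.
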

\begin{proof}
The algorithm (~\ref {eq:search}),(~\ref {eq:stop1}) describe an absorbing Markov chain that consists of many absorbing states (that are impossible to leave) and many transient states. The vertices where the mobile sensors stop are considered as absorbing states also, the vertices that are visited during the search are considered as transient states. Algorithm (~\ref {eq:search}) implies that the mobile sensors move randomly to the neighbouring unoccupied vertices of the 3D covering grid; this will continue until all mobile sensors’ neighbours are visited (an absorbing state). It is obvious that, from any initial state with probability 1, one of the absorbing states will be reached.
\end{proof}
For the case where the mobile sensor network knows the number of targets, the search should be stopped after finding all targets. In other words:
\begin{equation} \label{eq:stop2}
 p_i(\emph{k}+1)= p_i(\emph{k})
\end{equation}
\begin{equation*}
if \quad \forall\quad T=\left\{\emph{T}_1,\emph{T}_2,...\emph{T}_l\right\}, \exists\quad \emph{k},i\quad;\left\|(P_\emph{T}-p_i (\emph{k}))\right\| \leq \emph{R}_s
\end{equation*}
Where, $T=\left\{\emph{T}_1,\emph{T}_2,...\emph{T}_l\right\}$ and $P_\emph{T}=\left\{P_{\emph{T}_1},P_{\emph{T}_2},...P_{\emph{T}_l}\right\}$, are the sets of targets and their positions, respectively.
\begin{theorem}: Suppose that the mobile sensors move according to the law (~\ref {eq:search}),(~\ref {eq:stop2}). Then for any number of mobile sensors and any number of targets, with probability 1 there exists a time $\emph{k}_0$  such that:
 \begin{equation*}
\forall \quad  \emph{t} \in T,\quad b_\emph{T}(\emph{k}_0)=1
\end{equation*}
\end{theorem}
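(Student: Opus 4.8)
The plan is to reduce the statement to the vertex-visitation result already established in this section and then to convert ``every vertex is visited'' into ``every target is detected'' by means of the covering property of the grid together with the definition of detection. First I would note that, since $M$ is bounded, the covering set $\mathcal{V}=\hat{\mathcal{V}}\cap M$ is finite, and that the grid spacing $\frac{2R_s}{\sqrt{3}}$ in the cubic case (respectively $\frac{2R_s}{\sqrt{10}}$ in the truncated octahedral case) was chosen precisely so that the spheres of radius $R_s$ centred at the vertices of $\mathcal{V}$ cover $M$. Consequently, for each target $T_j$, $j=1,\dots,l$, located at $P_{T_j}\in M$ there is at least one vertex $v_j\in\mathcal{V}$ with $\|P_{T_j}-v_j\|\le R_s$; by the definition of detection, the instant some sensor satisfies $p_i(k)=v_j$ the target $T_j$ is detected, so $b_{T_j}(k)=1$, and a detected target stays detected.

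Next I would compare the actual process, governed by (\ref{eq:search}) together with the stopping rule (\ref{eq:stop2}), with the ``free'' process that follows (\ref{eq:search}) forever. The free process coincides, up to the harmless stopping rule, with the randomized grid search of the preceding theorem, which guarantees that with probability $1$ there is a finite time at which $b_v=1$ for every $v\in\mathcal{V}$; in particular each of $v_1,\dots,v_l$ is visited at some finite time, and letting $k_0$ be the largest of these times we obtain $b_{T_j}(k_0)=1$ for all $j$ in the free process. The actual process agrees with the free process until it halts: if it halts at some time $k_{\mathrm{stop}}\le k_0$, then by the form of (\ref{eq:stop2}) all targets were detected before it halted, and the conclusion holds with the halting time in place of $k_0$; otherwise the two processes agree up to $k_0$ and the conclusion holds as stated. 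In every case, with probability $1$ there is a time $k_0$ with $b_{T}(k_0)=1$ for all $t\in T$.

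The main obstacle, and the step deserving the most care, is the first reduction: one must verify that the specific grid spacings indeed produce a covering of $M$ by the sensing spheres of the vertices, since this is exactly what ties the purely combinatorial ``all vertices visited'' statement to the physical detection events. A secondary technical point, inherited from the earlier theorem, is the justification that the free process visits every vertex almost surely: because $\mathcal{V}$ is finite and the grid graph is connected, from any joint configuration of the sensors and any pattern of already visited vertices there is a uniformly positive probability of reaching a prescribed unvisited vertex within a bounded number of steps (following a shortest path in the grid graph and using that (\ref{eq:search}) always assigns positive probability to moving onto an unvisited neighbour, or onto any neighbour when none is unvisited); a standard Borel--Cantelli / repeated-trials argument then forces every vertex to be visited in finite time with probability $1$.
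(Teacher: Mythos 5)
Your proposal is correct and takes essentially the same route as the paper: it reduces the claim to the preceding theorem that the randomized grid walk visits every vertex of the finite covering set with probability~1 (via an absorbing--Markov-chain / positive-probability argument) and then concludes that every target is detected. You additionally make explicit two steps the paper leaves implicit --- the covering property of the grid spacing that converts ``every vertex visited'' into ``every target detected,'' and the coupling between the freely running search and the process stopped by the rule (\ref{eq:stop2}) --- so your write-up is a more complete version of the paper's one-line argument.
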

\begin{proof}
Vertices where sensors stop are considered as absorbing states (~\ref {eq:stop2}), also the vertices of the grid which are visited during search are considered as transient states (~\ref {eq:search}). As a result, the algorithm (~\ref {eq:search}),(~\ref {eq:stop2}) describes an absorbing Markov chain. The mobile sensors randomly move to visit unoccupied vertices; this continues until all targets are detected. Now the proof of this theorem immediately follows from the proof of Theorem 3.2.
\end{proof}

\begin{figure*}[t!]
\begin{center}
\mbox{
\subfigure[Mobile sensor's initial position]{
{\includegraphics[width=0.8\textwidth,height=0.65\textwidth]{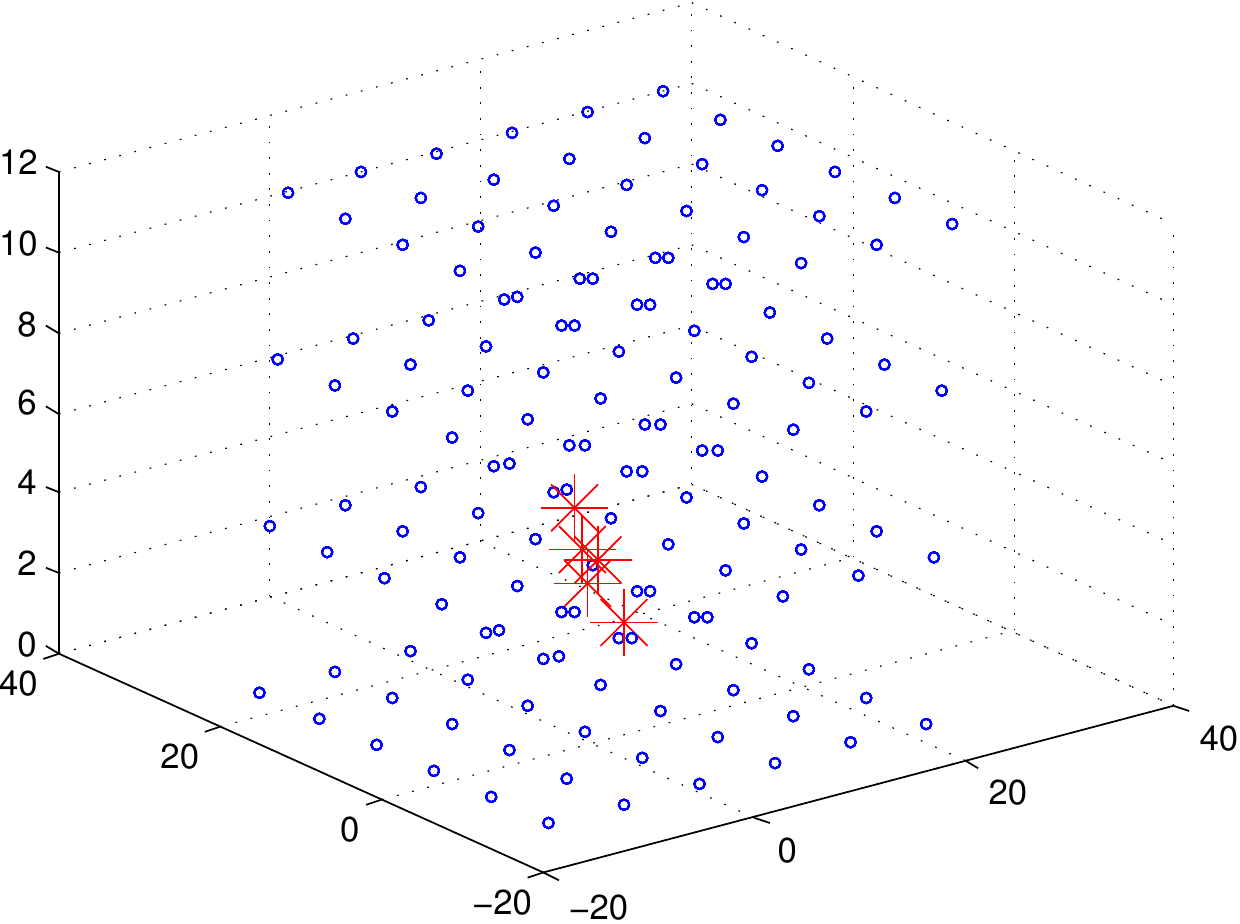}}\quad
\label{init2}
}
}
\mbox{
\subfigure[Consensus to the vertices of the truncated octahedral grid]{
{\includegraphics[width=0.8\textwidth,height=0.65\textwidth]{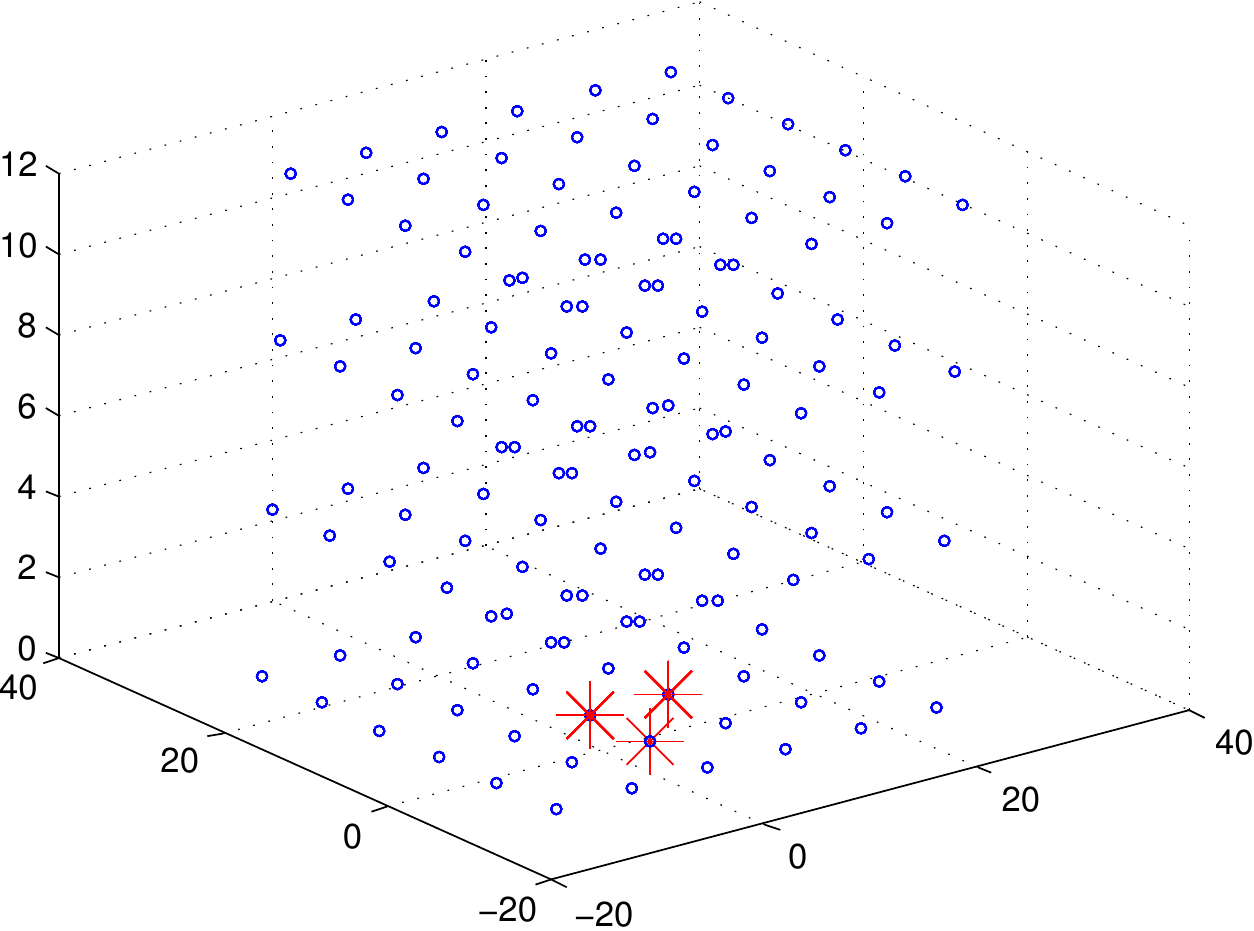}}\quad
\label{cons2}
}
}
\caption{Mobile sensor’s movement to the  vertices of the  common grid}
\label{search1}
\end{center}
\end{figure*}

\begin{figure*}[t!]
\begin{center}
\mbox{
\subfigure[The first scenario]{
{\includegraphics[width=0.8\textwidth,height=0.65\textwidth]{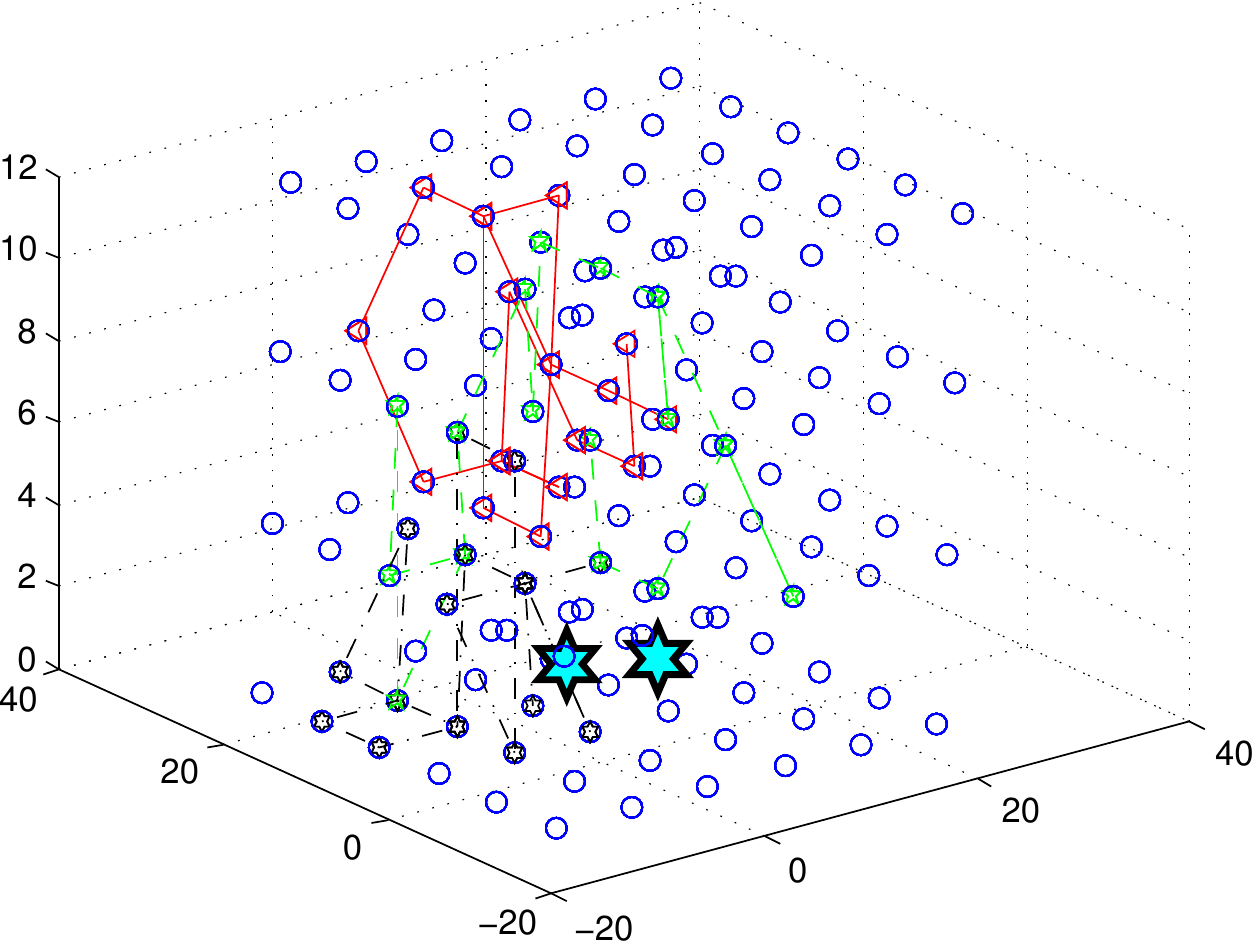}}\quad
\label{scenario1}
}
}
\mbox{
\subfigure[The second scenario]{
{\includegraphics[width=0.8\textwidth,height=0.65\textwidth]{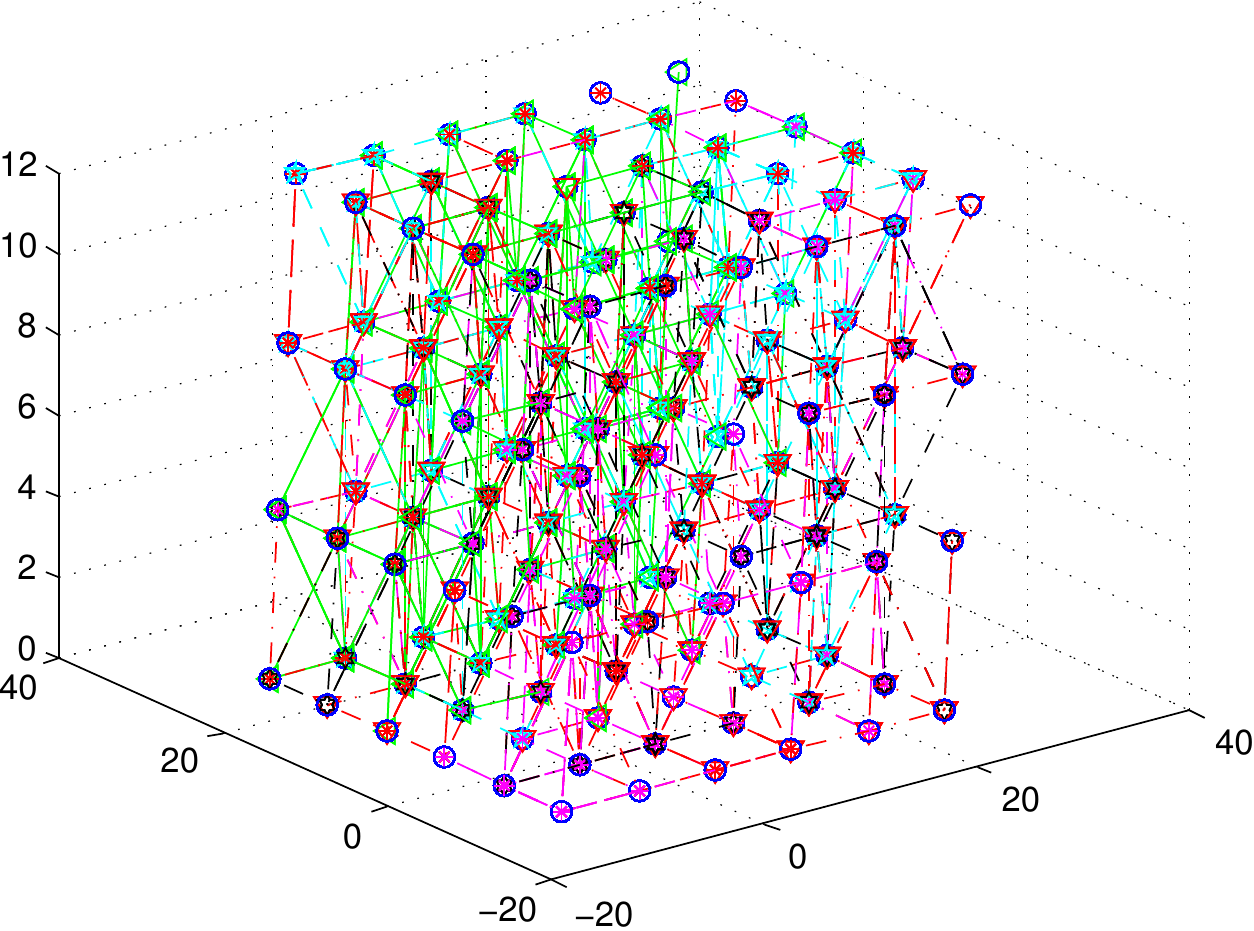}}\quad
\label{scenario2}
}
}
\caption{Mobile sensors’ movement to cover monitoring region (Truncated octahedron based grid)}
\label{search2}
\end{center}
\end{figure*}


\section{Simulation Results}
In this section, we present simulation results for different scenarios to explain the proposed algorithm. We consider six mobile sensors with random initial positions for the search task shown in Fig.\ref{init2}. By using the truncated octahedral grid and the control law (~\ref{eq:cons3}) and (~\ref{eq:cons4}), the mobile sensors move to the vertices of the common 3D grid. At the end of this stage, all mobile sensors are located at the vertices of the 3D grid as shown in Fig.\ref{cons2}. Then, we apply the second stage of the algorithm to transfer the mobile sensors for the random search. Based on the decentralized random control law (~\ref{eq:search}), the mobile sensors move randomly to visit unoccupied vertices of the covering grid. Throughout the search, each mobile sensor keeps information about the visited vertices and communicates with its neighbours in the communication range; this optimizes the search task because the mobile sensors do not search the vertices which have been already visited by the others.\\
Fig.\ref{scenario1} shows the performance of the search algorithm for the first scenario. As mentioned before, for this scenario we assume the mobile sensor network knows nothing about the environment and the positions of targets except the number of targets. In this case, the search goal is to identify two targets which are randomly placed in the search area. In this case, we evaluate our proposed distributed search coverage algorithm based on the time taken to find all targets. As shown, the mobile sensor network has identified the targets after 8 steps. Trajectories of the mobile sensors after 8 steps have been shown in Fig.\ref{scenario1}. As shown, the control  law (~\ref {eq:stop2}) terminates the search process after detecting the given number of targets.\\
For the second scenario where the mobile sensor network is not aware of the number of targets, the whole area should be searched to identify all possible targets. In this case, the control law (~\ref {eq:stop1}) terminates the search process after complete search of the whole area as shown in Fig.\ref{scenario2}. It is obvious that, adding the number of mobile sensor means more vertices of the grid to be visited at the same time. As a result, the increase in the numbers of the mobile sensors makes the search more efficient because increasing the number of mobile sensors will decrease the time taken to the search task for both search scenarios. As it is seen from the Fig.\ref{comp2} the performance of the system consisting of 14 mobile sensors is better than that of 8 mobile sensors and similarly 2 mobile sensors search performance is better than 1 mobile sensor. Also, the figure demonstrates the algorithm becomes faster with  more mobile sensors. However, if the number of mobile sensors exceeds a limit, the effectiveness of the algorithm has less impact on the whole search performance.\\
To compare the performance of the grids, we did several simulations with different number of mobile sensors. Based on the simulations the average search length for the truncated octahedron grid was about 28 percent less than for the cubic grid. In the following, we carried out several simulations to compare the proposed grid based random search method with the randomized algorithms based on Levy flights proposed in \cite{keeter2012cooperative} for detecting targets in a bounded 3D environment. Simulations result show that the average search time of the proposed grid based search method for two mobile sensors performing a search is far less than that of the Levy flight approach.\\
\begin{figure}
\centering
{\includegraphics[width=0.7\textwidth]{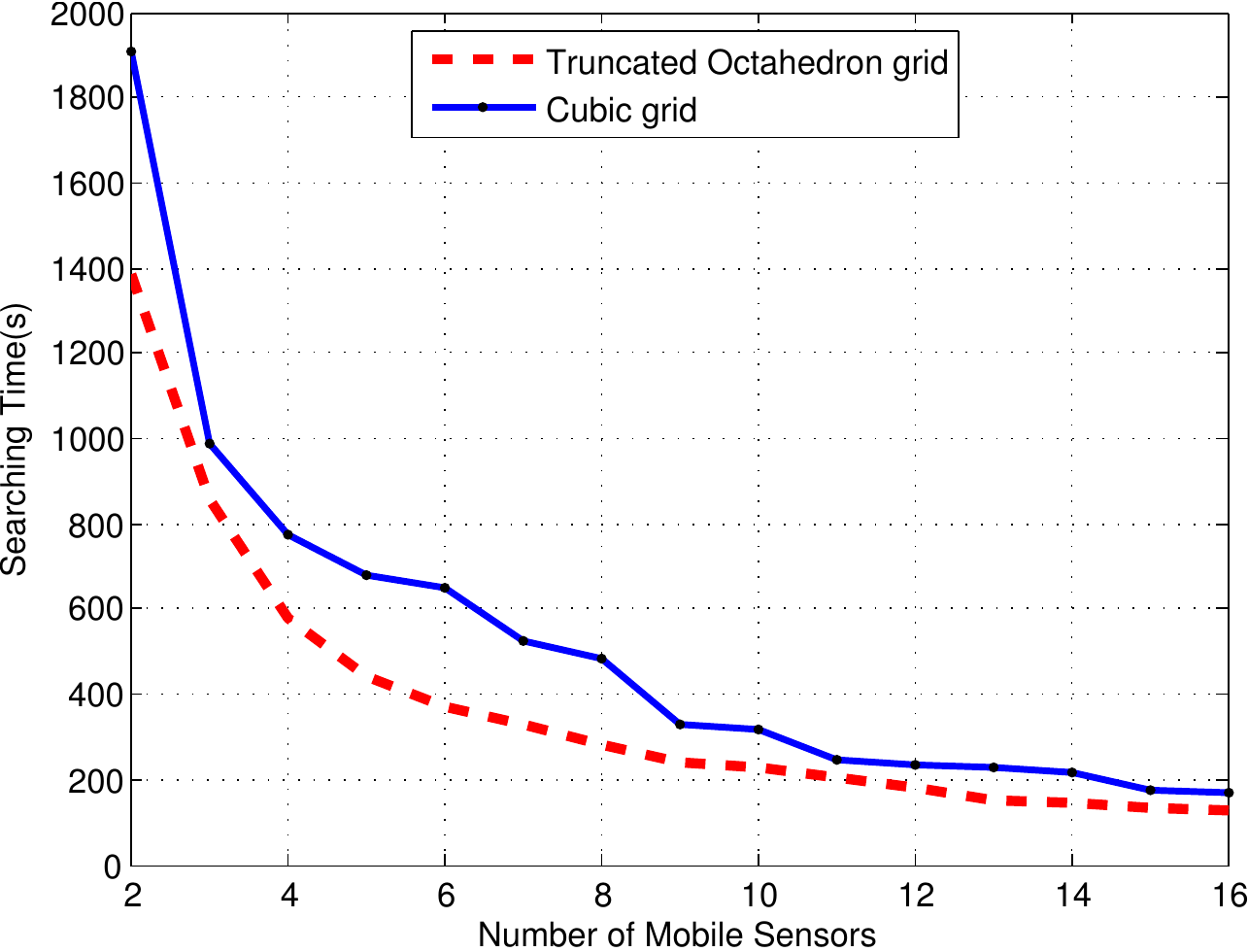}}
\caption{Searching time versus number of mobile sensors}
\label{comp2}
\end{figure}
\begin{figure}
\centering
{\includegraphics[width=0.8
\textwidth]{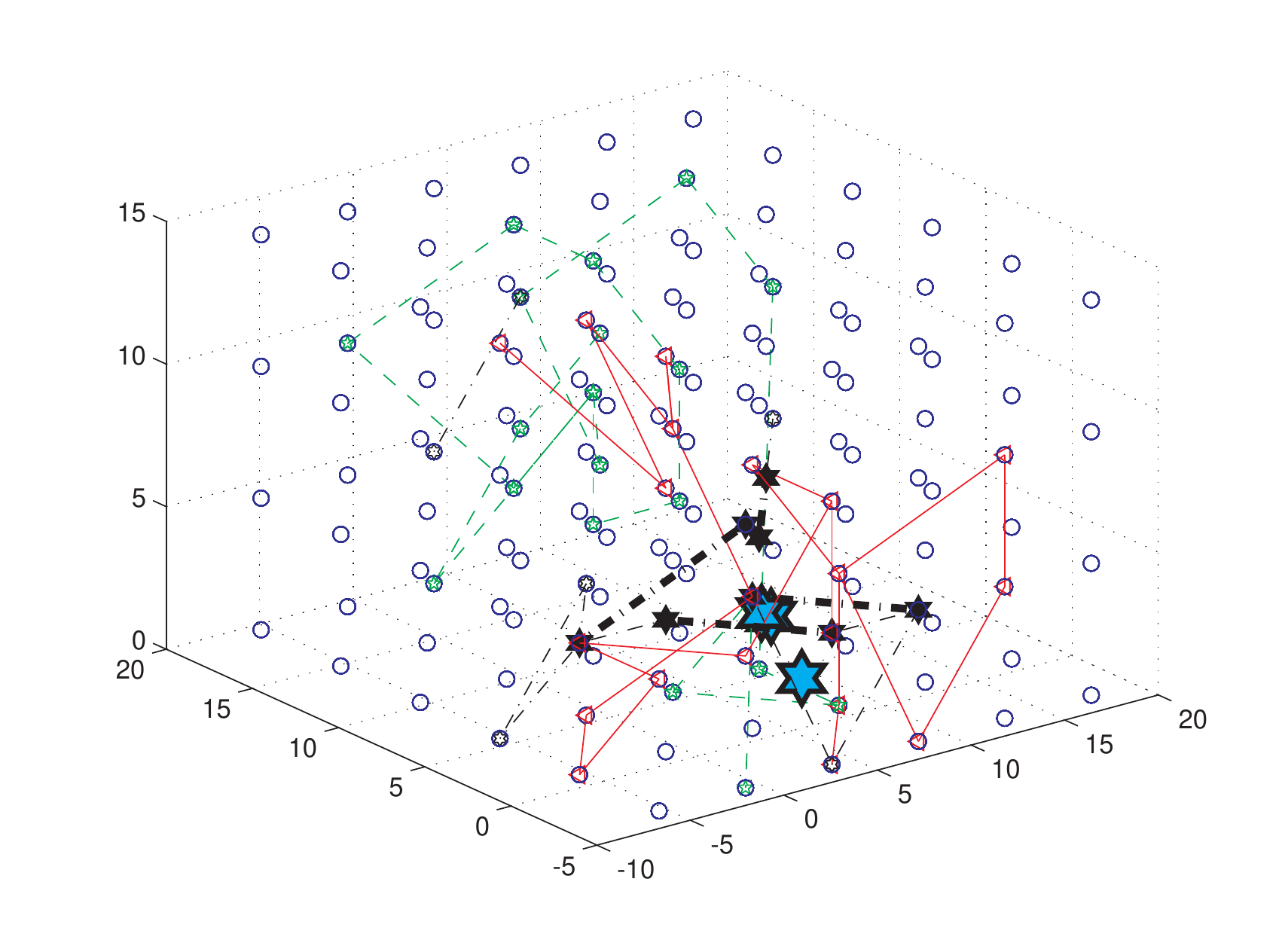}}
\caption{Robustness againt failure}
\label{rob}
\end{figure}
In the next set of simulations, we have evaluated the ability of the proposed search algorithm for handling some common uncertainties by assuming that one mobile sensor unexpectedly break during the search task. For this case, we consider six mobile sensors search for three targets. Fig.\ref{rob} shows that the other mobile sensors still be able to perform the search task in case of failure of one mobile sensors. It means that despite failure of one or more mobile sensor during the process the search task has been accomplished successfully.\\
Now, we investigate the effect of the size of the communication range on the performance of the proposed search algorithm. We have done three set of simulation. The communication ranges are chosen $R_c$, $\frac{R_c}{3}$ and $3R_C$. Simulation results show that, when the communication range is increased the search performance increases up to a certain point beyond which there is not much change in the performance of the system. As a result, it is not necessary to have the communication range to cover all the search area for better performance. Furthermore, shorter communication range means lower power consumption.
\section{Summary}
 In this chapter, we presented a set of distributed random laws for search in unknown bounded three dimensional environments. The mobile sensors utilize a truncated octahedral grid for the search process. Furthermore, they exchange information with their neighbouring sensors to minimize the search time. The performance of the proposed algorithm has been confirmed by extensive simulations and the convergence with probability 1 for any number of mobile sensors of the proposed algorithm has been mathematically rigorously proved.

\singlespacing


\chapter{Distributed Bio-inspired Random Search for Locating Static Targets}\label{chap:1ValiCH3}
\minitoc
In many applications, it is not necessary to cover the whole area of interest. Monitoring only some specific points in the area of interest is sufficient and acceptable. For such cases, we assume only a limited number of discrete target points are to be monitored. The point of interest can be either static or mobile. Here we assume that the targets are stationary which are not able to move. In this chapter, we propose a novel decentralized bio-inspired random search algorithm for the search of randomly located static targets in 3D spaces \cite{vali}. Here, we combine the bio-inspired Levy flight random search mechanism \cite{benhamou2007many} with a 3D covering grid proposed in \cite{nazarzehi2015distributed} to optimize the search procedure. In previous works the Levy flight random walks took place on a continuous space but here the random walk occurs on a discrete grid. Unlike \cite{nazarzehi2015distributed}, here we assume the mobile sensors move randomly on the vertices of the covering grid with the length of the movement follow a Levy flight distribution. Based on this scheme, the Levy flight distribution will generate the length of the movement and the vertices of the covering truncated octahedral grid will improve the dispersion of the deployed mobile sensor. As a result, this method reduces the search task and optimizes the search procedure by dispersing efficiently the mobile sensors in the search area. In order to stop search after finding all targets and reduce the cost of operation, each mobile sensor communicates to other mobile sensors within its communication range to broadcast information regarding detected targets. Performance of this bio-inspired random search method is verified by simulations. Moreover, to evaluate the performance of the proposed grid-based Levy flight algorithm we compare it to other random search strategies for locating sparse and clustered distribution of targets. Furthermore, we give a mathematically rigorous proof of the convergence of the proposed algorithm with probability 1 for any number of mobile sensors and targets. \\
The remainder of the chapter is structured in the following way: Section \ref{4.1} describes the problem of bio-inspired random search in 3d environments. The proposed algorithm is explained in Section \ref{4.2}. Section \ref{4.3} is devoted to evaluate the proposed algorithm by simulations. Section \ref{4.4} concludes this work.
\section{Problem Formulation} \label{4.1}
The objective of this chapter is to design a decentralized bio-inspired random search algorithm to drive a network of mobile sensors for target search in a  bounded 3D area. We assume that the mobile sensors have no information about the the positions of the targets $\emph{T}_1, \emph{T}_2, ... \emph{T}_l$ and search area ($\emph{M}  \subset \mathbb{R}^3$)  , but they can detect the boundaries of the search area. Let $ p_i(.)\in \texttt{R}^3 $ be the Cartesian coordinates of the mobile sensor $i$ and each mobile sensor  has a sensing range of $\texttt{R}_s>0 $ and a communication range of  $\texttt{R}_c> \frac{4}{\sqrt{5}}\texttt{R}_s $.
\begin{figure}
\centering
{\includegraphics[width=9cm,height=7.5cm]{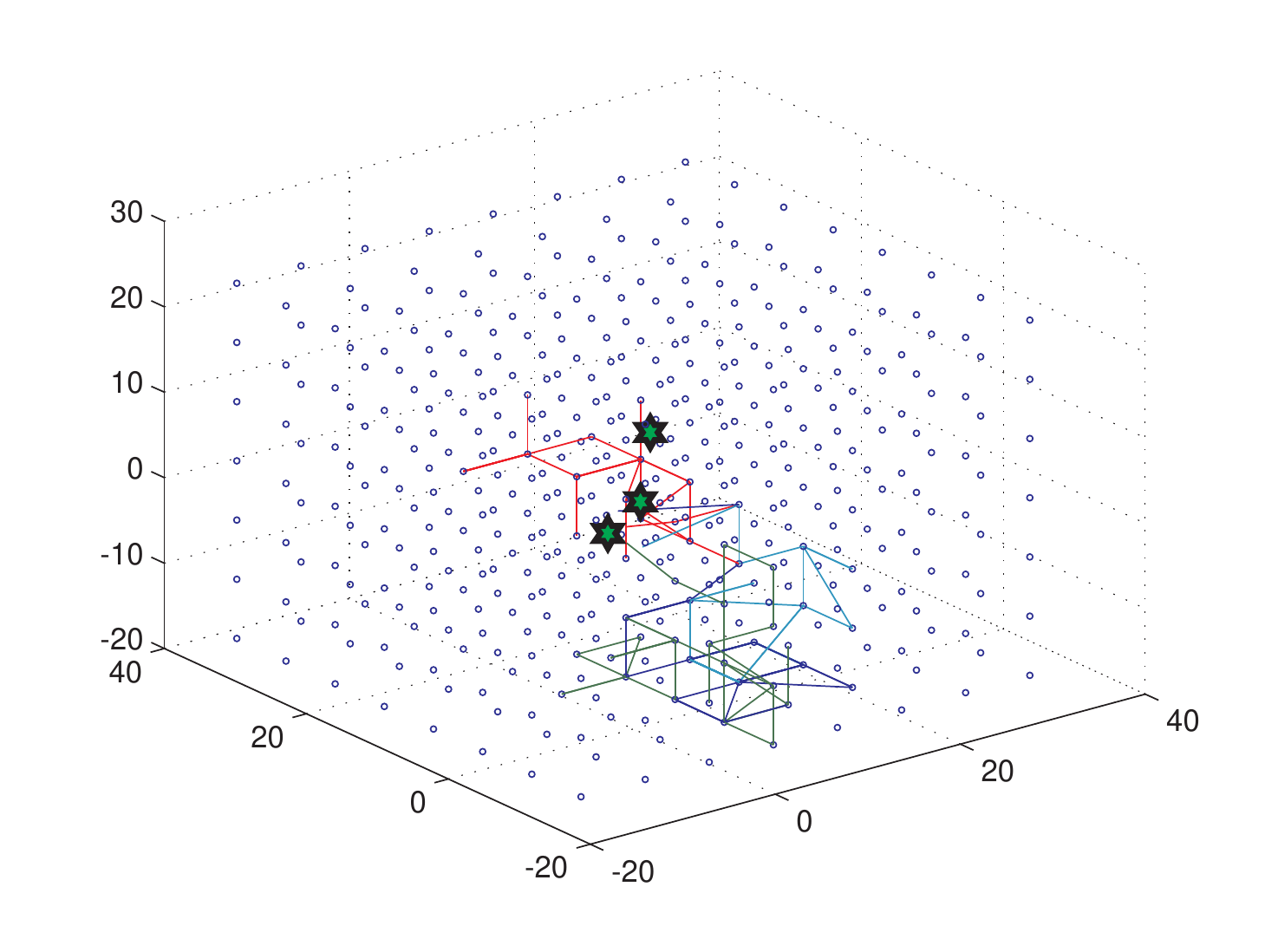}}\\
\caption{Vertices of the grid denoted by $’o’$, mobile sensors search trajectories represented by $’-’$ and targets by star }
\label{ch3}
\end{figure}
Our goal is to develop a distributed grid-based random search algorithm to drive the mobile sensors and to efficiently spread the mobile sensors in the search area for identifying all targets in a bounded 3D space. We take advantage of vertices of a 3D truncated octahedral grid proposed in \cite{alam2006coverage,alam2014coverage}.

At first, we assume there is not a common coordinate system among all mobile sensors. As a result, the mobile sensors use consensus variables to build a common coordinate system  and a common covering truncated octahedral grid. Then, they perform the search task by moving randomly with the random steps length drawn from a Levy flight probability distribution on the vertices of the grid from different initial positions to locate all randomly located targets in the 3D area. During the search, the mobile sensors keep the information of the previously detected targets and communicates with its neighbours in the communication range at the discrete sequence of times $\emph{k} = 0, 1, 2,...$, to exchange the search information.
\begin{definition}
A target is said to be detected by a mobile sensor if the Euclidean distance between the target and the mobile sensor is less than the sensing range. Mathematically speaking, the  target $T_i$ is said to detected by a mobile sensor $s_i$ , if and only if, $\left\|P_{T_i}-p_i (\texttt{K})\right\| \leq \texttt{R}_s$.
\end{definition}
Where, $P_{T_i}$ is position of target $T_i$.
\section{Decentralized Bio-inspired Random Search Algorithm} \label{4.2}
In this section, we present a two-stage decentralized bio-inspired random search algorithm for finding targets in a bounded 3D area. First, the mobile sensors build a common 3D truncated octahedral grid as explained in the previous chapter,section 3.2.1. Then, they perform the search task by moving randomly on the vertices of the 3D grid using random walks with a Levy-flight probability distribution.\\ Levy flight is a renowned bio-inspired random search mechanism with step lengths taken from a heavy-tailed probability distribution \cite{reynolds2009levy}. Using this distribution the probability of returning to a previously visited site is smaller, therefore advantageous when target sites are sparsely and randomly distributed \cite{viswanathan2000levy,nurzaman2009yuragi}. To optimize the search task, we supplement the bio-inspired Levy flight random search strategy to a covering truncated octahedral grid. It means that, the mobile sensors do the search task by moving randomly to the vertices of the covering grid with steps length drawn form Levy-flight probability distribution. At first we assume the mobile sensors do not have a common coordinate system and a common covering grid therefore the robots use consensus variables to build a common covering grid 3D \cite{nazarzehi2015distributed}.
In our previous search algorithm, mobile sensors performed the search task by moving randomly on the unvisited vertices in their neighbourhood. To optimize the search task and to minimize the time of search  in this section, we propose a novel random search strategy for the mobile sensors. Using this method, the mobile sensors do the search task by moving randomly to the vertices of the covering grid based on the random walk generated by the Levy flight distribution. A Levy flight random walk pattern uses a Levy probability distribution as follows \cite{viswanathan2008levy}:
\begin{equation}
P_{\alpha,\gamma}(l)=\frac{1}{2 \pi}\int_{-\infty}^{+\infty} e^{-\gamma q^\alpha}cos(ql) dq
\end{equation}
Where $\gamma$ denotes the scaling factor and $\alpha$ represents the shape of the distribution. Also, the distribution is symmetrical around $l$. In this chapter,
 the Levy flight  provides a random walk with the random step length is drawn from an approximated  Levy distribution proposed in \cite {sutantyo2013collective} as follows:\\
\begin{equation}
P_\alpha(l)=(l)^{-\alpha}
\end{equation}
Where $l$ is a random number, $\alpha>0$ defines the length of walk and $1<\alpha<3$. Based on our method, a mobile sensor selects next vertices of the grid as follows:\\
Step 1: The mobile sensor's direction will be drawn from a uniform distribution of angles in a range of $[0, 2\pi]$. In other words, the mobile sensor chooses a direction uniformly from a sphere.\\
Step 2: The mobile sensor chooses a step length from a Levy flight distribution where the power law parameter $\alpha$ determines the distance travelled by the mobile sensors between direction changes.\\
 Step 3: The mobile sensor moves to the closest vertices of the grid to the point calculated in Step 2 and 3.\\
 Step 4: If a target is ever within the sensing rage of the mobile sensor, then the target is said to be detected. If no target is found, the mobile robotic sensor repeats its search according to steps mentioned above.\\
 Note that the mobile sensors move from one vertex of the grid to another until they detect all targets. During the search process if a new random walk places a mobile sensor out of the boundary of the search area it will be rejected and a new random walk will be drawn from Levy probability distribution.\\
 Let ${\hat{\texttt{V}}}$ be the  set of  all vertices of the 3D covering grid and $\hat{v} \in \hat{\texttt{V}}$ is a randomly selected element of $\hat{\texttt{V}}$ with Levy flight probability distribution. We propose the following random algorithm for the  mobile sensors to search the 3D area:
\begin{equation} \label{eq:law1}
p_i(\emph{k}+1)=
\hat{v}
\end{equation}
It means that the mobile sensors use steps 1,2,3 to select an element of ${\hat{\texttt{V}}}$ in each step and move on it.\\
Here we assume that the Boolean variables $b_\emph{T}(\emph{k})$ defines the states of the targets at the time $\emph{k}$. If target $\emph{T}$ has been detected by any of the mobile sensors before time $\emph{k}$ then, $b_\emph{T}(\emph{k})= 1$  and $b_\emph{T}(\emph{k})= 0$ otherwise.
Throughout the search, each mobile sensor communicates to other mobile sensors in the communication range at the discrete sequence of times $k = 0, 1, 2,…$ to exchange the information about detected targets. The search process should be stopped after finding all targets.
In other words,
\begin{equation} \label{eq:law2}
 p_i(\emph{k}+1)= p_i(\emph{k})
\end{equation}
\begin{equation*}
if \quad \forall\quad T=\left\{\emph{T}_1,\emph{T}_2,...\emph{T}_l\right\}, \exists\quad \emph{k},i\quad;\left\|(P_\emph{T}-p_i (\emph{k}))\right\| \leq \emph{R}_s
\end{equation*}
Where, $T=\left\{\emph{T}_1,\emph{T}_2,...\emph{T}_l\right\}$ and $P_\emph{T}=\left\{P_{\emph{T}_1},P_{\emph{T}_2},...P_{\emph{T}_l}\right\}$, are the sets of targets and their positions, respectively.
\begin{theorem} Suppose that the mobile sensors move according to the law (~\ref{eq:law1}),(~\ref{eq:law2}). Then for any number of mobile sensors and any number of targets, with probability 1 there exists a time $\emph{k}_0$  such that:
 \begin{equation*}
\forall \quad  \emph{t} \in T,\quad b_\emph{T}(\emph{k}_0)=1
\end{equation*}
\end{theorem}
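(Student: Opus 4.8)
The plan is to mirror the absorbing Markov chain argument already used for the earlier theorems of this chapter (the analogues of Theorem 3.2 and Theorem 3.3). First I would note that, since the search region $\emph{M}$ is bounded, the covering set $\hat{\texttt{V}}\cap\emph{M}$ is a \emph{finite} set of vertices $\{v_1,\dots,v_m\}$, and any position produced by rule (~\ref{eq:law1}) lies in this finite set: a random walk that would leave $\emph{M}$ is rejected and redrawn, so a sensor never occupies a vertex outside $\emph{M}$. Consequently the joint configuration $\big(p_1(\emph{k}),\dots,p_n(\emph{k}),b_{\emph{T}_1}(\emph{k}),\dots,b_{\emph{T}_l}(\emph{k})\big)$ evolves on a finite state space, and since the update (~\ref{eq:law1}),(~\ref{eq:law2}) depends only on the current configuration together with fresh randomness (a direction drawn uniformly on the sphere in Step~1 and a length drawn from the Levy law $P_\alpha(l)=l^{-\alpha}$ in Step~2), the process is a finite-state Markov chain.

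Next I would identify the absorbing states. By the stopping condition (~\ref{eq:law2}), once every target has been detected, i.e.\ $b_{\emph{T}_j}(\emph{k})=1$ for all $j$, every sensor satisfies $p_i(\emph{k}+1)=p_i(\emph{k})$, so the configuration is frozen; these are exactly the absorbing states, and in each of them the assertion $\forall\,t\in T,\ b_\emph{T}(\emph{k}_0)=1$ holds. It therefore suffices to show that from \emph{any} configuration some absorbing state is reachable with positive probability in finitely many steps; the standard fact that a finite absorbing Markov chain is absorbed with probability $1$ then yields the theorem.

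The crux is the one-step reachability estimate. Because $\hat{\texttt{V}}$ is a covering grid built as in Section~3.2.1, the union of the sensing spheres of radius $\texttt{R}_s$ centred at the vertices covers $\emph{M}$, so each target $\emph{T}_j\in\emph{M}$ lies within distance $\texttt{R}_s$ of at least one grid vertex $v(j)$; hence it is enough to be able to drive a single sensor to $v(j)$. For a fixed sensor at a vertex $v$, the set of (direction, length) pairs whose closest-vertex projection equals a prescribed vertex $v(j)$ contains an open cone of directions of positive solid angle together with an interval of admissible lengths on which the Levy density $l^{-\alpha}$ is strictly positive, while the direction is uniform on the sphere; the probability of the proposed move being rejected for leaving $\emph{M}$ is strictly less than one, so even after conditioning on acceptance the sensor moves to $v(j)$ with strictly positive probability. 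Chaining at most $l$ such moves for one sensor (detecting the targets one after another while all other sensors hold still with positive probability) shows that from any configuration the set of fully-detected states is reached within $l$ steps with probability at least some $\varepsilon>0$.

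Finally, partitioning time into blocks of $l$ consecutive steps, in each block the chain enters an absorbing state with probability at least $\varepsilon$ regardless of the past, so the probability of never being absorbed is bounded by $\prod_{\text{blocks}}(1-\varepsilon)=0$; hence with probability $1$ there is a finite $\emph{k}_0$ with $b_\emph{T}(\emph{k}_0)=1$ for all $t\in T$, as claimed. The main obstacle I anticipate is precisely the one-step reachability lemma: making rigorous the geometry of the ``closest vertex'' map on the truncated octahedral grid together with the boundary-rejection mechanism, so that the strict positivity of the transition probability is genuinely established rather than merely asserted; once that is in hand, the remainder is the routine absorbing-chain argument already invoked elsewhere in this chapter.
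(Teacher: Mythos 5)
Your proposal follows essentially the same route as the paper's own proof: both cast the dynamics under (\ref{eq:law1}),(\ref{eq:law2}) as a finite absorbing Markov chain whose absorbing states are exactly the configurations with all targets detected, and conclude absorption with probability~1. The only difference is one of rigor, not of approach — your one-step reachability estimate (positive solid angle of directions, strictly positive Levy density on an interval of lengths, and positive acceptance probability under boundary rejection) supplies precisely the ``reachable from any state with positive probability'' step that the paper asserts without argument, so your version is a strictly more careful rendering of the same proof.
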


\begin{figure*}[t!]
\begin{center}
\mbox{
\subfigure[Mobile sensors' positions after convergence to the vertices of the common covering truncated octahedral grid]{
{\includegraphics[width=0.75\textwidth,height=0.6\textwidth]{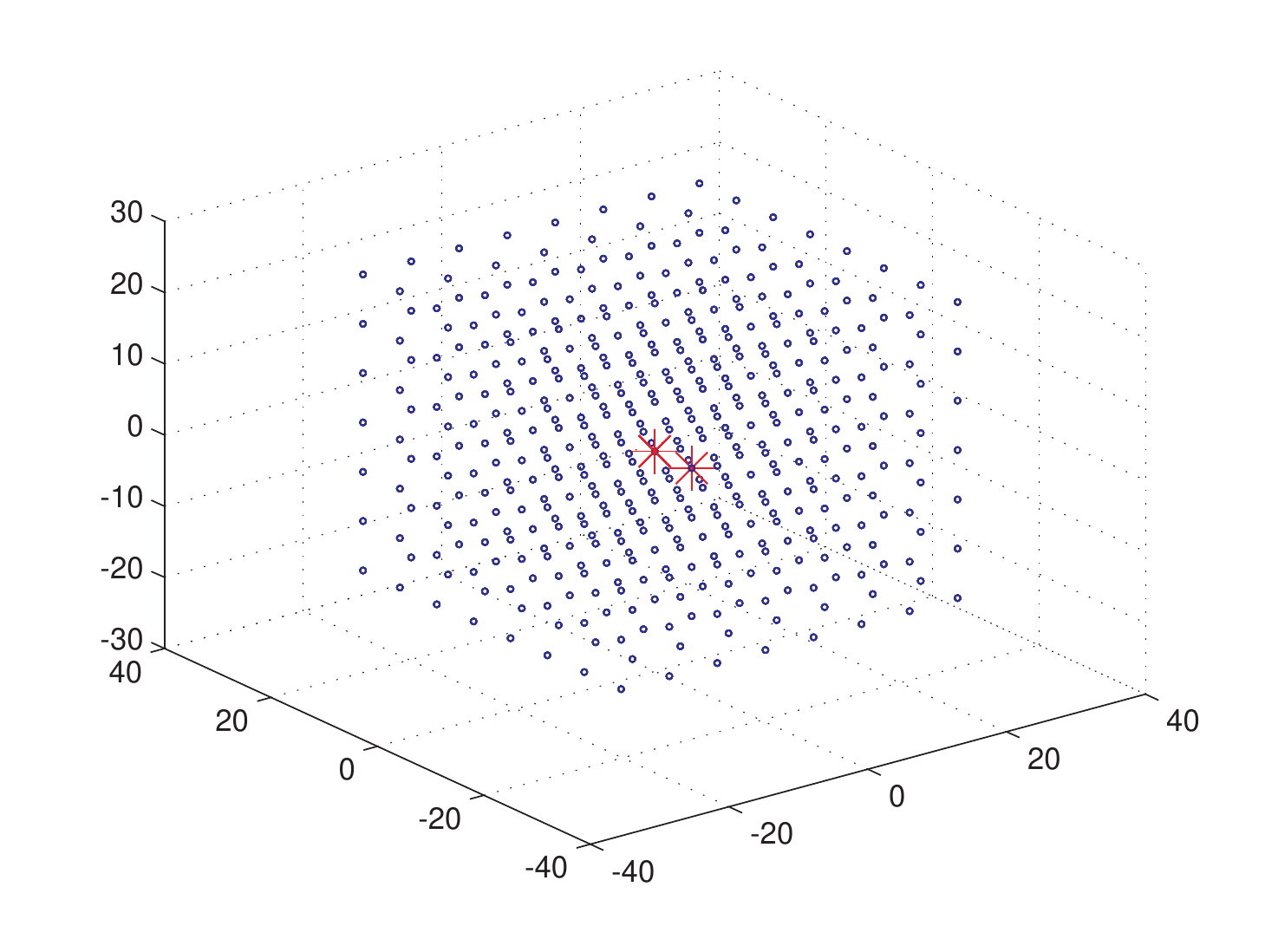}}\quad
\label{levy1}
}
}
\mbox{
\subfigure[Mobile sensors' trajectories after 15 steps]{
{\includegraphics[width=0.75\textwidth,height=0.6\textwidth]{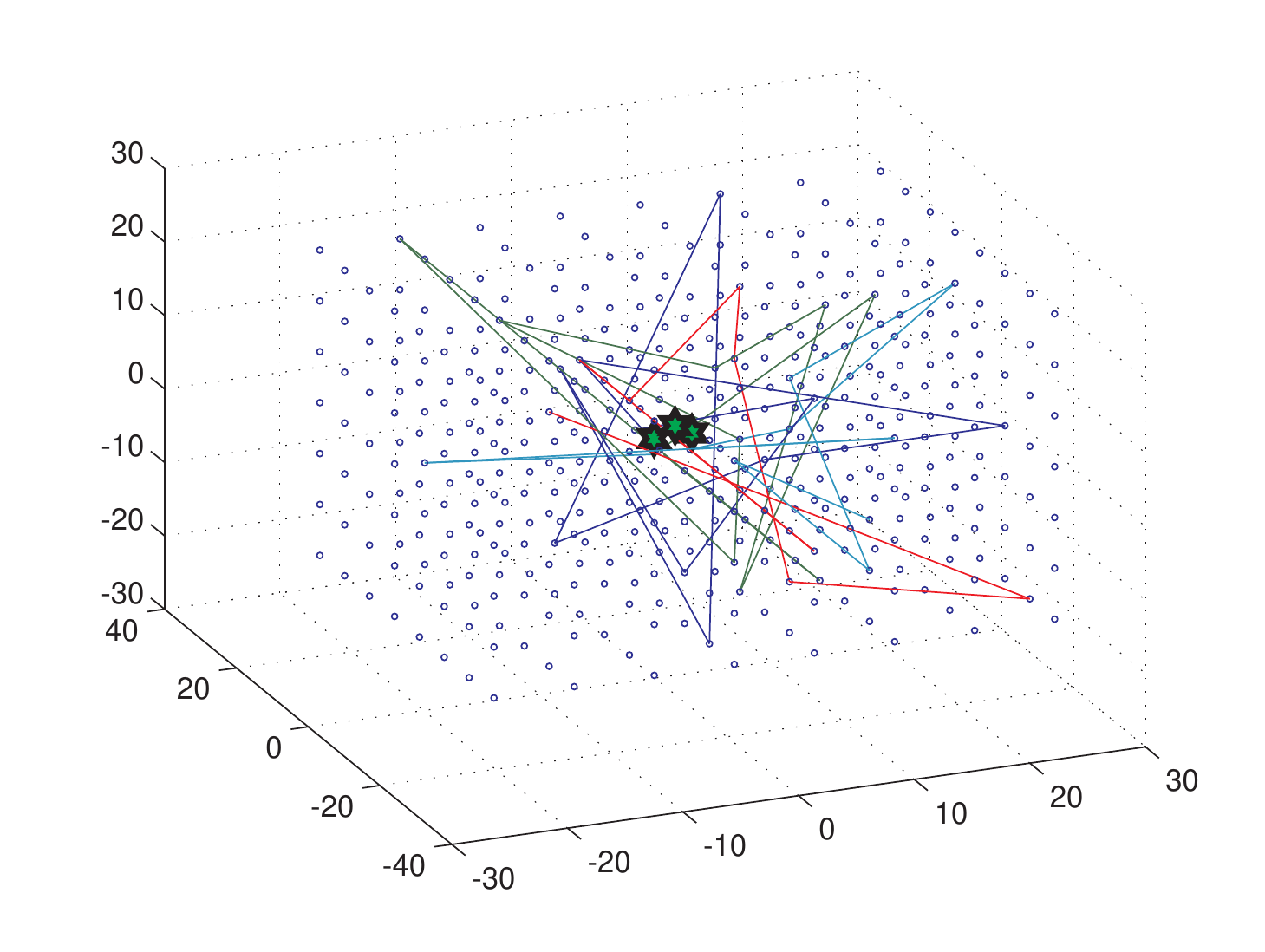}}\quad
\label{levy2}
}
}
\caption{Trajectories of four mobile sensors performing grid-based Levy flight random search for locating clustered  targets. mobile sensors' trajectories denoted by -, vertices of the common grid by o, Targets by star}
\label{levy}
\end{center}
\end{figure*}

\begin{figure}
\centering
{\includegraphics[width=8.5cm,height=8cm]{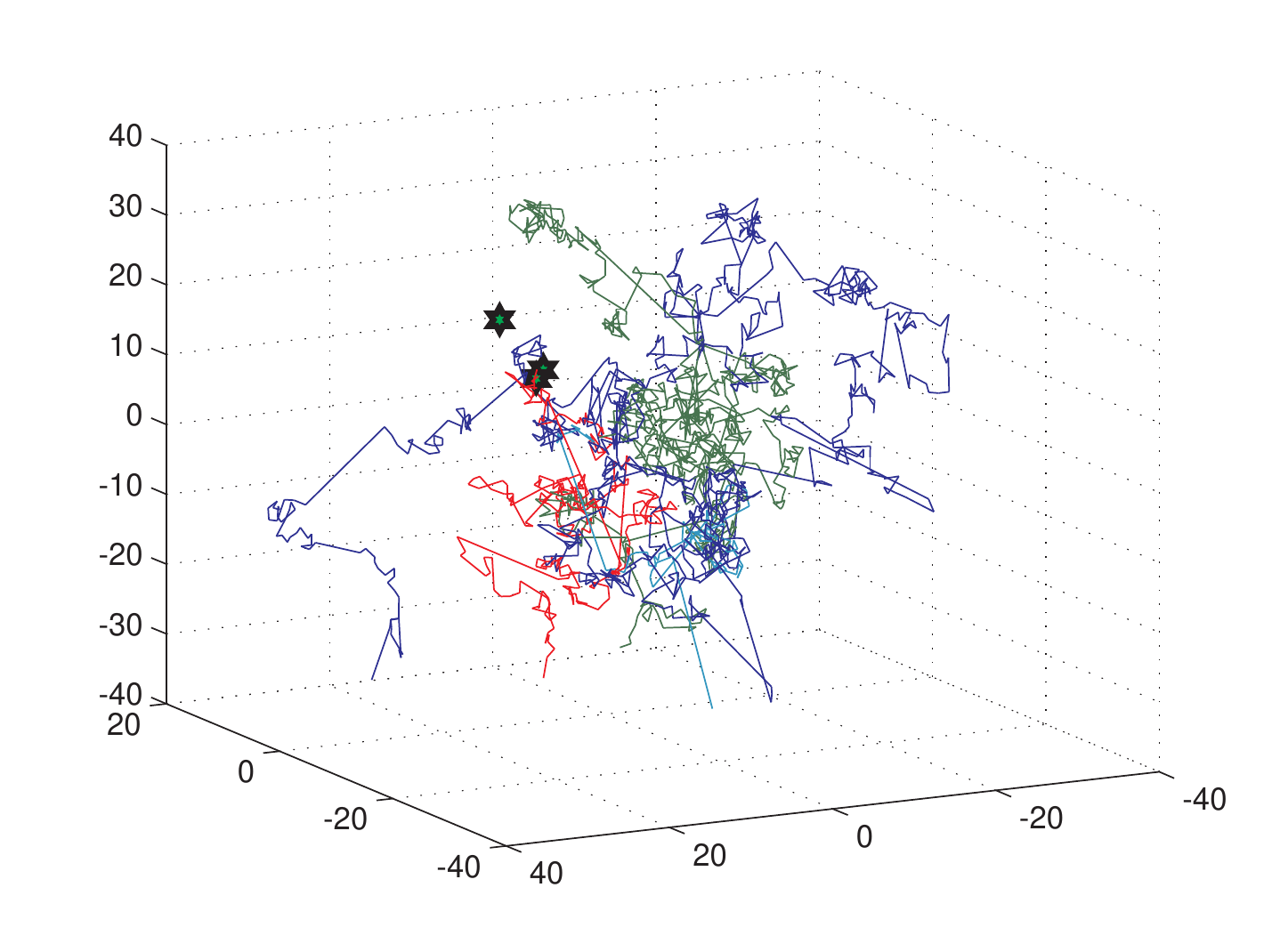}}\\
\caption{Levy flight without grid search: trajectories of four mobile sensors performing Levy flight search for three targets. mobile sensors' trajectories denoted by -, targets by star.}
\label{nogrid}
\end{figure}
\textit{Proof}:
The algorithm (~\ref{eq:law1}),(~\ref{eq:law2}) describes an absorbing Markov chain that consists of a number of absorbing states (that are impossible to leave) and many transient states. Vertices, where mobile sensors stop, are considered as absorbing states (~\ref{eq:law2}). Also, the vertices of the grid which are visited during the search are considered as transient states (~\ref{eq:law1}). Relation (~\ref{eq:law1}) implies that the mobile sensors randomly move to visit unvisited vertices, this continues until all targets are detected (an absorbing state). It is clear that from any initial state with probability 1, one of the absorbing states will be reached.
\section{Simulation Results} \label{4.3}
In this section, we conduct several simulations using MATLAB R2014b to evaluate the performance of the proposed bio-inspired Levy flight search algorithm for different number of mobile sensors and for clustered and sparsely distributed targets in bounded three dimensional environments. At the first set of simulations, we assume that three targets are randomly distributed within the search area, and they are shown as stars, see Fig.\ref{levy2}. The initial positions of all mobile sensors and targets are generated randomly within the search area. For this case, a team of four mobile sensors are  deployed in the area to locate clustered targets. Fig.\ref{levy2} indicates the results of searching for three randomly distributed  stationary targets within the search region. As shown in Fig.\ref{levy1}, using the update law proposed in the previous chapter, first the mobile sensors build a common covering truncated octahedral grid and after that all mobile sensors move to the vertices of this grid. Then, the mobile sensors perform the search task by moving randomly by the random walk taken by a Levy-flight probability distribution on the vertices of the grid. Fig.\ref{levy2} shows the trajectories of the mobile sensors during the search. For this case, the mobile sensors have detected the targets after 15 steps. We performed a simulation to compare the performance of our proposed Levy-flight grid-based search algorithm with Levy flight without grid. Our simulation (Fig.\ref{nogrid}) demonstrates that using Levy flight without a grid, the mobile sensors has detected the targets after 800 steps which is far more than the time taken by Levy flight grid based method.
\begin{figure}
\centering
{\includegraphics[width=8cm,height=7.5cm]{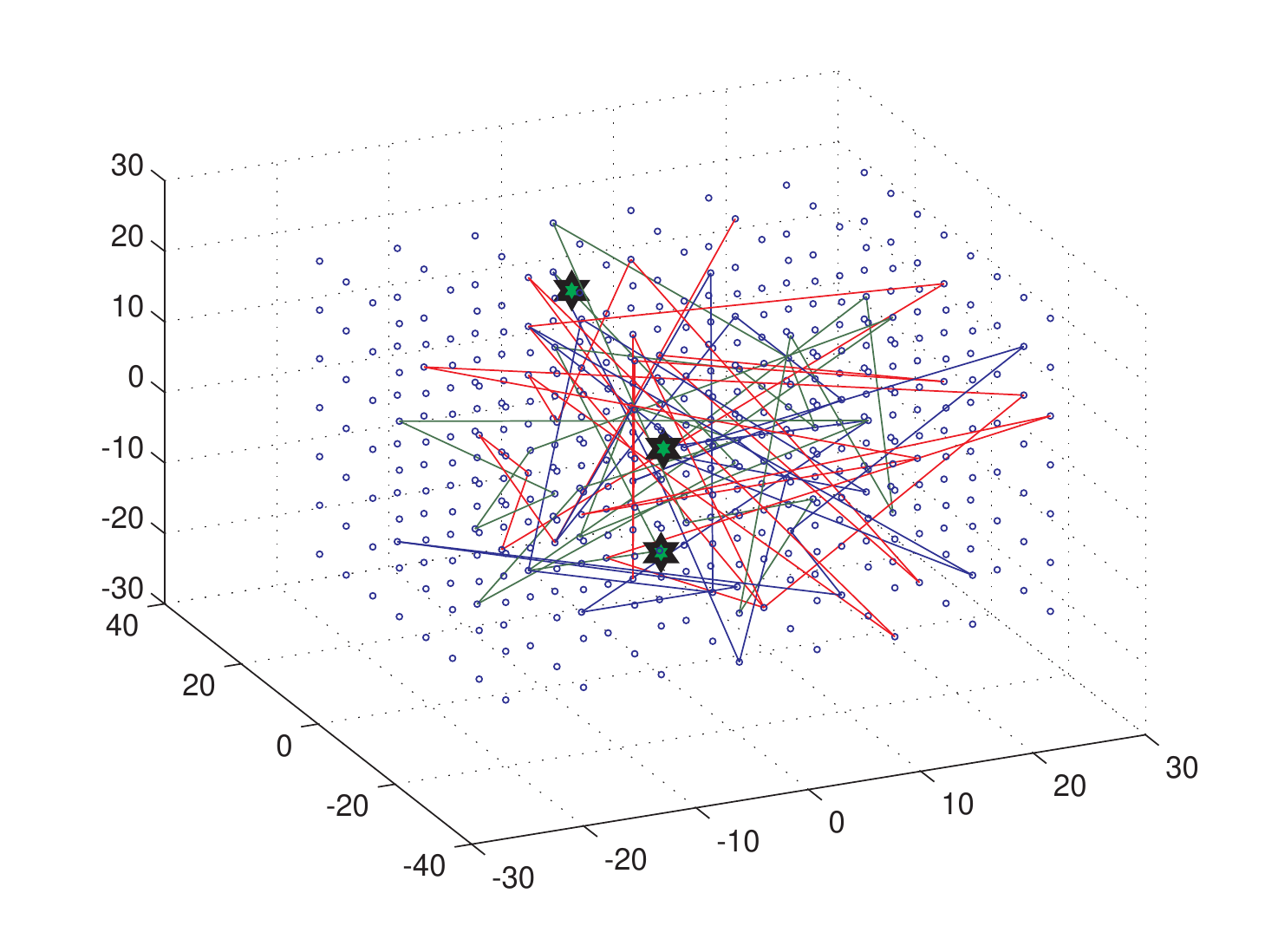}}\\
\caption{Trajectories of mobile sensors performing bio-inspired Levy flight random search for locating sparsely located  targets. mobile sensors' trajectories denoted by -, vertices of the common grid by o, targets by star.}
\label{levy3}
\end{figure}

\begin{figure*}[t!]
\begin{center}
\mbox{
\subfigure[Searching time versus number of mobile sensors]{
{\includegraphics[width=0.75\textwidth,height=0.6\textwidth]{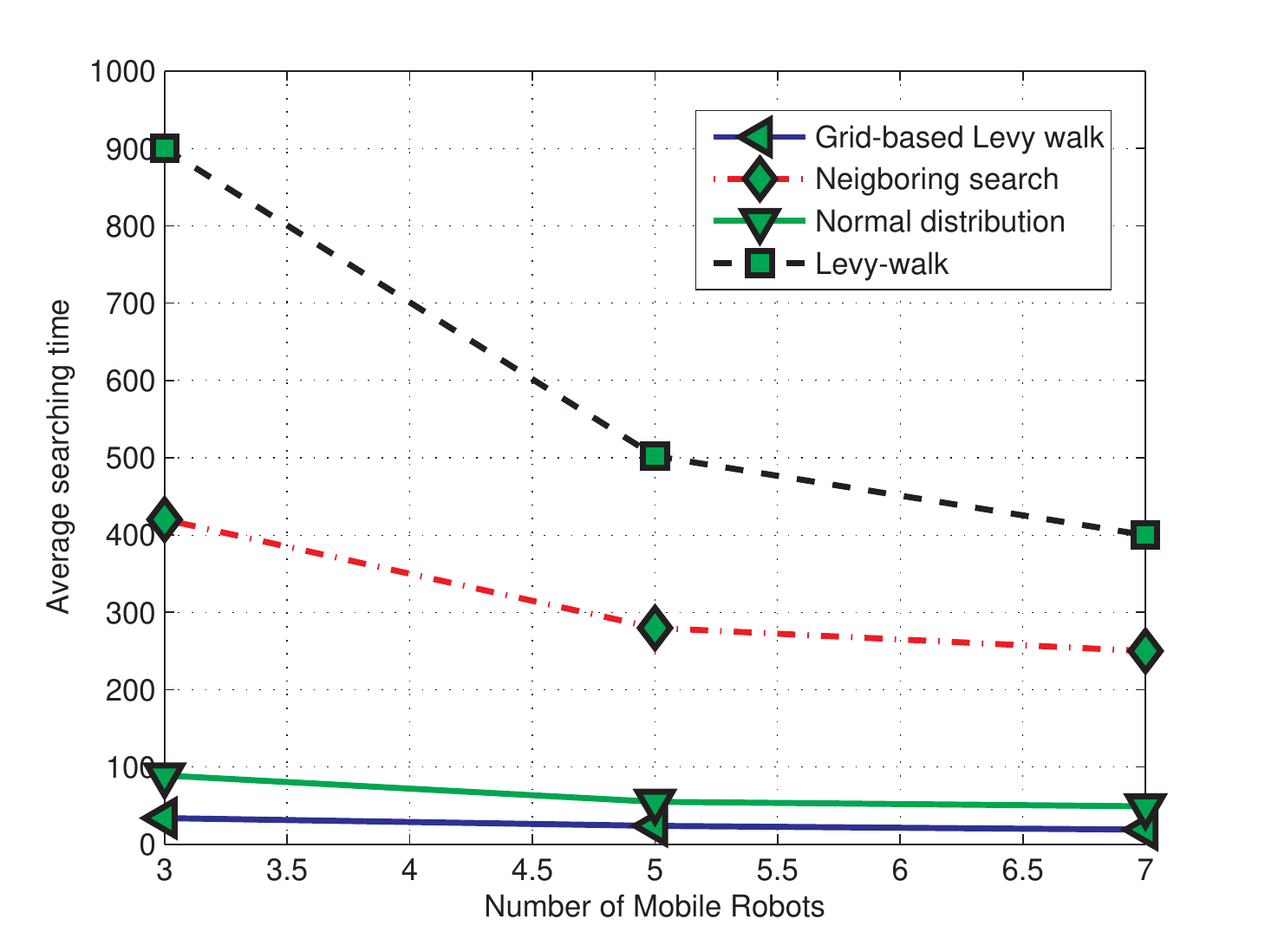}}\quad
\label{comp10}
}
}
\mbox{
\subfigure[Searching time versus number of targets]{
{\includegraphics[width=0.75\textwidth,height=0.6\textwidth]{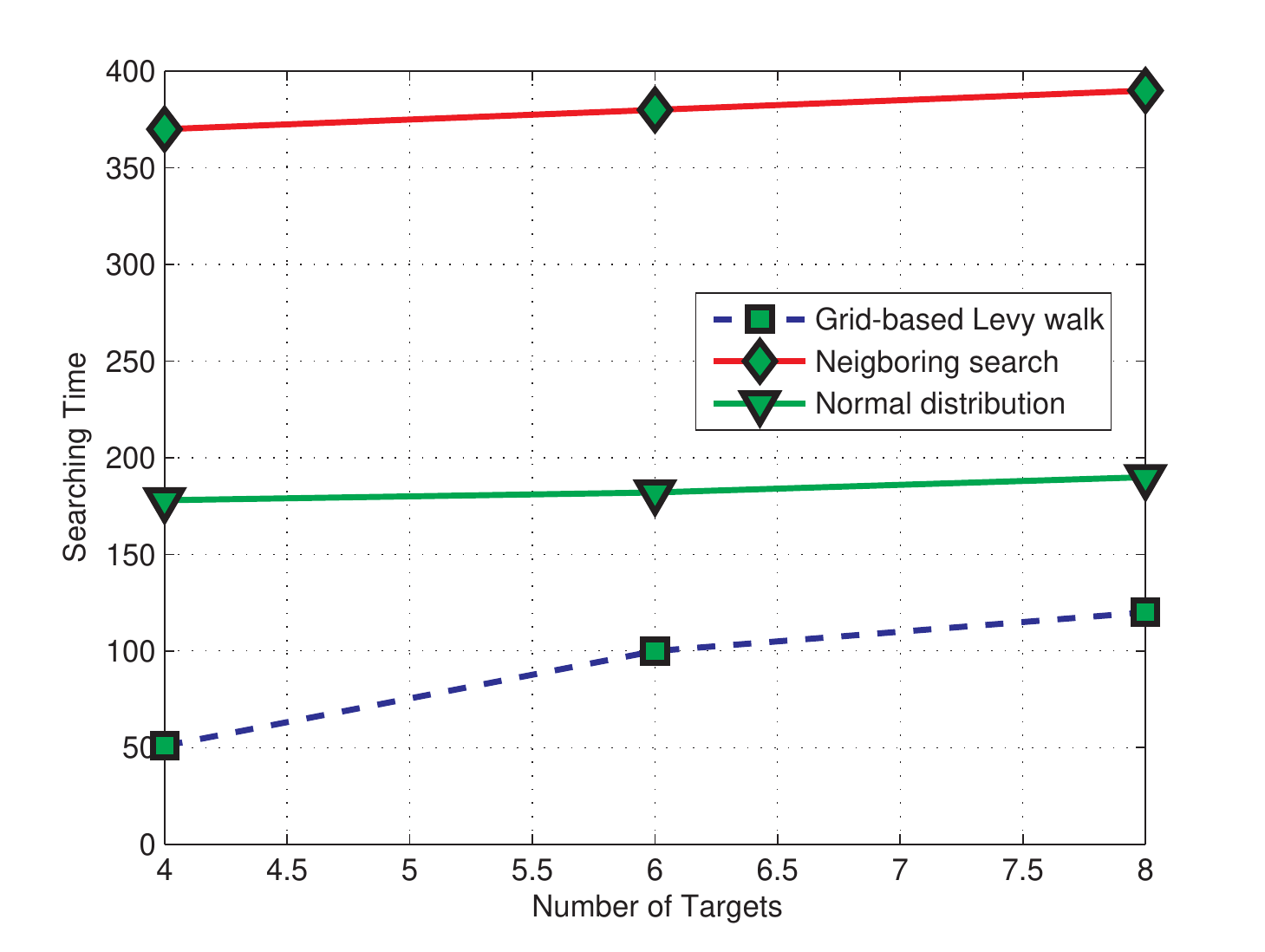}}\quad
\label{comp11}
}
}
\caption{Comparison of random search methods using different number of mobile sensors and targets}
\label{no}
\end{center}
\end{figure*}

\begin{table}
 \caption{Comparison of different search styles for finding sparsely located targets}
 \label{no}
 \begin{center}
 {\small
\begin{tabular*}{\columnwidth}[t]{cp{100pt}}\hline\hline
Type of random walk & Average search time \\ \hline
$Levy-flight$ & {\raggedright 1300 } \\
$Grid-based-Levy-flight$ & {\raggedright 87 } \\
$Grid-based-normal-distribution$ & {\raggedright  125} \\
$Neighboring-random-search$ & {\raggedright 300 } \\
\hline\hline
  \end{tabular*}
 }
 \end{center}
\end{table}
For the second case, we assume that targets are sparsely distributed in the search environment. Fig.\ref{levy3} shows paths of the mobile sensors doing the search task by moving randomly to the vertices of the common truncated octahedral grid to find the targets. For this case, the mobile sensors have detected the targets after 47 steps. Simulation results in Fig.\ref{levy} and Fig.\ref{levy3} demonstrate that for both cases the mobile sensors have detected the given targets after a reasonable amount of time(15 and 47 steps) by moving randomly to the vertices of the common covering grid. It means that, simulations verify the effectiveness of the proposed decentralized bio-inspired random search algorithm for finding targets in both cases.\\
The next set of simulations are carried out to compare the proposed grid-based Levy flight search algorithm to the Levy flight search algorithm without using grid, our random search algorithm proposed in the last chapter and to the grid-based random search with the step length taken by a normal distribution for detecting targets in a bounded 3D environment. In the following figures and table, for notational convenience we call the proposed grid-based Levy flight search algorithm, the Levy flight search algorithm without using a grid, our random search algorithm proposed in \cite{nazarzehi2015distributed} and the grid-based random search with the step length taken by a normal distribution as grid-based Levy flight, Levy flight, neighboring search and normal distribution, respectively.\\
As these algorithms are stochastic, we run each simulation in 10 trials. Moreover, we compare these random search methods using different number of mobile sensors and targets with different initial positions. Performance of the simulations is measured by the total steps, which is indicative of the total time spent to locate targets. Fig.\ref{comp10} shows simulation results for finding four clustered targets in a bounded 3D area using 3,5 and 7 mobile sensors by four random search methods. As shown, the proposed grid-based Levy flight method  outperforms the other random search strategies for locating these targets. Fig.\ref{comp11} displays the simulation results for locating 4, 6 and 8 clustered targets in 3D area using three mobile sensors. This simulation demonstrates that using our proposed algorithm the mobile sensors detected the given number of targets in the least time.\\
To evaluate the proposed algorithm for locating sparsely located targets, we did several simulations with different number of targets and compared our proposed grid-based Levy walk random search strategy to the others. Table.\ref{no} demonstrates that the proposed grid-based Levy flight strategy out-performs other random walk strategies in locating targets.
\section{Summary} \label{4.4}
In this chapter, we proposed an efficient bio-inspired random search algorithm to drive mobile sensors to find clustered and sparsely distributed targets in bounded 3D areas. The proposed method combines the bio-inspired Levy flight random search mechanism for determining the length of the walk with vertices of a covering truncated octahedral grid to optimize the search procedure.\\ We showed that moving randomly on the vertices of the covering grid can minimize  the time of locating randomly located objects in the search area by improving dispersion of the robots. Simulation results demonstrated that the proposed algorithm outperform to the other random search methods. Also, we gave mathematically rigorous proof of convergence with probability 1 of the proposed algorithm for any number of mobile sensors and targets.

\singlespacing

\chapter{Distributed Bio-inspired Algorithm for Search of Moving Targets}\label{chap:1ValiCH5}
\minitoc
In the previous chapter, we introduced a decentralized random search algorithm for detecting static targets in three dimensional spaces. In this chapter, we study the problem of detecting  mobile targets in a bounded 3D environment by a network of mobile sensors. Compare to a single mobile sensor, a network of mobile sensors can complete the search task in cheaper, faster, and more efficient way \cite{ anderson2008implicit}. A network of mobile sensors can handle the uncertainties and accomplish the task even when some mobile sensor fail during the search \cite{bucsoniu2015handling}. As a result, it can significantly improve the efficiency and provide better robustness and adaptability. A mobile or dynamic target has the ability to change its location. A running vehicle can be represented as a dynamic target. The main focus of our proposed search method is the movement of the dynamic target which  is complicated compared to the static one \cite{yamashita2001searching}. In \cite{koopman1979search}, Koopman showed that the Lawnmower search strategy is optimal with stationary targets in 2D spaces, but the analysis does not apply to the problem of search for moving targets. In \cite{doi:10.1117/12.918719}, authors demonstrated the advantages of random Levy walk search over the lawnmower strategy especially for moving targets in 2D spaces. The work presented here addresses significant gaps in the deployment of a team of mobile robotic sensors for the search of mobile targets in three-dimensional spaces. We introduce a random search algorithm based on the Levy walk in 3D spaces. Furthermore, we take advantage of vertices of a covering grid to efficiently disperse the mobile robot for the search task.\\
Here, we use our algorithm presented in the previous chapter for identifying moving targets by combining the bio-inspired Levy flight random search mechanism with a 3D covering grid proposed in \cite{nazarzehi2015distributed}. Unlike the last chapter, in this chapter we assume targets are moving in the search space randomly.
Using proposed algorithm, the mobile robotic sensors randomly move on the vertices of the covering grid with the length of the movement follow a Levy flight distribution. The vertices of the covering truncated octahedral grid optimize the search task by efficiently spreading mobile sensors in the search area. To stop the search procedure, each sensor communicates to the other sensors in its communication range to broadcast information regarding detected targets. The performance of this grid-based random search method is verified by extensive simulations.\\ To evaluate the performance of the proposed grid-based Levy walk algorithm we compare it to the Levy walk random search strategy for detecting moving targets. Furthermore, we give a mathematically rigorous proof of the convergence of the proposed algorithm with probability 1 for any number of mobile sensors and moving targets. The proposed algorithm is distributed, and it relies only on local sensing and communication, and does not require absolute positioning system.\\ The rest of the chapter is structured in the following way. Section \ref{5.1} describes the problem of search by a network of mobile sensor sensors in 3D environments. The proposed algorithm is described in Section \ref{5.2}. Section \ref{5.3} presents simulation results and section \ref{5.4} concludes this work.
\begin{figure*}[t!]
\begin{center}
\mbox{
\subfigure[Trajectories of mobile senosrs/targets performing grid-based Levy flight random search: Mobile sensors' trajectories denoted by -, Mobile sensors' denoted by sphere, Mobile targets' trajectories represented by --, Mobile targets' by  star]{
{\includegraphics[width=0.75\textwidth,height=0.6\textwidth]{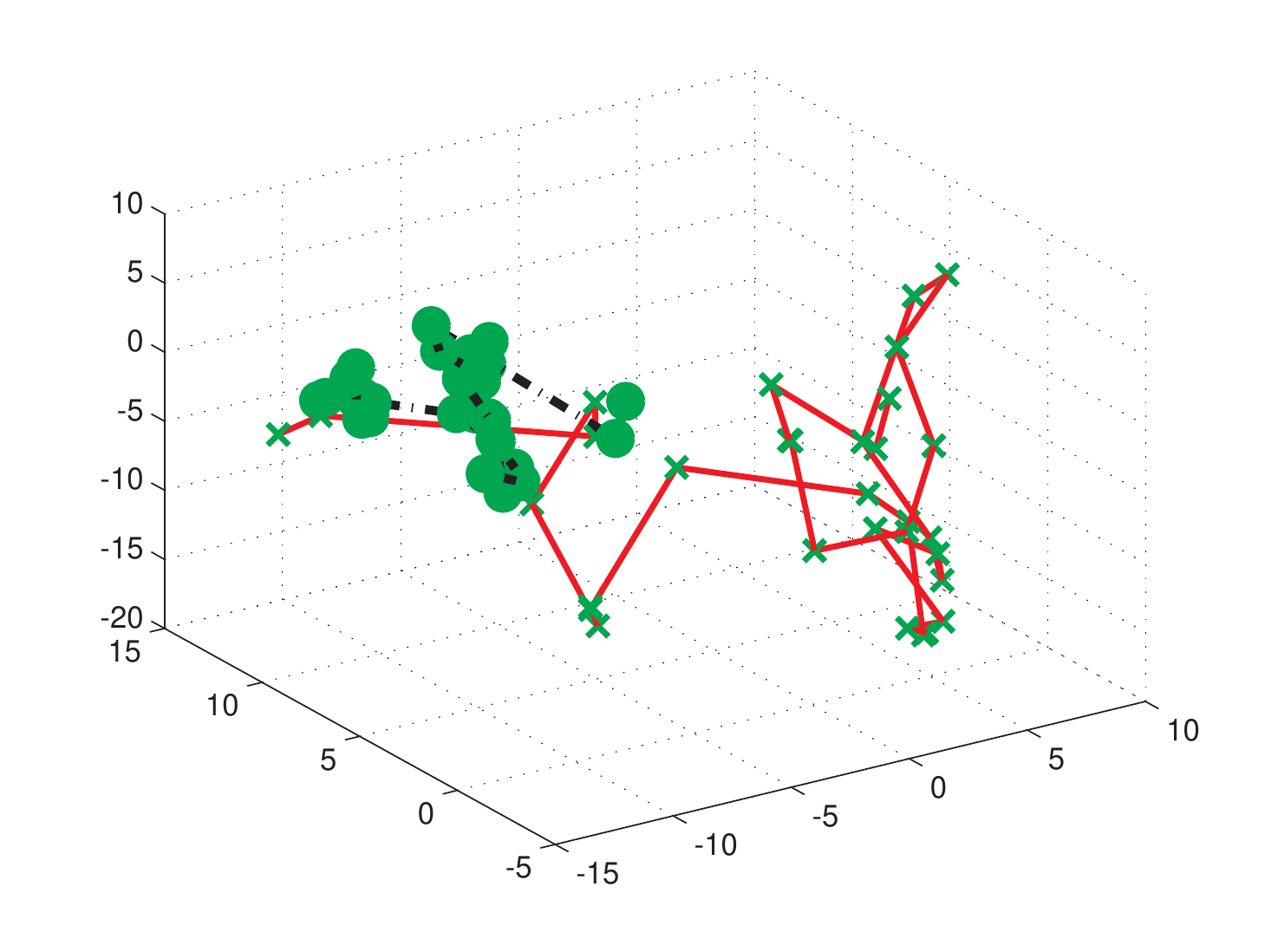}}\quad
\label{moving1}
}
}
\mbox{
\subfigure[Mobile sensors represented by their sensing sphere, communication between robots denoted by ---]{
{\includegraphics[width=0.75\textwidth,height=0.6\textwidth]{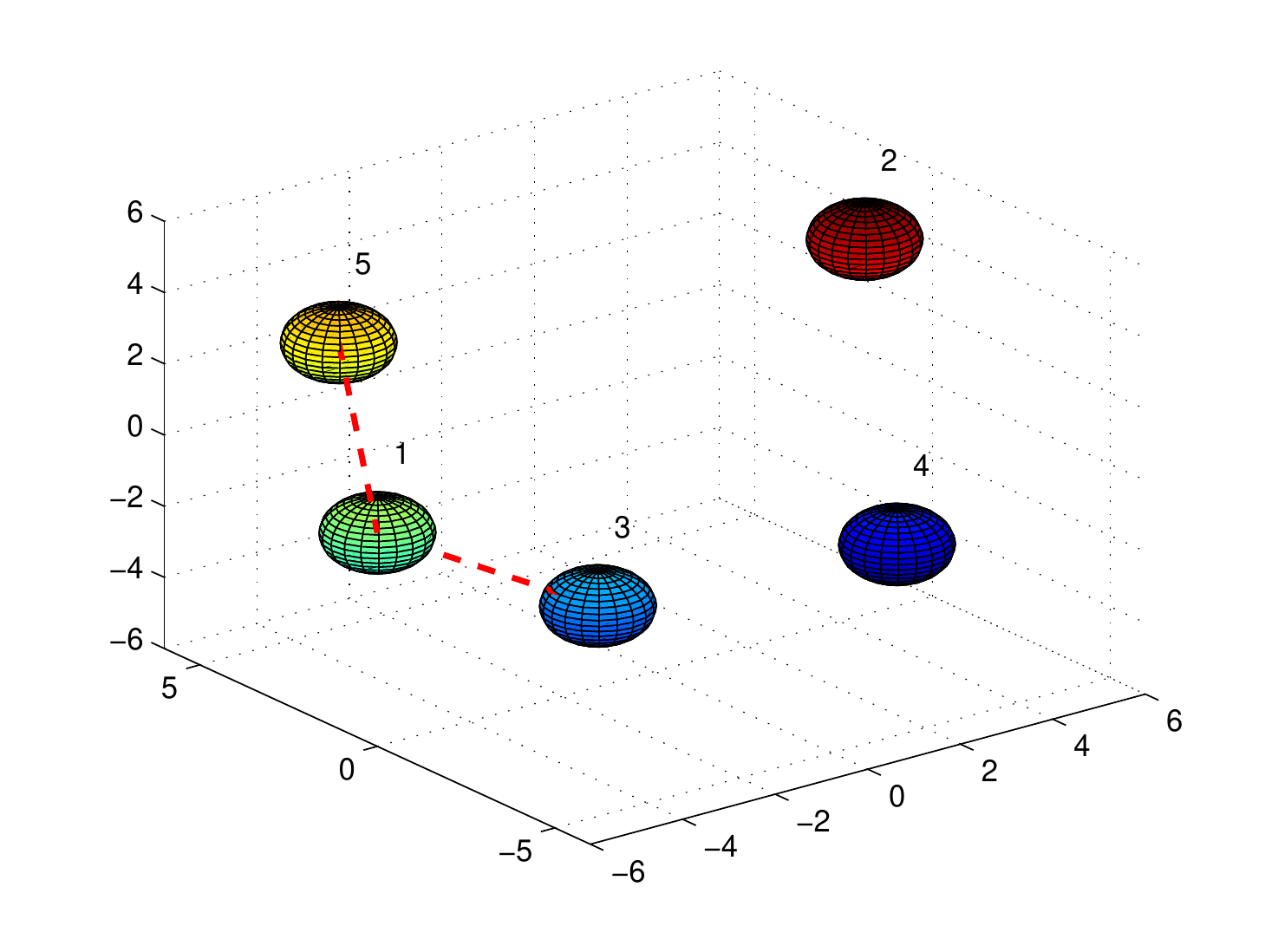}}\quad
\label{moving2}
}
}
\caption{A schematic of mobile sensors/targets}
\label{moving}
\end{center}
\end{figure*}
\section{Problem Statement} \label{5.1}
In this chapter, we study the problem of detecting mobile targets moving in a bounded 3D area using a network of mobile robotic sensors. Let $l$ mobile robotic sensors are deployed to detect $m$ moving targets in the search area. Also, let $ p_i(.)\in \texttt{R}^3 $ be the Cartesian coordinates of the mobile sensor $i$. Let {$\emph{T}_{m_1}, \emph{T}_{m_2}, ... \emph{T}_{m_l}$} be $l$ number of mobile targets in the search area that the mobile sensors have no information about their positions, the spatial distribution and the movement of the targets but they know the total number of mobile targets. Suppose that the search area ($\emph{M}  \subset \mathbb{R}^3$) is unknown to the mobile robotic network, but the mobile robotic sensors can detect the boundaries of $\emph{M}$. The problem of detecting moving targets depends not only on the mobility behavior of the robotic sensors but also on the movement of the mobile targets themselves.\\
In practice, mobile targets can have a wide variety of movement patterns. In this work, we consider the case where mobile robotic sensors and targets move randomly in the search area with Levy walk/normal probability distributions, respectively.
Fig.\ref{moving1} shows trajectories of mobile sensors/targets moving randomly in the search area. Here we assume the moving targets select a random location to travel to next location depending on a normal probability distribution $p(x)$. To find the next location for a mobile target, a  point is chosen uniformly at random on the surface of a unit sphere. This point establishes the direction for the step. Second, the step length $\lambda$ is determined from a probability density. We take advantage of simple kinematic model to describe the mobile targets' motions  in the three-dimensional environment as follows:
\begin{equation*}
x_t(k+1)=x_t(k)+\lambda\cos(\theta)cos(\psi)
\end{equation*}
\begin{equation*}
y_t(k+1)=y_t(k)+\lambda\cos(\theta)sin(\psi) \\
\end{equation*}
\begin{equation} \label{movingt}
z_t(k+1)=z_t(k)+\lambda\sin(\theta)\\
\end{equation}
Where $x_t(k)$, $y_t(k)$ and $z_t(k)$ are the Cartesian coordinates of the target. As mentioned earlier, the  mobile targets chooses randomly their directions ( $\theta$ and $\psi$ )  between $\left[0 \quad 2\pi\right)$.\\
We assume the mobile robotic sensors are equipped with short range distance sensors. These sensors have a maximum sensing range $\texttt{R}_s>0$. It means that the mobile robotic sensors can only sense the environment and detect mobile targets within their sensing area, which is the sphere of radius $\texttt{R}_s$ cantered at the sensor. Moreover, we assume that a mobile target will be identified when it falls into the sensing range of a mobile robotic sensor. Schematic of two mobile sensors moving randomly to find two moving targets is shown in Fig.\ref{mobiletarget2}. To back a moving mobile sensor to the search area when a mobile sensor reaches the boundary of the search area, it is given a new random heading that would direct it back to the search area. Here, our search goal is, to detect every moving target by at least one of the mobile robotic sensors and the target searching task is accomplished when all moving targets are detected.

We use a bio-inspired distributed random search algorithm proposed in the last chapter to efficiently spread the mobile sensors in the search area for identifying moving targets in a bounded 3D space. To disperse the mobile sensors in the search area, we take advantage of vertices of a 3D truncated octahedral grid proposed in \cite{alam2006coverage}.
\begin{figure}
\centering
{\includegraphics[width=0.7\textwidth,height=0.7\textwidth]{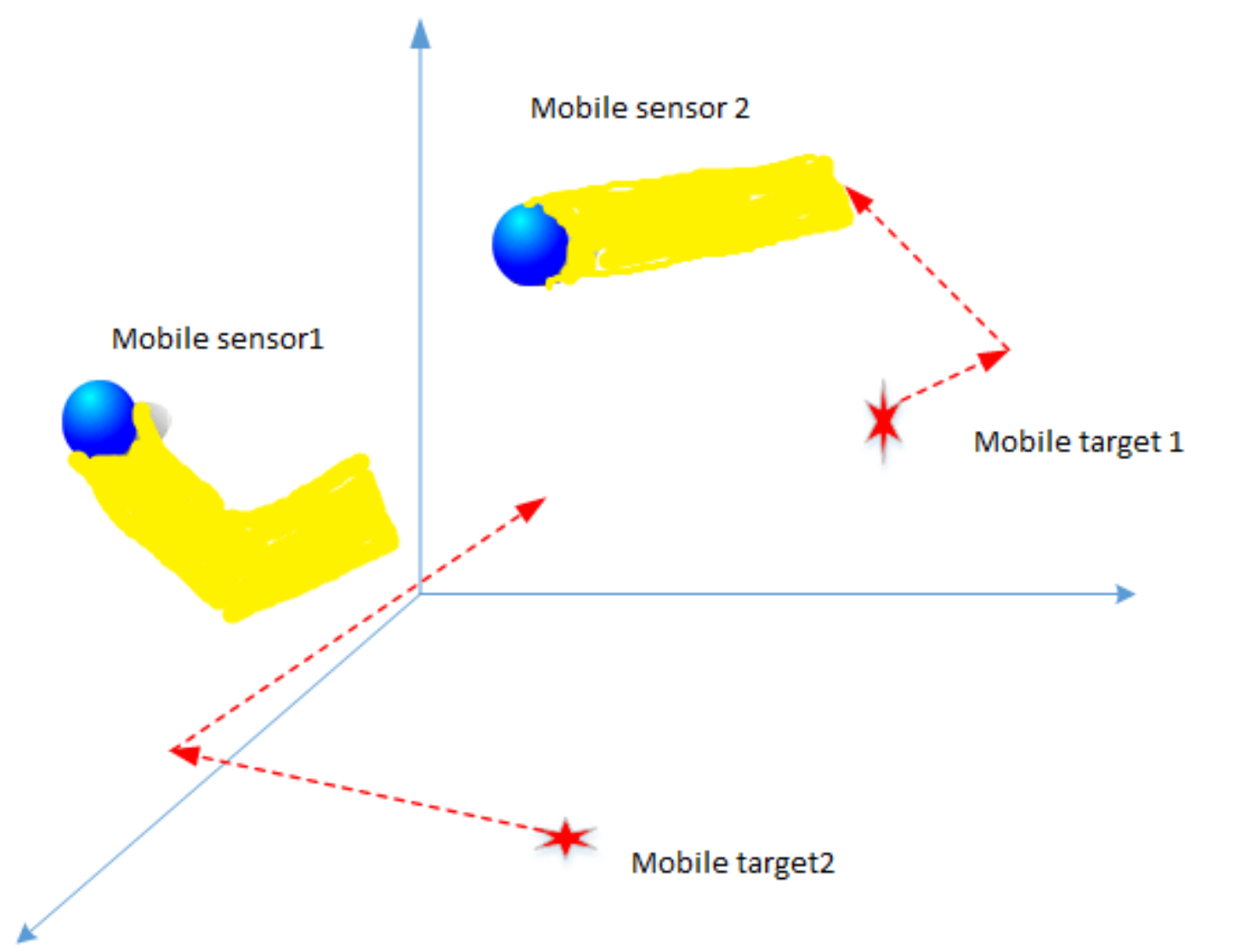}}
\caption{Mobile sensors denoted by sphere (sensing range) , moving targets denoted by stars, detected area shown by yellow tape}
 \label{mobiletarget2}
\end{figure}
To detect moving targets, the mobile  sensors build a common covering truncated octahedral grid. Then, they perform the search task by moving randomly with the step length taken by a Levy flight probability distribution on the vertices of the grid from different initial locations. As shown in Fig.\ref{moving2}, the mobile sensors are modeled as spheres having a sensing radius of $R_s$ that are randomly moving within the search area. During the search procedure, the mobile sensors keep the information of the detected mobile targets in their memory and share it with other sensors passing in their communication range. As shown, mobile sensors 1 and  5 as well as mobile sensors 1 and 3 are in their communication range; as a result, they update each other by exchanging the search information. Therefore, the mobile sensors 3 and 5 communicate indirectly through mobile sensor 1 and this will optimize the search task, because the mobile sensors do not search for the mobile targets that have been previously detected by other sensors.
\section{Distributed Bio-inspired Levy Walk Random Search Algorithm} \label{5.2}
In this section, we present a distributed grid-based random search algorithm for detecting moving targets in bounded 3D spaces. Based on this algorithm, the mobile robotic sensors build a common 3D truncated octahedral grid. Then, they perform the search task by moving randomly on the vertices of the 3D grid using random walks with a Levy-flight probability distribution. Here, the mobile sensors are constrained to operate within a bounded search region, constructed as the cuboid area shown in Fig.\ref{moving2.1}. To keep the mobile sensors/targets in the search area at all times at each step, the length of the next Levy walk is calculated to ensure that the travel from the current position to the next position will not place the mobile sensors/targets outside of the search area. If this new length places the mobile sensor out of the bounded search area, that value is rejected, and a new length is drawn. The process is repeated for each step until an acceptable length is provided.\\
\begin{figure}
\centering
{\includegraphics[width=0.8\textwidth,height=0.6\textwidth]{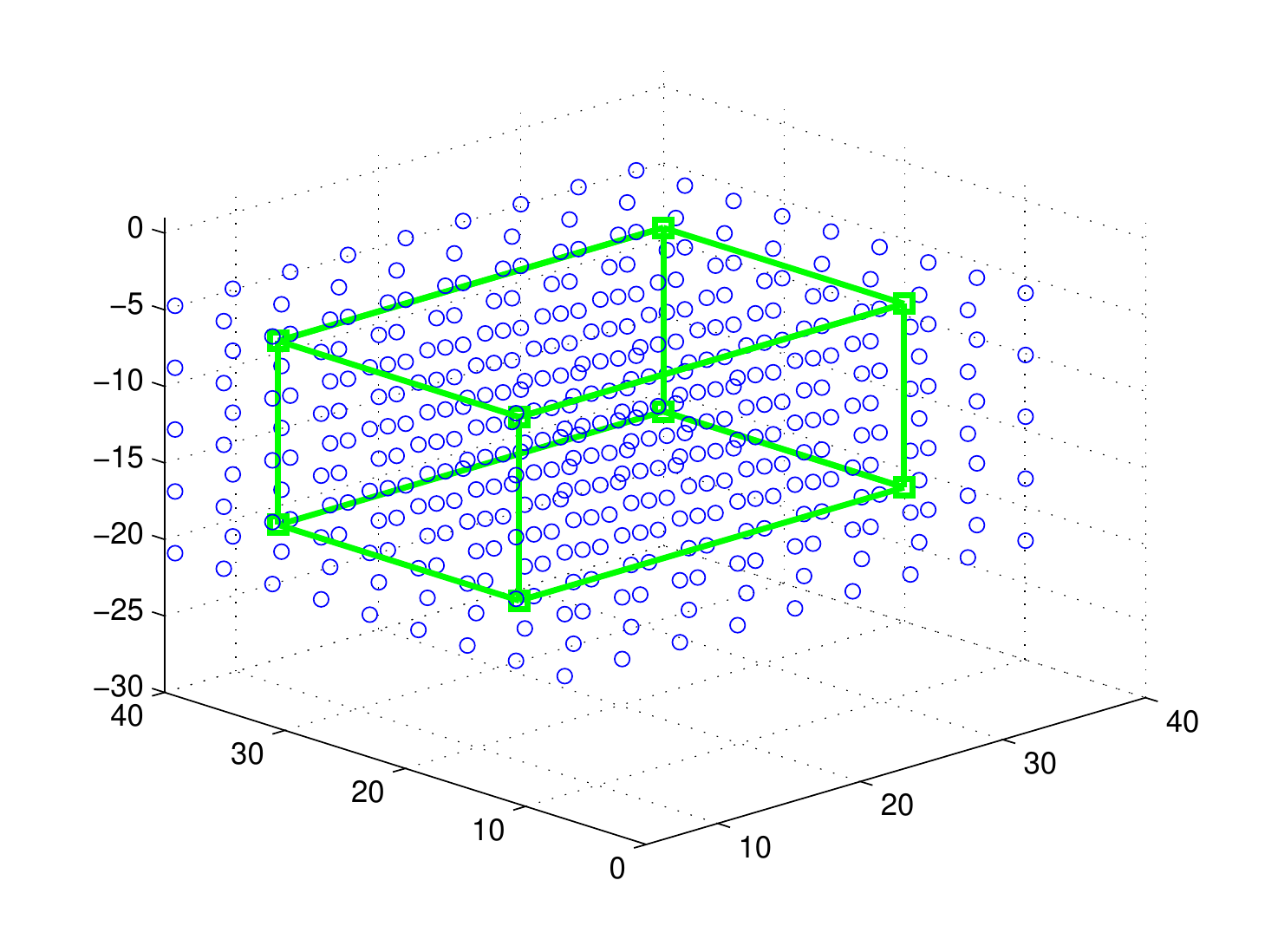}}
\caption{Boundaries of the search area ($\emph{M} $) represented by --, vertices of a covering 3D grid denoted by $’o’$}
 \label{moving2.1}
\end{figure}
Levy flight is a renowned bio-inspired random search mechanism with step lengths taken from a heavy-tailed probability distribution \cite{plank2008optimal}. Using this distribution the probability of returning to a previously visited site is smaller. Therefore, advantageous when target sites are sparsely and randomly distributed \cite{viswanathan2000levy}. To optimize the search task, we supplement the bio-inspired Levy flight random search strategy to a covering truncated octahedral grid. It means that mobile sensors do the search task by moving randomly to the vertices of the covering grid with random movements with Levy-flight probability distribution. As mentioned earlier, at first we assume that the mobile sensors do not have a common coordinate system and a common covering grid therefore the sensors use consensus variables to build a common covering grid 3D as explained in chapter 3, section 3.2.1 \cite{nazarzehi2015distributed}.
In our previous algorithm sensors performed the search task by moving randomly on the unvisited vertices in their neighbourhood. To optimize the search task and to minimize the time of the search, in this section, we propose a novel random search strategy for the mobile sensors. Using this method, the sensors do the search task by moving randomly to the vertices of the covering grid based on the random walk generated by the Levy flight distribution. A Levy flight random search uses a Levy probability distribution as follows \cite{viswanathan2008levy}:
\begin{equation}
P_{\alpha,\gamma}(l)=\frac{1}{2 \pi}\int_{-\infty}^{+\infty} e^{-\gamma q^\alpha}cos(ql) dq
\end{equation}
Where $\gamma$ denotes the scaling factor and $\alpha$ represents the shape of the distribution. Also, the distribution is symmetrical around $l$. In this chapter, the Levy flight  provides a random walk with the random step length is drawn from an approximated Levy distribution proposed in \cite {sutantyo2013collective} as follows:\\
\begin{equation}
P_\alpha(l)=(l)^{-\alpha}
\end{equation}
Where $l$ is a random number, $\alpha>0$ defines the length of walk and $1<\alpha<3$.\\ Let  ${\hat{\texttt{V}}}$ be the  set of  all vertices of the 3D covering grid and $\hat{v} \in \hat{\texttt{V}}$ is a randomly selected element of $\hat{\texttt{V}}$ with Levy flight probability distribution. We use the following random algorithm proposed in chapter 4 section 4.2, for the mobile sensors to search the 3D area:
\begin{equation} \label{movingr}
p_i(\emph{k}+1)=
\hat{v}
\end{equation}
Here we assume that the Boolean variables $b_\emph{T}(\emph{k})$ defines the states of the moving targets at the time $\emph{k}$. If a moving target $\emph{T}$ has been detected by any of the mobile sensors before time $\emph{k}$ then, $b_\emph{T}(\emph{k})= 1$  and $b_\emph{T}(\emph{k})= 0$ otherwise.
Throughout the search, each mobile sensor communicates to other sensors in the communication range at the discrete sequence of times $k = 0, 1, 2,…$ to exchange the information about detected mobile targets. The search process should be stopped after finding all targets.

\begin{figure*}[t!]
\begin{center}
\mbox{
\subfigure[Mobile sensors' positions after convergence to the vertices of the common covering truncated octahedral grid]{
{\includegraphics[width=0.75\textwidth,height=0.6\textwidth]{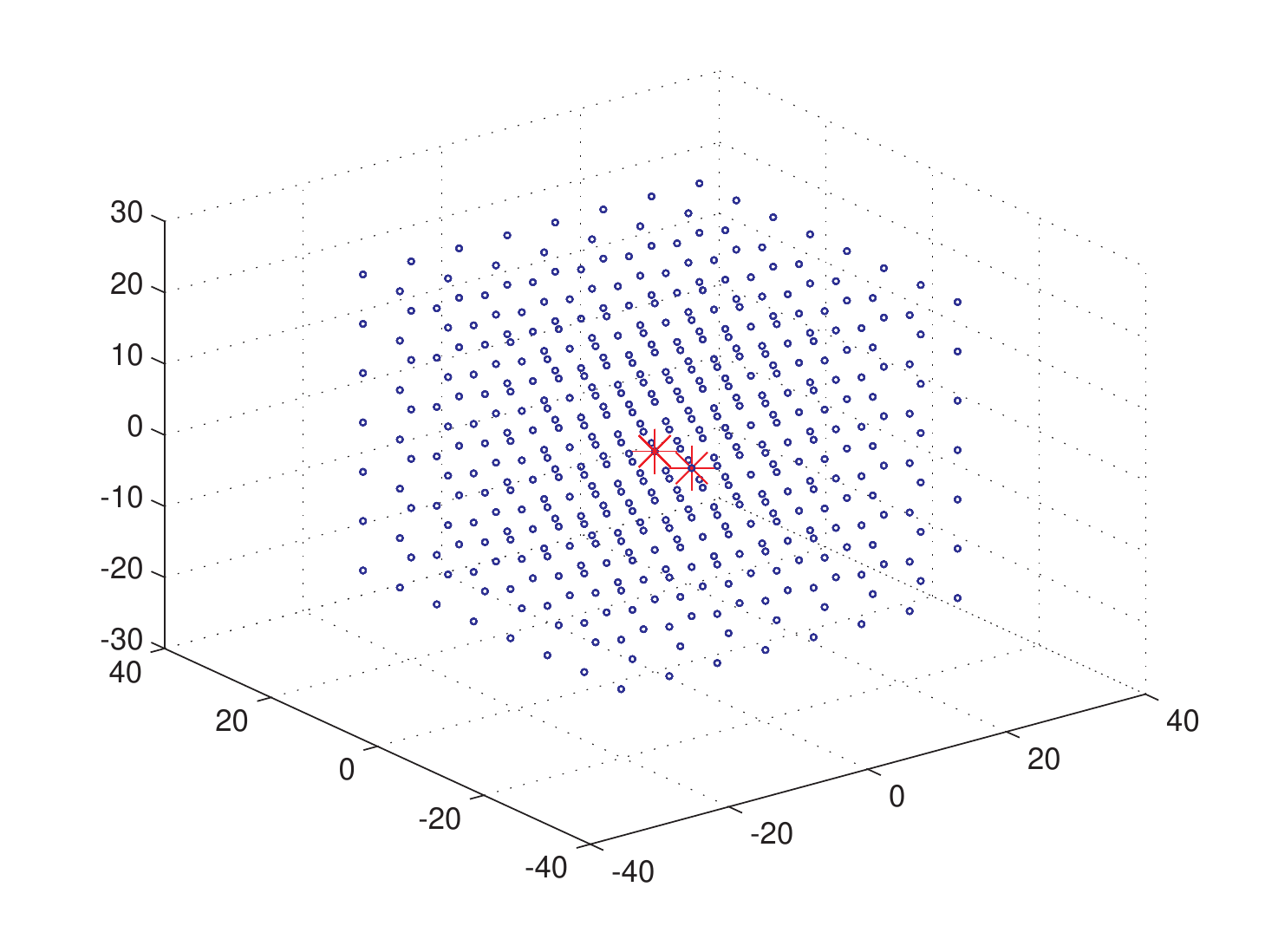}}\quad
\label{moving3}
}
}
\mbox{
\subfigure[Mobile sensors/targets' trajectories after 27 steps]{
{\includegraphics[width=0.7\textwidth,height=0.6\textwidth]{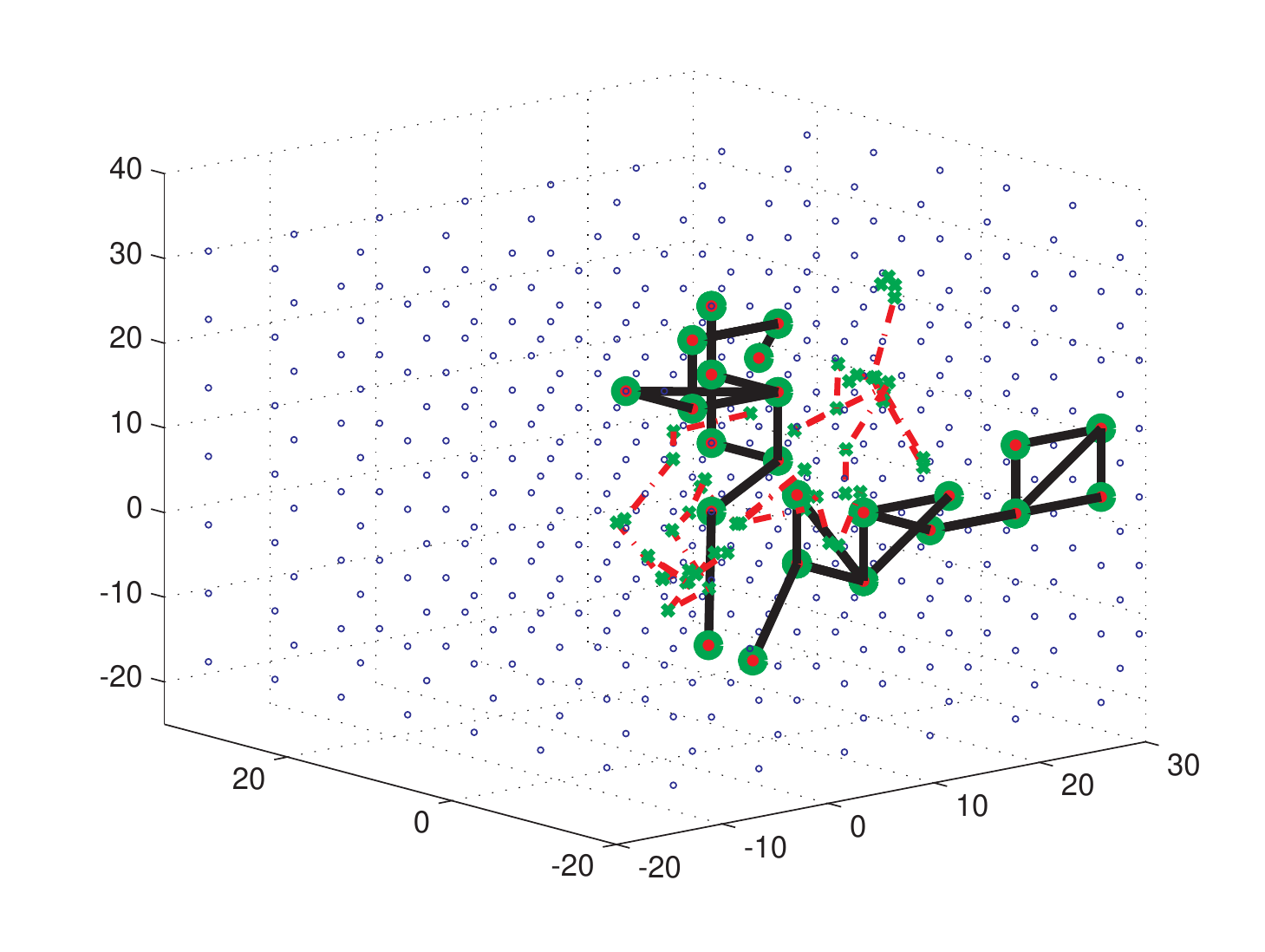}}\quad
\label{moving4}
}
}
\caption{Trajectories of two mobile sensors performing grid-based random search for detecting two moving targets: Mobile sensors' trajectories denoted by solid line ; moving targets trajectories represented by dashed line, vertices of the grid by o}
\label{moving5}
\end{center}
\end{figure*}

In other words,
\begin{equation} \label{stopping}
 p_i(\emph{k}+1)= p_i(\emph{k})
\end{equation}
\begin{equation*}
if \quad \forall\quad T=\left\{\emph{T}_1,\emph{T}_2,...\emph{T}_m\right\}, \exists\quad \emph{k},i\quad;\left\|(P_\emph{T}-p_i (\emph{k}))\right\| \leq \emph{R}_s
\end{equation*}
Where, $T=\left\{\emph{T}_1,\emph{T}_2,...\emph{T}_m\right\}$ and $P_\emph{T}=\left\{P_{\emph{T}_1},P_{\emph{T}_2},...P_{\emph{T}_m}\right\}$, are the sets of mobile targets and their positions, respectively.
\begin{theorem}
Suppose that the mobile sensors move according to the law (~\ref{movingr}), (~\ref{stopping}), and the mobile targets move based on the law(~\ref{movingt}). Then for any number of mobile sensors and any number of moving targets, with probability 1 there exists a time $\emph{k}_0$  such that:
\begin{equation*}
\forall \quad  \emph{t} \in T,\quad b_\emph{T}(\emph{k}_0)=1
\end{equation*}
\end{theorem}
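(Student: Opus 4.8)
The plan is to reduce this to the same absorbing-Markov-chain argument used for static targets, but carried out on the \emph{joint} state of sensors and targets. First I would observe that, once the common covering truncated octahedral grid $\hat{\texttt{V}}$ has been fixed (which the consensus rules of the previous chapter achieve) and the rule (\ref{movingr}) has been applied at least once, every sensor sits on a vertex of $\hat{\texttt{V}}$; and since $\emph{M}$ is bounded, $\hat{\texttt{V}}$ is \emph{finite}. The process $\big(p_1(\emph{k}),\dots,p_l(\emph{k}),P_{\emph{T}_1}(\emph{k}),\dots,P_{\emph{T}_m}(\emph{k})\big)$ is then Markov: by (\ref{movingr}) each sensor's next vertex is a function of its current vertex and an independent Levy-flight draw (together with the rejection of boundary-violating draws), and by (\ref{movingt}) each target's next position is a function of its current position and an independent direction/length draw, all draws being mutually independent. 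The configurations in which every target has already been detected are precisely the absorbing states, since (\ref{stopping}) then freezes all sensors, so the theorem amounts to showing such a state is reached with probability $1$ from any initial configuration.

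The core estimate I would prove is a uniform one-step detection bound: there is a constant $c_0>0$ such that for every joint state in which some target $\emph{T}_j$ is still undetected,
\begin{equation*}
\Pr\big(\emph{T}_j \text{ detected at step } \emph{k}+1 \,\big|\, \text{state at } \emph{k}\big)\ \ge\ c_0 .
\end{equation*}
Because the grid side length is chosen so that $\hat{\texttt{V}}$ is a \emph{covering} set of $\emph{M}$, wherever $\emph{T}_j$ moves at step $\emph{k}+1$ it lies within $\texttt{R}_s$ of its nearest vertex $v^\star\in\hat{\texttt{V}}$; hence detection occurs if a single sensor lands on $v^\star$ at that step. Conditioning on the state at $\emph{k}$, the new target position — and therefore $v^\star$, a $\hat{\texttt{V}}$-valued random variable — is independent of every sensor's move, so it suffices to show that a fixed sensor lands on any prescribed vertex $v$ with probability at least some $c_1>0$ uniform over the finitely many current sensor vertices. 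Averaging over the possible values of $v^\star$ then yields $c_0=c_1$ (one sensor already suffices; extra sensors only increase the probability). The bound $c_1>0$ in turn follows from the fact that the heading is uniform on the sphere (positive density in every solid angle) and the step length is drawn from the Levy density $P_\alpha(l)=(l)^{-\alpha}$, which puts positive mass on every sub-interval of its support: the pre-image of $v$ under the ``move by the sampled direction and length, then snap to the closest vertex of $\hat{\texttt{V}}$'' map has positive measure, and the rejection of boundary-violating steps only conditions on a positive-probability event, so it does not destroy this positivity.

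With $c_0$ in hand the conclusion is routine: while $\emph{T}_j$ remains undetected, the probability it is still undetected after $N$ further steps is at most $(1-c_0)^N\to 0$, so $\emph{T}_j$ is detected in finite time almost surely; a union bound over the $m$ targets produces almost surely a time $\emph{k}_0$ with $b_{\emph{T}}(\emph{k}_0)=1$ for all $\emph{t}\in T$, after which (\ref{stopping}) holds for every $i$ and the sensors remain frozen, i.e.\ an absorbing state has been reached. I expect the main obstacle to be establishing the uniform lower bound $c_1$: one must verify carefully that the composition of the uniform-heading draw, the heavy-tailed length draw, the snap-to-nearest-vertex rounding, and the boundary rejection really assigns positive probability to \emph{each} vertex of $\hat{\texttt{V}}$, uniformly over the finitely many starting vertices — in particular that rejection sampling near $\partial\emph{M}$ cannot render some vertex unreachable. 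Everything else (the Markov property, finiteness of $\hat{\texttt{V}}$, the independence of target motion from sensor motion, and the geometric-tail conclusion) is standard once this positivity is settled.
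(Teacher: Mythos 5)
Your proposal is correct in outline and is in fact considerably more careful than the argument the paper gives. The paper's proof is a three-sentence assertion that the algorithm ``describes an absorbing Markov chain'' whose absorbing states (all targets detected, sensors frozen by the stopping rule) are reached from any initial state with probability 1, with no supporting estimate; it implicitly treats the chain as a finite-state absorbing chain, which is not literally accurate here because the moving targets evolve in the continuous region $\emph{M}$ rather than on the grid, so the joint sensor--target state space is uncountable. You identify exactly the right fix: instead of invoking finite-chain absorption, you prove a uniform one-step detection bound $c_0>0$ via the covering property of $\hat{\texttt{V}}$ (every target position is within $\texttt{R}_s$ of its nearest vertex) together with a positive-probability-of-landing estimate for the Levy-driven, snap-to-vertex sensor move, and then conclude by geometric decay and a union bound over the finitely many targets. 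This is the argument the paper \emph{needs} but does not supply. The one point you rightly flag as delicate --- that the composition of the uniform heading, the heavy-tailed length draw, the nearest-vertex rounding, and the boundary rejection assigns probability at least $c_1>0$ to every vertex, uniformly over starting vertices --- is genuinely the crux; if some vertex were reachable only in several steps, you would need to replace the one-step bound by a $K$-step bound, which is slightly more awkward for moving targets since the relevant nearest vertex changes each step, but the overall strategy survives. In short: same underlying intuition as the paper (persistent random motion plus an absorbing stopping rule), but your route supplies the quantitative detection estimate and correctly handles the continuous target state space, both of which are absent from the paper's proof.
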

\textit{Proof}: The algorithm (~\ref{movingr}) describes an absorbing Markov chain that consists of a number of absorbing states (that are impossible to leave) and many transient states. Vertices where mobile sensors stop are considered as absorbing states (~\ref{stopping}). Also, the vertices of the grid which are visited during search are considered as transient states (~\ref{movingr}). Relation (~\ref{movingr}) implies that the mobile sensors randomly move to visit unvisited vertices, this continues until all mobile targets are detected (an absorbing state). It is obvious that, from any initial state with probability 1, one of the absorbing states will be reached.
\section{Simulation Results} \label{5.3}
In this section, we carry out a simulation study to evaluate the performance of the proposed bio-inspired distributed laws for search of mobile targets in bounded three dimensional environments. As mentioned before, the mobile targets can adopt a wide variety of movement patterns. In this work, we consider the mobile targets choose a direction uniformly from a sphere. Then, it chooses a step length from a normal distribution. The initial positions of all mobile sensors and targets are randomly generated  within the search area. As these algorithms are stochastic, we run each simulation in 20 trials.\\
In the first set of simulations, we study the detection time of the mobile targets. It is obvious that, detection time depends on the mobility behaviour of the mobile sensors and on the movement of the moving targets themselves. We assume that a team of two mobile sensors are  deployed in the area to detect two mobile targets moving randomly in the search area. As mentioned earlier, a mobile target is detected when it falls into the sensing range of a mobile sensor. As shown in Fig.\ref{moving3}, using the updating law proposed in 3.2.1, the mobile sensors build a common covering truncated octahedral grid then all mobile sensors move to the vertices of this grid. Subsequently, the mobile sensors perform the search task by moving randomly with the random walk taken from a Levy-flight probability distribution on the vertices of the grid.\\ Fig.\ref{moving4} shows trajectories of the mobile sensors doing the search task by moving randomly to the vertices of the common truncated octahedral grid to find the moving targets. As shown in Fig.\ref{moving4}, the mobile sensors have detected the targets after 27 steps. \\
Note that, to back a moving target to the search area when a moving target reaches the boundary of the search area, it is given a new random heading that would direct it back into the search area.\\
The second set of simulations are carried out to compare the proposed grid based Levy walk method to the Levy walk method without grid for detecting two moving targets.
\begin{figure}
\centering
{\includegraphics[width=0.8\textwidth,height=0.75\textwidth]{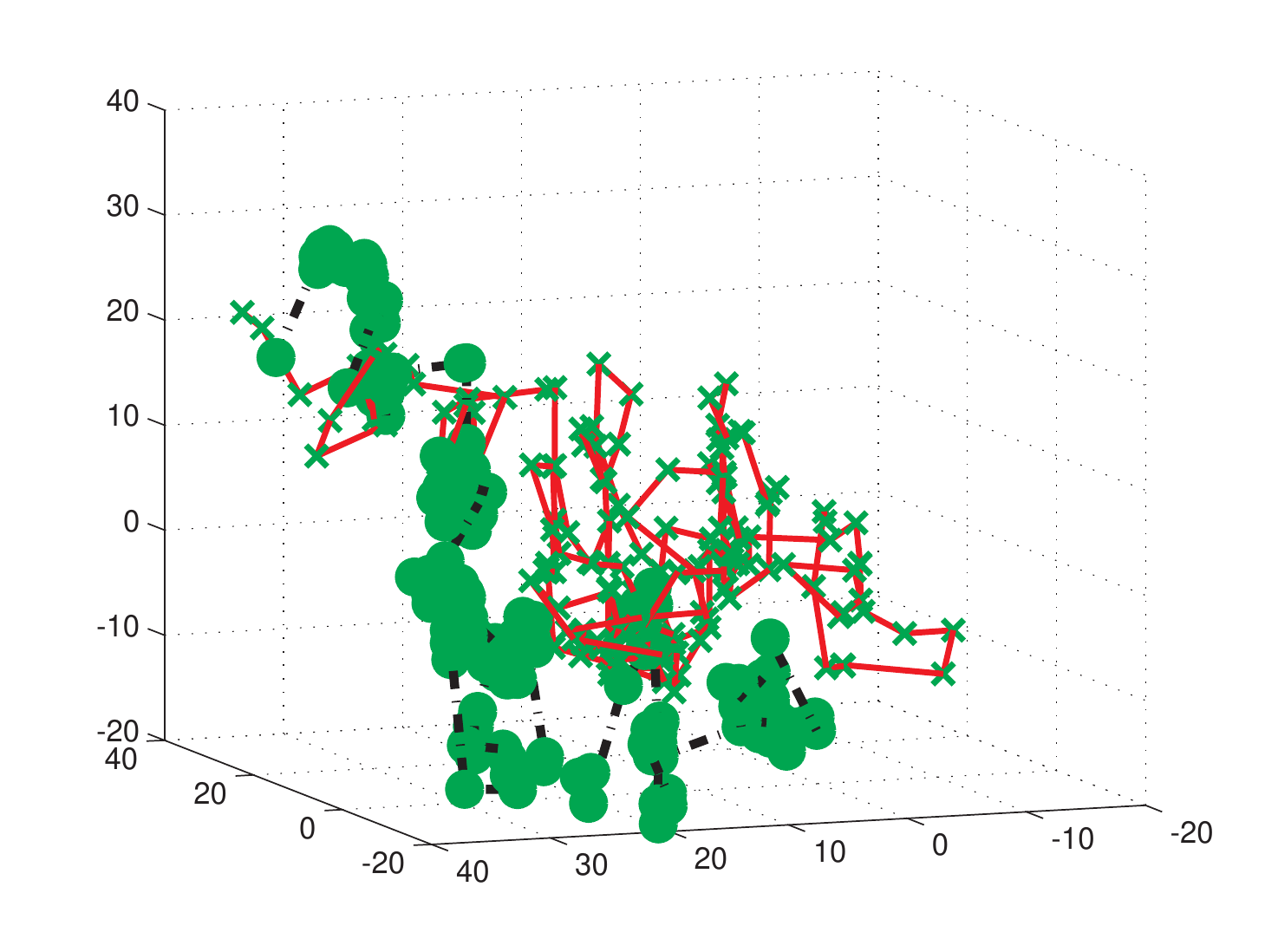}}
\caption{Trajectories of two mobile sensors performing Levy walk random search for detecting two moving targets: Mobile sensors' trajectories denoted by solid line, Mobile sensors by sphere, Mobile targets' trajectories represented by dashed line, Mobile targets by +}
 \label{moving6}
\end{figure}
Fig.\ref{moving6} shows the simulation results for detecting two moving targets in a 3D area using two mobile sensors using Levy walk random search method. Performance of the simulations are measured by the total steps, which is indicative of the total time spent to detect moving targets. Simulation results demonstrate that using the proposed grid-based random algorithm the mobile sensors detect the given number of targets in less time compare to the Levy flight method without grid as using the common grid improves mobile sensors dispersion in the search area.\\
In the next set of simulations, we have evaluated the ability of the proposed search algorithm for handling some uncertainties by assuming that two
mobile sensors unexpectedly fail during the search task. For this case, we consider four  mobile sensors search for two targets. Our simulations show that the other
mobile sensors continue to perform the search task despite failure on some mobile sensors. 
\section{Summary} \label{5.4}
In this chapter, we considered the problem of detecting mobile targets moving randomly in a bounded 3D space by a mobile sensor network. We presented a bio-inspired grid-based random search algorithm to drive mobile sensors for search of moving targets in the search areas. The proposed search method combines the bio-inspired Levy flight random search mechanism for determining the length of the walk with a covering truncated octahedral grid to optimize the search procedure. The presented algorithm is decentralized, and the control action of each sensor is based on the local information of its neighbors. Simulation results demonstrated that the proposed grid based algorithm outperforms to the Levy walk search method. Also, we gave mathematically rigorous proof of convergence with probability 1 of the proposed algorithm for any number of mobile sensors and targets.\\

\singlespacing

\chapter{Decentralized Three Dimensional Formation Building}\label{chap:1ValiCH4}
\minitoc
In this chapter, we study the problem of 3D formation building in 3D environments \cite{robio}. Our goal is to design a distributed control algorithm to coordinate a group of mobile robotic sensors such that they achieve a particular geometric pattern \cite{ cao2013overview}. In particular, here, we introduce a random formation building algorithm for anonymous robots in 3D spaces. First, we present a decentralized consensus-based control algorithm for the problem of 3D formation building for nonholonomic mobile robotic sensors in 3D environments. Based on the proposed algorithm, the mobile robotic sensors only communicate with their neighbors located in their communication range to build and maintain a desired 3D configuration. As a result, this method is a good option for the cases where the mobile robotic sensors have  limited communication range due to the economic reasons or because of physical constraints (underwater environments). Unlike previous works \cite{roussos20103d}, using this algorithm, the mobile robotic sensors move in not only the same direction in 3D spaces but also they finally build a given 3D geometric configuration and move with the same speed \cite{new1}. Furthermore, in this study, the motion of each robot is modeled by a nonlinear model also we take into account the standard constraints on mobile robotic sensors' angular acceleration and linear velocity \cite{matveev20143d}.\\
At second part, we study the problem of formation building for the case when the mobile robotic sensors are unaware of their positions in the configuration. We present a decentralized random motion coordination law for the multi-robot system so that the mobile robotic sensors reach consensus on their positions and form a desired three dimensional geometric pattern from any initial positions. The performance of the proposed three dimensional formation building algorithm is validated  by numerical simulations. Also, we give mathematically rigorous proof of convergence of the proposed algorithms to the given geometric configurations.\\
The rest of the chapter is organized as follows: In Section \ref{6.1}, we introduce a kinematic model for a group of mobile robotic sensors moving in a three-dimensional space. In Section \ref{6.2}, we formulate the problem. In Section \ref{6.3}, a novel consensus-based algorithm for formation building in 3D environments is presented. In Section \ref{6.4}, we introduce a 3D decentralized random formation building algorithm for anonymous mobile robotic sensors. Numerical simulation results that validate the presented algorithms are presented in Section \ref{6.5}. Finally, in Section \ref{6.6} conclusion is summarized.

\section{Problem Formulation} \label{6.1}
In this section, we define the kinematics equations of the motions of the multi-robot system in 3D environments and we formulate the problem of formation control. In this chapter, the term mobile robotic sensor is used for expression of both unmanned aerial vehicles and autonomous underwater vehicles. Moreover, the standard inner product is denoted by $< ¢; ¢ >$.
\subsection{Multi-Robot Model}
\label{sec:2}
The system under consideration consists of n  three degree-of-freedom nonholonomic mobile robotic sensors labelled 1 through $n$. All these robots move in the three dimensional  workspace in continuous time, and they are  controlled by bounded $v_i$, pitch ($q_i$) and yaw ($r_i$) rates. Let $\xi_i=col(x_i(t), y_i(t), z_i(t))$ denotes the column vector of the Cartesian coordinates of the mobile robotic sensor $i$. Also, let $\theta_i(t)$ and $\psi_i(t)$ denote the pitch and the yaw angle, respectively. Moreover, let $v_i(t)$, $q_i(t)$ and $r_i(t)$ be the linear velocity, the pitch and the yaw angular velocities of the robot $i$, respectively. Note that, the mobile robotic sensors considered in this chapter do not have roll motion. Then, the mobile robotic sensors kinematic equations of motion are given by the following equations:
  \begin{equation*}
\dot{\vec{\xi_i(t)}}=v_i(t)\vec{c_i(t)}
\end{equation*}
\begin{equation}\label{eq:1}
\dot{\vec{c_i(t)}}=\vec{u_i(t)}
\end{equation}
for all $i = 1,2,...n$. Here, $\vec{c_i(t)}\in R^3$, $\left\|\vec{c_i(t)}\right\|=1$, is the unit vector along the robot’s centerline described by:\\
\begin{equation}\label{eq:2}
\vec{c_i}=\left(
\begin{array}{c}
\cos(\theta_i(t))\cos(\psi_i(t)\\
\cos(\theta_i(t))\sin(\psi_i(t))\\
-\sin(\theta_i(t)
\end{array}
\right)
\end{equation}
Also, introduce three-dimensional vectors $V_i(t)$ of the robots’ velocities by $V_i(t)=v_i(t)c_i(t)$.\\
$\vec{u_i}(t) \in R^3$ is the two-degree-of-freedom control. Furthermore, $\vec{u_i}(t)$ and $v_i(t)\in R $ are the control inputs for all robots. We assume the following  constraints on the control inputs:
\begin{equation}\label{eq:3}
\left\langle \vec{u_i(t)},\vec{c_i(t)}\right\rangle=0
\end{equation}
\begin{equation}\label{eq:4}
\left\|\vec{u_i(t)}\right\|\leq u_{max}
\end{equation}
\begin{equation} \label{eq:5}
V_{min} \leq v_i(t) \leq V_{max}
\end{equation}

\begin{figure}
\centering
{\includegraphics[scale=0.85]{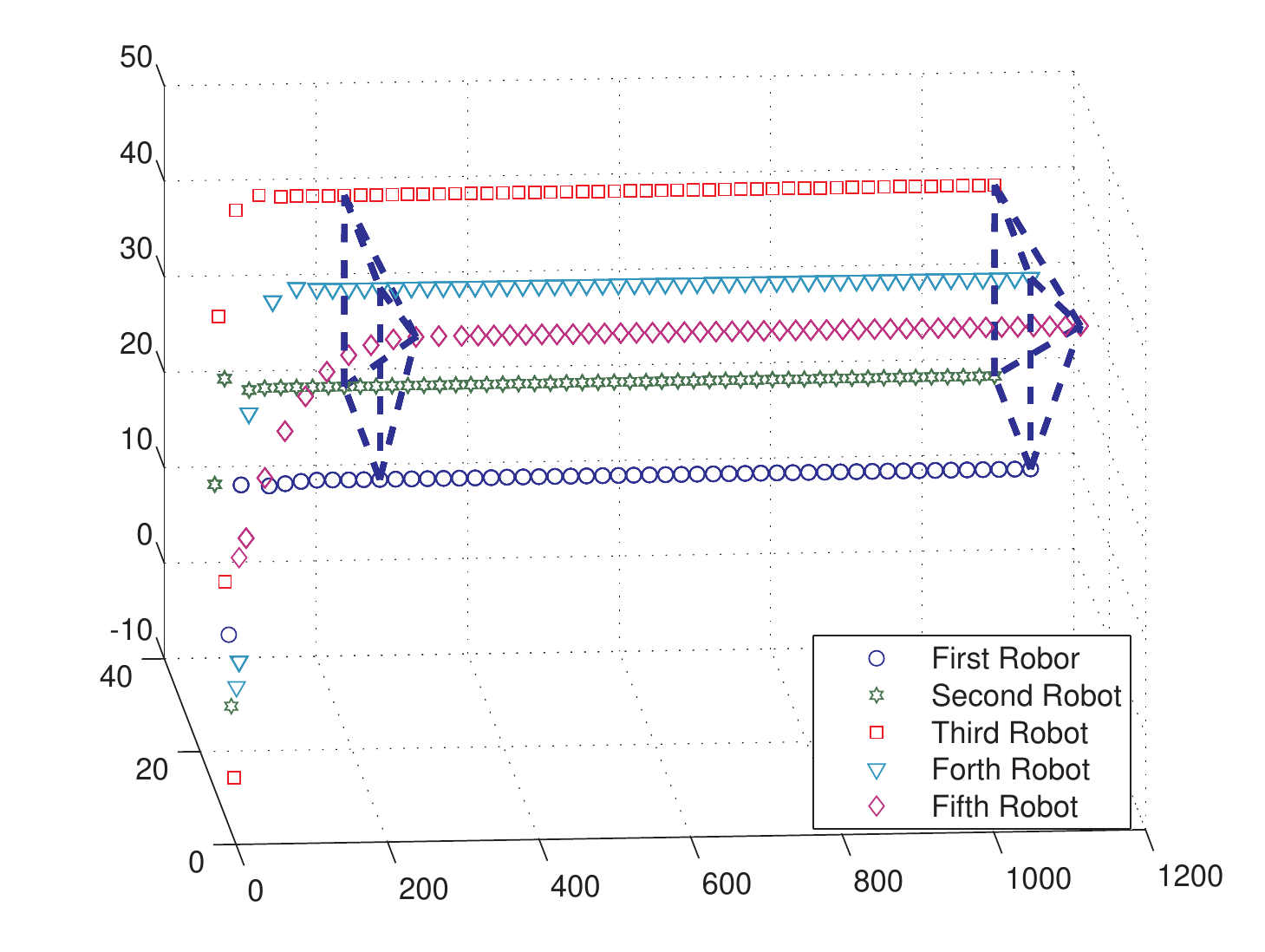}}\\
\caption{Formation Building Schematic }
\label{schematicnew}
\end{figure}

Here $u_{max}>0$ and also $0<V_{min}<V_{max}$ are given constants.
\subsection{Formation Building Problem} \label{6.2}
In this section, we explain the problem of formation building by nonholonomic mobile robotic sensors. Especially, we study the decentralized 3D formation building problem for $n$ mobile robotic sensors moving in three-dimensional environments such that, after transient state, the mobile robotic sensors constitute the desired formation and move collectively along a desired geometric pattern from any initial position. The desired geometric configuration of the mobile robotic sensors in the formation are characterized by a set of given numbers $X_1,X_2...X_n$, $Y_1, Y_2, ... Y_n$ and $Z_1,Z_2,....Z_n$ . For all $i = 1,...n$. , $Y_i-Y_j$ and $Z_i-Z_j$ are the desired horizontal and vertical distances between the $ith$' and the $jth$' robots respectively. Also, $X_i-X_j$ is the desired distance  between the $ith$ and the $jth$ mobile robotic sensor along $X$ axis.\\
\begin{definition} A navigation law with the given configuration $C=\left\{X_1,X_2...X_n, Y_1, Y_2, ... Y_n, Z_1,Z_2,....Z_n\right\}$ and initial conditions $\theta_i(0)$, $\psi_i(0)$, $x_i(0)$, $y_i(0)$,
$z_i(0)$ and $v_i(0)$ , $i = 1,...n$  is said to be globally stabilizing, if there exist a Cartesian coordinate system and $v_0$ such that the solution of the closed-loop system (\ref{eq:1}) with these initial conditions and the proposed navigation law in this Cartesian coordinate system satisfies:
\begin{equation}\label{eq:6}
\lim_{t\rightarrow\infty} x_i(t)-x_j(t) = X_i-X_j
\end{equation}
\begin{equation*}
\lim_{t\rightarrow\infty} y_i(t)-y_j(t) = Y_i-Y_j
\end{equation*}
\begin{equation*}
\lim_{t\rightarrow\infty} z_i(t)-z_j(t) = Z_i-Z_j
\end{equation*}
\begin{equation*}
\lim_{t\rightarrow\infty} \psi_i(t)=0
\end{equation*}
\begin{equation}\label{eq:7}
\lim_{t\rightarrow\infty} \theta_i(t)=0
\end{equation}
\begin{equation} \label{eq:8}
\lim_{t\rightarrow\infty} v_i(t) = v_0
\end{equation}
\end{definition}
for $i,j=1,2,.....n$, $i\neq j$.
Thus, the control objective is to design a navigation law to satisfy the conditions mentioned above for any initial conditions. The schematic of the mobile robotic sensors during formation building in  a given configuration is shown in Fig.~\ref{schematicnew}.
\section{Decentralized Consensus Based Formation Building Algorithm} \label{6.3}

For a given time $k > 0$, each mobile robotic sensor communicates with its surrounding neighbours in the communication range $R_c$ at the discrete time instances $k = 0,1,2, . .$ . We assume a spherical communication model where each robot has the ability to obtain information of its neighbours within a sphere of radius $R_c > 0$ defined by:
\begin{equation} \label{eq:9}
S_{i,R_c}=\{p\in R^3;\|(p-\zeta_i (k))\| \leq R_c\}
\end{equation}
Let $N_i(k)$ be the set of mobile robotic sensors  $j$, $j\neq  i$ and $j\in {1, 2, . . ., n}$ that at time $k$ belong to the sphere $S_{i,R_c}$,  also we assume that robot $i$ has $|N_i(k)|$ number of neighbours at time $k$. We consider a simple undirected graph $\mathcal{G}(k)$ with vertex set ${1, 2, . . ., n}$ to describe the relationships between the neighbouring mobile robotic sensors for any time $k$. The different vertices $i$ and $j$ of the graph $\mathcal{G}(k)$ are connected by an edge if and only if, the robots $i$ and $j$ are neighbours at time $k$. In the following, we impose a condition on the connectivity of the graph.\\
\begin{assumption} \label{assump3}There exists an infinite sequence of non-empty, contiguous, bounded time-intervals $[k_j, k_{j+1})$, $ j = 0, 1, 2, . . .$  beginning at $k_0 = 0$ such that across each $[k_j, k_{j+1})$, the union of the collection $\{\mathcal{G}(k) : k \in [k_j, k_{j+1})\} $ is a connected graph \cite{main}.
\end{assumption}
We use consensus variables $\tilde{v_i}(k)$ to reach a common speed of the formation, and the consensus variables $\tilde{x_i}(k)$,$\tilde{y_i}(k)$ and $\tilde{z_i}(k)$ are used to build a common coordinates of the formation, and also the consensus variables $\tilde{\theta_i}(k)$ and $\tilde{\psi_i}(k)$ are utilized  to achieve the common heading of the formation. The robots will start with different values of $\tilde{v_i}(0)$, $\tilde{x_i}(0)$, $\tilde{y_i}(0)$, $\tilde{z_i}(0)$, $\tilde{\theta_i}(0)$, and $\tilde{\psi_i}(0)$, and finally converge to some constant values which define a common formation orientation and speed for all robots.\\
\begin{assumption} \label{assump4} The initial values of the consensus variables: $\tilde{\theta_i}(0) \in\left[0 \quad \pi\right)$ and $\tilde{\psi_i}(0) \in\left[0 \quad \pi\right) $  for all $i = 1, 2, ... n$.
\end{assumption}
We propose the following rules for updating the consensus variables:
\begin{equation*}
\tilde{\psi_i}(k+1)=\frac{\tilde{\psi_i}(k)+\sum_{\substack{
   j\in N_i(k)
}}\tilde{\psi}_j(k) }
 {1+| N_i(k) |}
\end{equation*}
\begin{equation*}
\tilde{\theta_i}(k+1)=\frac{\tilde{\theta_i}(k)+\sum_{\substack{
   j\in N_i(k)
}}\tilde{\theta}_j(k) }
 {1+| N_i(k) |}
\end{equation*}
\begin{equation*}
\tilde{x_i}(k+1)=\frac{x_i(k)+\tilde{x_i}(k)+\sum_{\substack{j\in N_i(k)}}x_j(k)+\tilde{x_j}(k)}{1+|N_i(k)|} -x_i(k+1)
\end{equation*}
\begin{equation*}
\tilde{y_i}(k+1)=\frac{y_i(k)+\tilde{y_i}(k)+\sum_{\substack{j\in N_i(k)}}y_j(k)+\tilde{y_j}(k)}{1+|N_i(k)|}-y_i(k+1)
\end{equation*}
\begin{equation*}
\tilde{z_i}(k+1)=\frac{z_i(k)+\tilde{z_i}(k)+\sum_{\substack{j\in N_i(k)}}z_j(k)+\tilde{z_j}(k)}{1+|N_i(k)|}-z_i(k+1)
\end{equation*}
\begin{equation} \label{eq:10}
\tilde{v}_i(k+1)=\frac{\tilde{v}_i(k)+\sum_{\substack{
   j\in N_i(k)
}}\tilde{v}_j(k) }
 {1+| N_i(k) |}
\end{equation}
Based on (~\ref{eq:10}), the mobile robotic sensors use the consensus variables to achieve a consensus on the heading, speed and mass centre of the formation.\\
\begin{lemma} \label{lemma8} Suppose that assumptions ~\ref{assump3} and ~\ref{assump4} hold and the consensus variables are updated according to the decentralized control algorithm (~\ref{eq:10}), then there exist constants $\theta_0$, $\psi_0$, $x_0$, $y_0$,
$z_0$ and $v_0$ such that:
\begin{equation*}
\lim_{k\rightarrow\infty} \tilde{\theta_i}(k) = \theta_0
\end{equation*}
\begin{equation*}
\lim_{k\rightarrow\infty} \tilde{\psi_i}(k) = \psi_0
\end{equation*}
\begin{equation*}
\lim_{k\rightarrow\infty} x_i(k)+\tilde{x_i}(k) = x_0
\end{equation*}
\begin{equation*}
\lim_{k\rightarrow\infty} y_i(k)+\tilde{y_i}(k) = y_0
\end{equation*}
\begin{equation*}
\lim_{k\rightarrow\infty} z_i(k)+\tilde{z_i}(k) = z_0
\end{equation*}
\begin{equation}\label{eq:11}
\lim_{k\rightarrow\infty} \tilde{v_i}(k) = v_0
\end{equation}
\end{lemma}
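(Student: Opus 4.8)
The plan is to reduce the lemma to the classical convergence theorem for discrete-time averaging consensus over jointly connected switching graphs --- the same result already used to prove Theorem~\ref{TH1} (see \cite{jadbabaie2003coordination,cao,main}) --- after an elementary change of variables.

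First I would treat the heading and speed variables. The updates for $\tilde{\psi_i}$, $\tilde{\theta_i}$ and $\tilde{v}_i$ in (~\ref{eq:10}) are precisely convex-combination (averaging) iterations: stacking these scalars into a vector, one step of (~\ref{eq:10}) amounts to left multiplication by a row-stochastic matrix $P(k)$ with strictly positive diagonal entries $\frac{1}{1+|N_i(k)|}$, off-diagonal entries $\frac{1}{1+|N_i(k)|}$ on the edges of $\mathcal{G}(k)$, and zeros elsewhere. Assumption~\ref{assump3} states exactly that the union of the $\mathcal{G}(k)$ across each interval $[k_j,k_{j+1})$ is connected, which is the standing hypothesis of the Jadbabaie--Lin--Morse type theorem; invoking it yields constants $\theta_0,\psi_0,v_0$ with $\tilde{\theta_i}(k)\to\theta_0$, $\tilde{\psi_i}(k)\to\psi_0$, $\tilde{v_i}(k)\to v_0$ for all $i$. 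Assumption~\ref{assump4} is used only to observe that $[0,\pi)$ is convex, hence invariant under these iterations, so that $\theta_0,\psi_0\in[0,\pi)$ and no $2\pi$-wrap ambiguity arises.

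The one computation worth carrying out concerns the position variables. I would introduce $\hat{x}_i(k):=x_i(k)+\tilde{x_i}(k)$, and similarly $\hat{y}_i,\hat{z}_i$. Moving the term $-x_i(k+1)$ in the $\tilde{x_i}$-update of (~\ref{eq:10}) to the left-hand side shows
\[
\hat{x}_i(k+1)=x_i(k+1)+\tilde{x_i}(k+1)=\frac{\hat{x}_i(k)+\sum_{j\in N_i(k)}\hat{x}_j(k)}{1+|N_i(k)|},
\]
and likewise for $\hat{y}_i$ and $\hat{z}_i$: the robots' own displacements $x_i(k+1)-x_i(k)$ cancel by construction, so $\hat{x}_i,\hat{y}_i,\hat{z}_i$ obey the same averaging iteration driven by the same matrices $P(k)$. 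Applying the consensus theorem once more produces constants $x_0,y_0,z_0$ with $\hat{x}_i(k)\to x_0$, $\hat{y}_i(k)\to y_0$, $\hat{z}_i(k)\to z_0$, which are precisely the remaining three limits in (~\ref{eq:11}). This completes the proof.

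The only genuine subtlety --- hardly an obstacle --- is recognizing that the position rule in (~\ref{eq:10}) is engineered so that the combined quantity $x_i+\tilde{x_i}$, rather than $\tilde{x_i}$ alone, is what performs consensus; once this substitution is made the trajectory terms telescope away and nothing beyond the cited switched-averaging theorem is required. In particular, no boundedness or extra regularity of the paths $x_i(\cdot)$ need be assumed, since each $\hat{x}_i$ converges in its own right.
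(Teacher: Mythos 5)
Your proposal is correct and follows essentially the same route as the paper, which simply cites the switched-averaging consensus results of \cite{jadbabaie2003coordination,savkin2004coordinated} without further elaboration. Your explicit change of variables $\hat{x}_i(k)=x_i(k)+\tilde{x_i}(k)$, showing that the $-x_i(k+1)$ term in (~\ref{eq:10}) is precisely what makes the combined quantity obey the standard row-stochastic averaging iteration, is the step the paper leaves implicit, and your verification of it is accurate.
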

(See \cite{jadbabaie2003coordination,savkin2004coordinated} for the proof of convergence of consensus variables to some constant values).\\
Notice that, for all $i=1,2,...n.$, we consider a cartesian coordinate system with the $x-axis$ in the direction of $\tilde{\theta_i}(k)$. Thus, in this coordinate system $\tilde{\theta_i}(k) = 0$.\\
In the following, we present a decentralized formation building law in 3D spaces for the multi-robot system such that the robots will eventually move in the same direction of the x-axis in this Cartesian coordinate system, with the same speed and with the given  geometric configuration.\\
We define the functions $h_i(t)$ as:\\ $h_i(t) = x_i(t) + \tilde{x_i}(t) + X_i + t\tilde{v_i}(t)$. \\Next, we introduce $Tx_i$, $Ty_i(t)$ and $Tz_i(t)$ as:
%
\begin{equation*}
Tx_i(t)=\begin{cases}
h_i(t)+c_0 \quad  if \quad x_i(t) \leq h_i(t)\\
x_i(t)+c_0 \quad  if \quad x_i(t) \succ h_i(t)\\
\end{cases}
\end{equation*}

Note that  $c_0 > 0$ is a constant such that
\begin{equation} \label{eq:12}
 c_0>\frac{2V_{max}}{u_{max}}
\end{equation}
\begin{equation} \label{eq:13}
Ty_i(t)=y_i(t) + \tilde{y_i}(t)+Y_i
\end{equation}
\begin{equation*}
Tz_i(t)=z_i(t) + \tilde{z_i}(t)+Z_i.
\end{equation*}
Also, for $i=1,2,.....n$ , we consider $T_i$ as fictitious targets  moving  with coordinates defined by:
\begin{equation} \label{eq:14}
T_i=\left(
\begin{array}{c}
Tx_i(t)\\
Ty_i(t)\\
Tz_i(t)\\
\end{array}
\right)
\end{equation}
\begin{figure}
\centering
{\includegraphics[trim={5cm 0 0 0}, scale=0.8]{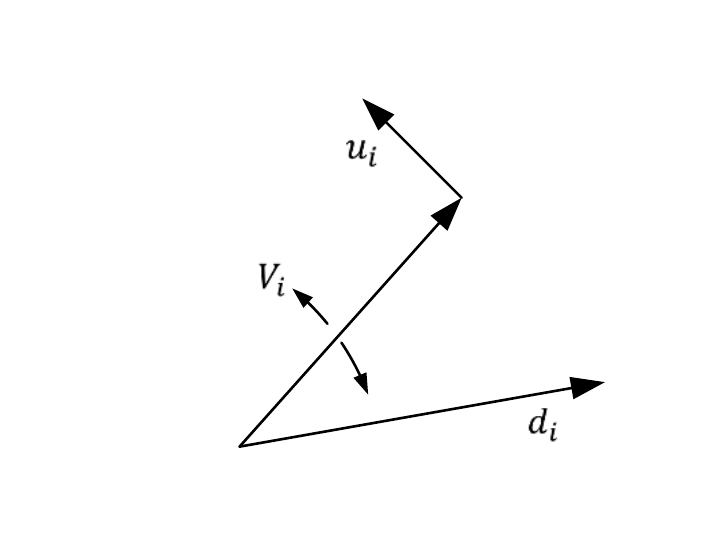}}
\caption{Vectors Setting}
 \label{vec}
\end{figure}

Furthermore, let $d_i$ be a three-dimensional vector as:
\begin{equation} \label{eq:15}
d_i=\left(
\begin{array}{c}
Tx_i(t)-x_i(t)\\
Ty_i(t)-y_i(t)\\
Tz_i(t)-z_i(t)\\
\end{array}
\right)
\end{equation}

 Which is the distance vector between robot $i$ and the fictitious target.\\
\begin{remark} We assume that each mobile robotic sensor knows the coordination $(x_i,y_i,z_i)$ and the consensus variables of all of its neighboring mobile robotic sensors, also the configuration $C=\left\{X_1,X_2,....X_n,Y_1,Y_2,...Y_n,Z_1,Z_2,...Z_n\right\}$ and the constant $c_0$  are known to the multi-robot system a priori.
\end{remark}
Since $\dot{c}_i=u_i(t)$ defined by (~\ref{eq:1}), then the following correspondence between the control vector $u_i$ from (~\ref{eq:1}) and the pitch ($q_i$) and yaw ($r_i$) rates can be defined:
\begin{equation} \label{eq:16}
\vec{u_i}=\dot{\theta}_i\vec{A_i}+\dot{\psi}_i\vec{B_i}
\end{equation}

where, $\dot{\theta}_i=q_i$, $\dot{\psi}_i=r_i$ and also $A_i$ and $B_i$ are considered as three-dimensional vectors defined by:
\begin{equation} \label{eq:17}
\vec{A_i}=\left(
\begin{array}{c}
-\sin(\theta_i(t))\cos(\psi_i(t)\\
-\sin(\theta_i(t))\sin(\psi_i(t))\\
-\cos(\theta_i(t)
\end{array}
\right)
\end{equation}
\begin{equation} \label{eq:18}
\vec{B_i}=\left(
\begin{array}{c}
-\cos(\theta_i(t))\sin(\psi_i(t)\\
\cos(\theta_i(t))\cos(\psi_i(t))\\
0\\
\end{array}
\right)
\end{equation}

for all $i=1,2,....n$.\\
We know that the direction of $\vec{u_i}$  is orthogonal to $\vec{V_i}$ and it can cause acceleration perpendicular to the mobile robotic sensors velocity vector($V_i$). The vector $u_i$ causes  a turning motion to parallel the mobile robotic sensors velocity vector to the vector $d_i$. Furthermore, we assume that the control signal($u_i$) is in the same plane with $\vec{d_i}$ and the velocity vector $V_i$,  therefore we can write the direction of $\vec{u}_i$ as(see ~\ref{vec}):
\begin{equation} \label{eq:19}
\vec{d_{u_i}}= \vec{d_i}-\frac{\left\langle d_i(t),V_i(t)\right\rangle}{\left\|V_i\right\|^2}\vec{V_i};
\end{equation}
We define $\tilde{u_i}=\frac{\vec{d_{u_i}}}{\vec{\left\|d_{u_i}\right\|}}$ as the  unit vector of $\vec{d_{u_i}}$.
Now, we introduce the following decentralized control law for formation building in 3D environments:
\begin{equation} \label{eq:20}
v_i(t)=\begin{cases}
V_{max}\quad  if \quad x_i(t) \leq h_i(t)\\
V_{min} \quad  if \quad x_i(t) > h_i(t)\\
\end{cases}
\end{equation}
\begin{equation} \label{eq:21}
\vec{u_i}(t)=\bar{u_i}(t)\vec{\tilde{u_i}}
\end{equation}
Where $\bar{u_i}=u_{max}sgn(V_i,d_i)$. $sgn(,)$ defines the sign of angle between vectors $V_i$ and $d_i$.\\ The controller ~\ref{eq:19}-~\ref{eq:21} belongs to the class of switched controller systems, see e.g \cite{savkin1996hybrid,skafidas1999stability,Matveev:2000:QTH:555969,savkin2002hybrid}
Also, (~\ref{eq:16}) implies that we can calculate the angular velocities as follows:
\begin{equation} \label{eq:22}
q_i(t)=\left\langle u_i(t),A_i(t)\right\rangle.
\end{equation}
\begin{equation} \label{eq:23}
r_i(t)=\left\langle u_i(t),B_i(t)\right\rangle
\end{equation}
for all $i=1,2,....n$.\\
\begin{lemma} Suppose that $q_i(t)$ and $r_i(t)$ are updated according to the decentralized control  law (~\ref{eq:22}),(~\ref{eq:23}), then both angular velocities are bounded.
\end{lemma}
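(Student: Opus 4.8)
The plan is to bound $q_i(t)$ and $r_i(t)$ directly via the Cauchy--Schwarz inequality, using the explicit coordinate expressions for $\vec{A_i}(t)$ and $\vec{B_i}(t)$ in (~\ref{eq:17}), (~\ref{eq:18}) together with the input constraint (~\ref{eq:4}). The only preliminary point is to verify that the proposed switched control law actually produces an admissible $\vec{u_i}(t)$.

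First I would check that the control law (~\ref{eq:21}) respects (~\ref{eq:3})--(~\ref{eq:4}). By (~\ref{eq:19}) the vector $\vec{d_{u_i}}$ is the orthogonal projection of $\vec{d_i}$ onto the plane perpendicular to $\vec{V_i}=v_i\vec{c_i}$, so $\vec{\tilde{u_i}}=\vec{d_{u_i}}/\|\vec{d_{u_i}}\|$ is a unit vector orthogonal to $\vec{c_i}$ (and when $\vec{d_{u_i}}=0$ the turning control is simply taken to be zero, which only makes every bound below easier). Since $\bar{u_i}=u_{max}\,\mathrm{sgn}(V_i,d_i)$ with $|\mathrm{sgn}(\cdot,\cdot)|\le 1$, we get $\langle\vec{u_i}(t),\vec{c_i}(t)\rangle=0$ and $\|\vec{u_i}(t)\|=|\bar{u_i}|\,\|\vec{\tilde{u_i}}\|\le u_{max}$, so (~\ref{eq:4}) holds.

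Next I would compute the norms of $\vec{A_i}(t)$ and $\vec{B_i}(t)$ from their definitions. A direct calculation using $\cos^2+\sin^2=1$ gives $\|\vec{A_i}(t)\|=1$ and $\|\vec{B_i}(t)\|=|\cos(\theta_i(t))|\le 1$ for every $t$. Then, combining these with $\|\vec{u_i}(t)\|\le u_{max}$ and the identities (~\ref{eq:22}), (~\ref{eq:23}), the Cauchy--Schwarz inequality yields
\[
|q_i(t)|=|\langle \vec{u_i}(t),\vec{A_i}(t)\rangle|\le \|\vec{u_i}(t)\|\,\|\vec{A_i}(t)\|\le u_{max},
\]
\[
|r_i(t)|=|\langle \vec{u_i}(t),\vec{B_i}(t)\rangle|\le \|\vec{u_i}(t)\|\,\|\vec{B_i}(t)\|\le u_{max},
\]
for all $i=1,2,\dots,n$ and all $t\ge 0$. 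Hence both angular velocities are uniformly bounded (by $u_{max}$), which proves the lemma.

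I do not expect a genuine obstacle here; the argument is essentially a one-line Cauchy--Schwarz estimate. The only step needing a little care is the bookkeeping that the switched, discontinuous control law (~\ref{eq:20})--(~\ref{eq:21}) stays inside the admissible set (~\ref{eq:3})--(~\ref{eq:5})—in particular the orthogonality $\langle \vec{u_i},\vec{c_i}\rangle=0$ and the norm bound $\|\vec{u_i}\|\le u_{max}$—and the treatment of the degenerate case $\vec{d_{u_i}}=0$; once those are settled the bound on $q_i$ and $r_i$ is immediate.
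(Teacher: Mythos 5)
Your proof is correct and follows essentially the same route as the paper: both bound $|q_i|$ and $|r_i|$ by $\|\vec{u_i}\|\,\|\vec{A_i}\|$ and $\|\vec{u_i}\|\,\|\vec{B_i}\|$ via Cauchy--Schwarz, then use $\|\vec{A_i}\|=1$, $\|\vec{B_i}\|=|\cos\theta_i|\le 1$ and $\|\vec{u_i}\|\le u_{max}$ to conclude that $u_{max}$ is a uniform bound. Your additional checks (that the switched law satisfies the orthogonality and norm constraints, and the degenerate case $\vec{d_{u_i}}=0$) are careful touches the paper leaves implicit but do not change the argument.
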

 By using (~\ref{eq:21}, ~\ref{eq:22} ,~\ref{eq:23})  we can say that $\left\|q_i(t)\right\|\leq \left\|\vec{u_i}(t)\right\|\left\|\vec{A_i}\right\|$ and $\left\|r_i(t)\right\|\leq \left\|\vec{u_i}(t)\right\|\left\|\vec{B_i}\right\|$. Also, it is clear that $\left\|\vec{u_i}(t)\right\|=u_{max}$, $\left\|\vec{A_i}\right\|=1$ and $\left\|\vec{B_i}\right\|=\cos(\theta_i)$. This means that $q_i(t)$, $r_i(t)$ are bounded, and $u_{max}$ is an upper bound to them. Now, we are in a position to present the main result of the chapter.\\
\begin{theorem} \label{theorem:formation} Consider the mobile robotic sensors described by the equations (~\ref{eq:1}), (~\ref{eq:2}) and the constraints (~\ref{eq:3}), (~\ref{eq:4}), (~\ref{eq:5}). Let $C=\left\{X_1,X_2,....X_n,Y_1,Y_2,...Y_n,Z_1,Z_2,...Z_n\right\}$ be a given configuration. Suppose that Assumptions ~\ref{assump3} and ~\ref{assump4} hold, and $c_0$ is a constant satisfying (~\ref{eq:11}). Then, the decentralized control law (~\ref{eq:19}), (~\ref{eq:20}) and (~\ref{eq:21}) is globally stabilizing with any initial conditions and the configuration C.
\end{theorem}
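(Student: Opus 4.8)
The plan is to combine the consensus result of Lemma~\ref{lemma8} with a pursuit--guidance argument for each individual robot chasing its fictitious target $T_i$, and then to glue the two together.

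\textbf{Step 1 (reduction via consensus).} First I would invoke Lemma~\ref{lemma8}: under Assumptions~\ref{assump3}--\ref{assump4} the consensus variables satisfy $\tilde\theta_i(k)\to\theta_0$, $\tilde\psi_i(k)\to\psi_0$, $w^x_i(k):=x_i(k)+\tilde x_i(k)\to x_0$, $w^y_i(k):=y_i(k)+\tilde y_i(k)\to y_0$, $w^z_i(k):=z_i(k)+\tilde z_i(k)\to z_0$ and $\tilde v_i(k)\to v_0$; since each $\tilde v_i(0)\in(V_{min},V_{max})$ and the update is a convex combination, $V_{min}<v_0<V_{max}$. Working in the Cartesian frame whose $x$-axis is aligned with the common limiting heading (so effectively $\theta_0=\psi_0=0$), the target coordinates become, modulo $o(1)$ terms, $Ty_i(t)=w^y_i+Y_i\to y_0+Y_i$, $Tz_i(t)=w^z_i+Z_i\to z_0+Z_i$, and $h_i(t)=w^x_i+X_i+t\tilde v_i\to x_0+X_i+v_0 t$. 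Hence the fictitious targets asymptotically form a rigid configuration translating with constant velocity $v_0$ along $+\hat x$, with $T_i-T_j\to(X_i-X_j,\;Y_i-Y_j,\;Z_i-Z_j)$. It therefore suffices to prove that $\xi_i(t)-T_i(t)$ converges to the constant vector $(-c_0,0,0)$ in this frame and that $c_i(t)\to(1,0,0)$.

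\textbf{Step 2 (longitudinal dynamics / sliding mode).} Next I would analyse the scalar switching in (\ref{eq:20}). Since $x_i\le h_i\iff 0\le \tilde x_i+X_i+t\tilde v_i\iff x_i\le w^x_i+X_i+t\tilde v_i$, and once $c_i$ is near $\hat x$ one has $\dot x_i\in\{V_{max},V_{min}\}$ with $V_{min}<\dot h_i\approx v_0<V_{max}$, the surface $x_i=h_i$ is reached in finite time and is attractive; on it the motion is a sliding mode with effective longitudinal speed $v_0$. Thus $x_i(t)-h_i(t)\to 0$, which gives $x_i(t)-x_j(t)\to X_i-X_j$ and, in the Filippov (time-averaged) sense, $v_i(t)\to v_0$. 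On the sliding surface $Tx_i=h_i+c_0=x_i+c_0$, so the distance vector reduces to $d_i(t)=(c_0,\;Ty_i-y_i,\;Tz_i-z_i)$, whose first component is the fixed positive cushion $c_0$.

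\textbf{Step 3 (transversal pursuit--guidance).} This is the core of the argument. The law (\ref{eq:19})--(\ref{eq:21}) keeps $u_i\perp V_i$, $\|u_i\|=u_{max}$, and selects $u_i$ in the plane $\mathrm{span}\{V_i,d_i\}$ so as to rotate $V_i$ towards $d_i$; because $\|\dot c_i\|=u_{max}$ the heading turns at the fixed angular rate $u_{max}$, so the path curvature radius never exceeds $V_{max}/u_{max}$. I would introduce the angle $\gamma_i(t)=\angle(V_i(t),d_i(t))$ and the transversal error $e_i(t)=\big(y_i-(y_0+Y_i),\,z_i-(z_0+Z_i)\big)$ and show, with a Lyapunov-type estimate, that $\gamma_i\to0$ and $\|e_i\|\to0$: the guidance makes the transversal velocity $v_i(\cos\theta_i\sin\psi_i,-\sin\theta_i)$ essentially anti-parallel to $e_i$, so $\|e_i\|$ is eventually non-increasing, while $d_i$'s direction stabilises towards $+\hat x$ so that $\gamma_i$ keeps decreasing. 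The hypothesis $c_0>2V_{max}/u_{max}$ from (\ref{eq:12}) enters exactly as in the planar case: $2V_{max}/u_{max}$ bounds the displacement of a half-turn, so the constant longitudinal cushion guarantees the robot can always re-orient towards $T_i$ without the target slipping behind it, which rules out the degenerate trajectory $\gamma_i\equiv\pi$ and the ill-definedness of $\mathrm{sgn}(V_i,d_i)$ on a set of positive measure. Hence $e_i\to0$, i.e. $y_i\to y_0+Y_i$, $z_i\to z_0+Z_i$, whence $d_i\to(c_0,0,0)$, $c_i\to(1,0,0)$, and therefore $\psi_i\to0$, $\theta_i\to0$.

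\textbf{Step 4 (conclusion).} Combining Steps~2--3 with Step~1 gives $x_i-x_j\to X_i-X_j$, $y_i-y_j\to Y_i-Y_j$, $z_i-z_j\to Z_i-Z_j$, $\psi_i\to0$, $\theta_i\to0$ and $v_i\to v_0$, so the law is globally stabilising for the configuration $C$; boundedness of $q_i,r_i$ has already been established and the other constraints are built into (\ref{eq:20})--(\ref{eq:21}), so admissibility holds. I expect Step~3 — the three-dimensional pursuit--guidance convergence with the discontinuous/switched control and the continuously rotating turning plane — to be the main obstacle; the cleanest route is to reduce it, at each instant, to the known planar pursuit estimate inside the moving plane $\mathrm{span}\{V_i,d_i\}$, and to bound how fast that plane turns using the convergence of the consensus variables together with the cushion $c_0$, while handling existence of solutions in the Filippov sense.
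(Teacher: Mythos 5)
Your proposal is correct and follows essentially the same route as the paper: a reduction via Lemma~\ref{lemma8} to pursuit of a fictitious target with the limiting consensus values, followed by a sliding-mode argument on the angle between the robot's velocity and the line of sight to that target, with the cushion $c_0>2V_{max}/u_{max}$ guaranteeing $\|\tilde{d}_i(t)\|$ stays bounded away from zero so that the turn rate $u_{max}$ dominates the line-of-sight rotation. The ``planar pursuit estimate in the moving plane'' that you flag as the main obstacle in Step~3 is exactly what the paper imports as the differential inequality $\dot{\lambda}_i=a(t)-\bar{u}_i(t)-b(t)$ with the bounds (\ref{eq:27})--(\ref{eq:28}) from \cite{teimoori2010equiangular}, which yields $\dot{\lambda}_i\lambda_i<0$ and finite-time convergence to the sliding surface $\lambda_i=0$.
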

\textit{Proof}: Consider a fictitious target $T_i(t)$ with coordinates defined by $T_i(t)$ moving in the 3D space. Moreover, let
\begin{equation}
\tilde{T}_i(t)=\left(
\begin{array}{c}
\tilde{T}x_i(t)\\
\tilde{T}y_i(t)\\
\tilde{T}z_i(t)\\
\end{array}
\right)
\end{equation}
be another fictitious target moving in the 3D space with coordinates $\tilde{T}_i(t)$ as follows:
\begin{equation*} \label{eq:24}
\tilde{T}x_i(t)=\begin{cases}
X_i+X_0+t\tilde{v_0}+c_0 \quad  if \quad x_i(t) \leq X_i+X_0+t\tilde{v_0} \\
x_i(t)+c_0 \quad       \quad if   \quad x_i(t) \geq X_i+X_0+t\tilde{v_0}\\
\end{cases}
\end{equation*}
 \begin{equation} \label{eq:25}
\tilde{T}y_i(t)=y_i(t) + \tilde{y_i}(t)+Y_i
\end{equation}
\begin{equation*}
\tilde{T}z_i(t)=z_i(t) + \tilde{z_i}(t)+Z_i.
\end{equation*}

Based on Lemma ~\ref{lemma8},
 \begin{equation*}
\lim_{t\rightarrow\infty} \tilde{T}_i(t) = T_i(t)
\end{equation*}
The system defined by the equations (~\ref{eq:1}),(~\ref{eq:20}),(~\ref{eq:21}), (~\ref{eq:22}), (~\ref{eq:23}) represents a closed-loop system  with discontinuous right-hand sides. Suppose $\lambda _{i}(t)$ is the angle between the velocity vector $V_{i}(t)$ of the robot $i$ and the line-of-sight between the robot and $\tilde{T}_{i}$. The equation $\lambda_{i}=0$ defines a switching surface of this system. In order to prove convergence to the sliding mode, we show that with this control, $\lambda_{i}$ will converge to 0 in finite time. In \cite{teimoori2010equiangular}, it has been shown that:
\begin{eqnarray}  \label{eq:26}
\dot{\lambda}_{i}(t)= a(t)-\bar{u}_i(t)- b(t),
\end{eqnarray}
where
\begin{equation}
\label{eq:27}
|a(t)|\leq \frac{|\sin(\lambda_{i}(t))|\|V_i(t)\|}
{\|\tilde{d}_{i}(t)\|};
\end{equation}
\begin{equation}
\label{eq:28}
|b(t)|\leq \frac{|\sin(\beta_{i}(t))|\|V_i^T(t)\|}
{\|\tilde{d}_{i}(t)\|}.
\end{equation}
$\beta _{i}(t)$ is the angle between the velocity vector $V_{i}^{T}(t)$ of $\tilde{T}_{i}$ and the line-of-sight from the robot $i$ to $\tilde{T}_{i}$. Also, $\tilde{d}_{i}(t)$ is defined as:
\begin{eqnarray}  \label{eq:29}
\tilde{d}_{i}(t):=\tilde{T}_{i}(t)-\xi_i(t)
\end{eqnarray}
(~\ref{eq:24}), (~\ref{eq:29}) imply that
\begin{eqnarray}  \label{eq:30}
\|\tilde{d}_{i}(t)\|\geq c~~~~\forall \quad t\geq 0.
\end{eqnarray}
It  follows from (~\ref{eq:24}) that
\begin{equation} \label{eq:31}
V_{i}^T(t)=\left(
\begin{array}{c}
V_{i}^{Tx}(t)\\
V_{i}^{Ty}(t)\\
V_{i}^{Tz}(t)\\
\end{array}
\right)
\end{equation}
\begin{equation*}
V_{i}^{Tx}(t)=\begin{cases}
 \tilde{v}_0 & if~x_i(t)\leq X_0+X_i+t\tilde{v}_0 \\
v_i(t) & if~x_i(t)> X_0+X_i+t\tilde{v}_0\\
\end{cases}
\end{equation*}
 \begin{equation} \label{eq:32}
V_{i}^{Ty}(t)= 0
\end{equation}
\begin{equation*}
V_{i}^{Tz}(t)=0
\end{equation*}

(~\ref{eq:32}) and (~\ref{eq:5}) imply that
\begin{eqnarray}  \label{eq:33}
\|V_{i}^T(t)\|\leq V^M.
\end{eqnarray}
Suppose that in control law (~\ref{eq:21}) the distance vector $d_{i}$ replaced by $\tilde{d}_{i}$. The mathematical relations (~\ref{eq:12}), (~\ref{eq:26}), (~\ref{eq:33}), (~\ref{eq:30}) implies that using this control law , there exists a
constant $\epsilon>0$ such that
\begin{eqnarray}  \label{eq:34}
\dot{\lambda}_{i}(t)<-\epsilon ~~~~if~~~\lambda_{i}(t)>0  \nonumber \\
\dot{\lambda}_{i}(t)>\epsilon ~~~~if~~~\lambda_{i}(t)<0.
\end{eqnarray}
It means that $\dot{\lambda}_{i}(t)\lambda_{i}(t)< 0$.  Therefore, there exists a time $t_0>0$ such that
\begin{eqnarray}  \label{eq:35}
\lambda_{i}(t)=0~~~~~\forall \quad t\geq t_0.
\end{eqnarray}
\begin{figure}
\centering
{\includegraphics[trim={7cm 0 0 0},scale=0.5]{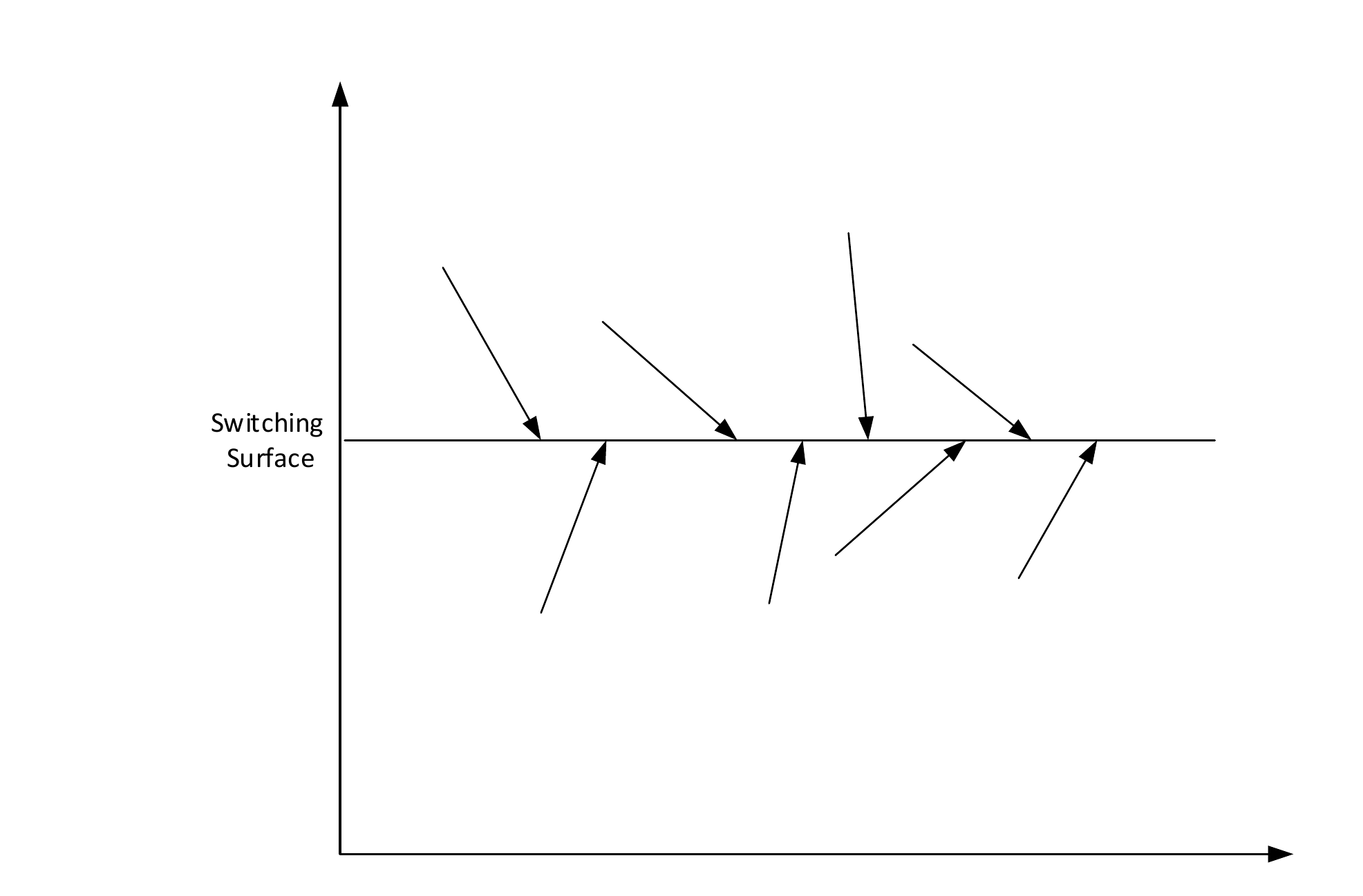}}\\
\caption{Vector field around the switching surface}
\label{switching}
\end{figure}
Fig.~\ref{switching} shows the vector field of the closed-loop system around the switching surface $(\lambda_{i}(t)=0)$.
(~\ref{eq:34}) implies that
\begin{eqnarray}  \label{eq:36}
\dot{\lambda}_{i}(t)=0~~~~~\forall \quad t\geq t_0.
\end{eqnarray}
It clearly sollows from (~\ref{eq:26}), (~\ref{eq:27}), (~\ref{eq:28}) and (~\ref{eq:35}) that
\begin{eqnarray}  \label{eq:37}
\bar{u}_i(t)= -b(t)
\end{eqnarray}
where $b(t)$ satisfies the inequality (~\ref{eq:28}) for all sliding mode solutions. As a result, for any initial condition, the sliding mode solution is unique and well-defined. Moreover, (~\ref{eq:37}), (~\ref{eq:33}), (~\ref{eq:12}) and (~\ref{eq:30}) indicate that the constraint (~\ref{eq:4}) holds for any sliding mode solution satisfying (~\ref{eq:34}). It follows from (~\ref{eq:35}) that for all $t\geq t_0$ the vector $\tilde{d}_{i}(t)$ is parallel to the velocity vector $V_i (t)$
as a result, $y_i(t)=Y_0+Y_i$ and $z_i(t)=Z_0+Z_i$ for all $t\geq t_0$. Therefore, we can say that the second and third parts of the condition (6) follow from this. The fact that the velocity vector $V_i (t)$ is parallel to the vector $%
\tilde{d}_{i}(t)$ for all $t\geq t_0$ and $d_{i}$ replaced by $\tilde{d}_{i}$ in the control law (~\ref{eq:21})  mean that
\[
\tilde{d}_{i}(t)= \left(
\begin{array}{l}
c \\
0\\
0%
\end{array}
\right)
\]
The first part of the conditions ~\ref{eq:6} follows from this. This completes the proof of Theorem ~\ref{theorem:formation}.

%
%
%

\section{Decentralized 3D Formation Building with Anonymous  Robots} \label{6.4}

This section describes the problem of formation building in 3D environments for the case where the mobile robotic sensors are unaware of their positions in the configuration. In the following, we propose a random decentralized motion coordination law for the mobile robotic sensors so that they reach consensus on their positions and form a desired geometric pattern from any initial position in 3D spaces.
\begin{definition} A navigation law with the given configuration $C=\left\{X_1,X_2...X_n, Y_1, Y_2, ... Y_n, Z_1,Z_2,....Z_n\right\}$ and anonymous  robots is said to be globally stabilizing if for any initial conditions $\theta_i(0)$, $\psi_i(0)$, $x_i(0)$, $y_i(0)$, $z_i(0)$ and $v_i(0)$ , there exists a permutation  \textit{p(i)} of the index set  $ {1,2,...n}$ such that for any of them  there exists a Cartesian coordinate system and $v_0$ such that the solution of the closed-loop system (~\ref{eq:1}) with these initial conditions and the proposed navigation law in this Cartesian coordinate system satisfies:
\begin{equation*}
\lim_{t\rightarrow\infty} x_i(t)-x_j(t) = X_{\textit{p(i)}}-X_{\textit{p(j)}},
\end{equation*}
\begin{equation*}
\lim_{t\rightarrow\infty} y_i(t)-y_j(t) = Y_{\textit{p(i)}}-Y_{\textit{p(j)}},
\end{equation*}
\begin{equation} \label{38}
\lim_{t\rightarrow\infty} z_i(t)-z_j(t) = Z_{\textit{p(i)}}-Z_{\textit{p(j)}}.
\end{equation}
\end{definition}
Let $0<\lambda <\frac{R_c}{2}$ be a given constant. Also, let $N>1$ be a given integer. We consider a undirected graph $g$ as a representation of the set of vertices of the given configuration $C=\left\{X_1,X_2...X_n, Y_1, Y_2, ... Y_n, Z_1,Z_2,....Z_n\right\}$. We assume that graph $g$ is connected. The index permutation function $\textit{p(i)}$ defines the position of the mobile robotic sensors in the configuration. As a result, to determine the position of the mobile robotic sensors in the configuration we need to build the index permutation function. In the following, we propose a random algorithm to build the index permutation function $\textit{p(i)}$.
\begin{definition} a vertex j of the graph $g$ is vacant at time $kN$ for robot $i$ if there is not any robot inside the sphere of radius $\lambda$ centered at the following  point:
\begin{equation} \label{39}
\left(
\begin{array}{c}
x_i(kN) + \tilde{x_i}(kN) + X_j + kN\tilde{v_i}(kN)\\
y_i(kN) + \tilde{y_i}(kN)+Y_j\\
z_i(kN) + \tilde{z_i}(kN)+Z_j
\end{array}
\right)
\end{equation}
\end{definition}
Let $b_i(k)$ be a boolean variable such that $b_i(k)=1$ if there exists another robot $j \neq i$ within the sphere of radius $\lambda$ centred at the following point:

\begin{equation} \label{40}
\left(
\begin{array}{c}
x_i(kN) + \tilde{x_i}(kN) + X_i + kN\tilde{v_i}(kN)\\
y_i(kN) + \tilde{y_i}(kN)+Y_i\\
z_i(kN) + \tilde{z_i}(kN)+Z_i
\end{array}
\right)
\end{equation}
at time $kN$, and $b_i(k)=0$ otherwise. We introduce the following algorithm for updating permution $\textit{p(i)}$ as follows:
\begin{equation} \label{eq:41}
\textit{p}((k+1)N,i)=\begin{cases}
\textit{p}(kN,i)\\  \quad  if \quad b_i(kN)=0\\
j\quad with\quad probability\quad\frac{1}{\left\|S(kN,i)\right\|},\forall j\in S(kN,i)\\
\quad if\quad \quad b_i(kN)=1 \quad and \left\|S(kN,i)\right\|>1.
\end{cases}
\end{equation}
Where $S(kN,i)$ is a set consisting of $\textit{p}(kN,i)$ and those vertices of g that are vacant for robot $i$ at time $kN$ and connected to  $\textit{p}(kN,i)$.
\begin{figure*}[t!]
\begin{center}
\mbox{
\subfigure[Traveling in an edge formation with six robots]{
{\includegraphics[width=0.8\textwidth,height=0.65\textwidth]{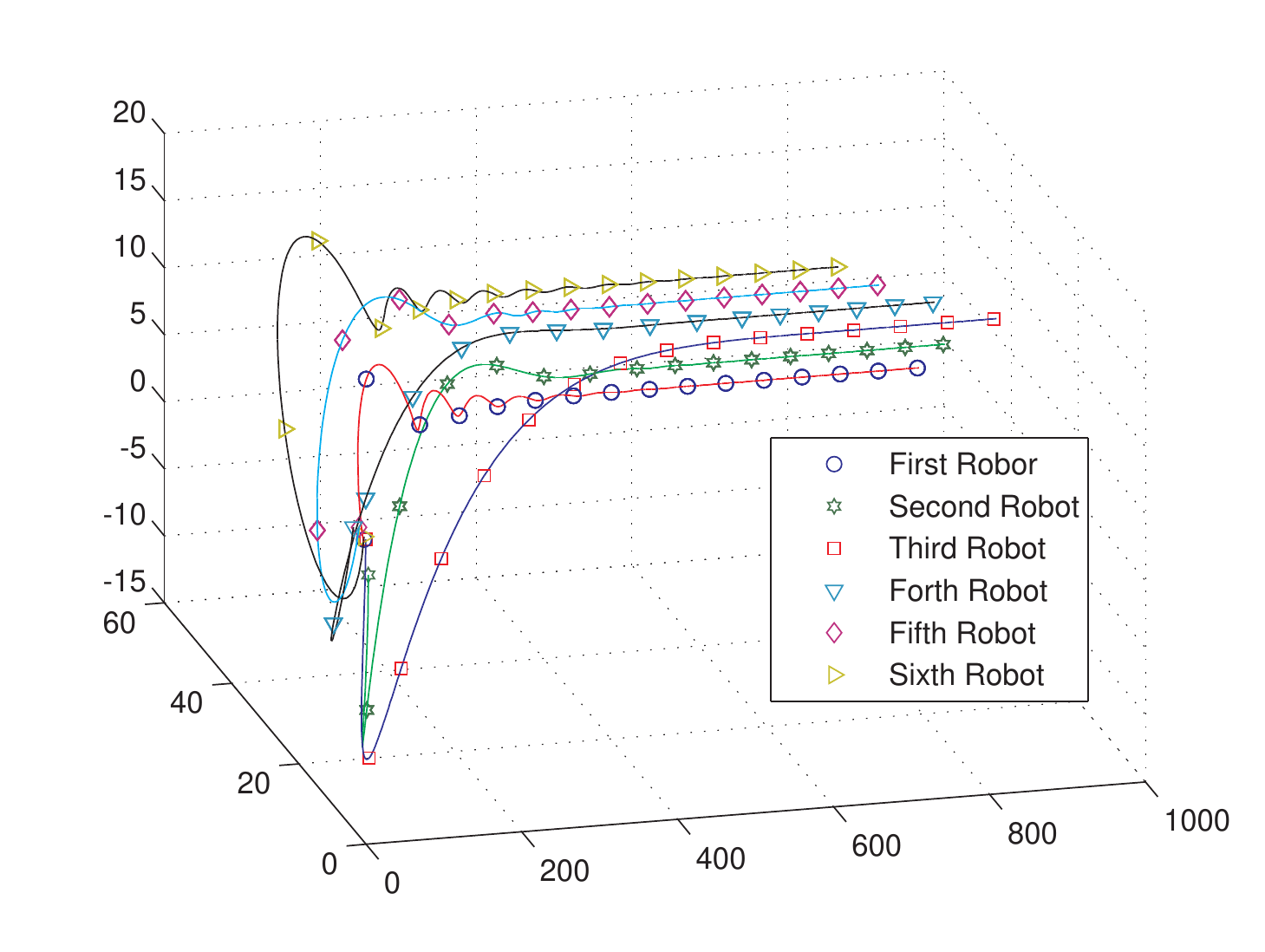}}\quad
\label{f1}
}
}
\mbox{
\subfigure[Traveling in a random formation with six robots]{
{\includegraphics[width=0.8\textwidth,height=0.65\textwidth]{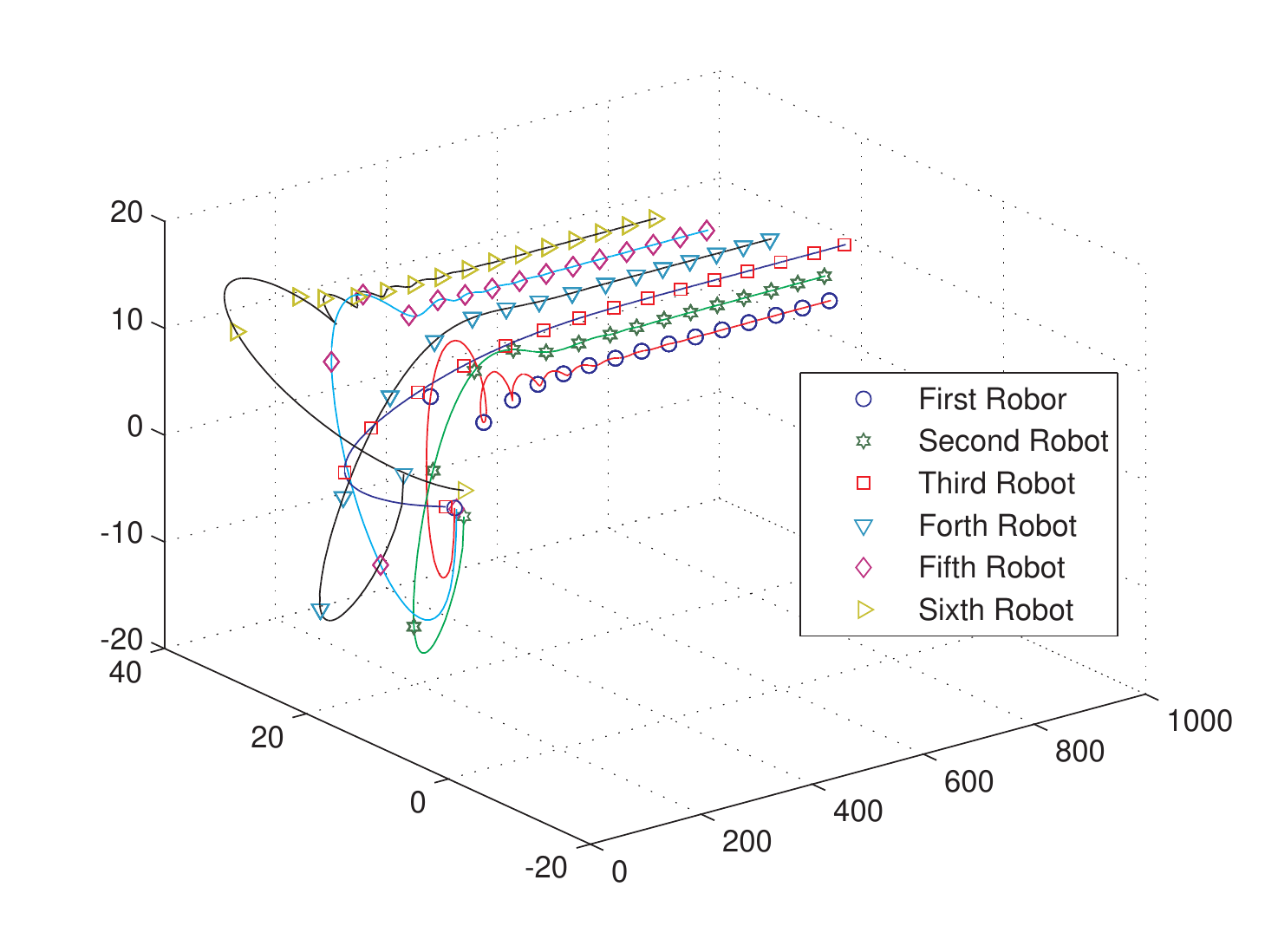}}\quad
\label{f2}
}
}
\caption{Trajectories of six mobile robotic sensors during formation  building}
\label{yz}
\end{center}
\end{figure*}
\begin{theorem} Consider the mobile robotic sensors and their constraints described by the equations (~\ref{eq:1}), (~\ref{eq:2}), (~\ref{eq:3}), (~\ref{eq:4}) and (~\ref{eq:5}). Let $C=\left\{X_1,X_2,....X_n,Y_1,Y_2,...Y_n,Z_1,Z_2,...Z_n\right\}$ be a given configuration. Suppose that assumptions ~\ref{assump3} and ~\ref{assump4} hold, and $c_0$ is a constant satisfying (~\ref{eq:12}). Also, assume that graph g is connected. Then, for any initial conditions there exists an integer $N_0>0$ such that for any $N>N_0$, the decentralized control law (~\ref{eq:10}),(~\ref{eq:20}), (~\ref{eq:21}), (~\ref{eq:22}) and (~\ref{eq:23}) with probability 1 is globally stabilizing with the given configuration.
\end{theorem}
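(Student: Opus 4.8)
\emph{Proof sketch.} The plan is to decouple the argument into three layers: (i) convergence of the consensus variables, which turns the moving ``target'' points used in the permutation rule into a rigidly translating lattice; (ii) almost-sure stabilization of the random index‑permutation process (\ref{eq:41}) to a fixed permutation $p$; and (iii) a reduction to Theorem~\ref{theorem:formation}, applied with the relabelled configuration, once $p$ has been frozen. For (i) I would invoke Lemma~\ref{lemma8}: under Assumptions~\ref{assump3} and \ref{assump4}, the update rule (\ref{eq:10}) drives $\tilde\theta_i(k)\to\theta_0$, $\tilde\psi_i(k)\to\psi_0$, $\tilde v_i(k)\to v_0$ and $x_i(k)+\tilde x_i(k)\to x_0$, $y_i(k)+\tilde y_i(k)\to y_0$, $z_i(k)+\tilde z_i(k)\to z_0$. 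Consequently the points in (\ref{39}) and (\ref{40}) converge, uniformly in the robot index, to the lattice points $(x_0+X_j+kN v_0,\ y_0+Y_j,\ z_0+Z_j)$, so that after finitely many permutation epochs the predicate ``vertex $j$ is vacant for robot $i$'' and the flag $b_i(kN)$ are evaluated exactly against this limiting lattice, provided the minimal spacing of the configuration $C$ exceeds $2\lambda$.

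For (ii) I would analyse (\ref{eq:41}) as a time‑inhomogeneous Markov chain on the finite set of maps $i\mapsto p(kN,i)$ from robots into the vertex set of the connected graph $g$. Two facts drive the argument. First, any \emph{injective} labelling is absorbing: once the consensus errors are small, $b_i(kN)=0$ for every $i$, so no robot ever re‑randomizes. Second, from any non‑injective labelling at least one robot $i$ has $b_i(kN)=1$; since $g$ is connected and two robots share a claimed point, the set $S(kN,i)$ of vacant vertices adjacent to $p(kN,i)$ together with $p(kN,i)$ itself has $\|S(kN,i)\|>1$, so robot $i$ moves to a strictly less ``collided'' labelling with probability $\geq 1/\|S(kN,i)\|\geq 1/(\deg g+1)$. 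Iterating, from every state an injective labelling is reachable in a bounded number of epochs with probability bounded below uniformly in $k$ (for $N$ large enough that the vacancy tests are reliable). A standard absorbing‑Markov‑chain / Borel--Cantelli argument then yields: with probability $1$ there is a finite random epoch $k^\ast$ and a fixed permutation $p$ of $\{1,\dots,n\}$ with $p(kN,\cdot)\equiv p$ for all $k\geq k^\ast$.

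For (iii), once $p$ is frozen the closed loop is exactly the system treated in Theorem~\ref{theorem:formation} but with the configuration relabelled, i.e. with per‑robot offsets $X_{p(i)},Y_{p(i)},Z_{p(i)}$ and the fictitious targets, switching surface, and controller (\ref{eq:19})--(\ref{eq:23}) defined accordingly. Theorem~\ref{theorem:formation} (whose hypotheses on $c_0$, (\ref{eq:12}), and on the control constraints are inherited) then gives $x_i(t)-x_j(t)\to X_{p(i)}-X_{p(j)}$, $y_i(t)-y_j(t)\to Y_{p(i)}-Y_{p(j)}$, $z_i(t)-z_j(t)\to Z_{p(i)}-Z_{p(j)}$, together with $\psi_i,\theta_i\to 0$ and $v_i\to v_0$, which is precisely (\ref{38}). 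The integer threshold $N_0$ enters exactly here: $N$ must be large enough that, between two consecutive permutation epochs, the continuous formation dynamics settle close enough to the (current) frozen formation for the vacancy predicate to be decided correctly, so the chain cannot re‑randomize spuriously forever.

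The main obstacle I anticipate is step (ii), and more precisely making the interaction between the discrete relabelling clock of period $N$ and the continuous convergence rigorous: one must show that for $N>N_0$ the tests (\ref{39})--(\ref{40}) agree, after a finite number of epochs, with the tests against the limiting translating lattice, so that the absorbing states of the chain are \emph{exactly} the injective labellings and not some limit cycle produced by transient position errors; and one must verify reachability of an injective labelling on the connected graph $g$ with a re‑randomization probability bounded uniformly from below. The remaining parts are routine given Lemma~\ref{lemma8} and Theorem~\ref{theorem:formation}.
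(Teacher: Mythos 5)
Your proposal is correct and follows essentially the same route as the paper: the paper's own proof consists solely of the observation that the permutation update (\ref{eq:41}) defines an absorbing Markov chain whose absorbing states are the injective labellings, reached with probability $1$, with the reduction to Theorem~\ref{theorem:formation} left implicit. Your sketch is in fact considerably more complete than the paper's one-paragraph argument, since you also supply the consensus layer via Lemma~\ref{lemma8}, the reachability bound on the connected graph $g$, and an explanation of where the threshold $N_0$ actually enters --- none of which the paper addresses.
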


\textit{Proof}: Algorithm (~\ref{eq:41}) implies that the mobile robotic sensors move randomly to the neighbouring unoccupied vertices of the given configuration’s graph; this will continue until all mobile robotic sensors are positioned in different vertices of the graph g. These states are considered as absorbing states and they are impossible to leave. As a result, the algorithm (~\ref{eq:41}) describes an absorbing Markov chain. It is obvious that, from any initial state with probability 1, one of the absorbing states will be reached.



\begin{table}
 \caption{Simulations Parameters}
 \label{parameter}
 \begin{center}
 {\small
  \begin{tabular*}{\columnwidth}[t]{cp{72pt}p{96pt}}\hline\hline
Parameter & {\raggedright Value } & {\raggedright Comment}\\ \hline
$T_s$ & {\raggedright .01 } & {\raggedright Sampling time }\\
$R_c$ & {\raggedright 100 } & {\raggedright Communication range}\\
$\lambda$ & {\raggedright  20} & {\raggedright $0<\lambda <\frac{R_c}{2}$}\\
$N$ & {\raggedright 10 } & {\raggedright The given integer }\\
$V_{max}$ & {\raggedright 8 m/s } & {\raggedright Maximum linear velocity for the mobile robotic sensors.}\\
$V_{min}$ & {\raggedright 2 m/s } & {\raggedright Minimum linear velocity for the mobile robotic sensors.}\\
$U_{max}$ & {\raggedright 2 } & {\raggedright Maximum angular velocity.}\\
\hline\hline
  \end{tabular*}
 }
 \end{center}
\end{table}
\begin{figure*}[t!]
\begin{center}
\mbox{
\subfigure[Traveling in a regular tetrahedron  formation with four robots]{
{\includegraphics[width=0.8\textwidth,height=0.65\textwidth]{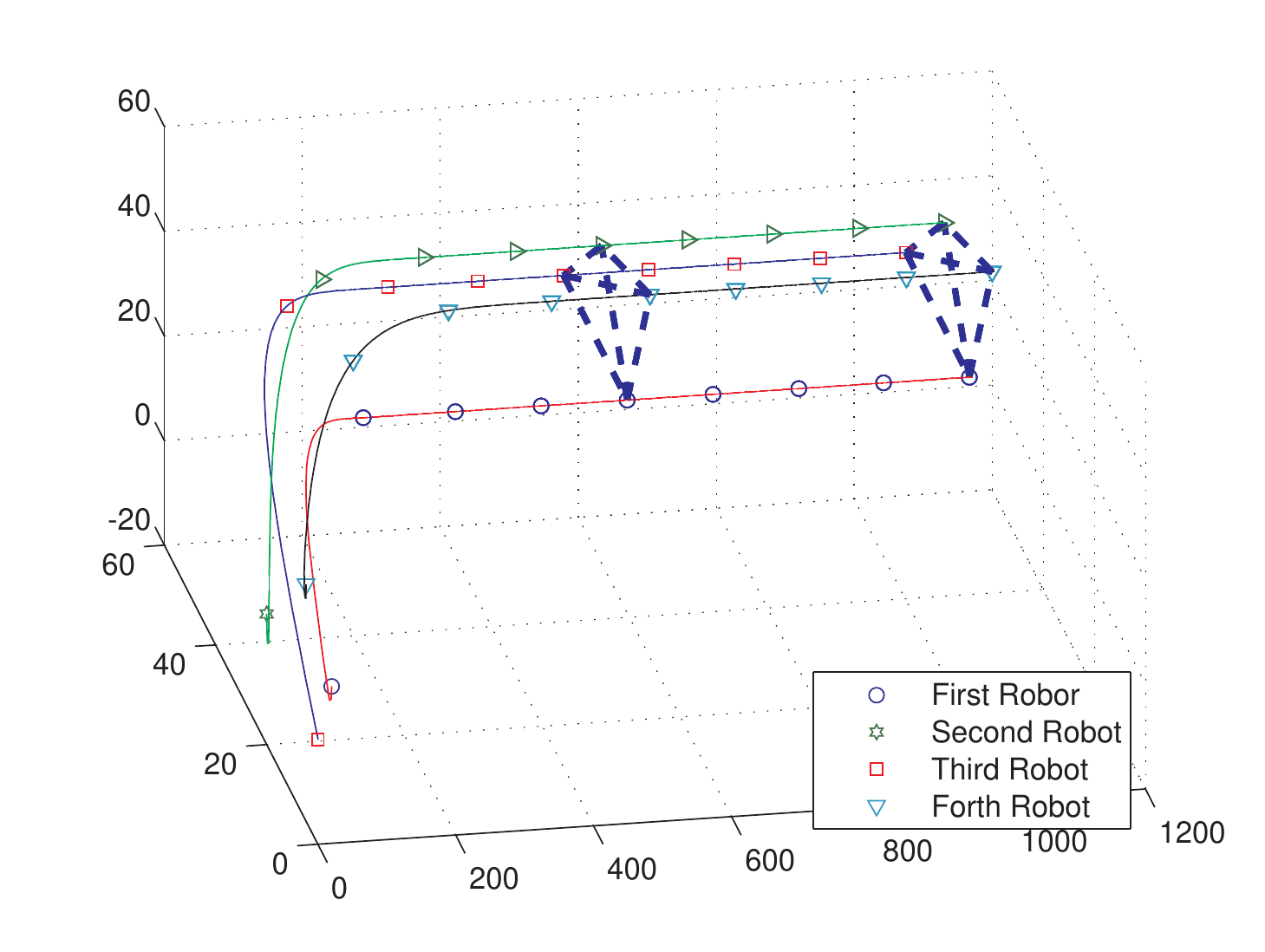}}\quad
\label{f3}
}
}
\mbox{
\subfigure[Traveling in a 3D formation with five robots]{
{\includegraphics[width=0.8\textwidth,height=0.65\textwidth]{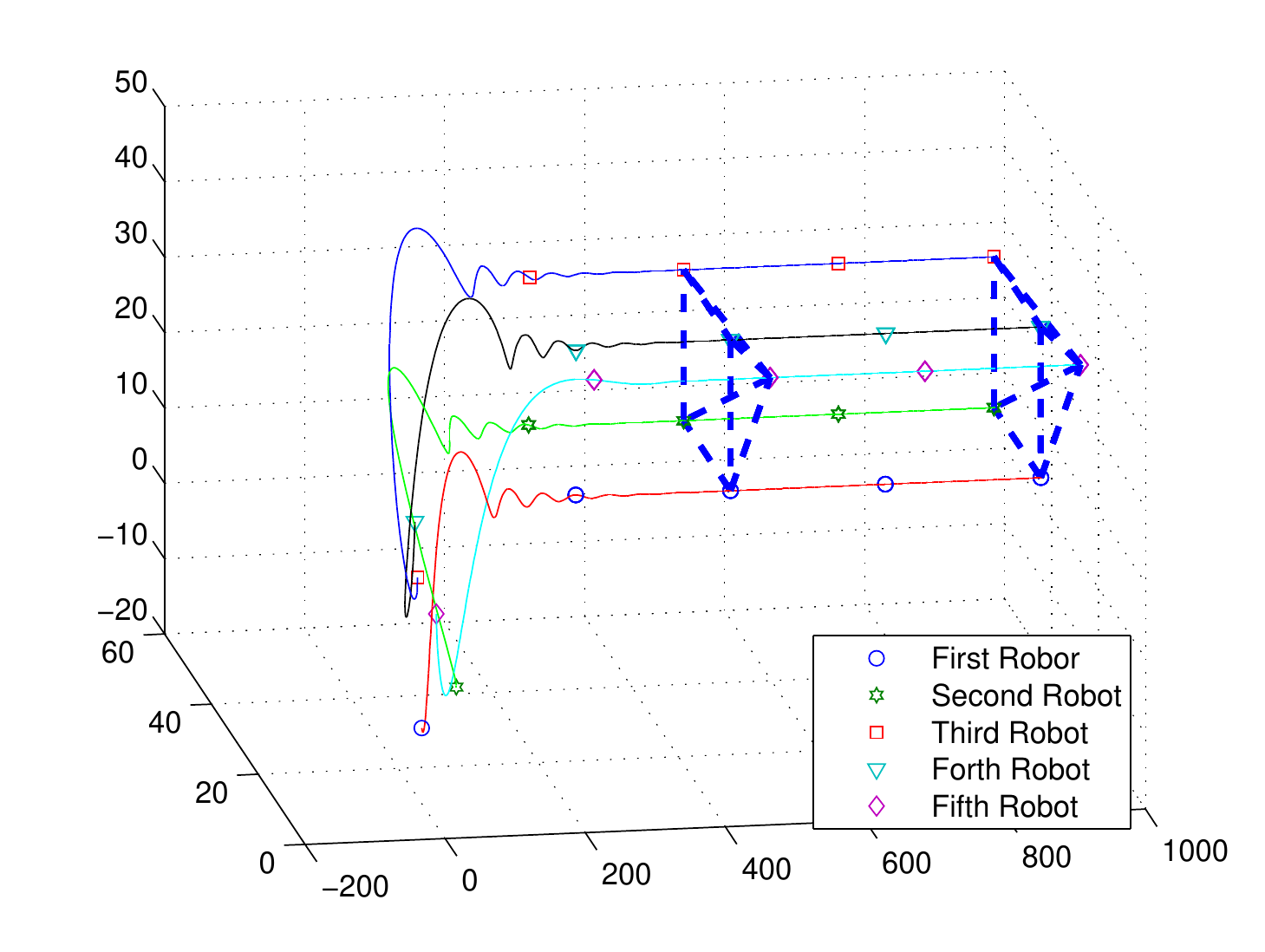}}\quad
\label{f4}
}
}
\caption{Trajectories of mobile robotic sensors during 3D formation  building}
\label{yz1}
\end{center}
\end{figure*}
 \begin{figure}
\centering
{\includegraphics[scale=0.75]{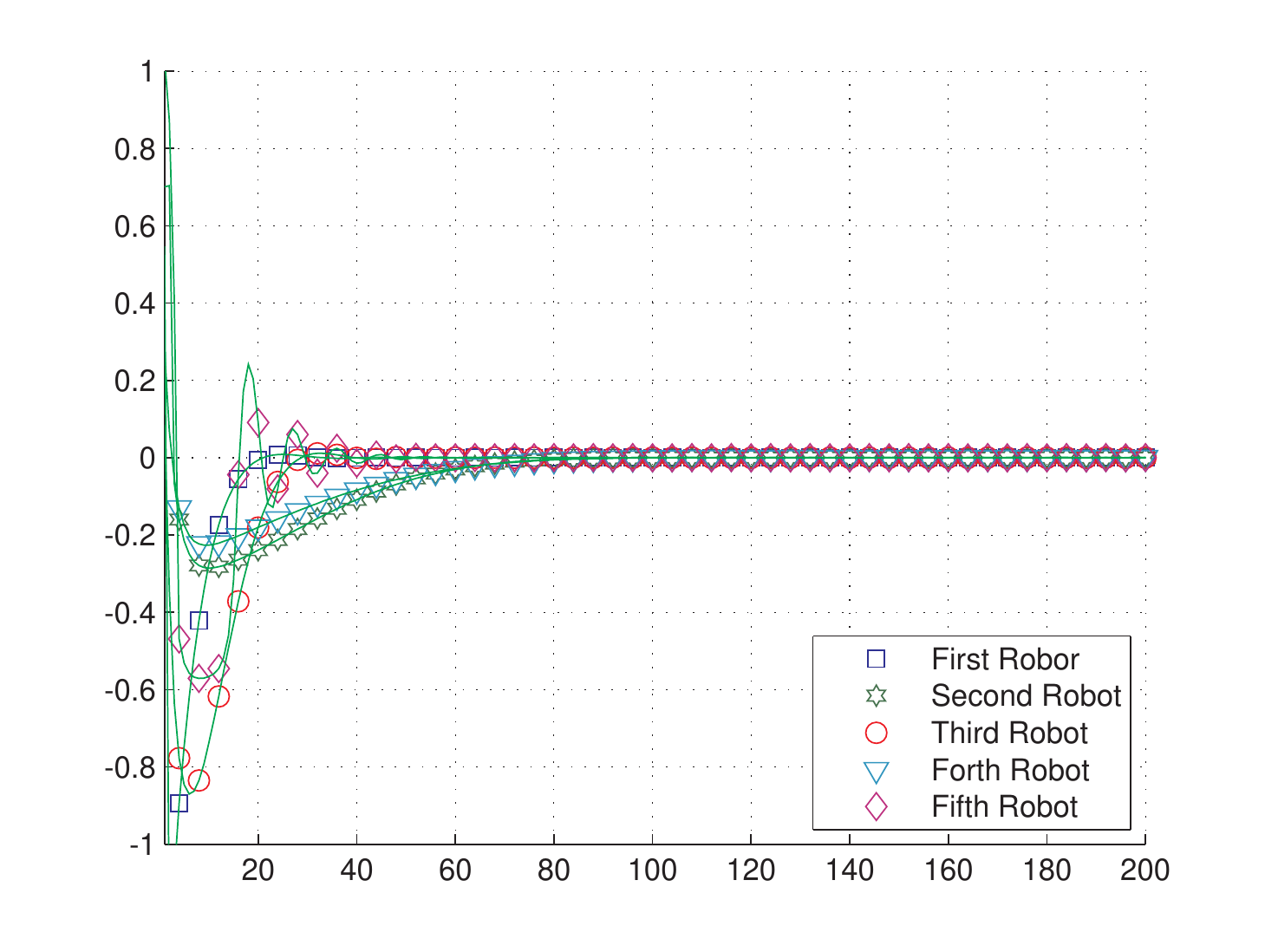}}
\caption{Convergence of the robot's headings' to the same directions}
 \label{heading}
\end{figure}
\begin{figure*}[t!]
\begin{center}
\mbox{
\subfigure[]{
{\includegraphics[width=0.75\textwidth,height=0.6\textwidth]{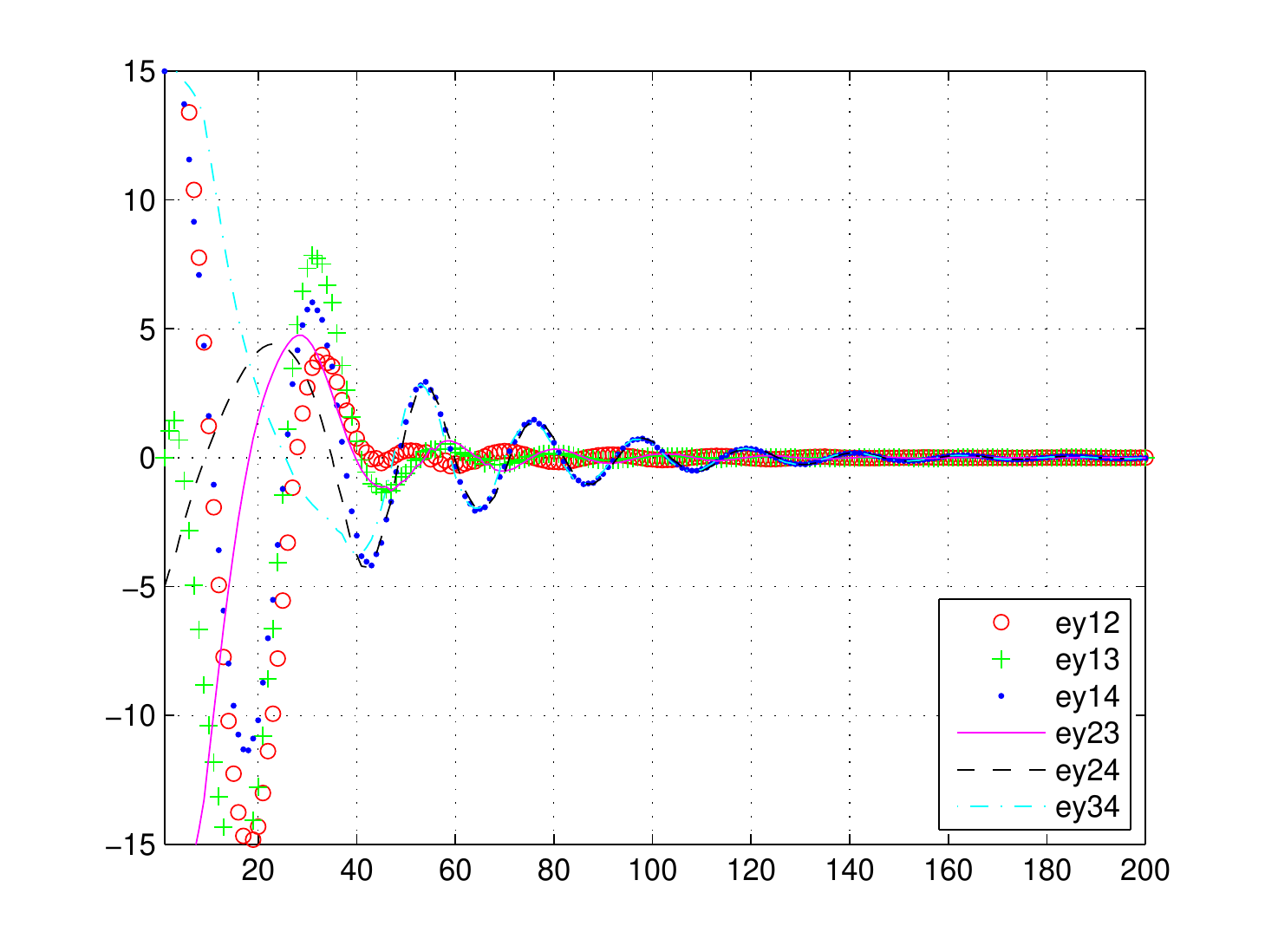}}\quad
\label{y}
}
}
\mbox{
\subfigure[]{
{\includegraphics[width=0.75\textwidth,height=0.6\textwidth]{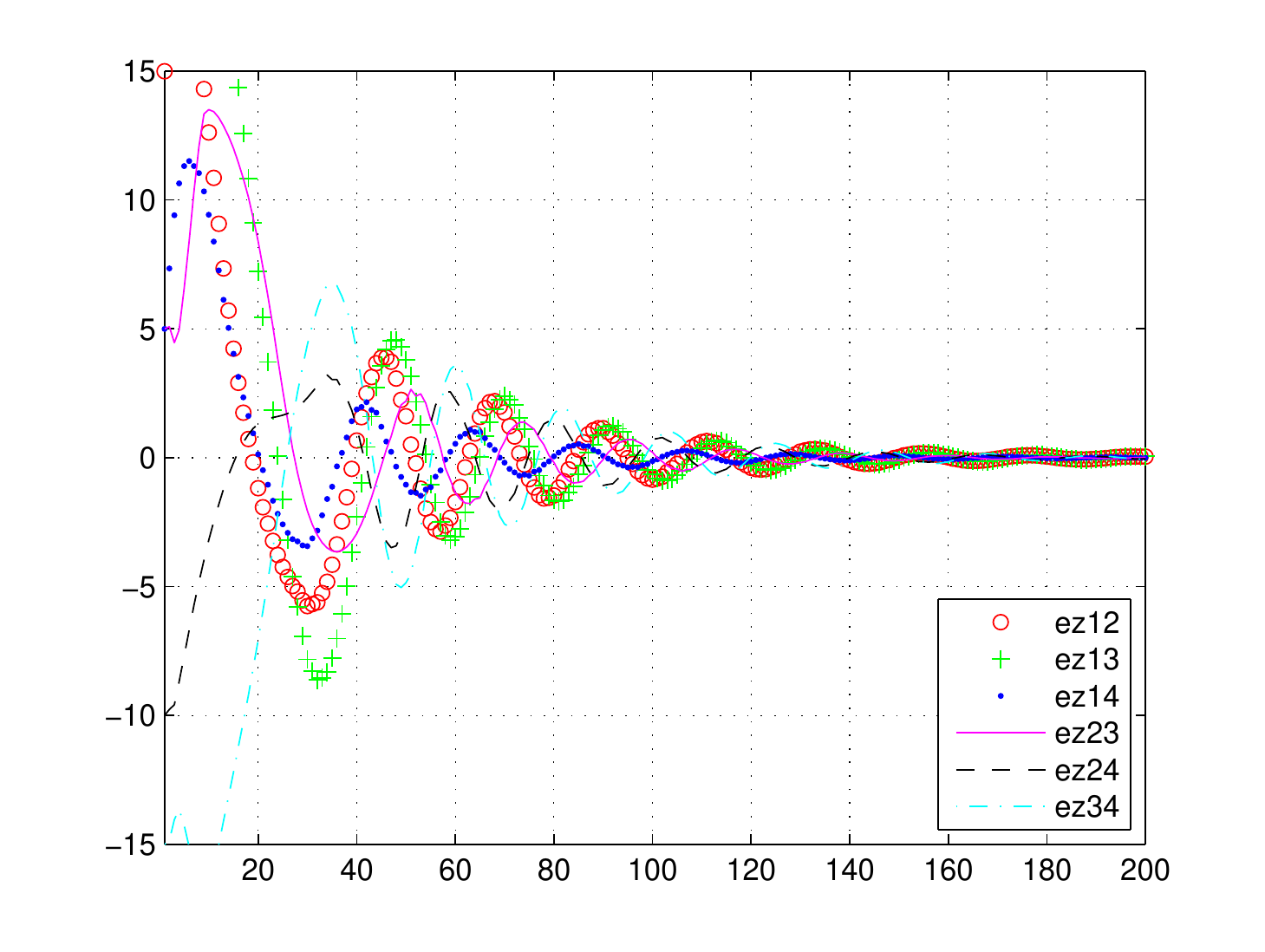}}\quad
\label{z}
}
}
\caption{Convergence of the mobile robotic sensors to the given configuration}
\label{yz}
\end{center}
\end{figure*}
\begin{figure*}[t!]
\begin{center}
\mbox{
\subfigure[]{
{\includegraphics[trim={2cm 10cm 0cm  9cm}, width=0.8\textwidth,height=0.6\textwidth]{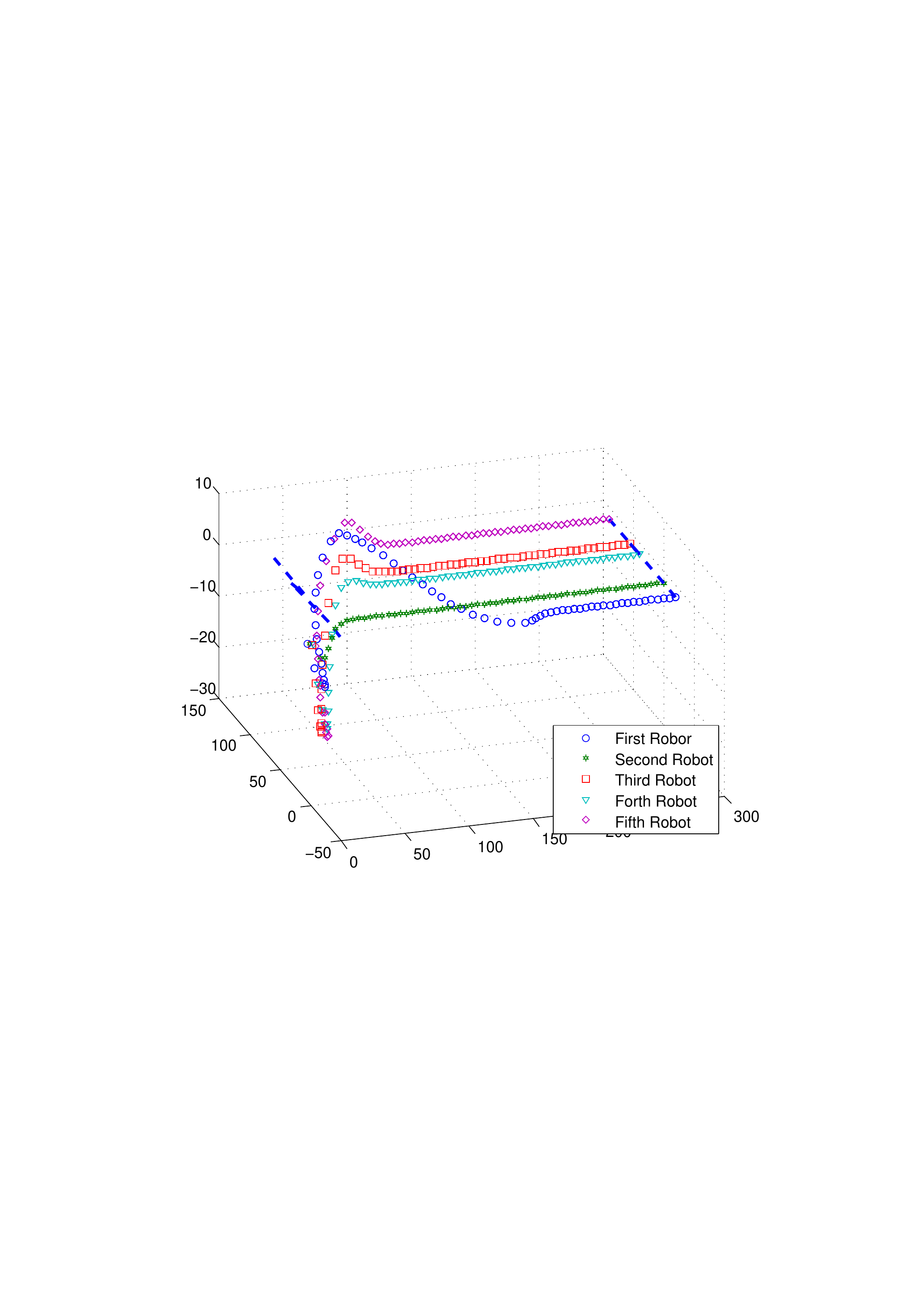}}\quad
\label{unknown1}
}
}
\mbox{
\subfigure[]{
{\includegraphics[trim={2cm 10cm 0cm  9cm}, width=0.8\textwidth,height=0.6\textwidth]{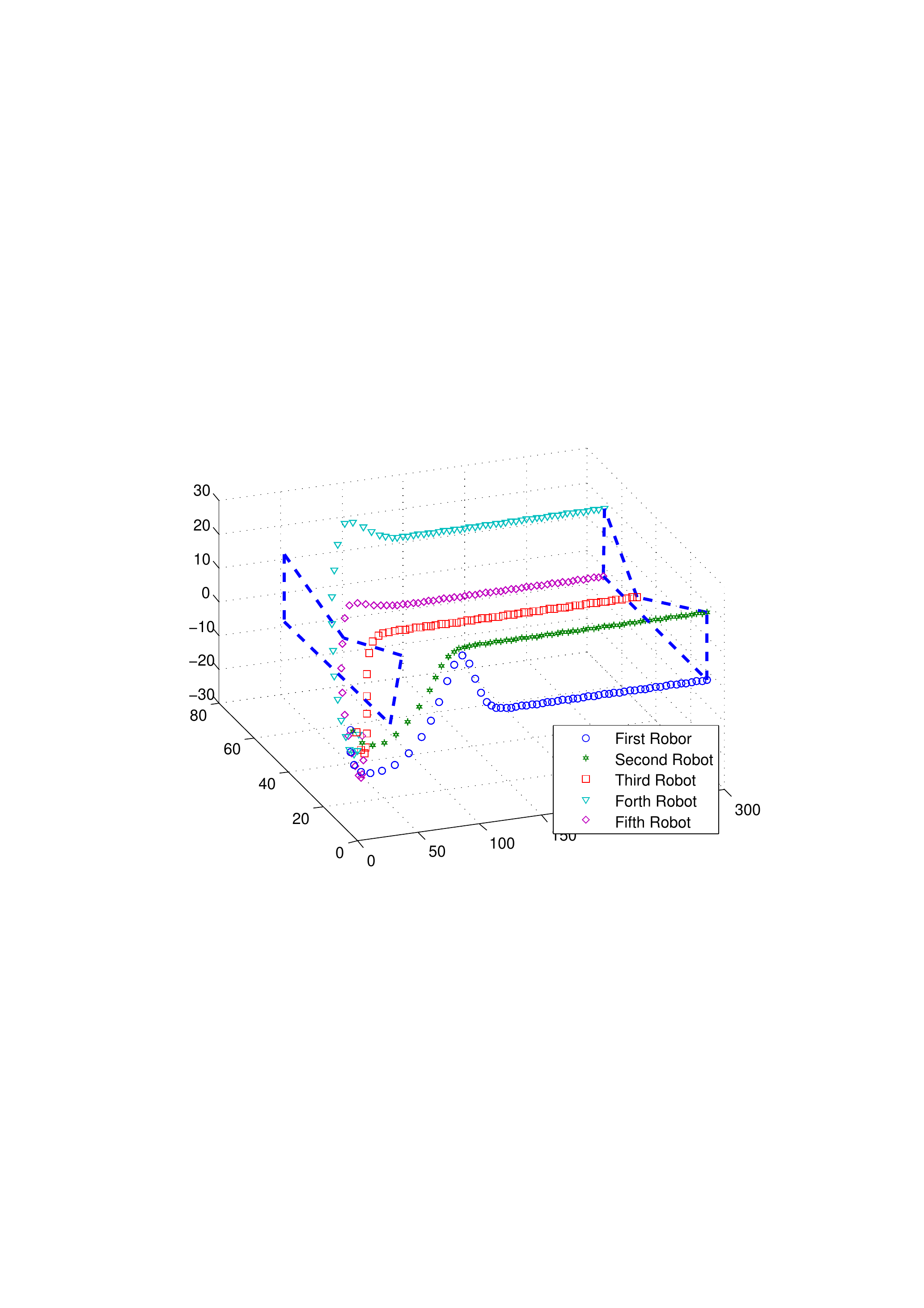}}\quad
\label{unknown2}
}
}
\caption{Formation building with anonymous robots}
\label{unknown}
\end{center}
\end{figure*}

\section{Simulation Results} \label{6.5}
In this section, a simulation study is carried out to evaluate the performance of the proposed consensus-based decentralized navigation laws for formation building in 3D environments. For simulation, we used MATLAB R2014b with simulation parameters shown in Table.~\ref{parameter} for all configurations. In the first set of simulations, we consider a fleet of six mobile robotic sensors defined by (~\ref{eq:1}) with the constraints (~\ref{eq:3}) and (~\ref{eq:4}) for an edge and a random formation building in a 3D environment. The initial headings and positions of all mobile robotic sensors are generated randomly within a pre-specified area. As shown in Fig.~\ref{f1} and Fig.~\ref{f2}, after the transient state the mobile robotic sensors constitute the desired formation and they preserve this formation as they move along the x-axis. It is obvious that the communication range should be chosen large enough so that the mobile robotic sensors form a connected graph during the simulations.\\
The next set of simulations show the ability of the proposed formation building strategy to work at different conditions. Fig.~\ref{f3} and Fig.~\ref{f4} demonstrate the process of formation building for two different configurations with five and four mobile robotic sensors. As shown in Fig.~\ref{f3} and Fig.~\ref{f4}, for both cases the mobile robotic sensors adjust their headings after a reasonable amount of time, and then they converge to the vertices of the given 3D configurations. The presented simulation results verify the effectiveness of the proposed decentralized formation building control law for various initial conditions, the number of mobile robotic sensors and configurations.\\
The second set of simulations is carried out to show the headings of the mobile robotic sensors in the whole formation building process. By application of the proposed decentralized navigation law, the robots heading angles converge to zero, because eventually all robots move in the same direction with the X-axis, as shown in Fig.~\ref{heading}. To illustrate the convergence of the mobile robotic sensors to the given configuration, in the following we introduce the errors term defined by: $ey_{ij}(t)= (y_i(t)-y_j(t)) -(Y_i-Y_j)$ and $ez_{ij}=(z_i(t)-z_j(t)) -(Z_i-Z_j)$. Fig.~\ref{y} and Fig.~\ref{z} show that  $ey_{ij}(t)\rightarrow 0$ and $ez_{ij}(t)\rightarrow 0$, for all $i=1,2,..n$. It means that, eventually all mobile robotic sensors converge to the given configuration.\\
In the following, we simulate the proposed algorithm for the problem of formation building with anonymous robots. Fig.~\ref{unknown1} and Fig.~\ref{unknown2} show the performance of this  algorithm using different initial conditions and configurations. As shown, in both cases the mobile robotic sensors converge to the given configuration.



\section{Summary} \label{6.6}
At first part of this chapter, a decentralized control law for formation building in three dimensional environments was proposed. We used nonlinear standard kinematics equations with hard constraints on the robot angular and linear velocity to describe the robot motion in 3D spaces. Then, we presented a random formation building algorithm for the problem of formation building with anonymous robots in 3D environments. This algorithm is an appropriate  option for the case where  robots do not know a priori its position in the configuration. The proposed control laws are based on the consensus approach that is simply implemented and computationally effective. The proposed control algorithms are decentralized, and the control action of each robot is based on the local information of its neighbouring robots. Based on this algorithms, all mobile robotic sensors eventually converge to the desired geometric configuration with the same direction and the same speed. The performance of the proposed decentralized control laws have been confirmed by extensive simulations. Furthermore, we give a mathematically rigorous analysis of the convergence of the mobile robotic sensors to the given configurations.

\singlespacing
\chapter{Conclusions and Future Work}
\label{chap:conclusion}
\minitoc
This report considers the problem of coverage, search and formation building by a network of mobile robotic sensors in three dimensional spaces. Unlike most of the existing algorithms which focus on two dimensional environments, our algorithms are designed for three dimensional environments. In this report, we present some novel decentralized control algorithms for search, coverage and formation building in 3D spaces. The decentralized control of mobile robotic sensor networks is a relatively recent area of research, and this approach is believed more promising due to many inevitable physical constraints such as limited resources and energy, short wireless communication ranges, narrow bandwidths, and large sizes of robots to manage and control. Furthermore, the distributed method has many advantages in realizing cooperative group performances, scalability, and robustness.\\
In the following, we summarize the important contributions of this report for self deployment of mobile robotic senors networks for coverage, search and formation building in 3D environments.

\begin{itemize}
\item We introduced a distributed motion coordination algorithm for a network of mobile robotic sensors for complete sensing coverage of a bounded three-dimensional space. The algorithm was developed based on some simple consensus laws that only require information about the closest neighbours of each mobile robotic sensor. The proposed control laws drive the network of mobile robotic sensors to form a 3D covering grid in the target region and then they occupy the vertices of the grid. Also, we proposed a distributed control law for coordination of the mobile robotic sensors such that they form a given 3D shape at vertices of a truncated octahedral grid from any initial positions. Simulation results showed that the 3D truncated octahedral grid outperforms other 3D grids in terms of complete sensing coverage time and a minimum number of mobile robotic sensors required to cover completely the given 3D region.
\item We presented a set of random distributed laws for search in unknown bounded three dimensional environments. The mobile sensors utilize a truncated octahedral grid for the search process. Furthermore, they exchange information with their neighbouring sensors to minimize the search time.
\item We introduced a novel random bio-inspired search algorithm to drive mobile robotic sensors for locating clustered and sparsely located targets in bounded 3D areas. The proposed method combines the bio-inspired Levy flight random search mechanism for determining the length of the walk with vertices of a covering truncated octahedral grid to optimize the search procedure. The proposed approach has the advantage that it does not need centralized control system, also it is scalable.
\item We extended our grid-based random search algorithm for the problem of detecting mobile targets moving randomly in a bounded 3D space by a mobile robotic sensor network.
\item We presented a decentralized control law for formation building in 3D environments. We used nonlinear standard kinematics equations with hard constraints on the robot angular and linear velocity to describe the robots' motion in 3D spaces. Then, we proposed a random formation building algorithm for the problem of formation building with anonymous robots in 3D environments. This algorithm is an appropriate option for the case where robots do not know a priori its position in the configuration.

\item  The proposed algorithms are distributed. As a result, these algorithms can be used to the cases where the communication and the sensing range are limited. Moreover, The proposed control laws are based on the consensus approach that is simply implemented and computationally effective. The control algorithms are decentralized and the control action of each robot is based on the local information of its neighbouring robots.
\item The proposed methods can deal with new tasks in various environments, such as different number of targets or mobile sensors. In other words, these algorithms are flexible in various scenarios.
\item Finally, the effectiveness of the proposed control algorithms have been confirmed by extensive simulations. Also, we gave mathematically rigorous proof of convergence with probability 1 of the proposed algorithms.
\end{itemize}

\vspace{-8mm}
\section{Future Work}
\vspace{-4mm}
The work in this report opens up some future research problems and the practical realization of the proposed algorithms. The following suggestions are examples of possible future research based on the results presented in this report:
\begin{itemize}
\item In the future studies, the problem of navigating mobile robotic sensors with complicated models can be considered. In such cases, it is not sufficient to solve the navigation problem by only considering the kinematic models of the mobile robotic sensors, the dynamics of the robots should also be studied. The model proposed in the current works is simple. Future research could consider a more complex nonlinear version of the model \cite{krstic1995nonlinear,savkin1995minimax,savkin1998robust}. In such a model, dynamic equations for mobile robots should be applied. Moreover, we will combine  our proposed navigation method with methods of robust nonlinear control and robust state estimation proposed in \cite{savkin2000robust,savkin1996model,moheimani1998robust,petersen1999robust}.
\item In the future search and coverage research, we will consider more realistic scenarios with robotic sensors motion described by non-holonomic kinematic models with the problem of collision avoidance \cite{hoy2015algorithms,stastny2015collision,yang20133d,savkin2013simple}
\item Current works assumed homogeneous sensors as well as spherical detection ranges of those sensors. These assumptions are simplistic. To extend this work, we consider nodes with more realistic features.
\item  Another exciting direction is to combine the proposed formation building algorithms with a local obstacle avoidance technique to generate safe and applicable strategies in the real world so that the mobile robots will not collide with each other and to obstacles during formation building. See e.g. \cite{chang2015coordinated,teimoori2010biologically,matveev2011method,matveev2012real,savkin2014seeking,savkin2015safe,subramanian2014real,munoz2015uas,brockers2014micro,
    hrabar2012evaluation,yang20133d,shim2007evasive}.
\item In the last chapter, we proposed a decentralized formation building control law for a group of mobile robots with hard constraints on the linear and angular velocities, and we presented their computer simulations. It would be interesting to investigate and verify the effectiveness of the proposed algorithms through the experiments with a group of real mobile robots.
\item In the future works, we can evaluate the proposed distributed algorithms based on the factors such as cost, power consumption, etc.
\item To use mobile robotic sensors in search, surveillance, monitoring and exploration in future applications, it is definitely necessary to move on real mobile robotic sensors and implement solutions using realistic scenarios. Moreover, we can use the 3D mobile robotic sensor networks for environmental exploration of 2D and 3D environmental fields and surfaces. See, e.g. \cite{matveev2011navigation,matveev2012method,clark2007mobile}
\item Further research is needed to optimize the travelling distance of the mobile robotic sensor networks as it can effectively reduce the energy consumption and prolong the network lifetime.
\item In chapter 2,3,4,5, we consider the problem of decentralized search and coverage in 3D spaces. It is possible to combine the proposed algorithms with a collision avoidance technique in order to avoid any collision with nearby moving agents. See e.g.\cite{breitenmoser2016combining,stipanovic2012collision}
 \item In our proposed algorithms, we didn't take into account limitations of communication channels. Future research is needed to combine our proposed navigation method with the tools for control and estimation via limited capacity channels developed in \cite{matveev2001optimal,matveev2009estimation,matveev2003problem,matveev2005comments,savkin2006analysis}.
\end{itemize}

\singlespacing

\renewcommand{\chaptername}{}
\renewcommand{\thechapter}{}
\renewcommand{\chaptermark}[1]{%
\markboth{\MakeUppercase{%
#1}}{}}




\bibliographystyle{abbrv}
\bibliography{ref}

\end{document}